\newtheorem{thm}{Theorem}[section]
\newtheorem*{thm*}{Theorem}
\newtheorem*{mthm*}{Main Theorem}
\newtheorem{theorem}[thm]{Theorem}
\newtheorem{lemma}[thm]{Lemma}
\newtheorem{cor}[thm]{Corollary}
\newtheorem{prop}[thm]{Proposition}
\newtheorem{proposition}[thm]{Proposition}
\newtheorem*{conjecture*}{Conjecture}
\newtheorem*{question*}{Question}
\theoremstyle{definition}
\newtheorem{defi}[thm]{Definition}
\newtheorem{construction}[thm]{Construction}
\newtheorem{definition}[thm]{Definition}
\newtheorem{notation}[thm]{Notation}
 \newtheorem{example}[thm]{Example}
  \newtheorem*{example*}{Example}
 \newtheorem{remark}[thm]{Remark}
\newtheorem{warning}[thm]{Warning}
\theoremstyle{remark}
\newcommand{\A}{\mathbb{A}}
\newcommand{\B}{\mathbf{B}}
\newcommand{\C}{\mathbb{C}}
\newcommand{\E}{\mathbb{E}}
\newcommand{\G}{\mathbb{G}}
\newcommand{\Gm}{\mathbb{G}_m}
\newcommand{\N}{\mathbb{N}}
\newcommand{\R}{\mathbb{R}}
\let\oldS\S
\newcommand{\parag}{\oldS}
\renewcommand{\S}{\mathbb{S}}
\newcommand{\T}{\mathbf{T}}
\newcommand{\Z}{\mathbb{Z}}
\newcommand{\CP}{\mathbb{CP}}
\newcommand{\con}{\tau_{\geq 0}}
\newcommand{\BB}{\mathbf{B}}
\newcommand{\CC}{\mathcal{C}}
\newcommand{\DD}{\mathcal{D}}
\newcommand{\EE}{\mathcal{E}}
\newcommand{\FF}{\mathcal{F}}
\newcommand{\UU}{\mathcal{U}}
\newcommand{\LL}{\mathcal{L}}
\newcommand{\PP}{\mathcal{P}}
\newcommand{\RR}{\mathcal{R}}
\renewcommand{\SS}{\mathcal{S}}
\newcommand{\XX}{\mathcal{X}}
\newcommand{\YY}{\mathcal{Y}}
\newcommand{\Spc}{\mathcal{S}}
\newcommand{\OO}{\mathcal{O}}
\newcommand{\QQ}{\mathcal{Q}}
\newcommand{\MM}{\mathcal{M}}
\newcommand{\NN}{\mathcal{N}}
\newcommand{\TT}{\mathcal{T}}
\newcommand{\WW}{\mathcal{W}}
\renewcommand{\i}{\infty}
\newcommand{\Ef}{\mathsf{E}}
\newcommand{\Mf}{\mathsf{M}}
\newcommand{\Nf}{\mathsf{N}}
\newcommand{\Xf}{\mathsf{X}}
\newcommand{\Yf}{\mathsf{Y}}
\newcommand{\Zf}{\mathsf{Z}}
\newcommand{\Dual}{\widehat}
\newcommand{\et}{\mathrm{\acute{e}t}}
\newcommand{\tensor}{\otimes}
\newcommand{\wEllT}{\widetilde{\mathcal{E}ll}_{\T}}
\newcommand{\EllT}{\mathcal{E}ll_{\T}}
\newcommand{\GQ}{\widehat{\mathbf{G}}^{\mathscr{Q}}}
\DeclareMathOperator{\Ell}{Ell}
\DeclareMathOperator{\TopCat}{TopCat}
\DeclareMathOperator{\coh}{coh}
\DeclareMathOperator{\SeCat}{SeCat}
\DeclareMathOperator{\PreAb}{PreAb}
\DeclareMathOperator{\TopAb}{TopAb}
\DeclareMathOperator{\FGLie}{FGLie}
\DeclareMathOperator{\CptLie}{CptLie}
\DeclareMathOperator{\Gpd}{Gpd}
\DeclareMathOperator{\ad}{ad}
\DeclareMathOperator{\Stk}{Stk}
\DeclareMathOperator{\rep}{rep}
\DeclareMathOperator{\LocSys}{LocSys}
\DeclareMathOperator{\Rep}{Rep}
\DeclareMathOperator{\pt}{pt}
\DeclareMathOperator{\Tot}{Tot}
\DeclareMathOperator{\Sh}{Sh}
\DeclareMathOperator{\Vect}{Vect}
\DeclareMathOperator{\Aut}{Aut}
\DeclareMathOperator{\op}{op}
\DeclareMathOperator{\Sp}{Sp}
\DeclareMathOperator{\Cat}{Cat}
\DeclareMathOperator{\RelCat}{RelCat}
\DeclareMathOperator{\Sub}{Sub}
\DeclareMathOperator{\fin}{fin}
\DeclareMathOperator{\Shv}{Shv}
\DeclareMathOperator{\Spf}{Spf}
\DeclareMathOperator{\Ab}{Ab}
\DeclareMathOperator{\ab}{ab}
\DeclareMathOperator{\Pre}{Pre} 
\DeclareMathOperator{\TopGpd}{TopGpd} 
\DeclareMathOperator{\PreStk}{PreStk} 
\DeclareMathOperator{\ev}{ev}
\DeclareMathOperator{\comodules}{\text{-}comod}
\DeclareMathOperator{\Aff}{Aff}
\DeclareMathOperator{\id}{id}
\DeclareMathOperator{\im}{im}
\DeclareMathOperator{\cof}{cof}
\DeclareMathOperator{\dual}{dual}
\DeclareMathOperator{\sm}{\wedge}
\DeclareMathOperator{\colim}{colim}
\DeclareMathOperator{\Lie}{Lie}
\DeclareMathOperator{\Top}{Top}
\DeclareMathOperator{\Stab}{Stab}
\DeclareMathOperator{\Hom}{Hom}
\DeclareMathOperator{\Spec}{Spec}
\DeclareMathOperator{\cSpec}{cSpec}
\DeclareMathOperator{\fib}{fib}
\DeclareMathOperator{\hocolim}{hocolim}
\DeclareMathOperator{\QCoh}{QCoh}
\DeclareMathOperator{\cn}{cn}
\DeclareMathOperator{\afp}{afp}
\DeclareMathOperator{\Mod}{Mod}
\DeclareMathOperator{\res}{res}
\DeclareMathOperator{\Ind}{Ind}
\DeclareMathOperator{\Fun}{Fun}
\DeclareMathOperator{\sCat}{sCat}
\DeclareMathOperator{\Sing}{Sing}
\DeclareMathOperator{\sSet}{sSet}
\DeclareMathOperator{\Map}{Map}
\DeclareMathOperator{\uMap}{\underline{Map}}
\DeclareMathOperator{\CAlg}{CAlg}
\DeclareMathOperator{\cCAlg}{cCAlg}
\DeclareMathOperator{\Orb}{Orb}
\mathchardef\mhyphen="2D
\newcommand*{\doublerightarrow}[2]{\mathrel{
  \settowidth{\@tempdima}{$\scriptstyle#1$}
  \settowidth{\@tempdimb}{$\scriptstyle#2$}
  \ifdim\@tempdimb>\@tempdima \@tempdima=\@tempdimb\fi
  \mathop{\vcenter{
    \offinterlineskip\ialign{\hbox to\dimexpr\@tempdima+1em{##}\cr
    \rightarrowfill\cr\noalign{\kern.5ex}
    \rightarrowfill\cr}}}\limits^{\!#1}_{\!#2}}}
\def\fibdown{\ar@{->>}[d]}
\def\hookdown{\ar@<-.5ex>[d]|{\phantom{a}}|<<{\put(-.7,2){$\scriptstyle\cap$}}}
\def\rrarrow{  \hspace{.05cm}\mbox{\,\put(0,-2){$\rightarrow$}\put(0,2){$\rightarrow$}\hspace{.45cm}}}
\def\rrrarrow{ \hspace{.05cm}\mbox{\,\put(0,-3){$\rightarrow$}\put(0,1){$\rightarrow$}\put(0,5){$\rightarrow$}\hspace{.45cm}}}
\def\rrrrarrow{\hspace{.05cm}\mbox{\,\put(0,-3.5){$\rightarrow$}\put(0,0){$\rightarrow$}\put(0,3.5){$\rightarrow$}\put(0,7){$\rightarrow$}
               \hspace{.45cm}}}
\DeclareRobustCommand\widecheck[1]{{\mathpalette\@widecheck{#1}}}
\def\@widecheck#1#2{%
    \setbox\z@\hbox{\m@th$#1#2$}%
    \setbox\tw@\hbox{\m@th$#1%
       \widehat{%
          \vrule\@width\z@\@height\ht\z@
          \vrule\@height\z@\@width\wd\z@}$}%
    \dp\tw@-\ht\z@
    \@tempdima\ht\z@ \advance\@tempdima2\ht\tw@ \divide\@tempdima\thr@@
    \setbox\tw@\hbox{%
       \raise\@tempdima\hbox{\scalebox{1}[-1]{\lower\@tempdima\box
\tw@}}}%
    {\ooalign{\box\tw@ \cr \box\z@}}}
\subjclass{55N34, 55P91, 14A30}
\keywords{Equivariant Homotopy Theory, Elliptic Cohomology, Topological Modular Forms, Orbispaces}
\begin{document}
\title{On equivariant topological modular forms}
\author{David Gepner}
\address{Johns Hopkins University,
Zanvyl Krieger School of Arts \& Sciences,
Department of Mathematics,
3400 N. Charles Street, Baltimore, MD 21218, United States}
\email{gepner@jhu.edu}

\author{Lennart Meier}
\address{Universiteit Utrecht, Mathematical Institute, Budapestlaan 6, 3584 CD Utrecht, The Netherlands}
\email{f.l.m.meier@uu.nl}

\begin{abstract}
    Following ideas of Lurie, we give in this article a general construction of equivariant elliptic cohomology without restriction to characteristic zero. Specializing to the universal elliptic curve we obtain in particular equivariant spectra of topological modular forms. We compute the fixed points of these spectra for the circle group and more generally for tori. 
\end{abstract}

\maketitle
\setcounter{tocdepth}{1}
\tableofcontents

\section{Introduction}
The aim of this paper is to construct an integral theory of equivariant elliptic cohomology for an arbitrary compact Lie group and prove some of its basic properties. In particular, this applies to elliptic cohomology based on the universal elliptic curve over the moduli stack of elliptic curves, yielding compatible $G$-equivariant spectra of topological modular forms $\mathrm{TMF}$ for every compact abelian Lie group $G$. The construction follows the ideas sketched in \cite{Lur07} and builds crucially on the theory Lurie detailed in subsequent work.
One of our main results beyond the construction is the computation of the fixed points of $\mathrm{TMF}$ with respect to the circle-group $\T$, identifying these with $\mathrm{TMF} \oplus \Sigma \mathrm{TMF}$. 

\subsection{Motivation and main results}
The $G$-equivariant complex K-theory of a point agrees per definition with the representation ring of $G$. Together with Bott periodicity this gives for example $KU^{\mathrm{even}}_{\T}(\pt) \cong \Z[t^{\pm 1}]$ and $KU^{\mathrm{odd}}_{\T}(\pt) = 0$. The goal of the present paper is to repeat this calculation at the level of equivariant \emph{elliptic} cohomology. 

The basic idea of equivariant elliptic cohomology is inspired by the following algebro-geometric interpretation of the computation above: The spectrum $\Spec \Z[t^{\pm 1}]$ coincides with the multiplicative group $\mathbb{G}_m$. Thus, for every $\T$-space $X$ the K-theory $KU^{\mathrm{even}}_{\T}(X)$ defines a quasi-coherent sheaf on $\mathbb{G}_m$. In elliptic cohomology, one replaces $\G_m$ by an elliptic curve and thus $\T$-equivariant elliptic cohomology takes values in quasi-coherent sheaves on an elliptic curve. 

There have been several realizations of this basic idea. The first was given by Grojnowski \cite{Groj} in 1995, taking a point $\tau$ in the upper half-plane and the resulting complex elliptic curve $\C/\Z + \tau\Z$ as input and producing a sheaf-valued $G$-equivariant cohomology theory for every compact connected Lie group $G$. This was motivated by applications in geometric representation theory (see \cite{GKV}, and e.g.\ \cite{AganagicOkounkov} \cite{YangZhao} \cite{FRV} for later developments). Another motivation was provided by Miller's suggestion to use $\T$-equivariant elliptic cohomology to reprove the rigidity of the elliptic genus, which was realized by Rosu \cite{Rosu} and Ando--Basterra \cite{AndoBasterra}. We do not want to summarize all work done on equivariant elliptic cohomology, but want to mention Greenlees's work on rational $\T$-equivariant cohomology \cite{GreenleesElliptic}, work of Kitchloo \cite{KitchlooElliptic}, Rezk \cite{RezkElliptic}, Spong \cite{Spong} and Berwick-Evans--Tripathy \cite{BerwickTripathy} in the complex-analytic setting and the work of Devoto \cite{DevotoEquiv} on equivariant elliptic cohomology for finite groups and its applications to moonshine phenomena in \cite{GanterHecke}. 

Many more works could be named, but for us the most relevant work has been done by Lurie in  \cite{Lur07} \cite{LurEllIII}. The starting point is the definition of an elliptic cohomology theory, consisting of an even-periodic ring spectrum $R$, an elliptic curve $\Ef$ over $\pi_0R$, and an isomorphism between the formal group associated with $R$ and that associated with $\Ef$. Lurie refined this to the notion of an \emph{oriented spectral elliptic curve}, consisting of an even-periodic $E_{\infty}$-ring spectrum $R$, an elliptic curve $\Ef$ over $R$ and an equivalence between the formal groups $\Spf R^{\CP^{\infty}}$ and $\widehat{\Ef}$ over $R$. Thus we have moved completely into the land of spectral algebraic geometry, instead of the hybrid definition of an elliptic cohomology theory. This seems to be of key importance in order to obtain integral equivariant elliptic cohomology theories, without restriction to characteristic zero. Following the outline given in \cite{Lur07} and extending work of \cite{LurEllIII} from finite groups to compact Lie groups, we associate to every oriented spectral elliptic curve $\Ef$ a ``globally'' equivariant elliptic cohomology theory.

To make this precise, we work with orbispaces, an $\infty$-category incorporating $G$-equivariant homotopy theory for all compact Lie groups $G$ at once, which was introduced in \cite{GH}. As any topological groupoid or stack (such as the stack $[X/G]$ associated to a $G$-space $X$) determines an orbispace, they are a convenient source category for cohomology theories which are equivariant for all $G$ simultaneously, often called \emph{global cohomology theories}.

We construct from the datum of an oriented elliptic curve $\Ef$ over an $E_{\infty}$-ring $R$, for any orbispace $Y$, a stack $\Ell(Y)$ in the world of spectral algebraic geometry. 
For example, in the case of $\B\T = [\pt/\T]$, the stack $\Ell(\B\T)$ is precisely the spectral elliptic curve $\Ef$ and $\Ell(\B C_n)$ agrees with the $n$-torsion $\Ef[n]$. Moreover, for any compact abelian Lie group $G$ pushforward of the structure sheaf along $\Ell([X/G]) \to \Ell(\B G)$ defines a contravariant functor from (finite) $G$-spaces to quasi-coherent sheaves on $\Ell(\B G)$. In particular, we obtain a functor
\[\EllT\colon \left(\text{finite } \T\text{-spaces}\right)^{\op} \to \QCoh(\Ef). \] 
Thus, we obtain a derived realization of the basic idea of $\T$-equivariant elliptic cohomology sketched above. Taking homotopy groups defines functors $\EllT^{i} = \pi_{-i}\EllT$ to quasi-coherent sheaves on the underlying classical elliptic curve. 
Finally, to push the $\T$-equivariant elliptic cohomology functor
into the realm of ordinary equivariant homotopy theory, we postcompose $\EllT$ with the global sections functor $\Gamma$, to obtain a functor
\[\Gamma\EllT\colon \left(\text{finite } \T\text{-spaces}\right)^{\op} \to \QCoh(\Ef) \overset{\Gamma}\rightarrow\mathrm{Spectra}.\]
We show in \cref{constr:GSpectra} that this functor is representable by a genuine $\T$-spectrum, which we also denote $R$. In particular, $\Gamma\EllT(\pt)$ coincides with the $\T$-fixed points $R^{\B\T}$.\footnote{We use the notation $R^{\B\T}$ for what is more commonly denoted $R^{\T}$, namely the $\T$-fixed point spectrum, as our notation both stresses the importance of the stack $\B\T$ and avoids confusion with the function spectrum $\uMap(\Sigma^{\infty}_+\T, R)$, which we will also have opportunity to use.} This allows us to formulate the main computation of this article, \cref{thm:main}. 
\begin{thm}
Restriction and degree-shifting transfer determine an equivalence of spectra
\[
R\oplus \Sigma R \to R^{\B\T}.
\]
\end{thm}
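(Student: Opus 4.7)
The key reduction is that by the construction of $\EllT$ carried out in the paper, $R^{B\T} \simeq \Gamma\EllT(\pt) \simeq \Gamma(\Ef, \OO_\Ef)$, so the theorem amounts to showing that the natural restriction and transfer maps assemble into an equivalence $R \oplus \Sigma R \to \Gamma(\Ef, \OO_\Ef)$ in $R$-modules. My plan is to define the two maps geometrically, abstractly compute the homotopy groups of the target via a descent spectral sequence for $\pi \colon \Ef \to \Spec R$, and then verify that the combined map induces an isomorphism on homotopy.

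The restriction map $R \to R^{B\T}$ is the unit of the adjunction $\OO_{\Spec R} \to \pi_*\pi^*\OO_{\Spec R} = \pi_*\OO_\Ef$, corresponding to the collapse $B\T \to \pt$ of orbispaces. The degree-shifting transfer $\Sigma R \to R^{B\T}$ comes from the identity section $e \colon \Spec R \to \Ef$: it is the algebro-geometric residue/Gysin map for the codimension-$1$ closed immersion $e$, Thomified using the conormal bundle, which the orientation identifies with $\Sigma^{-2} R$. Functoriality of $\Ell$ on orbispaces then identifies this geometric transfer with the Becker--Gottlieb transfer of the principal $\T$-bundle $\pt \to B\T$, Thomified by the adjoint representation of $\T$, as familiar from equivariant stable homotopy theory.

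For the homotopy computation, the descent spectral sequence for $\pi_*\OO_\Ef$ has $E_2^{s,t} = H^s(\Ef^{\mathrm{cl}}, \pi_t R \otimes \OO_{\Ef^{\mathrm{cl}}})$, which is concentrated in rows $s = 0, 1$ since $\Ef^{\mathrm{cl}}$ has relative dimension $1$. Row $s = 0$ contributes $\pi_n R$ to $\pi_n R^{B\T}$; using that $R^1\pi_*\OO_{\Ef^{\mathrm{cl}}} \cong \omega^{-1}$ for the Hodge bundle $\omega$ of invariant differentials, row $s = 1$ contributes $\pi_{n+1} R \otimes_{\pi_0 R} \omega^{-1}$. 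The orientation identifies $\omega$ with the invertible $R$-module $\Sigma^{-2} R$, so weak even-periodicity of $R$ makes this summand $\pi_{n-1} R$. The total $\pi_n R^{B\T} \cong \pi_n R \oplus \pi_{n-1} R = \pi_n(R \oplus \Sigma R)$ has the correct size, and a direct check that the restriction and the transfer hit the generators of the two rows of $E_\infty$ completes the proof.

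The main technical obstacle is the identification of the algebraic Gysin/residue map with the equivariant Becker--Gottlieb transfer: making this precise requires leveraging the representability result established earlier in the paper, together with a careful comparison of the orientation-induced Thom class on the conormal bundle of $e$ with the Thom class of the adjoint representation in equivariant stable homotopy theory. A secondary subtlety is handling potential extensions in the descent spectral sequence, but since we have an explicit candidate splitting from restriction and transfer, this reduces to the homotopy-level check above.
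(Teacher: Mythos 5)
Your reduction to $R^{\B\T}\simeq\Gamma(\OO_{\Ef})$ and your $E_2$-computation are correct: the descent spectral sequence is concentrated in rows $s=0,1$, with $E_2^{1,n+1}\cong\pi_{n+1}R\otimes\omega^{-1}\cong\pi_{n-1}R$ via the orientation, and you are right that an explicit pair of candidate splitting maps disposes of the extension problem. The genuine gap is the step you defer twice: first the identification of the equivariantly defined degree-shifting transfer with an algebro-geometric Gysin map along the unit section $e$, and then the ``direct check'' that this map surjects onto the $s=1$ row of $E_\infty$. The second is not a direct check at all --- detecting a generator of $H^1(\Ef_0,\OO_{\Ef_0})\cong\omega^{-1}$ in the image of the transfer amounts to a residue/Serre-duality statement on the spectral elliptic curve, and you supply no mechanism for it. The route you sketch for the first step (Becker--Gottlieb transfers and a comparison of the Thom class of the conormal bundle of $e$ with that of the adjoint representation of $\T$) is likewise not established anywhere and would be a substantial project in its own right; the restriction map is the easy half, the transfer is where all the content sits.

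The paper's proof shows how both difficulties dissolve at once. Since $\EllT$ is symmetric monoidal on finite $\T$-spectra (\cref{thm:monodality} together with the factorization through $\Sp^{\T,\omega}$), it preserves Spanier--Whitehead duals; applying it to the Wirthm{\"u}ller-dualized cofiber sequence $\Sigma^{-1}\Sigma^{\infty}\T_+\leftarrow\mathbb{S}\leftarrow\Sigma^{\infty}S^{-\rho}$ and taking global sections yields a cofiber sequence $\Sigma R\to R^{\B\T}\to\Gamma(\OO_{\Ef}(e))$ whose first map \emph{is} the degree-shifting transfer essentially by definition, with no Thom-class comparison required. One then shows that the composite $R\to R^{\B\T}\to\Gamma(\OO_{\Ef}(e))$ is an equivalence using Deligne's classical facts $p_*\OO_{\Ef_0}(e_0)\cong\OO_{\Mf_0}$ and $R^1p_*\OO_{\Ef_0}(e_0)=0$ together with flatness and the degeneration of the relative descent spectral sequence; the cofiber sequence then splits formally, and no detection statement about the transfer on $H^1$ is ever needed. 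If you want to rescue your argument, replace the ``direct check'' for the transfer by exactly this computation of the cofiber of the transfer --- at which point you will have reconstructed the paper's proof.
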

Instead of an oriented spectral elliptic curve over an affine spectral scheme, we might also directly work with the universal oriented elliptic curve. The associated equivariant theory has as underlying spectrum $\mathrm{TMF}$, the Goerss--Hopkins--Miller spectrum of topological modular forms. Thus we may speak of a genuine $\T$-spectrum $\mathrm{TMF}$ (and likewise of a genuine $G$-spectrum $\mathrm{TMF}$ for all compact abelian Lie groups $G$) and easily deduce from our main theorem the following corollaries: 
\begin{cor}
Restriction and degree-shifting transfers determine an equivalence of spectra
\[
\mathrm{TMF}\oplus \Sigma \mathrm{TMF} \to \mathrm{TMF}^{\B\T}.
\]
This coincides with the ring spectrum of global sections of the structure sheaf of the universal oriented spectral elliptic curve. 
In particular, the reduced theory
$$
\widetilde{\mathrm{TMF}}(\B\T)\simeq\fib(\mathrm{TMF}^{\B\T} \to \mathrm{TMF})\simeq\Sigma\mathrm{TMF}
$$
is an invertible $\mathrm{TMF}$-module.
\end{cor}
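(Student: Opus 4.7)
The plan is to reduce to a computation of the spectrum of derived global sections $\Gamma(\Ef, \OO_\Ef)$ of the oriented spectral elliptic curve, and then identify the resulting splitting with the topologically defined restriction and transfer maps. The corollary will then follow by specializing to the universal oriented elliptic curve.

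By the results already established in the paper, one has $R^{\B\T} \simeq \Gamma\EllT(\pt) \simeq \Gamma(\Ef, \OO_\Ef)$, since $\EllT(\pt) = \OO_\Ef$ by construction. Since $\Ef$ is a proper spectral elliptic curve of relative dimension $1$ over $\Spec R$, the descent spectral sequence
\[
E_2^{p,q} = H^p(\Ef_0, \pi_q \OO_\Ef) \Longrightarrow \pi_{q-p}\Gamma(\Ef, \OO_\Ef)
\]
is concentrated in columns $p = 0, 1$, where $\Ef_0$ denotes the underlying classical elliptic curve. The $p=0$ column contributes $\pi_* R$, because $\Ef$ is geometrically connected over $\Spec R$. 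For $p = 1$, Serre duality identifies $H^1(\Ef_0, \OO_{\Ef_0})$ with $\omega_{\Ef}^{-1}$, a free $\pi_0 R$-module of rank one, and the orientation of $\Ef$ identifies $\omega_\Ef$ with $\pi_2 R$; tracing degrees, this column contributes exactly a single copy of $\pi_{*-1} R$. Freeness of all modules in sight forces the spectral sequence to collapse and produces a (non-canonical) equivalence $\Gamma(\Ef, \OO_\Ef) \simeq R \oplus \Sigma R$ of $R$-modules.

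To identify this equivalence with restriction and degree-shifting transfer, the first summand is visibly the structural pullback $R \to \Gamma(\Ef, \OO_\Ef)$ along the structure map $\Ef \to \Spec R$, which is the restriction. For the second summand, the algebro-geometric counterpart of the degree-shifting transfer is the dual of the Grothendieck--Serre trace for the proper morphism $\Ef \to \Spec R$: dualizing the trace $f_*\omega_\Ef[1] \to R$ and using the orientation's trivialization of $\omega_\Ef$ yields a natural map $\Sigma R \to \Gamma(\Ef, \OO_\Ef)$. Matching this with the topological $\T$-equivariant transfer amounts to unwinding the functor $\EllT$ applied to the free $\T$-orbit $\T \to \pt$, using the identification $\Ell([\T/\T]) \simeq \Spec R$ and checking compatibility with the identity section $e\colon \Spec R \to \Ef$.

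The principal obstacle is this last identification of the algebro-geometric trace with the topological transfer: the existence of the algebraic splitting is essentially forced by the geometry of elliptic curves together with the orientation, but the comparison with the Becker--Gottlieb transfer along $\T \to \pt$ requires a careful analysis of how the construction of $\EllT$ interacts with the equivariant transfers built into the theory of $\T$-spectra. Granted the main theorem, the corollary for $\mathrm{TMF}$ is immediate: applying the theorem functorially over the spectral moduli stack of oriented elliptic curves gives the equivalence $\mathrm{TMF} \oplus \Sigma\mathrm{TMF} \simeq \mathrm{TMF}^{\B\T}$ together with the identification as global sections; the reduced theory statement follows because under the splitting the restriction to the identity section is the projection onto the first factor, and $\Sigma\mathrm{TMF}$ is manifestly an invertible $\mathrm{TMF}$-module.
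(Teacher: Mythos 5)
The corollary itself is, in the paper, an immediate specialization of \cref{thm:main} to the universal oriented elliptic curve over $\Mf_{\Ell}^{or}$ (where $R=\Gamma(\OO^{top})=\mathrm{TMF}$ by definition), so the real content lies in the main theorem, which your proposal attempts to re-derive. Your re-derivation has a genuine gap, and it sits exactly where you flag it: you produce only an abstract, non-canonical additive splitting $\Gamma(\OO_{\Ef})\simeq R\oplus\Sigma R$ from a descent spectral sequence, and you never show that the degree-shifting transfer realizes the $\Sigma R$ summand. Since the statement to be proved is precisely that \emph{these particular maps} are an equivalence, deferring that identification to ``a careful analysis of how $\EllT$ interacts with equivariant transfers'' leaves the theorem unproved. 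The paper's argument makes this identification automatic rather than something to be checked afterwards: one applies $\EllT$ (symmetric monoidal on finite $\T$-spectra by \cref{thm:monodality}, hence dual-preserving) to the Wirthm\"uller dual of the cofiber sequence $\Sigma^{\infty}\T_+\to\mathbb{S}\to\Sigma^{\infty}S^{\rho}$, obtaining a cofiber sequence $\Sigma R\to R^{\B\T}\to\Gamma(\OO_{\Ef}(e))$ whose first map \emph{is} the degree-shifting transfer by construction; then Deligne's computation ($p_*\OO_{\Ef_0}(e_0)\cong\OO_{\Mf_0}$ and $R^1p_*\OO_{\Ef_0}(e_0)=0$), fed into the \emph{relative} descent spectral sequence for $p_*\OO_{\Ef}(e)$, identifies the third term with $R$ via restriction, so the sequence splits.

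A secondary problem is that your descent-spectral-sequence computation is carried out as if the base were affine with even-periodic homotopy, whereas the corollary concerns the universal curve over the non-affine moduli stack. There the absolute descent spectral sequence for $\Gamma(\Ef,\OO_{\Ef})$ is not concentrated in two columns of free modules: the groups $H^*(\Mf_{\Ell},\omega^{\otimes *})$ contribute torsion and higher cohomology, so ``freeness forces collapse'' fails, and one cannot produce the map $\Sigma\mathrm{TMF}\to\mathrm{TMF}^{\B\T}$ by choosing a homotopy class (indeed $H^0(\Mf_{\Ell},\omega^{-1})=0$, so the would-be generator is not visible as a global section on the $E_2$-page in the naive way). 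The transfer is exactly what supplies the required global $\mathrm{TMF}$-module map; without it your affine-local splittings do not obviously assemble. This is why the paper reduces the universal case to a \emph{relative} statement over $\Mf_{\Ell}^{or}$ rather than running an absolute spectral sequence.
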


\begin{cor}
Restriction and degree-shifting transfers determine an equivalence of spectra 
\[
\bigoplus_{S \subset \{1,\dots, n\}} \Sigma^{|S|} \mathrm{TMF} \to \mathrm{TMF}^{\BB\T^n}.
\]
\end{cor}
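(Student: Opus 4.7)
My plan is to argue by induction on $n$, with the base case $n=1$ being the preceding corollary. For the inductive step, I would exploit the product decomposition $\T^n \cong \T \times \T^{n-1}$. This gives a decomposition of orbispaces $\B\T^n \simeq \B\T \times \B\T^{n-1}$, and in view of the paper's construction the stack $\Ell(\B\T^n)$ is identified with the fiber product $\Ell(\B\T) \times_{\Spec R} \Ell(\B\T^{n-1})$; iterating, this becomes $\Ef^{\times_R n}$ over the base $R$ of the universal oriented spectral elliptic curve. On the spectrum side, this product decomposition translates into an iterated fixed-point identification
\[
\mathrm{TMF}^{\B\T^n} \simeq \bigl(\mathrm{TMF}^{\B\T^{n-1}}\bigr)^{\B\T},
\]
where the remaining $\T$ acts on the inner fixed-point spectrum through its own factor of the product.

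Next, I would apply the inductive hypothesis to rewrite $\mathrm{TMF}^{\B\T^{n-1}}$ as $\bigoplus_{S \subset \{2,\ldots,n\}} \Sigma^{|S|} \mathrm{TMF}$ via restriction and degree-shifting transfers. Since finite direct sums are preserved by the fixed-point functor $(-)^{\B\T}$ (being both limits and colimits), and since the remaining $\T$ acts on each summand through the global $\mathrm{TMF}$ structure so that the preceding corollary applies verbatim to each $\Sigma^{|S|}\mathrm{TMF}$, I obtain
\[
\bigl(\mathrm{TMF}^{\B\T^{n-1}}\bigr)^{\B\T} \simeq \bigoplus_{S \subset \{2,\ldots,n\}} \Sigma^{|S|} (\mathrm{TMF} \oplus \Sigma \mathrm{TMF}) \simeq \bigoplus_{S' \subset \{1,\ldots,n\}} \Sigma^{|S'|} \mathrm{TMF}.
\]
Naturality of restriction and transfer with respect to the product decomposition $\T^n = \T \times \T^{n-1}$ then ensures that the resulting equivalence is, as claimed, assembled from composites of restrictions and degree-shifting transfers.

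The main obstacle is verifying that the $\T$-equivariant structure on each summand $\Sigma^{|S|}\mathrm{TMF}$ of $\mathrm{TMF}^{\B\T^{n-1}}$ is precisely the one for which the previous corollary applies, i.e.\ the structure inherited from the global $\T^n$-equivariant $\mathrm{TMF}$. This is cleanest to see in the sheaf-theoretic picture underlying the main construction: combining the identification $\mathrm{TMF}^{\B\T^n} \simeq \Gamma(\Ef^{\times_R n}, \OO)$ with flatness of $\Ef$ over $R$ yields a K\"unneth isomorphism
\[
\Gamma(\Ef^{\times_R n}, \OO) \simeq \Gamma(\Ef, \OO)^{\otimes_R n},
\]
and by the preceding corollary $\Gamma(\Ef, \OO) \simeq \mathrm{TMF} \oplus \Sigma \mathrm{TMF}$ as $\mathrm{TMF}$-modules. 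Expanding the $n$-fold tensor product gives the claimed direct sum indexed by subsets of $\{1,\ldots,n\}$, and tracing through the identifications recovers the description of the equivalence in terms of iterated restrictions and degree-shifting transfers.
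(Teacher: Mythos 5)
Your overall strategy (induction on $n$, peeling off one circle factor at a time) matches the paper's, and your answer is right, but both routes you offer for the inductive step differ from the paper's, and the first one has exactly the gap you flag yourself. The paper's inductive step is a one-liner that sidesteps both difficulties: since \cref{thm:main} was proved for an arbitrary oriented elliptic curve over an arbitrary non-connective spectral Deligne--Mumford stack, one simply applies it to the pullback $\Ef^{\times n}\to\Ef^{\times(n-1)}$ of $\Ef\to\Mf$ along $\Ef^{\times(n-1)}\to\Mf$, obtaining
\[
\Gamma(\OO_{\Ef^{\times n}})\simeq\Gamma(\OO_{\Ef^{\times(n-1)}})\oplus\Sigma\Gamma(\OO_{\Ef^{\times(n-1)}})
\]
in one stroke, with the two maps being restriction and degree-shifting transfer for the last circle factor; there is no need to first split $\mathrm{TMF}^{\B\T^{n-1}}$ into summands and then check that the remaining $\T$-action respects the splitting. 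Your first route, as written, does need that check: the equivalence $\mathrm{TMF}^{\B\T^{n-1}}\simeq\bigoplus_S\Sigma^{|S|}\mathrm{TMF}$ supplied by the inductive hypothesis is a priori only an equivalence of spectra, not of $\T$-spectra, so ``the preceding corollary applies verbatim to each summand'' is not justified as stated. Your K\"unneth route does repair this and is a legitimate alternative, but ``flatness of $\Ef$ over $R$'' is not by itself enough over the non-affine base $\Mf_{\Ell}^{or}$, since global sections do not commute with tensor products in general. What makes it work is that flat base change and the projection formula give $q_*\OO_{\Ef^{\times n}}\simeq(p_*\OO_{\Ef})^{\otimes_{\OO_{\Mf}}n}$ for $q\colon\Ef^{\times n}\to\Mf$, combined with the sheaf-level splitting $p_*\OO_{\Ef}\simeq\OO_{\Mf}\oplus\Sigma\OO_{\Mf}$ extracted from the proof of \cref{thm:main}; the tensor power is then a finite sum of shifts of $\OO_{\Mf}$, with which $\Gamma$ does commute. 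With that supplied your argument closes up, though the paper's base-change trick is shorter and keeps the identification of the maps as restrictions and transfers completely transparent.
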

Note that this in particular amounts to a computation of the graded ring $\mathrm{TMF}_{\T^n}^*(\pt)$.
The previous corollary also implies a dualizability property for equivariant $\mathrm{TMF}$, which is in stark contrast with the situation for K-theory, where $\pi_0KU^G = R(G)$ has infinite rank over $\pi_0KU = \Z$ if $G$ is a compact Lie group of positive dimension. More precisely, we establish the following result as \cref{cor:dualizable}:
\begin{cor}
For every compact abelian Lie group $G$, the $G$-fixed points $\mathrm{TMF}^{\B G}$ are a dualizable $\mathrm{TMF}$-module. 
\end{cor}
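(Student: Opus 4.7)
The plan is to reduce the general case to the torus case (already handled by the preceding corollary) and to the case of a finite abelian group, then combine these via a Künneth-type formula. By the structure theorem for compact abelian Lie groups, one may write $G \cong \T^n \times A$ with $A$ a finite abelian group; correspondingly, $\B G \simeq \B\T^n \times \B A$ as orbispaces, and the functor $\Ell$ is compatible with products in the sense that $\Ell(Y_1 \times Y_2) \simeq \Ell(Y_1) \times_{\mathcal{M}} \Ell(Y_2)$, where $\mathcal{M}$ denotes the base of the universal oriented spectral elliptic curve (so that $\mathrm{TMF} \simeq \Gamma(\mathcal{M}, \mathcal{O}_{\mathcal{M}})$).

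First I would treat the finite abelian case. Writing $A \cong C_{n_1} \times \cdots \times C_{n_k}$ and iterating the product formula together with the identification $\Ell(\B C_n) \simeq \Ef[n]$, one obtains
\[
\Ell(\B A) \simeq \Ef[n_1] \times_{\mathcal{M}} \cdots \times_{\mathcal{M}} \Ef[n_k].
\]
Since each $\Ef[n_i] \to \mathcal{M}$ is a finite flat group scheme (being the $n_i$-torsion subgroup scheme in a smooth elliptic curve), so is the fiber product; hence $\mathrm{TMF}^{\B A} \simeq \Gamma(\Ell(\B A), \mathcal{O})$ is finite locally free over $\mathrm{TMF}$, in particular dualizable.

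To deduce the general case, I would combine this with the preceding corollary. Flat base change along the finite flat morphism $\Ell(\B A) \to \mathcal{M}$, together with the projection formula, yields a Künneth-type decomposition
\[
\mathrm{TMF}^{\B G} \simeq \mathrm{TMF}^{\B\T^n} \otimes_{\mathrm{TMF}} \mathrm{TMF}^{\B A},
\]
and the tensor product of two dualizable $\mathrm{TMF}$-modules is again dualizable, completing the argument.

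The main technical obstacle I anticipate lies in rigorously executing the Künneth step in the spectral algebro-geometric setting: one needs the product formula for $\Ell$ together with flat base change and the projection formula for the structure sheaf pushforward along $\Ell(\B A) \to \mathcal{M}$. Both should be available from the general theory on which the construction of $\Ell$ relies, but they are the nontrivial ingredients that go beyond merely quoting the two preceding corollaries.
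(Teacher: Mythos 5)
Your proposal is correct and follows essentially the same route as the paper: split the group as (finite abelian)\,$\times$\,(torus), handle the torus factor via the preceding corollary (the paper gets your K\"unneth decomposition by relativizing \cref{cor:torus} over $\Hom(\widecheck{A},\Ef)$, which yields $\mathrm{TMF}^{\B(A\times\T^n)}$ as a finite sum of suspensions of $\mathrm{TMF}^{\B A}$), and reduce the finite factor to the finiteness and flatness of $\Hom(\widecheck{A},\Ef_0)\to\Mf_{\Ell}$. The only point you gloss is that finite flatness over the \emph{non-affine} stack $\Mf_{\Ell}^{or}$ does not immediately give that the global sections are finite locally free over $\mathrm{TMF}$; the paper closes this gap by arguing as in \cite[Proposition 2.13]{MeierLevel} via a descent spectral sequence.
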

We conjecture that the corresponding statement is true for all compact Lie groups. For finite groups, this is a consequence of Lurie's Tempered Ambidexterity Theorem \cite{LurEllIII} in his setting of equivariant elliptic cohomology. 

\subsection{The construction of $\Ell$}
Equivariant K-theory defines a contravariant functor, associating to each finite $G$-CW complex $X$ an $E_{\infty}$-ring spectrum $KU_G(X)$, based on the theory $G$-vector bundles. Moving to spectral algebraic geometry, one can consider the covariant functor, sending $X$ to $\Spec KU_G(X)$. To make the functoriality with respect to $G$ more transparent, one can also use the $\infty$-category $\Spc_{\Orb}$ of orbispaces as a source of this functor. The $\infty$-category $\Spc_{\Orb}$ is defined as space-valued presheaves on the global orbit $\infty$-category $\Orb$ with objects $\B G$ for every compact Lie group $G$. We will give a precise definition in \cref{sec:orbispaces}.

Constructing $\Ell$, there are several differences: 
\begin{enumerate}
    \item our construction is based on the choice of an oriented spectral elliptic curve $\Ef$ over some non-connective spectral Deligne--Mumford stack $S$; 
    \item since we associate to the $\T$-space $\pt$ the elliptic curve $\Ef$ itself (which is not affine), we can only work with the covariant picture;
    \item most importantly, we do not know of a geometric construction of $\Ell$. 
\end{enumerate}

As the target of $\Ell$, we can choose any cocomplete  $\infty$-category $\XX$ with a functor from $S$-schemes. In this paper, we will choose $\XX$ to be an $\infty$-topos of sheaves. 

To remedy the lack of a direct geometric construction of $\Ell$, we will construct it in steps: 
\begin{enumerate}
    \item We first define $\Ell$ on $\Orb^{\ab} \subset \Orb$, the full subcategory on $\B G$ for $G$ abelian. This is essentially done by the formula $\Ell(\B G) = \Hom(\widehat{G}, \Ef)$, where $\widehat{G}$ is the Pontryagin dual of $G$. To obtain the correct functoriality, we exploit the preorientation of $\Ef$ using categorified Pontryagin duality, as explained in \cref{sec:Abelian} and especially \cref{con:PicardDuality}. 
    \item We left Kan extend the functor $\Orb^{\ab} \to \XX$ first to $\Orb$ and then to $\Spc_{\Orb} = \PP(\Orb)$. 
\end{enumerate}
While Kan extending from $\Orb$ to $\Spc_{\Orb}$ is dictated by our wish that $\Ell$ preserves colimits, just Kan extending from $\Orb^{\ab}$ to $\Orb$ may be more suprising: we let abelian information dictate non-abelian information. The reasons are essentially twofold:
\begin{enumerate}
    \item If we do the same in the case of K-theory, we exactly recover equivariant K-theory. In some incarnation, this result goes at least back to \cite{AdamsHaeberlyJackowskiMay} and we will discuss it in more detail in \cref{sec:KTheory}. 
    \item The resulting theory for non-abelian groups of equivariance has many good properties. This was already outlined in \cite{Lur07} and will also be established in a sequel to this paper \cite{GepnerMeierII}. 
\end{enumerate}

\subsection{Outline of the paper}
We start with a section about orbispaces. We construct the $\infty$-category of orbispaces $\Spc_{\Orb} = \PP(\Orb)$ as presheaves on the global orbit category $\Orb$, whose objects are the classifying stacks $\B G$ for compact Lie groups $G$. Our treatment of this is $\infty$-categorical and mostly self-contained. 

\cref{sec:Abelian} is devoted to a general framework for constructing global cohomology theories using a preoriented abelian group object in a nice $\i$-category $\XX$.
This is strongly inspired by the sketch provided in \cite{Lur07}. More precisely, we associate to a preoriented abelian group object a functor $\Spc_{\Orb} \to \XX$. 

In \cref{sec:KTheory}, we establish that the construction from \cref{sec:Abelian} recovers equivariant K-theory when applied to the strict multiplicative group. 

In \cref{sec:Orientations}, we introduce our main example of a nice $\infty$-category $\XX$, namely the $\infty$-category $\Shv(\Mf)$ of sheaves on a given non-connective spectral Deligne--Mumford stack $\Mf$. Moreover, we give a precise definition of an orientation of an elliptic curve over such an $\Mf$. While not necessary for the definition of equivariant elliptic cohomology, it is crucial for its finer properties. 

The actual construction of equivariant elliptic cohomology is given in \cref{sec:EquivariantElliptic}. From a preoriented elliptic curve, we both construct a functor $\Ell\colon \Spc_{\Orb}\to \Shv(\Mf)$ and variants taking values in quasi-coherent sheaves. 

In \cref{sec:EllCohSymMon} we show that the resulting functor
\[\EllT\colon \left(\text{finite }\T\text{-spaces}\right)^{\op} \to \QCoh(\Ef) \]
is symmetric monoidal. This is based on the orientation $\widehat{\Ef} \simeq \Spf R^{B\T}$ (if $\Mf = \Spec R$). A crucial ingredient is the study of fiber products of the form $\Spf R^{BC_n} \times_{\Spf R^{B\T}} \Spf R^{BC_m}$. This is done in the equivalent setting of coalgebras and their cospectra in \cref{app:EM}, employing essentially the convergence of the Eilenberg--Moore spectral sequence. 

Using the symmetric monoidality and the universal property of $G$-spectra as inverting representation spheres (see \cref{app:GSpectra}), we show in \cref{sec:ellipticspectra} that $\EllT$ factors over finite $\T$-spectra. This allows us to use the Wirthm{\"u}ller isomorphism calculating the dual of $\Sigma^{\infty}\T_+$ to compute the dual of $\EllT(\T)$, which is the key ingredient in the proof of our main theorem in \cref{sec:computation}. Note that while by construction $\Gamma\EllT(\pt)$ coincides with global sections of $\OO_{\Ef}$ and this is an object purely in spectral algebraic geometry, we are able to use $\T$-equivariant homotopy theory for its computation.

We end with four appendices. \cref{app:q} discusses quotient $\i$-categories, which are used in our treatment of orbispaces. \cref{app:GlobalOrbitModels} compares our treatment of the global orbit category $\Orb$ with the original treatment in \cite{GH}.   \cref{app:GSpectra} gives an $\i$-categorical construction of (genuine) $G$-spectra for arbitrary compact Lie groups $G$ and compares it to orthogonal $G$-spectra, and uses work of Robalo \cite{RobaloBridge} to establish a universal property of the $\i$-category of $G$-spectra. \cref{app:SAG} establishes some statements in spectral algebraic geometry that are relevant for working with the big \'etale site. 

\subsection*{Conventions}\label{Conventions}
In general, we will freely use the terminology of $\infty$-categories and spectral algebraic geometry, for which we refer to the book series \cite{HTT}, \cite{HA} and \cite{SAG}. In particular, an $\infty$-category will be for us a quasicategory. One difference though is that we will assume all (non-connective) spectral Deligne--Mumford stacks to be locally noetherian, i.e.\ they are \'etale locally of the form $\Spec A$ with $\pi_0A$ noetherian and $\pi_iA$ a finitely generated $\pi_0A$-module for $i\geq 0$. Moreover, we assume (non-connective) spectral Deligne--Mumford stacks to be quasi-separated, i.e.\ the fiber product of any two affines over such a stack is quasi-compact again, and that all iterated diagonals are quasi-separated (i.e.\ $n$-quasi-separated in the sense of \cref{def:nqsep} for all $n\geq 1$). As discussed in \cref{app:SAG}, these are mild conditions, which are usually satisfied in practice and form the natural higher analogue of the quasi-separated-condition in classical algebraic geometry.

If we write $\Mf = (\MM, \OO_{\Mf})$, then $\MM$ denotes the underlying $\infty$-topos of the (non-connective) spectral Deligne--Mumford stack $\Mf$ and $\OO_{\Mf}$ its structure sheaf. In contrast, $\Shv(\Mf)$ will denote the sheaves on the big \'etale site (see \cref{def:Shv}). 

We use $\Spc$ as notation for the $\infty$-category of spaces and $\Sp$ for the $\infty$-category of spectra. Likewise, we use $\Spc^G$ for the $\infty$-category of $G$-spaces and $\Sp^G$ for the $\infty$-category of genuine $G$-spectra (see \cref{app:GSpectra} for details). We use the notation $E^{\B H}$ for the $H$-fixed points of a $G$-spectrum $E$ for $H\subset G$. We will use $\PP$ for $\Spc$-valued presheaves. When speaking about topological spaces, we assume them to be compactly generated and weak Hausdorff.

We will generally follow the convention that $\Map$ denotes mapping spaces (or mapping groupoids), while $\uMap$ denotes internal mapping objects; depending on the context, these might be mapping spectra or mapping (topological) groupoids. 
This choice of notation emphasizes the fact that we always work $\infty$-categorically, and that all limits and colimits are formed in the appropriate $\infty$-category. Moreover, we use $|X_{\bullet}|$ as a shorthand for $\colim_{\Delta^{\op}}X_{\bullet}$. Regarding other special limits, we often write $\pt$ for the terminal object of an $\infty$-category. Moreover, we recall that every cocomplete $\i$-category is tensored over $\Spc$ and we use the symbol $\tensor$ to refer to this tensoring.

\subsection*{Acknowledgments}
Despite lacking firm foundations until more 
recently, equivariant elliptic cohomology is by now an old and diverse subject, going back 
to the 1980s and admitting applications across much of modern mathematics and physics.
Many people have contributed to the subject and their contributions are too numerous to name. But
we'd like to thank Matthew Ando, John Greenlees, and Jacob Lurie in particular for shaping our thinking about equivariant elliptic cohomology. We furthermore thank Thomas Nikolaus for his input at the beginning of this project, Viktoriya Ozornova and Daniel Sch{\"a}ppi for useful conversations about categorical questions and Bastiaan Cnossen and Stefan Schwede for providing references. We thank Bastiaan Cnossen and Sil Linskens for catching a mistake about orbispaces. Lastly, we thank the referee for their careful reading and insightful comments. 

The authors would like to thank the Isaac Newton Institute for Mathematical Sciences for support and hospitality during the programme ``Homotopy harnessing higher structures'' when work on this paper was undertaken. This work was supported by EPSRC grant number EP/R014604/1.
The authors would like to thank the Mathematical Sciences Research Institute for providing an inspiring working environment during the program ``Higher categories and categorification''. Lastly, we want to thank the Hausdorff Research Institute for Mathematics in Bonn for their excellent working conditions.

\section{Orbispaces}\label{sec:orbispaces}
In this section, we will introduce the particular framework of global unstable homotopy theory we will work in: orbispaces. These were first introduced in \cite{GH}, but we choose to give a (mostly) self-contained and $\infty$-categorical treatment. While philosophically our approach is similar to \cite{GH}, there are certain technical differences and we refer to \cref{app:GlobalOrbitModels} for a precise comparison. A different approach is taken in \cite{SchGlobal}, where unstable global homotopy theory is based on orthogonal spaces, which is in the same spirit as using orthogonal spectra to model stable global homotopy theory. The orbispace approach of \cite{GH} and the orthogonal space approach have been shown to be equivalent in \cite{SchwedeOrbi} and \cite{KorOrbi}. Another valuable source on orbispaces are \cite{RezkGlobal} and \cite{LinskensNardinPol}, though beware that they use the term `global spaces' for our orbispaces. 

We will construct the $\i$-category $\Spc_{\Orb}$ of orbispaces in a three step process. We will first define an $\infty$-category $\Stk_{\infty}$ of topological stacks, then define $\Orb$ as the full subcategory on the orbits $[\pt/G]$ for $G$ compact Lie and lastly define $\Spc_{\Orb}$ as presheaves on $\Orb$. This requires first recalling basic concepts about topological stacks, which we will do next. 

We write $\mathrm{Top}$ for the cartesian closed category of compactly generated weak Hausdorff topological spaces.
We will also be interested in the cartesian closed $2$-category $\TopGpd$ of topological groupoids, and we will typically write $X_\bullet$, $Y_\bullet$,... for topological groupoids, viewed as simplicial topological spaces.\footnote{The topological groupoid $\underline{\Map}_{\TopGpd}(Y_\bullet,X_\bullet)$ of maps has as its objects space the space of enriched functors and as morphism space the space of enriched natural transformations. Their definition is analogous to that for classical groupoids, e.g.\ the space of enriched functors is the evident subspace of the product of $\underline{\Map}_{\Top}(Y_0,X_0)$ and $\underline{\Map}_{\Top}(Y_1,X_1)$. Here the mapping spaces are equipped with the compact-open topology. This is a special case of the cartesian closedness of internal categories \cite{nlab:internal_category}.}
Since we will also consider topological stacks, we will additionally need to equip the category $\Top$ with a Grothendieck topology. While it is customary to define the covering sieves using open sets, one obtains the same topology using \'etale covers instead, as in \cite{GH}.

We write $\mathrm{Gpd}$ for the $2$-category of groupoids. This is an example of a $(2,1)$-category, i.e.\ a $2$-category whose $2$-morphisms are invertible. We will view $(2,1)$-categories implicitly as $\infty$-categories via their Duskin nerve and refer to \cite[Appendix A]{GHN} and \cite[Section 2.3 009P]{kerodon} for details. More generally, a weak $(2,1)$-category can be viewed as an $\infty$-category via its Duskin nerve, and an $\i$-category arises in this way if and only if it is a $2$-category in the sense of \cite[Section 2.3.4]{HTT}; that is, the inner horn liftings are unique in dimensions greater than $2$ \cite[Theorem 8.6]{Duskin}.

For us, a {\em topological stack} will mean a sheaf of groupoids on $\mathrm{Top}$, in the sense of higher category theory.\footnote{Often, \emph{topological stacks} only refers to those stacks that admit an atlas, i.e.\ those that can be represented by a topological groupoid. While these examples are all we care about, we find it unnecessary to restrict to these from the outset.}
In order to avoid universe issues we must index our stacks on a small full subcategory of $\Top'\subset\Top$. We suppose that it contains up to homeomorphism all countable CW-complexes and furthermore that it is closed under finite products and subspaces. 
The particular choice of indexing category is not very relevant for our purposes. We will assume in doubt that the object and morphism space of every topological groupoid is in $\Top'$.

\begin{definition}
A \emph{topological stack} is a functor $X\colon\Top'^{\op}\to\mathrm{Gpd}$ which satisfies the sheaf condition;
that is, for all $T\in \Top'$ and all coverings $p\colon U\to T$ in $\Top'$, the canonical map
\[
\xymatrix{
X(T)\ar[r] & \lim \{ X(U)\ar@<.5ex>[r]\ar@<-.5ex>[r] & X(U\times_T U)\ar@<1ex>[r]\ar[r]\ar@<-1ex>[r] & X(U\times_T U\times_T U)\}
}
\]
is an equivalence.
The $2$-category of stacks is the full subcategory
\[
\Stk\subset\Pre\Stk=\Fun(\Top'^{\op},\mathrm{Gpd})
\]
of the $2$-category of prestacks (that is, presheaves of groupoids on topological spaces).
\end{definition}

Our actual interest is in an $\i$-categorical quotient of the $2$-category of topological stacks, which we denote $\Stk_\i$ and refer to as the $\i$-category of ``stacks modulo homotopy''.
The idea is to view the $2$-category of topological stacks as enriched over itself via its cartesian closed (symmetric) monoidal structure, and then to use the realization functor $|-|\colon\Stk\to\SS$ to change enrichment (see \cite{GH} for a more precise formulation along this line).
What this essentially amounts to is imposing a homotopy relation upon the mapping spaces, which motivates our implementation of this idea. 
\begin{definition}\label{def:Stki}
The $\i$-category $\Stk_\i$ (respectively, $\Pre\Stk_\i$) is the quotient of the $2$-category $\Stk$ of topological stacks (respectively, the $2$-category $\Pre\Stk$ of topological prestacks) induced by the action of the standard cosimplicial simplex $\Delta_{\Top}^\bullet\colon\Delta\to\Top$.\footnote{Here and in the following we regard topological spaces as topological stacks via their representable functors.}
Heuristically, this $\i$-category has the same objects, with mapping spaces
\[
\Map(X,Y)\simeq |\Map_{\PreStk}(X\times \Delta_{\Top}^{\bullet},Y)|.
\]
Composition uses that geometric realizations commute with finite products and the diagonal map $\Delta^{\bullet}_{\Top}\to{\Delta^\bullet}_{\Top}\times \Delta^{\bullet}_{\Top}$.
See \cref{con:cosimplicialquotient} for details of this construction.
\end{definition}

\begin{remark}
The definition is motivated by the following observation: If we take the quotient of the $1$-category of CW-complexes by the action of the cosimplicial simplex $\Delta_{\Top}^\bullet\colon\Delta\to\Top$, we obtain the $\infty$-category $\Spc$ of spaces. Taking homwise $\Sing$ of the topological category of CW-complexes, we obtain a simplicial category $\CC$, which is equivalent to the simplicial category of Kan complexes. Moreover, the quotient category we are considering is by definition $\colim_{\Delta^{\op}} N\CC_{\bullet}$, where $\CC_n$ is the category obtained from taking the $n$-simplices in each mapping space and the colimit is taken in $\Cat_{\i}$. That this agrees up to equivalence with $N^{\coh}\CC \simeq \Spc$ can be shown analogously to \cref{cor:hocolim}. 
\end{remark}

\begin{remark}
The assignment $X \mapsto \Map_{\Stk_{\i}}(\pt, X)$ is one way to assign a homotopy type to a topological stack. For alternative treatments we refer to \cite{GH}, \cite{Noohi} and \cite{EbertHomotopy}; geometric applications have been given in \cite{Carchedi}. Given a presentation of a topological stack by a topological groupoid, all of these sources show the equivalence of the homotopy type of the stack with the homotopy type of the topological groupoid. As we will do so as well in \cref{prop:HomotopyType}, our approach is compatible with the cited sources. 
\end{remark}
An especially important class of stacks for us will be orbit stacks. 

\begin{definition}
A topological stack $X$ is an {\em orbit} if there exists a compact Lie group $G$ and an equivalence of topological stacks between $X$ and the stack quotient $[\pt/G]$. 
\end{definition}
\begin{remark}
Given a compact Lie group $G$, we write $\BB G$ for the stack of principal $G$-bundles.
This is an orbit stack, as a choice of basepoint induces an equivalence of  topological stacks $[\pt/G]\simeq\BB G$, and all choices of basepoint are equivalent.
We will typically write $\BB G$ for an arbitrary orbit stack; strictly speaking, however, one cannot recover the group $G$ from the stack $\BB G$ without a choice of basepoint, in which case $G\simeq\Omega\BB G$.
\end{remark}

The reason we are primarily concerned with orbit stacks is that they generate the $\i$-category of orbispaces \cite{GH}.\footnote{The theory in \cite{GH} allows for an arbitrary family of groups, but we will only consider the case of the family of
compact Lie groups. We allow morphisms between orbit stacks do be induced by arbitrary continuous homomorphisms, as these are automatically smooth \cite[Theorem 3.7.1]{Feres}.}
One should think of an orbispace as encoding the  various ``fixed point spaces'' of a topological stack, just as the equivariant homotopy type of a $G$-space is encoded by the fixed point subspaces, ranging over all (closed) subgroups $H$ of $G$.
Since topological stacks are typically not global quotient stacks, one cannot restrict to subgroups of one given group, but simply indexes these spaces on all compact Lie groups, regarded as orbit stacks.
These are the objects of the $\i$-category $\Orb$ of orbit stacks.

\begin{definition}
The \emph{global orbit category} $\Orb$ is the full subcategory of the $\i$-category of $\Stk_{\i}$ on those objects of the form $\BB G$, for $G$ a compact Lie group.
\end{definition}

\begin{definition}
An \emph{orbispace} is a functor $\Orb^{\op}\to\SS$.
The $\infty$-category of orbispaces is the $\infty$-category $\Spc_{\Orb} = \Fun(\Orb^{\op},\SS)$ of functors $\Orb^{\op}\to\SS$.
\end{definition}

\begin{remark}
The Yoneda embedding defines a functor
\[\Stk_{\i} \to \Spc_{\Orb}, \qquad X \,\mapsto\, (\B G \mapsto \Map_{\Stk_{\i}}(\B G, X)).\]
In this way, we may associate to every stack an orbispace in a canonical and functorial way.
In particular, when later defining cohomology theories for orbispaces, these will also determine cohomology theories for topological stacks.
\end{remark}

In order to understand the $\i$-category of orbispaces, we must first understand the mapping spaces in $\Orb$ or more generally mapping spaces in $\Stk_{\i}$. As with almost any computation about stacks, this is done by choosing \emph{presentations} of the relevant stacks: given a topological groupoid $X_{\bullet}$, we denote the represented prestack the same way and obtain a stack $X_{\bullet}^{\dag}$ by stackification. We view $X_{\bullet}$ as a presentation of the associated stack. The example we care most about is the presentation  $\{G \rrarrow \pt\}$ of $\B G$. 

One thing a presentation allows us to compute is the homotopy type of a stack, based on the homotopy type of a topological groupoid: for a topological groupoid $X_{\bullet}$, we consider the space $|X_{\bullet}|$ arising as the homotopy colimit $\colim_{\Delta^{\op}}X_{\bullet}$ or equivalently as the fat realization of $X_{\bullet}$ (cf.\ \cref{app:GlobalOrbitModels}). While the following two results are not strictly necessary to understand $\Orb$, they are reassuring pieces of evidence that our notion of mapping spaces between topological stacks is ``correct''. 

\begin{lemma}\label{lem:pstkmap}
     Let $X_\bullet$ be a topological groupoid.
     There are natural weak equivalences of spaces
    \[
     |X_\bullet|\simeq |\Map_{\Pre\Stk}(\Delta_{\Top}^{\ast},X_\bullet)|\simeq \Map_{\Pre\Stk_\i}(\pt,X_\bullet),
     \]
     where the geometric realization in the middle is with respect to $\ast$. 
 \end{lemma}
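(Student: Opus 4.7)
My plan is to compute the middle expression $\Map_{\Pre\Stk}(\Delta_{\Top}^\ast, X_\bullet)$ explicitly and then recognize it as a bisimplicial object whose total realization reduces in one direction to $|X_\bullet|$ and in the other to the heuristic formula for $\Map_{\Pre\Stk_\i}(\pt, X_\bullet)$.

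\textbf{Step 1 (the second equivalence is essentially by construction).} Since $\pt$ is the terminal object, $\pt \times \Delta_{\Top}^\bullet \simeq \Delta_{\Top}^\bullet$, and by the definition of $\Pre\Stk_\i$ as the quotient of the $2$-category $\Pre\Stk$ by the action of $\Delta_\Top^\bullet$ (unpacked in \cref{con:cosimplicialquotient}), one has
\[
\Map_{\Pre\Stk_\i}(\pt, X_\bullet) \;\simeq\; \bigl|\Map_{\Fun(\Top^{\op},\Gpd)}(\pt \times \Delta_\Top^\bullet, X_\bullet)\bigr| \;=\; |\Map_{\Pre\Stk}(\Delta_\Top^\bullet, X_\bullet)|.
\]
So this equivalence is a direct unpacking of the quotient construction.

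\textbf{Step 2 (unpacking the middle term).} View $X_\bullet$ as a represented prestack. By the Yoneda lemma, $\Map_{\Pre\Stk}(\Delta^n_\Top, X_\bullet)$ is the groupoid $X_\bullet(\Delta^n_\Top)$, whose $k$-simplices (in its nerve) are the continuous maps $\Delta^n \to X_k$, i.e.\ $\Sing_n(X_k)$. Thus, taking nerves of groupoids levelwise and collecting all simplicial directions, the object $\Map_{\Pre\Stk}(\Delta_\Top^\bullet, X_\bullet)$ is modeled by the bisimplicial set
\[
B_{n,k} \;=\; \Map_\Top(\Delta^n, X_k) \;=\; \Sing_n(X_k).
\]

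\textbf{Step 3 (computing the realization in both directions).} I apply the standard Fubini principle for realizations of bisimplicial sets. Realizing in the $n$-direction first produces, at each level $k$, the space $|\Sing(X_k)| \simeq X_k$; therefore
\[
|B_{\bullet,\bullet}| \;\simeq\; \bigl|\,k \mapsto |\Sing(X_k)|\,\bigr| \;\simeq\; |X_\bullet|.
\]
The first equivalence asserted in the lemma then follows.

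\textbf{Main obstacle.} The delicate point is that the identification of $|B_{\bullet,\bullet}|$ with the fat realization $|X_\bullet|$ requires care: the realization of a simplicial space preserves levelwise weak equivalences only when evaluated as a homotopy colimit (equivalently the fat realization, or the realization of a Reedy cofibrant simplicial space). I will handle this by working throughout with homotopy colimits (equivalently fat realizations), and using that the simplicial space $k \mapsto |\Sing(X_k)|$ has all its face/degeneracy maps CW-inclusions (after CW-approximation), so its thin and fat realizations coincide and agree with $|X_\bullet|$ via the counit $|\Sing(-)| \to \id$. The check that the two ways of realizing the bisimplicial set $B_{\bullet,\bullet}$ agree up to weak equivalence is standard (e.g.\ by Bousfield--Kan or by noting that $B_{\bullet,\bullet}$ is Reedy cofibrant in one direction since its $n$-direction is obtained by applying $\Sing$).
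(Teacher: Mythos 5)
Your proof is correct and follows essentially the same route as the paper's: identify $\Map_{\Pre\Stk}(\Delta^\ast_{\Top},X_\bullet)$ with the bisimplicial set $(n,k)\mapsto \Sing_n(X_k)$, apply Fubini for realizations, and use the counit $|\Sing(X_k)|\to X_k$. The paper handles your ``main obstacle'' simply by regarding all realizations as colimits over $\Delta^{\op}$ in the $\infty$-category of spaces, where the interchange is automatic, so your extra Reedy-cofibrancy care is a harmless point-set elaboration of the same argument.
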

 \begin{proof}
     By the Yoneda lemma, the space of maps from $\Delta_{\Top}^n$ to $X_{\bullet}$ in $\PreStk$ is naturally equivalent to $X_{\bullet}(\Delta_{\Top}^n)$ (where we view a groupoid via the nerve as an object in $\Spc$). This in turn is represented by the simplicial set $\Hom_{\Top}(\Delta^n_{\Top}, X_{\bullet}) = (\Sing X_{\bullet})_n$. Since every simplicial set is the homotopy colimit along $\Delta^{\op}$ of its levels, we obtain $X_{\bullet}(\Delta_{\Top}^n) \simeq |(\Sing X_{\bullet})_n|$. Thus, 
    \[
        |\Map_{\PreStk}(\Delta^{\ast}, X_{\bullet})| \simeq | |(\Sing X_{\bullet})_{\ast}||,
    \]
     where the inner realization is with respect to $\bullet$ and the outer with respect to $\ast$.\footnote{We stress that the double bars stand here for an iterated geometric realization and not for a fat geometric realization.} 
    
     To obtain the first equivalence in the statement, we just have to use the natural equivalence $|(\Sing X_n)_{\ast}| \to X_n$ for every $n$ and that the order of geometric realization does not matter (as these are just colimits over $\Delta^{\op}$). 

    The second equivalence in the statement of the lemma holds by definition.
 \end{proof}
 
\begin{proposition}\label{prop:HomotopyType}
Let $X_{\bullet}$ be a topological groupoid. Then there is a natural equivalence $|X_{\bullet}| \simeq \Map_{\Stk_{\i}}(\pt, X_{\bullet}^{\dag})$. 
\end{proposition}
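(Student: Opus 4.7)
The plan is to chain together three equivalences: first, \cref{lem:pstkmap} yields $|X_\bullet|\simeq\Map_{\Pre\Stk_\i}(\pt,X_\bullet)$; next, the unit of the stackification adjunction provides an equivalence $\Map_{\Pre\Stk_\i}(\pt,X_\bullet)\simeq\Map_{\Pre\Stk_\i}(\pt,X^\dag)$; and finally, the adjunction itself, together with the fact that $\pt$ is already a stack, gives $\Map_{\Pre\Stk_\i}(\pt,X^\dag)\simeq\Map_{\Stk_\i}(\pt,X^\dag)$. Concatenating these produces the asserted natural equivalence, with naturality in $X_\bullet$ clear from the construction.

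The last step is the easy one. Writing $L\dashv\iota$ for the stackification--inclusion adjunction between $\Pre\Stk$ and $\Stk$, the fact that $L$ is left exact implies that it is compatible with the cartesian action of $\Delta^\bullet_\Top$ used to define the cosimplicial quotient, so the adjunction descends to one between $\Pre\Stk_\i$ and $\Stk_\i$. Since the \'etale topology on $\Top'$ is subcanonical, the terminal presheaf $\pt$ is already a stack, so $L\pt\simeq\pt$, and the adjunction yields $\Map_{\Stk_\i}(\pt,X^\dag)\simeq\Map_{\Pre\Stk_\i}(\pt,\iota X^\dag)$.

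The middle step carries the actual content and contains the main obstacle. Unpacking the cosimplicial quotient gives $\Map_{\Pre\Stk_\i}(\pt,Y)\simeq |Y(\Delta^\bullet_\Top)|$ for any prestack $Y$, so the task reduces to showing that the unit $X_\bullet(\Delta^n_\Top)\to X^\dag(\Delta^n_\Top)$ is a levelwise equivalence of groupoids. This is the assertion that stackification of a groupoid-valued presheaf does not change the groupoid of sections over a paracompact contractible space, which requires verifying that in the \'etale topology on $\Top'$, every cover of $\Delta^n_\Top$ contributes only trivially to the plus construction: covering-space covers are split by simple connectivity, whereas open covers of $\Delta^n_\Top$ admit subordinate partitions of unity so that descent data for any such cover is already uniquely realized in $X_\bullet(\Delta^n_\Top)$ up to coherent isomorphism. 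This verification is the essential combinatorial--topological input to the proof, and I expect it to be the hardest step to pin down precisely in the $2$-categorical setting.
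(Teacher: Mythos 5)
Your steps (1) and (3) are fine and agree with what the paper does implicitly: \cref{lem:pstkmap} handles the prestack mapping space, and since $\pt$ is already a stack and $\Stk\subset\Pre\Stk$ is full, passing between $\Map_{\Pre\Stk_\i}$ and $\Map_{\Stk_\i}$ costs nothing. The gap is in your middle step, and it is not merely a matter of "pinning down" the verification: the claim that the unit $X_{\bullet}(\Delta^n_{\Top})\to X^{\dag}(\Delta^n_{\Top})$ is a levelwise equivalence of groupoids is false for a general topological groupoid. Take $X_{\bullet}$ to be the \v{C}ech groupoid $\{V\times_Y V\rrarrow V\}$ of the open cover $V=[0,2/3)\sqcup(1/3,1]$ of $Y=[0,1]$. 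Then $X^{\dag}\simeq Y$, so $X^{\dag}(\Delta^1)$ contains the identity $[0,1]\to[0,1]$, but an object of $X_{\bullet}(\Delta^1)$ is a map $\Delta^1\to V$, i.e.\ a map whose image lies in one of the two opens; the unit is not essentially surjective. A partition of unity lets you average sections of a vector or principal bundle, but it gives no way to glue local maps $U_i\to X_0$ along isomorphisms in $X_1$ into a single global map to $X_0$, which is what your claim requires. What is true (and what the proposition really asserts) is only that the map becomes an equivalence \emph{after} realizing over the cosimplicial direction $\Delta^{\bullet}_{\Top}$ — in the example both sides realize to something contractible — but your argument as written does not supply this.

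The paper avoids the issue entirely by a different device: the fibrant replacement $X_{\bullet}\to\fib(X_{\bullet})$ of Gepner--Henriques. This map is simultaneously a homotopy equivalence (so it does not change $|X_{\bullet}|$) and a categorical equivalence (so it does not change the stackification), and for fibrant $Y_{\bullet}$ the represented prestack is already a stack, so no stackification and hence no descent argument over $\Delta^n$ is needed; one then applies \cref{lem:pstkmap} directly. If you want to salvage your route, you must either prove the equivalence only after geometric realization (essentially re-proving that fat realization of the \v{C}ech nerve of an \'etale cover of a paracompact space recovers the homotopy type, compatibly with the groupoid direction), or import the fibrant replacement as the paper does.
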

\begin{proof}
We recall from \cite[Section 2.5]{GH} that there is a class of \emph{fibrant} topological groupoids. Moreover, there is a fibrant replacement functor $X_{\bullet} \to \fib(X_{\bullet})$, which induces by \cite[Proposition 3.7]{GH} an equivalence 
$X_{\bullet}^{\dag} \to \fib(X_{\bullet})^{\dag}$. By \cite[Section 3.4]{GH}, this implies that $|X_{\bullet}| \to |\fib(X_{\bullet})|$ is an equivalence. This reduces the statement of the proposition to the class of fibrant topological groupoids $X_{\bullet}$. By \cite[Section 3.3]{GH}, $\fib(X_{\bullet}) \to \fib(X_{\bullet})^{\dag}$ induces an equivalence on paracompact spaces like $\Delta^n_{\Top}$.

Thus, 
\[\uMap_{\Stk_\i} (\pt, X_{\bullet}^{\dag})\simeq |\Map_{\Stk}(\Delta^n_{\Top},X_{\bullet}^{\dag})|\simeq |\Map_{\PreStk}(\Delta^n_{\Top}, X_\bullet)|.\]  
\cref{lem:pstkmap} gives $\Map_{\PreStk}(\Delta^n_{\Top}, X_\bullet)\simeq |X_\bullet|$, finishing the proof.
\end{proof}

 We are now ready to calculate the mapping spaces in $\Orb$. In the proof we will use a concept of interest in its own right, namely action groupoids: Given a topological group $G$ and a $G$-space $X$, the action of $G$ on $X$ may be encoded via a topological groupoid, called the {\em action groupoid}, which we will denote by $\{G\times X\rrarrow X\}$, where one of the arrows is the projection and the other one the action. 

\begin{proposition}\label{prop:orbcalc}
For compact Lie groups $G$ and $H$, there is a chain of equivalences
\[
\Map_{\Orb}(\BB H,\BB G) \simeq |\uMap_{\TopGpd}(H \rrarrow \pt, G \rrarrow \pt)| \simeq \Map_{\Lie}(H,G)_{hG},
\]
which is natural in morphisms of compact Lie groups. Here, the action of $G$ on the homomorphisms is by conjugation.
\end{proposition}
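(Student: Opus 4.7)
The plan proceeds in two steps, one for each equivalence.

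\emph{First equivalence.} Unfolding the construction of $\Stk_{\infty}$ gives
\[
\Map_{\Orb}(\BB H, \BB G) \simeq |\Map_{\PreStk}(\BB H \times \Delta_{\Top}^{\bullet}, \BB G)|.
\]
The cartesian closed structure on $\PreStk = \Fun(\Top'^{\op}, \Gpd)$ converts this to
\[
|\Map_{\PreStk}(\Delta_{\Top}^{\bullet}, \uMap_{\PreStk}(\BB H, \BB G))|.
\]
The central observation is that for two topological groupoids $X_{\bullet}$, $Y_{\bullet}$, the internal hom prestack $\uMap_{\PreStk}(X_{\bullet}, Y_{\bullet})$ is represented by the internal hom topological groupoid $\uMap_{\TopGpd}(X_{\bullet}, Y_{\bullet})$. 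Indeed, by Yoneda together with the cartesian closedness of $\TopGpd$, evaluation at $T \in \Top'$ gives
\[
\uMap_{\PreStk}(X_{\bullet}, Y_{\bullet})(T) \simeq \Map_{\TopGpd}(T \times X_{\bullet}, Y_{\bullet}) \simeq \Map_{\TopGpd}(T, \uMap_{\TopGpd}(X_{\bullet}, Y_{\bullet})),
\]
which is exactly the prestack represented by $\uMap_{\TopGpd}(X_{\bullet}, Y_{\bullet})$. Combining this with \cref{lem:pstkmap} then produces
\[
|\Map_{\PreStk}(\Delta_{\Top}^{\bullet}, \uMap_{\TopGpd}(H \rrarrow \pt, G \rrarrow \pt))| \simeq |\uMap_{\TopGpd}(H \rrarrow \pt, G \rrarrow \pt)|.
\]
To relate the topological-groupoid presentations $(H \rrarrow \pt)$, $(G \rrarrow \pt)$ to the actual stacks $\BB H$, $\BB G$, I would use the fibrant-replacement technique from the proof of \cref{prop:HomotopyType}, whose categorical-equivalence property guarantees compatibility with the internal hom.

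\emph{Second equivalence.} The topological groupoid $\uMap_{\TopGpd}(H \rrarrow \pt, G \rrarrow \pt)$ can be computed by hand. Since both source and target have a single object, a functor between them is the same as a continuous homomorphism $H \to G$; by automatic smoothness for compact Lie groups, the space of objects is $\Map_{\Lie}(H, G)$. A natural transformation between two homomorphisms $F, F' : H \to G$ is an element $g \in G$ with $F'(h) = g F(h) g^{-1}$ for all $h \in H$. Thus the internal hom is the action groupoid
\[
\{G \times \Map_{\Lie}(H, G) \rrarrow \Map_{\Lie}(H, G)\}
\]
for the conjugation action of $G$, whose geometric realization is precisely the homotopy orbit $\Map_{\Lie}(H, G)_{hG}$.

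\emph{Main obstacle.} The delicate point is in the first equivalence: one must carefully reconcile the internal hom computation at the level of topological groupoids with the stackifications $\BB H$, $\BB G$, since $(H \rrarrow \pt)$ and $(G \rrarrow \pt)$ need not themselves be fibrant prestacks. The argument hinges on the fact that fibrant replacement of topological groupoids is simultaneously a homotopy equivalence and a categorical equivalence; the latter is what guarantees that internal homs are preserved up to $2$-equivalence and hence that their realizations are preserved up to weak equivalence. Naturality in morphisms of compact Lie groups is then automatic from the functoriality of each construction.
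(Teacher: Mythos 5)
Your second equivalence (the by-hand identification of $\uMap_{\TopGpd}(H\rrarrow\pt, G\rrarrow\pt)$ with the conjugation action groupoid, realized to the homotopy quotient) is exactly the paper's argument. The gap is in the first equivalence, and it sits precisely at the point you flag as the ``main obstacle'': your proposed resolution there does not work. Your central observation identifies $\uMap_{\PreStk}(X_\bullet,Y_\bullet)$ for the \emph{represented prestacks} of the groupoids, but what your chain of reductions actually requires is $\uMap(\BB H,\BB G)$ for the \emph{stackifications}, and these genuinely differ as prestacks. Concretely, $\uMap(\BB H,\BB G)(T)\simeq\Map_{\Stk}(T\times\BB H,\BB G)$ is the groupoid of principal $G$-bundles on $T\times\BB H$; already for $H=e$, $G=\Z/2$, $T=S^1$ this has two isomorphism classes, whereas $\Map_{\TopGpd}(S^1\times(H\rrarrow\pt),\,G\rrarrow\pt)$ sees only the trivial(ized) bundle. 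So no formal argument can identify the two internal homs; they only agree after evaluation on the $\Delta^n$, and that agreement is a geometric fact (every principal $G$-bundle on a contractible paracompact space is trivial), not a categorical one.

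This is also why the fibrant-replacement fix fails: the map $(G\rrarrow\pt)\to\fib(G\rrarrow\pt)$ is essentially surjective and fully faithful but admits no continuous inverse, so $\uMap_{\TopGpd}(X_\bullet,-)$ does not carry it to an equivalence --- its essential image consists of the maps classifying trivializable bundles. The remark immediately following the proposition in the paper states exactly this: one should \emph{not} expect $\Map_{\Stk_\i}(X_\bullet^\dag,Y_\bullet^\dag)\simeq|\uMap_{\TopGpd}(X_\bullet,Y_\bullet)|$ when $Y_\bullet$ is not fibrant, and $\{G\rrarrow\pt\}$ is not fibrant; the special feature saving the day is that the source has contractible object space. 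The missing ingredient in your write-up is therefore the statement that $\Map_{\TopGpd}(G\times\Delta^n\rrarrow\Delta^n,\,H\rrarrow\pt)\to\Map_{\Stk}(\Delta^n\times\BB G,\BB H)$ is an equivalence because all principal bundles over $\Delta^n$ are trivial (this is \cref{lem:GpdStk}); the paper routes the whole first equivalence through the comparison \cref{prop:OrbComparison} with the Gepner--Henriques model, which is built on that lemma. Your skeleton (cartesian closedness plus \cref{lem:pstkmap} to reduce to evaluating the internal hom on the $\Delta^n$) can be completed once you insert that input, but as written the justification rests on a preservation statement that is false.
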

\begin{proof}
    By \cref{lem:GpdStk}, there is a natural equivalence
\[\Map_{\Orb}(\BB H,\BB G) \simeq |\uMap_{\TopGpd}(H \rrarrow \pt, G \rrarrow \pt)|.\]
One easily calculates that the mapping groupoid $\underline{\Map}_{\TopGpd}(H\rrarrow \pt,G\rrarrow \pt)$ is isomorphic to the action groupoid $\{\Map_{\Lie}(H,G)\times G \rrarrow\Map_{\Lie}(H,G)\}$, with the action given by conjugation. As the realization of an action groupoid is the homotopy quotient, the result follows.
\end{proof}

\begin{remark}
An alternative approach to prove \cref{prop:orbcalc} would be to use fibrant topological groupoids again as in the proof of \cref{prop:HomotopyType}. And there is no reason to expect in general an equivalence between $\Map_{\Stk_{\i}}(X_{\bullet}^{\dag}, Y_{\bullet}^{\dag})$ and $|\uMap_{\TopGpd}(X_{\bullet}, Y_{\bullet})|$ if $Y_{\bullet}$ is not fibrant and one should rather consider derived mapping groupoids between topological groupoids as in \cite{GH}. The special feature we used in the preceding proposition is that the object space of the source is contractible. 
\end{remark}

\begin{remark}\label{Orbast}
Denote by $\CptLie$ the topological category of compact Lie groups, which we view as an $\infty$-category. We claim that the association $G \mapsto \B G$ defines an equivalence $\CptLie \to \Orb_*$, where $\B G$ is equipped with the base point coming from the inclusion of the trivial group into $G$.

To see this, we first invoke \cref{prop:orbcalc}, which implies the existence of a functor $\CptLie\to\Orb\subset\Stk_\i$ which sends $G$ to $\B G$.

Since $\pt$ is an initial and terminal object of $\CptLie$, we see that $\CptLie$ is naturally a pointed $\i$-category, and the functor $\CptLie\to\Orb$ therefore factors through the projection $\Orb_*\to\Orb$.
The equivalence $\CptLie\simeq\Orb_*$ now follows from direct calculation: The diagram
\[
\xymatrix{
\Map_{\CptLie}(H,G)\ar[r]\ar[d] & \Map_{\CptLie}(H,G)_{hG}\ar[r]\ar[d] & BG\ar[d]\\
\Map_{\Orb_*}(\BB H,\BB G)\ar[r] & \Map_{\Orb}(\BB H,\BB G)\ar[r] & \Map_{\Orb}(\pt,\BB G)}
\]
defines a morphism of fiber sequences in which the middle and rightmost vertical maps are equivalences.
It follows that the leftmost vertical map is also an equivalence.
\end{remark}
We conclude this section with a couple of observations about the relationship between orbispaces and $G$-spaces.
Above we have defined for a topological group $G$ and a $G$-space $X$ an action groupoid.
This is functorial in morphisms of $G$-spaces: given a second $G$-space $Y$ and a $G$-equivariant map $Y\to X$, we obtain a morphism of action groupoids \[\{G\times Y\rrarrow Y\}\to\{G\times X\rrarrow X\}\]
by applying $f\colon Y\to X$ on the level of objects and $(\id_G,f)\colon G\times Y\to G\times X$ on the level of morphisms.
Moreover, the resulting functor $\Top^G \to \TopGpd$ is compatible with the topological enrichement.
However, it is not fully faithful unless $G$ is trivial, as morphisms of topological groupoids need not be injective on stabilizer groups.
This is remedied by insisting that the functors are compatible with the projection to $\{G\rrarrow\pt\}$.
\begin{proposition}\label{prop:fullfaithful}
    The action groupoid functor $\Top^G\to\TopGpd$, for any topological group $G$, factors through the projection $\TopGpd_{/\{G\rrarrow\pt\}}\to\TopGpd$, and the induced functor
    \[
    \Top^G\to\TopGpd_{/\{G\rrarrow\pt\}}
    \]
    is fully faithful (even as a functor of topologically enriched categories).
\end{proposition}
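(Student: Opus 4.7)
My plan has two steps: (1) construct the factorization through the over-category, and (2) identify the mapping objects to verify full faithfulness.

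For (1), every action groupoid $\{G\times X\rrarrow X\}$ admits an evident projection to $\{G\rrarrow\pt\}$, given by $X\to\pt$ on objects and by the projection $G\times X\to G$ on morphisms. A $G$-equivariant map $f\colon Y\to X$ induces the functor of action groupoids with components $f$ and $(\id_G,f)$; since the morphism component preserves the first coordinate, this commutes strictly with the two projections to $\{G\rrarrow\pt\}$, yielding the desired factorization.

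For (2), the main task is to unwind what a continuous functor $F\colon \{G\times Y\rrarrow Y\}\to\{G\times X\rrarrow X\}$ over $\{G\rrarrow\pt\}$ amounts to. Such an $F$ is a pair of continuous maps $(F_0,F_1)$ with $F_0\colon Y\to X$ and $F_1\colon G\times Y\to G\times X$, compatible with source, target, identities, and composition. The over condition on morphisms forces the first coordinate of $F_1(g,y)$ to equal $g$; source compatibility then pins its second coordinate to $F_0(y)$; and target compatibility reduces to the $G$-equivariance relation $F_0(g\cdot y)=g\cdot F_0(y)$. The resulting bijection with $\uMap_{\Top^G}(Y,X)$ is readily checked to be a homeomorphism via the continuous inverse $f\mapsto(f,(g,y)\mapsto(g,f(y)))$, giving full faithfulness enriched over $\Top$. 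To upgrade this to the stronger enrichment in topological groupoids (matching the footnote in the paper), one additionally verifies that no non-identity $2$-morphism in the over-category can exist: a natural transformation $\eta\colon F\Rightarrow F'$ assigns to $y\in Y$ a morphism $\eta_y=(a_y,F_0(y))$ in $\{G\times X\rrarrow X\}$, and the over condition $\pi_X\ast\eta=\id_{\pi_Y}$ forces each $a_y$ to be the identity element of $G$, so $F=F'$ and $\eta=\id$.

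The main obstacle is really bookkeeping: one must keep the source/target conventions of the action groupoid straight and be careful about the strict over-category in a $2$-categorical setting. The key non-trivial input, which distinguishes this situation from the (non-faithful) bare action groupoid functor $\Top^G\to\TopGpd$, is that the over condition forces the first coordinate of $F_1$ to equal $g$; this single constraint both determines $F_1$ entirely from $F_0$ and enforces $G$-equivariance of $F_0$, so the structure of $G$ as a stabilizer is rigidly preserved.
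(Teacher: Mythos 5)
Your argument is essentially identical to the paper's: both unwind a functor over $\{G\rrarrow\pt\}$ into the pair $(F_0,F_1)$, observe that the over-condition forces the $G$-coordinate of $F_1$ to be the projection, deduce that compatibility with the groupoid structure determines $F_1$ from $F_0$ and forces $G$-equivariance, and note that the topologies agree. Your additional check that the over-condition kills all non-identity natural transformations is a small but welcome completion of the enriched statement that the paper's proof leaves implicit.
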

\begin{proof}
    Let $X$ and $Y$ be topological spaces equipped with an action of the topological group $G$.
    The space of topological groupoid morphisms between the associated topological groupois is the subspace of the product
    \[
    \underline{\Map}_{\Top}(Y,X)\times\underline{\Map}_{\Top}(G\times Y,G\times X)
    \]
    consisting of those maps which are compatible with the groupoid structure as well as the projection to $\{G\rrarrow\pt\}$.
    These conditions are exactly what is needed to ensure that for an allowed pair $(f,h)$, the map $h$ is determined by $f$ and $f$ is $G$-equivariant:
    Writing $\underline{\Map}_{\Top}(G\times Y,G\times X)\cong\underline{\Map}_{\Top}(G\times Y,G)\times\underline{\Map}_{\Top}(G\times Y,X)$, we see that the map $G\times Y\to G$ must be the projection in order to be compatible with the map down to $\{G\rrarrow\pt\}$.
    Moreover, in order to be compatible with the groupoid structure, the map $G\times Y\to X$ must factor as the composite of the projection $G\times Y\to Y$ followed by $f$ and the map $f$ must be equivariant.
    It is straightforward to verify that the topologies agree.
\end{proof}

To deduce a corresponding $\infty$-categorical statement would require to compare the associated $\infty$-category of a slice category with the slice category in an associated $\infty$-category. As checking the necessary fibrancy conditions would take us too far afield, we will rather cite the following result from \cite[Lemma 6.13]{LinskensNardinPol}, who give a more $\infty$-categorical proof. For a compact Lie group $G$, we denote by $\Orb_G$ the $\infty$-category associated to the full topological subcategory of $G$-spaces on the $G/H$ for $H\subset G$ a closed subgroup.

\begin{proposition}\label{cor:OrbBG}
Given a compact Lie group $G$, there is a fully faithful embedding
\[\Orb_G \to \Orb_{/\B G}, \qquad G/H \mapsto (\B H \to \B G),\] with essential image those morphisms that are representable, i.e.\ correspond to inclusions of subgroups.
\end{proposition}

Call more generally a morphism $X \to Y$ in $\Spc_{\Orb}$ \emph{representable} or \emph{faithful} if is right orthogonal to $\B G \to \B G/K$ for all closed normal subgroups $K\subset G$ for all compact Lie groups $G$, i.e.\ we have an essentially unique lift in every diagram
\[
\xymatrix{
\B G\ar[d] \ar[r] & X \ar[d] \\
\B G/K \ar[r] \ar@{-->}[ur]& Y.
}
\]
Intuitively, this means that $X \to Y$ is injective on all automorphism groups of points of these stacks. It was observed in \cite[Example 3.4.2]{RezkGlobal} and proven in \cite[Proposition 6.14]{LinskensNardinPol} that indeed a morphism $\B H \to \B G$ is representable if and only if it corresponds to an injection $H\to G$. 

Recall that a \emph{factorization system} in an $\infty$-category $\CC$ consists of two classes of morphisms, $L$ and $R$, that are orthogonal to each other and such that every morphism in $\CC$ factors as $fg$ with $f\in L$ and $g\in R$ (see \cite[Section 24]{JoyalNotes} and \cite[Section 5.2.8]{HTT} for details). Similar statements to the following were already considered in \cite{RezkGlobal} and \cite{LinskensNardinPol}.
\begin{lemma}\label{lem:factorizationOrb}
    There exists a factorization system $(L,R)$ on $\Spc_{\Orb}$ such that $R$ consists exactly of the representable morphisms and every quotient map $\B G \to \B G/K$, for $K\subset G$ closed, lies in $L$. This factorization system restricts to $\Orb$. 
\end{lemma}
\begin{proof}
    Let $S$ be the set of morphisms $\B G \to \B G/K$ for closed subgroups $K\subset G$. By \cite[Proposition 5.5.5.7]{HTT}, there exists a factorization system $(L,R)$ on $\Spc_{\Orb}$ such that $L$ is the saturation of $S$ and $R$ consists of those morphisms that are right-orthogonal to $L$ (or, equivalently, to $S$). By definition, these are exactly the representable morphisms. 
    
    In $\Orb$, we can factor a morphism $\B G \to \B H$ corresponding to a group homomorphism $\varphi\colon G \to H$ as $\B G \to \B(G/\ker(\varphi)) \simeq \B\im(\varphi)\to \B H$.  The first arrow lies in $L$ and the second in $R$; thus the factorization system restricts. 
\end{proof}

\begin{lemma}\label{lem:factorizationfinal}
    Let $(L, R)$ be a factorization system on an $\infty$-category $\CC$ and let $X \in \CC$ be an object. Let $\DD \subseteq \CC$ be a full subcategory such that every morphism $D \to X$ with $D\in \DD$ factors as $D \xrightarrow{f} D' \xrightarrow{g} X$ with $D'\in \DD$, $f\in L$ and $g\in R$. Then the inclusion of the full subcategory of $\DD_{/X}$ on the morphisms $Y \to X$ lying in $R$ is final.\footnote{There is, unfortunately, no uniform terminology for final/cofinal functors. We use \emph{final} for what is called cofinal in \cite{HTT}, i.e.\ for functors that induce an equivalence of colimits indexed by the source and target $\infty$-categories. This is compatible with the terminology in \cite{JoyalNotes} and \cite{Cisinski}.}
\end{lemma}
\begin{proof}
    Consider for every $f\colon Y \to X$ with $Y\in \DD$, the full subcategory $\EE$ of $(\DD_{/X})_{f/}$ on those factorizations
    \[\xymatrix{Y\ar[dr]^g \ar[rr]^f && X \\
    &Z \ar[ur]^h&
    }\]
    with $h\in R$. We have to show that $\EE$ is contractible. Choose a factorization $Y \xrightarrow{l} T \xrightarrow{r} X$ with $T\in \DD$, $l\in L$ and $r\in R$. We claim that this is terminal in $\EE$. Equivalently, the space of dashed arrows in 
    \[
    \xymatrix{
    Y \ar[r]^g \ar[d]^l & Z \ar[d]^h \\
    T \ar@{-->}[ur]\ar[r]^r & X}
    \]
    is contractible. But this is part of the statement that $L$ and $R$ are orthogonal.
\end{proof}

As in \cref{app:GSpectra}, we write $\Spc^G$ for the $\infty$-category of $G$-spaces. By Elmendorf's theorem, there is an equivalence of $\i$-categories $\Spc^G \xrightarrow{\simeq} \PP(\Orb_G)$ which associates to a $G$-space to its diagram of fixed point spaces. The following result was already observed with a different proof in \cite[Proposition 3.5.1]{RezkGlobal}
\begin{prop}\label{prop:OntoRepresentables}
For any compact Lie group $G$, the functor  
\[\Spc^G \simeq \PP(\Orb_G)\to\PP(\Orb_{/\BB G})\simeq\PP(\Orb)_{/\BB G} = (\Spc_{\Orb})_{/\B G},\]
induced by left Kan extending along the embedding $\Orb_G \to \Orb_{/\B G}$ from \cref{cor:OrbBG},
is fully faithful, with essential image the representable maps $X\to\BB G$.    
\end{prop}
\begin{proof}
    The equivalence $\PP(\Orb_{/\BB G})\simeq\PP(\Orb)_{/\BB G}$ is formal. Moreover, since $\Orb_G \to \Orb_{/\B G}$ is fully faithful, so is $\PP(\Orb_G) \to \PP(\Orb_{/\B G})$. Since mapping out of any $\B H$ in $\Spc_{\Orb} = \PP(\Orb)$ is colimit-preserving, the class of representable morphisms is closed under colimits in $\PP(\Orb)_{/\B G}$. Since the image of $\Orb_G$ in $\Orb$ consists of representable morphisms, the image of $\PP(\Orb_G) \to \PP(\Orb)_{/\B G}$ consists of representable morphisms as well. It remains to show that every representable morphism lies in the image. 

    Let $X \to \B G$ be representable. Restricting $X \in \PP(\Orb_{/\B G})$ to the subcategory of representables $\Orb^{\rep}_{/\B G}$ and left Kan extending again, results in the morphism
    \[
    \colim_{(\B H \to \B G)\in(\Orb_{/\BB G}^{\rep})_{/X}}\BB H\to \colim_{(\B H \to \B G)\in(\Orb_{/\BB G})_{/X}}\BB H \simeq X,
    \]
    approximating $X$ as a colimit of representable orbits $\BB H$ over $\BB G$. Thus it suffices to show that $(\Orb_{/\BB G}^{\rep})_{/X}$ is final in $(\Orb_{/\BB G})_{/X}$. This follows from \cref{lem:factorizationfinal} and the following observation: Given any $\B H \to X$, factor $\B H \to X \to \B G$ as $\B H \to \B H/Q \to \B G$ with $\B H/Q \to \B G$ representable, resulting in 
    \[\xymatrix{ \B H \ar[r] \ar[d] & X \ar[d] \\
    \B H/Q \ar[r] & \B G. }\]
    Since $X \to \B G$ is representable, we obtain a lift $\B H/Q \to X$, which is automatically representable as well. This gives the required factorization $\B H \to \B H/Q \to X$.  
\end{proof}

Recall that given a $G$-space $X$, we can define the stack quotient $[X/G]$ as the stackification of the action groupoid or, equivalently, as the classifying stack for $G$-principal bundles with a $G$-equivariant map to $X$. Every $G$-homotopy equivalence induces an equivalence between the corresponding stack quotients in $\Stk_{\i}$ and thus sending a $G$-CW-complex $X$ to $[X/G]$ defines a functor $\Spc^G \to \Stk_{\i}$.

\begin{proposition}\label{prop:embeddingG}
For every compact Lie group $G$, 
the functor $\SS^G\to\Stk_{\i} \to \Spc_{\Orb}$, which sends $X$ to $[X/G]$, preserves colimits. In particular, the resulting functor $\SS^G \to (\Spc_{\Orb})_{/\B G}$ agrees with the one considered in \cref{prop:OntoRepresentables}. 
\end{proposition}
\begin{proof}
We need to show that $\Map_{\Stk_{\i}}(\B H, [-/G])\colon \Spc^G \to \Spc$ preserves colimits for every compact Lie group $H$. Letting $H$ vary, this implies that the functor 
\[\SS^G\to\Stk_{\i} \to \Spc_{\Orb}, \qquad X \mapsto [X/G]\]
preserves colimits. 

Let $X$ be a $G$-space and $\Phi\colon \B H \to \B G$ be a map in $\Stk_{\i}$, which after choice of basepoints corresponds to a homomorphism $\varphi\colon H \to G$. As in the proof of \cref{lem:GpdStk}, one shows using that all principal bundles on $\Delta^n$ are trivial that the natural maps
\begin{align*}
    \Map_{\TopGpd/(G \rrarrow \pt)}(H \times \Delta^n\rrarrow \Delta^n, G\times X\rrarrow X) &\to \Map_{\Stk/\B G}(\B H \times \Delta^n, [X/G])
\end{align*}
are equivalences of (discrete) groupoids for all $n$. After geometrically realizing the resulting simplicial objects, the right-hand side becomes $\Map_{(\Stk_{\i})_{/\B G}}(\B H, [X/G])$, while we can identify the left-hand side with the space of $\varphi$-equivariant maps $\pt \to X$ (using essentially the argument of \cref{prop:fullfaithful}). This in turn is the same as the $\im(\varphi)$-fixed points of $X$. As a colimit of $G$-spaces induces a colimit of $\im(\varphi)$-fixed points, the functor $\Map_{(\Stk_{\i})_{/\B G}}(\B H, [-/G])\colon \Spc^G \to \Spc$ preserves colimits. As it is the fiber of
\[
\Map_{\Stk_{\i}}(\B H, [-/G]) \to \Map_{\Stk_{\i}}(\B H, \B G)
\]
over $\Phi$, we see that $\Map_{\Stk_{\i}}(\B H, [-/G])$ preserves colimits as well. 

The last point follows since the functor $\Orb_G \to \Orb\to \Spc_{\Orb}$ agrees with $X \mapsto [X/G]$ and a colimit-preserving functor from $\Spc^G \simeq \PP(\Orb_G)$ is determined by its restriction to $\Orb_G$. 
\end{proof}

\section{Abelian group objects in $\infty$-categories}\label{sec:Abelian}
The goal of this section is to construct from any preoriented abelian group object in a suitable $\infty$-category $\XX$ a colimit-preserving functor $\Spc_{\Orb} \to \XX$. We achieve this in \cref{MainConstruction}, which is the basis for our construction of equivariant elliptic cohomology.

To start, we consider for an $\i$-category $\XX$ with finite products the $\i$-category $\Ab(\XX)$ of abelian group objects in $\XX$.
We caution the reader that ``abelian group object'' is meant in the {\em strict} sense as opposed to the more initial notion of commutative (that is, $E_\i$) group object.
In order to make this precise, it is convenient to invoke the language of universal algebra.
We refer the reader to \cite[Section 5.5.8]{HTT} or \cite[Appendix B]{GGN} for relevant facts about Lawvere theories in the context of $\infty$-categories.

Let $\TT_{\Ab}$ denote the Lawvere theory of abelian groups, which we regard as an $\i$-category.
That is, $\TT_{\Ab}^{\op}\subset\Ab=\Ab(\mathrm{Set})$ is the full subcategory of abelian groups (in sets) consisting of the finitely generated free abelian groups.
A skeleton of $\TT_{\Ab}$ is given by the $\i$-category with object set $\mathbb{N}$ and (discrete) mapping spaces
\[\Map(q,p)\cong\Map_{\Ab}(\Z^p,\Z^q)\cong (\Z^q)^{\times p}.\]

\begin{definition}
Let $\XX$ be an $\infty$-category.
The $\infty$-category $\Ab(\XX)$ of \emph{abelian group objects} in $\XX$ is the $\infty$-category $\Fun^{\Pi}(\TT_{\Ab}^{\op},\XX)$ of finite product preserving functors from $\TT_{\Ab}$ to $\XX$.
\end{definition}

In particular, if $\XX$ is the ordinary category of sets, then $\Ab(\XX)\simeq\Ab$ recovers the ordinary category of abelian groups.
However, we will be primarily interested in the case in which $\XX$ is an $\infty$-topos, such as the $\infty$-category $\SS$ of spaces, or sheaves of spaces on a site.
\begin{remark}
It is important to note that $\Ab(\SS)$ is not equivalent to the $\infty$-category of (grouplike) $\E_\infty$-spaces since $\TT_{\Ab}$ is a full subcategory of the ordinary category of abelian groups.
Should one need to consider this $\infty$-category, one would instead start with the Lawvere theory $\TT_{\E_\infty}$ of (grouplike) $\E_\infty$-spaces.
Rather, $\Ab(\SS)$ is equivalent to the $\i$-category of connective chain complexes, or equivalently, connective $\Z$-module spectra. 
Note also that $\TT_{\Ab}^{\op}$ is equivalent to the ordinary category of lattices.
See \cite[Section 1.2]{LurEllI} for further details on (strict) abelian group objects in $\infty$-categories.
\end{remark}

\begin{example}\label{TopAb}
Topological abelian groups give examples of abelian group objects in $\Spc$. 
To make this relationship respect the simplicial enrichment of the category of topological abelian groups, we first need to investigate the relationship between abelian group objects in a simplicial category and its associated $\i$-category. Thus let $\CC$ be a simplicial category. Denote by $\Ab(\CC)$ the simplicial category of product preserving functors from $\TT_{\Ab}$ to $\CC$. Denoting the coherent nerve by $N^{\coh}$, there is a natural functor $N^{\coh}\Ab(\CC) \to \Ab(N^{\coh}\CC)$ constructed as follows: Taking the adjoint of the coherent nerve of the evaluation map $\TT_{\Ab} \times \Fun(\TT_{\Ab}, \CC) \to \CC$ gives a map $N^{\coh}(\Fun(\TT_{\Ab}, \CC)) \to \Fun(\TT_{\Ab}, N^{\coh}(\CC))$. Restricting to product preserving functors gives the desired functor $N^{\coh}\Ab(\CC) \to \Ab(N^{\coh}\CC)$.

Let us denote by $\TopAb_{\infty}$ the $\infty$-category arising via the coherent nerve from the simplicial category whose objects are topological abelian groups and whose mapping spaces are the singular complexes of spaces of homomorphisms. From the last paragraph we get a functor $\TopAb_{\infty} \to \Ab(\Spc)$. 

We are interested in the full subcategories of $\TopAb_{\infty}$ on compact abelian Lie groups $\CptLie^{\ab}$ and the slightly bigger full subcategory $\FGLie^{\ab}$ on products of tori and finitely generated abelian groups. We claim that the functor $\FGLie^{\ab} \to \Ab(\Spc)$ is fully faithful. 

To that purpose recall from \cite[Remark 1.2.10]{LurEllI} that there is an equivalence \[B^{\infty}\colon \Ab(\Spc) \to \Mod_{\Z}^{\mathrm{cn}}\]
to connective $\Z$-module spectra, preserving homotopy groups. Thus, for a discrete group $A$, we have $B^{\infty}(A) \simeq A$ and $B^{\infty}(\T^n) \simeq \Sigma \Z^n$. 
Since both $\FGLie^{\ab}$ and $\Mod_{\Z}^{\mathrm{cn}}$ are additive, the general claim reduces to the case of the (topologically) cyclic groups $\T$, $\Z$, and $\Z/n$. We compute that $\Map(\Z,-)$ is the underlying space and $\Map(-,\Z)=0$ unless the source is $\Z$.
Moreover, $\Map(\T,\T)=\Z$, $\Map(\T,\Z/n)=0$, $\Map(\Z/n,\T)=\T[n]$ and $\Map(\Z/n,\Z/n)=\Z/n$, in either $\FGLie^{\ab}$ or $\Mod_\Z^{\cn}$.
\end{example}

\begin{example}
Let $\Orb^{\ab}\subset\Orb$ denote the full subcategory consisting of the abelian orbits, i.e.\ those orbits of the form $\BB G$ for $G$ an abelian compact Lie group.
Every such $\BB G$ has naturally the structure of an abelian group object: The classifying stack functor $\B\colon\CptLie\to\Stk_\i$ preserves products and every abelian compact Lie group defines an abelian group object in the category of compact Lie groups. 
\end{example}

We recall from \cite[Proposition B.3]{GGN}:
\begin{lemma}\label{lem:AbTensored}
    For a presentable $\infty$-category $\XX$, there is an equivalence
    \[\tensor\colon \Ab(\Spc) \tensor \XX \to \Ab(\XX),\]
    where the tensor product is of presentable $\infty$-categories. 
\end{lemma}
 
We recall the notion of a preoriented abelian group object from \cite{Lur07}. 
\begin{defi}
Let $\XX$ be an $\i$-category with finite products.
A \emph{preoriented abelian group object} in $\XX$ is an abelian group object $A\in\Ab(\XX)$ equipped with a morphism of abelian group objects $B\T \to \Map_{\XX}(\pt, A)$. 
We define the $\i$-category $\PreAb(\XX)$ of preoriented abelian group objects in $\XX$  as $\Ab(\XX)\times_{\Ab(\Spc)} \Ab(\Spc)_{B\T/}$, where $\Ab(\XX) \to \Ab(\Spc)$ is corepresented by $\pt$. 
\end{defi}
Equivalently, a preoriented abelian group object is an object $A\in\Ab(\XX)$ with a pointed map $S^2 \to \Map(\pt, A)$. If $\XX$ is presentable, we can alternatively define $\PreAb(\XX)$ as $\Ab(\XX)_{\B \T \tensor \pt/}$. Here, we use that $\Map(\pt, -)\colon \Ab(\XX) \to \Ab(\Spc)$ is right adjoint to $- \tensor \pt$. 

A third equivalent definition is via an algebraic theory. To that purpose, denote by $\TT^{\op}_{\mathrm{preab}} \subset \PreAb(\Spc)$ the full subcategory of preoriented abelian group objects free on a finite set, i.e.\ those equivalent to $\B\T \to \Z^n\times \B\T$.

\begin{lemma}\label{lem:PreAbTheory}
    Let $\XX$ be an $\i$-category with finite products. There is a functorial equivalence $\PreAb(\XX) \simeq \Fun^{\Pi}(\TT_{\mathrm{preab}}, \XX)$. 
\end{lemma}
\begin{proof}
Since the forgetful functor $\PreAb(\Spc) \to \Spc$ is conservative and preserves sifted colimits, \cite[Proposition B.7]{GGN} implies $\PreAb(\Spc) \simeq \Fun^{\Pi}(\TT_{\mathrm{preab}}, \Spc)$. This implies the analogous equivalence for presheaf $\i$-categories. 

For a general $i$-category $\XX$ with finite products, consider the composite
\[\PreAb(\XX) \to \PreAb(\PP(X)) \xrightarrow{\simeq} \Fun^{\Pi}(\TT_{\mathrm{preab}}, \PP(\XX)),\]
where we used that the Yoneda embedding $\XX \to \PP(\XX)$ preserves products. (Note that we have potentially passed to a larger universe.) 

Since the Yoneda embedding is fully faithful, it suffices to show that for any preoriented abelian group $A \in \PreAb(\XX)$, the corresponding functor $\TT_{\mathrm{preab}} \to \PP(\XX)$ takes values in representable presheaves. 
    Thus let $L \in \TT_{\mathrm{preab}}$ be free on a set with $n$ elements. Then for any $A\in \PreAb(\XX)$, the functor \[\Map_{\PreAb(\Spc)}(L, \Map_{\XX}(-, A))\colon \XX^{\op} \to \Spc \]
    is indeed representable by the underlying object of $A^n$. 
\end{proof}

\begin{construction}\label{con:PicardDuality}
Morally, the elliptic cohomology groups for a compact abelian Lie group $G$ are related to the sheaf cohomology groups of the elliptic curve and other related abelian varieties.
In order to make this precise, we invoke a stacky version of Pontryagin duality, which we refer to as \emph{shifted Pontryagin duality}. More precisely, it is given by the functor
\[
\Map_{\Orb}(-,\B\T)\colon(\Orb^{\ab})^{\op}\to\Ab(\SS)_{B\T/} = \PreAb(\Spc),\qquad
X \mapsto \widehat{X}
\]
which sends the abelian orbit stack $X$ to the preoriented abelian group
\[
B\T\simeq\Map(\pt,\B\T)\to\Map(X,\B\T).
\]
Here the preorientation is induced from the projection $X\to\pt$, and the abelian group structure on the mapping space $\Map(X,\B\T)$ is induced pointwise from that on $\BB\T$.

By \cref{prop:orbcalc}, we have $\widehat{\B G} \simeq \widehat{G} \times B \T$, where $\widehat{G}$ is the Pontryagin dual, i.e.\ the space of homomorphisms from $G$ to $\T$.
 \end{construction}

The following proposition is not used in our construction of equivariant elliptic cohomology but shows that shifted Pontryagin duality does not lose any information. 
\begin{proposition}
The shifted Pontryagin duality functor
\[
\Map(-,\B\T)\colon(\Orb^{\ab})^{\op}\to \PreAb(\Spc)
\]
is fully faithful.
\end{proposition}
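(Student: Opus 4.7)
The plan is to reduce everything to a computation inside $\Mod_{\Z}^{\cn}$ by means of the equivalence $\Ab(\Spc)\simeq \Mod_{\Z}^{\cn}$ and the fully faithful embedding $\FGLie^{\ab}\hookrightarrow \Ab(\Spc)$ of \cref{TopAb}. For compact abelian Lie groups $G$ and $H$, \cref{prop:orbcalc} gives
\[
\Map_{\Orb}(\B H,\B G)\simeq \Map_{\Lie}(H,G)\times BG,
\]
since the conjugation action of the abelian group $G$ is trivial. Applied with target $\B\T$, the same formula identifies $\widecheck{\B G}=\Map_{\Orb}(\B G,\B\T)\simeq \widecheck{G}\times B\T$ as an abelian group space, where $\widecheck{G}=\Hom_{\Lie}(G,\T)$ is the discrete finitely generated Pontryagin dual. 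Under $B^{\infty}$ this becomes $\widecheck{G}\oplus \Sigma^{2}\Z$ in $\Mod_{\Z}^{\cn}$, and the preorientation induced by $\B G\to \pt$ is the inclusion of the second summand.

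Because this inclusion is split, the fiber defining $\Map_{\PreAb(\Spc)}(\widecheck{\B G},\widecheck{\B H})$ collapses, and I obtain
\[
\Map_{\PreAb(\Spc)}(\widecheck{\B G},\widecheck{\B H})\simeq \Map_{\Mod_{\Z}^{\cn}}(\widecheck{G},\widecheck{H}\oplus \Sigma^{2}\Z)\simeq \Map(\widecheck{G},\widecheck{H})\times \Map(\widecheck{G},\Sigma^{2}\Z).
\]
The first factor is the discrete set $\Hom(\widecheck{G},\widecheck{H})$, which by classical Pontryagin duality of finitely generated abelian groups is precisely $\Map_{\Lie}(H,G)$. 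For the second factor, I would use a two-term free resolution of $\widecheck{G}=\Z^{n}\oplus \widecheck{A}$ to compute directly that $\Map_{\Mod_{\Z}^{\cn}}(\widecheck{G},\Sigma\Z)\simeq G$ as objects of $\FGLie^{\ab}$ — matching $\pi_{1}=\Hom(\widecheck{G},\Z)$ with the cocharacter lattice of $G$ and $\pi_{0}=\Ext^{1}(\widecheck{G},\Z)$ with the component group $\pi_{0}G$ — so that after a shift $\Map_{\Mod_{\Z}^{\cn}}(\widecheck{G},\Sigma^{2}\Z)\simeq \Sigma G \simeq BG$.

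Combining the two identifications produces a natural equivalence $\Map_{\Orb}(\B H,\B G)\simeq \Map_{\PreAb(\Spc)}(\widecheck{\B G},\widecheck{\B H})$. What remains is to verify that this equivalence is induced by Picard duality itself, rather than being merely an abstract equivalence of spaces. On the $\pi_{0}$ factor this is just the definition of Picard duality — a map $\varphi\colon H\to G$ goes to the pullback of characters $\varphi^{\ast}\colon \widecheck{G}\to \widecheck{H}$ — and for the $BG$ factor I would argue by naturality: the comparison map respects products, so one can reduce to the generators $\B\T$ and $\B C_{n}$, where direct inspection suffices. The main obstacle, and the conceptual heart of the argument, is the identification $BG\simeq \Map_{\Mod_{\Z}^{\cn}}(\widecheck{G},\Sigma^{2}\Z)$: this is the derived incarnation of classical Pontryagin duality for compact abelian Lie groups and genuinely requires the fully faithful embedding of \cref{TopAb}, since without it one can only extract homotopy-group information and not the space-level equivalence one ultimately needs.
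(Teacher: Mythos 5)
Your computation is correct, but your route is genuinely different from the paper's. The paper never computes the mapping spaces on both sides and matches them; instead it forms the \v{C}ech-style simplicial resolution of $(\Orb^{\ab})^{\op}$ by the $\infty$-categories of $n$-pointed objects $(\Orb^{\ab}_{n\ast})^{\op}$, identifies these (via \cref{Orbast} and \cref{prop:orbcalc}) with subcategories of $\FGLie^{\ab}$ where the vertical comparison functors become literally Pontryagin duality followed by the fully faithful embedding of \cref{TopAb}, and then descends using the fact that the face maps are quotient functors in the sense of Appendix~\ref{app:q}. The payoff of that detour is precisely that it never has to identify the induced map on the $BG$-factor of $\Map_{\Orb}(\B H,\B G)\simeq\Map_{\Lie}(H,G)\times BG$: adding basepoints kills that factor, and \cref{prop:qcolim} reassembles it automatically. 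Your approach buys concreteness (an explicit description of both mapping spaces, which is useful in its own right) at the price of having to verify by hand that the abstract equivalence is the one induced by Picard duality.

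That verification is where your proof is only a sketch, and it deserves care on two points. First, ``the comparison map respects products'' is not quite right as stated: $\Map(-,\B\T)$ sends products in $\Orb^{\ab}$ to \emph{coproducts} in $\PreAb(\Spc)=\Ab(\Spc)_{B\T/}$ (the pushout over $B\T$), since $\widecheck{\B G\times\B H}\simeq\widecheck{G}\times\widecheck{H}\times B\T$ has only one $B\T$-factor; the mapping-space comparison does still decompose compatibly, but this is what you must say to justify reducing to $\T$ and $C_n$. Second, you should identify the map $BG\to\Map_{\Mod_\Z^{\cn}}(\widecheck G,\Sigma^2\Z)$ concretely as (adjoint to) the evaluation pairing $BG\times\widecheck G\to B\T$, $(x,\chi)\mapsto B\chi(x)$ --- this follows from unwinding how a $2$-morphism $g\in G=\Omega\Map_{\Orb}(\B H,\B G)$ acts by precomposition on characters --- after which the cases $G=\T$ (the identity of $K(\Z,2)$) and $G=C_n$ (the perfect pairing $C_n\otimes\widecheck{C_n}\to\Q/\Z$ inducing $C_n\xrightarrow{\ \cong\ }\Ext^1(\widecheck{C_n},\Z)$) do close the argument. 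With those two points made explicit your proof is complete; as written they are the gap between a sketch and a proof.
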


\begin{proof}
Let $\Orb^{\ab}_{n\ast}\simeq\Orb^{\ab}_*\times_{\Orb^{\ab}}\cdots\times_{\Orb^{\ab}}\Orb^{\ab}_*$ denote the iterated fibered product, which we might regard as the $\infty$-category of $n$-pointed orbits.
We have a morphism of augmented simplicial $\i$-categories
\begin{equation}\label{eq:DoubleSimplicial}
\xymatrix{
(\Orb^{\ab})^{\op}\ar[d] & (\Orb_*^{\ab})^{\op}\ar[l]\ar[d] & (\Orb^{\ab}_{2*})^{\op}\ar@<-.5ex>[l]\ar@<.5ex>[l]\ar[d] & (\Orb^{\ab}_{3*})^{\op}\ar@<-1ex>[l]\ar[l]\ar@<1ex>[l]\ar[d] & \cdots\ar@<-1.5ex>[l]\ar@<-.5ex>[l]\ar@<.5ex>[l]\ar@<1.5ex>[l]\\
\Ab(\SS)_{B\T/} & \Ab(\SS)\ar[l]  & \Ab(\SS)_{\T/}\ar@<-.5ex>[l]\ar@<.5ex>[l] & \Ab(\SS)_{\T\times\T/}\ar@<-1ex>[l]\ar[l]\ar@<1ex>[l] & \cdots\ar@<-1.5ex>[l]\ar@<-.5ex>[l]\ar@<.5ex>[l]\ar@<1.5ex>[l]}
\end{equation}
in which the vertical maps are given by shifted Pontryagin duality, sending an $n$-pointed $X$ to the abelian group object $\Map_{\Orb^{\ab}_{n*}}(X, \B \T)$.
The vertical maps factor as claimed, since the undercategories in question are determined by the shifted Pontryagin dual
\[
\Map_{\Orb^{\ab}_{n\ast}}(\pt,\B\T)\simeq\T^{n-1}
\]
of the initial object $\pt$ in $(\Orb^{\ab})^{\op}$.

For every $n>0$, we claim a natural equivalence 
\begin{equation}\label{eq:ZnEquivalence}\Map_{\Orb^{\ab}_{n\ast}}(\B H, \B G) \simeq \Map_{\Lie}(H,G) \times G^{n-1} \simeq \Map_{\Lie_{/\Z^{n-1}}}(H\times\Z^{n-1},G\times\Z^{n-1}).\end{equation}
Indeed, the $\infty$-category $\Orb_*^{\ab}$ is equivalent to $\CptLie^{\ab}$ by \cref{Orbast}, settling the case $n=1$. Every two pointings of orbits are equivalent and we can thus restrict for $n>1$ to the case that all pointings of $\B G$ are the same canonical map $\pt \xrightarrow{p} \B G$. By definition of the slice category, $\Map_{\Orb^{\ab}_{n\ast}}(\B H, \B G)$ is the product of $\Map_{\CptLie}(H, G) \simeq \Map_{\Orb^{\ab}_{\ast}}(\B H, \B G)$ and the $(n-1)$-fold product of the space of self-homotopies of $p$ in $\Map_{\Orb^{\ab}}(\pt, \B G)$, i.e.\ $G^{n-1}$, giving the claim. One checks that the equivalence in \eqref{eq:ZnEquivalence} is compatible with composition, at least up to homotopy. 

On mapping spaces, the $n$-th vertical arrow in \eqref{eq:DoubleSimplicial} is
\[\Map_{\Orb^{\ab}_{n*}}(\B H, \B G) \xrightarrow{P} \Map_{\Ab(\Spc)_{/T^{n-1}}}(\Map_{\Orb_{n*}^{\ab}}(\B G, \B T), \Map_{\Orb_{n*}^{\ab}}(\B H, \B T)).\]
Using \eqref{eq:ZnEquivalence}, for $n>0$, the target is equivalent to \[\Map_{\Ab(\Spc)_{/T^{n-1}}}(\Map_{\Lie}(G,T)\times T^{n-1}, \Map_{\Lie}(H,T)\times T^{n-1}).\]
Note that $\Map_{\Lie}(G,T)\times T^{n-1}$ is the Pontryagin dual of $G\times \Z^{n-1}$. By the fully faithfulness of Pontryagin duality and of the embedding $\FGLie \to \Ab(\Spc)$ from \cref{TopAb}, this is equivalent to 
\[ \Map_{\Lie_{/\Z^{n-1}}}(H\times\Z^{n-1},T\times\Z^{n-1}) \simeq \Map_{\Orb^{\ab}_{n*}}(\B H, \B G).\]
Tracing through the equivalences shows that this provides the required inverse up to homotopy of $P$. Thus, the vertical maps in \eqref{eq:DoubleSimplicial} (except, possibly, for the one on the far left) are fully faithful.

Observe now that the leftmost horizontal maps in \eqref{eq:DoubleSimplicial} are quotient functors (see Appendix \ref{app:q} for details).
It follows that the left-hand vertical map is also fully faithful (cf.\ the description of mapping spaces in the proof of \cref{prop:qcolim}).
\end{proof}

Our next goal is to associate to any preoriented abelian group object in some $\XX$ a functor $\PreAb(\Spc)^{\op} \to \XX$; precomposing this functor with shifted Pontryagin duality will allow us to define a functor from orbispaces to $\XX$. The following categorical result will not only construct the functor, but also show that it does no lose information. At least parts of the result are well-known (see e.g.\ \cite[Proposition B.3]{GGN}). 
\begin{proposition}\label{prop:FunRAb}
    Let $\TT$ be a theory and $\XX$ a complete $\i$-category with finite products. Denoting $\Fun^{\Pi}(\TT, \XX)$ by $\TT(X)$, the functor
       \[\TT(\XX) \to \Fun^R(\TT(\Spc)^{\op}, \XX), \qquad A \mapsto (B \mapsto \uMap_{\TT}(B, A))\] 
    is an equivalence. Here, for $A\in \TT(\XX)$ and $B\in \TT(\Spc)$, the object $\uMap_{\TT}(B, A)$ represents the functor $X \mapsto \Map_{\TT(\Spc)}(B, \Map_{\XX}(X, A))$. 

    In particular, this applies to $\TT = \TT_{\mathrm{ab}}$ with $\TT(\XX) = \Ab(\XX)$ and $\TT = \TT_{\mathrm{preab}}$ with $\TT(\XX) = \PreAb(\XX)$. 
\end{proposition}
\begin{proof}
    By definition, $\TT(\Spc) = \Fun^{\Pi}(\TT, \Spc)$ agrees with $P_{\Sigma}(\TT^{\op})$ in the sense of \cite[Section 5.5.8]{HTT}. Introducing opposite categories in Proposition 5.5.8.15 in op.cit.\ implies that the restriction functor
    \[\res\colon \Fun^R(\TT(\Spc)^{\op}, \XX) \to \Fun^{\Pi}(\TT, \XX)\simeq \TT(\XX)\]
    is an equivalence. It remains to identify its inverse explicitly. 

    Fixing $A\in \TT(\XX)$, consider the class of $B\in \TT(\Spc)$ such that the functor \[X \mapsto \Map_{\TT(\Spc)}(B, \Map_{\XX}(X, A))\] is representable. Since $\XX$ is complete, this class is closed under colimits. Moreover, for $B$ being free on one generator, the representing object is the underlying object in $\XX$ of $A$ itself, yielding representability for all $B$. Thus, the functor 
    \[\TT(\Spc)^{\op}\times \TT(\XX) \to \PP(\XX), \qquad (A,B)\mapsto (X \mapsto \Map_{\TT(\Spc)}(B, \Map_{\XX}(X, A)))\] 
    factors through a functor \[\TT(\Spc)^{\op}\times \TT(\XX) \to \PP(\XX)\]
    we denote by $\uMap_{\TT}$. 
    This yields in particular the functor $\TT(\XX) \to \Fun^R(\TT(\Spc)^{\op}, \XX)$ from the claim of our proposition. This is inverse to $\res$ if $\uMap_{\TT}(L, A)$ is naturally equivalent to $A(L)$ for all $L\in \TT$. But the functor 
    $\Map_{\TT(\Spc)}(L, \Map_{\XX}(-, A)))$
    is equivalent to $\Map_{\XX}(-, A)(L) = \Map_{\XX}(-, A(L))$ by Yoneda, which is indeed represented by $A(L)$. 
\end{proof}

\begin{remark}\label{rem:PresentableCase}
    If $\XX$ is presentable, $\uMap_{\TT}(B, -)$ defines a right adjoint to \[B\tensor -\colon \TT(\XX)\to \TT(\XX)\] in the sense of \cref{lem:AbTensored}. We further can identify this with the internal mapping object in $\TT(\XX)$ from $B\tensor \pt$.
\end{remark}

\begin{remark}\label{rem:fin}
    Let $\TT(\Spc)^{\fin}$ be the smallest subcategory of $\TT(\Spc)$ that contains the free $\TT$-algebra on one object and is closed under finite colimits. By the same proof as above, $\uMap_{\TT}(B,A)$ exists for $B\in \TT(\Spc)^{\fin}$ and $A\in \TT(\XX)$ if $\XX$ has \emph{finite} limits. The functor
    \[\TT(\XX) \to \Fun^{\mathrm{lex}}(\TT(\Spc)^{\fin, \op}, \XX), \qquad A \mapsto (B \mapsto \uMap(B, A))\]
    into finite-limit preserving functors is an equivalence. Indeed: if $\XX$ is complete,    
    the equivalence $\Ind(\TT(\Spc)^{\fin})\simeq \TT(\Spc)$ implies that the restriction functor \[\Fun^R(\TT(\Spc)^{\op}, \XX) \to  \Fun^{\mathrm{lex}}(\TT(\Spc)^{\fin, \op}, \XX)\]
    is an equivalence. Then \cref{prop:FunRAb} gives the result. The general case can be reduced to that of presheaf $\i$-categories, similarly to the proof of \cref{lem:PreAbTheory}. 
\end{remark}

Combining the preceding proposition with \cref{con:PicardDuality} leads to one of the main constructions of this article. 
\begin{construction}\label{MainConstruction}
Let $\XX$ be a finitely complete $\infty$-category. Note that the shifted Pontryagin duality functor $\Orb^{\ab} \to \PreAb(\Spc)$ actually takes values in $\PreAb(\Spc)^{\fin}$ in the sense of \cref{rem:fin}: $\widehat{\B G}$ is equivalent to the product of  $\widehat{G}$ (with trivial preorientation) and $B\T$ (with tautological preorientation) and the Pontryagin dual of any compact abelian Lie group is a finitely generated discrete group. Precomposition with shifted Pontryagin duality turns thus the equivalence $\PreAb(\XX) \to \Fun^{\mathrm{lex}}(\PreAb(\Spc)^{\op, \fin}, \XX)$ from \cref{rem:fin} into a functor
\[\PreAb(\XX) \to \Fun(\Orb^{\ab}, \XX), \qquad A \,\mapsto\, (X \mapsto \uMap_{\PreAb}(\widehat{X}, A)). \]
This functor is also functorial in $\XX$ in the following sense: Given a functor $F\colon \XX \to \YY$ preserving finite limits, we obtain a commutative square
\[
\xymatrix{ \PreAb(\XX) \ar[r]\ar[d]^{F_*} & \Fun(\Orb^{\ab}, \XX) \ar[d]^{F_*} \\
\PreAb(\YY) \ar[r] & \Fun(\Orb^{\ab}, \YY). }
\] 

Now assume that $\XX$ is additionally cocomplete. Given $A \in \PreAb(\XX)$, left Kan extension along $\Orb^{\ab} \subset \Orb$ defines a functor $\Orb \to \XX$. As $\Spc_{\Orb}$ is the presheaf $\i$-category on $\Orb$, i.e.\ its universal cocompletion, this extends essentially uniquely to a colimit-preserving functor $\Spc_{\Orb}\to \XX$. More precisely, we obtain a functor 
\[\PreAb(\XX) \to \Fun^{\mathrm{L}}(\Spc_{\Orb}, \XX)\] 
into colimit-preserving functors. This construction is natural in $\XX$ with respect to functors that preserve colimits and finite limits.
\end{construction}

\begin{remark}
The preceding construction is closely related to \cite[Proposition 3.1]{Lur07}, whose proof was only outlined though and did not contain the notion of shifted Pontryagin duality. 
\end{remark}

\begin{prop}\label{prop:computation}
    Let $\XX$ be an $\infty$-category with finite limits and $A\in \PreAb(\XX)$. The associated functor $F\colon \Orb^{\ab} \to \XX$ from \cref{MainConstruction} sends $\B G$ to $A[\widehat{G}] := \uMap_{\Ab}(\widehat{G}, A)$. Here, $\widehat{G}$ is the Pontryagin dual, and we leave the forgetful functor $\PreAb(\XX) \to \Ab(\XX)$ implicit. If $G = \fib(\T^n\xrightarrow{\varphi}\T^m)$, then $A[\widehat{G}] \simeq \fib(A(\Z^m) \xrightarrow{\widehat{\varphi}^*} A(\Z^n))$.
\end{prop}
\begin{proof}
    The forgetful functor $\PreAb(\Spc) \to \Ab(\Spc)$ has a left adjoint sending $H$ to $H\times \B\T$, with  preorientation given by inclusion into the second factor. As computed in \cref{con:PicardDuality}, the shifted Pontryagin dual of $\B G$ is equivalent to $\widehat{G} \times \B \T$, i.e.\ lies in the image of the left adjoint. Thus, 
    \[F(\B G) = \uMap_{\PreAb}(\widehat{\B G}, A) \simeq \uMap_{\Ab}(\widehat{G}, A).\]
    The last equivalence uses the defining property of $\uMap$ as in \cref{prop:FunRAb}.
    
    Note that Pontryagin duality sends limits to colimits. Thus, $F$ preserves limits and the last claim follows.
\end{proof}
\begin{example}
We take $\XX = \Spc_{\Orb}$ and $A = \B T$, with the preorientation given by the canonical identification of $B\T$ with $\Map_{\Orb}(\pt, \B\T)$. The corresponding functor $\Orb^{\ab} \to \Spc_{\Orb}$ sends any $X$ to its double shifted Pontryagin dual, which is easily seen to be naturally equivalent to $X$ again. Thus, the functor $\Orb^{\ab} \to \Spc_{\Orb}$ is just the natural inclusion as representable presheaves. We obtain a functor $\Spc_{\Orb} \to \Spc_{\Orb}$ by left Kan extending this inclusion along itself. This functor is \emph{not} the identity, but we claim it rather to be the colocalization functor given by the composite
\[\Spc_{\Orb} \xrightarrow{\iota^*} \Fun(\Orb^{\ab, \op}, \Spc) \xrightarrow{\iota_!}  \Spc_{\Orb},\]
where we denote by $\iota\colon \Orb^{\ab} \to \Orb$ the inclusion and by $\iota_!$ the left adjoint to restriction of presheaves. 

This claim is a special case of the following more general claim: Let $F\colon \CC \to \DD$ be a functor of $\infty$-categories. Denoting by $Y_{\CC}\colon \CC \to \Fun(\CC^{\op}, \Spc)$ the Yoneda embedding, we claim that the natural transformation $\mathrm{Lan}_{F}Y_{\CC} \to F^*Y_{\DD}$ of functors $\DD \to \Fun(\CC^{\op}, \Spc)$, induced by $Y_{\CC} \to F^*Y_{\DD}F$, is an equivalence. In the case that $\CC = \pt$ and $F$ corresponds to an object $d\in \DD$, this boils down to the natural equivalence $\colim_{\DD_{F/e}} \pt \simeq \Map_{\DD}(d,e)$, where $\DD_{F/e}$ is the comma category of $F$ and $\pt \xrightarrow{e} \DD$. For the general case, we note that it suffices to show that  $i^*\mathrm{Lan}_{F}Y_{\CC} \to i^*F^*Y_{\DD}$ is an equivalence for every functor $i\colon \pt \to \CC$. This natural transformation fits into a commutative square

\[
\xymatrix{ \mathrm{Lan}_{Fi}(Y_{\pt})\ar[d]^{\simeq} \ar[r]^-{\simeq} & (Fi)^*Y_{\DD} \simeq i^*F^*Y_{\DD} \\
\mathrm{Lan}_F\mathrm{Lan}_i Y_{\pt} \ar[d]^{\simeq} & \\
\mathrm{Lan}_F i^*Y_{\CC} \ar[r]^{\simeq} &i^*\mathrm{Lan}_F Y_{\CC}  \ar[uu]
}
\]
and is thus an equivalence as the upper horizontal and lower left vertical morphisms are equivalences by the special case treated above and the upper left vertical and the lower horizontal morphism are equivalences by general properties of left Kan extensions (as $i^*$ preserves colimits). 
\end{example}

\section{The case of K-theory}\label{sec:KTheory}
To motivate our subsequent definition of equivariant elliptic cohomology, we will demonstrate how the construction in the preceding section reproduces equivariant K-theory from the strict multiplicative group over K-theory. We do not claim significant originality in this section, as an analogous result for finite groups of equivariance was already proven in \cite[Theorem 4.1.2]{LurEllIII}, and a key point already goes back to \cite{AdamsHaeberlyJackowskiMay}. Our proof is, however, more direct than the one in \cite{LurEllIII}. 

We first recall the definition of equivariant K-theory: For every compact Lie group $G$, we denote by $\Rep_G$ the $\infty$-category associated to the topological category of finite-dimensional complex representations of $G$. This carries the compatible symmetric monoidal structures $\oplus$ and $\tensor$, i.e.\ has the structure of an $E_{\infty}$-semiring category. Multiplicative infinite-loop space theory (originally in particular due to May, Elmendorf and Mandell and developed $\infty$-categorically in \cite{GGN}) yields a connective $E_{\infty}$-ring spectrum $ku^{\B G}$ that deloops the group completion of the $\infty$-groupoid $\Rep_G^{\simeq}$. 

Note that $\Rep_G$ can be identified with the category of vector bundles $\Vect(\B G)$ on $\B G$ and thus is only dependent on the (unpointed) orbispace $\B G$. Thus, $\Rep$ refines to a functor $\Vect$ from $\Orb^{\op}$ to $E_{\infty}$-semiring categories. Hence, $ku^{-}$ refines to a functor $\Orb^{\op} \to \CAlg$. Since $ku^{\pt} = ku$, the functor actually refines further to a functor $\Orb^{\op} \to \CAlg_{ku}$ and we define $KU^{-}\colon \Orb^{\ab} \to \CAlg$ as $ku^{-}\tensor_{ku} KU$. 

The space $\Omega^{\infty}ku^{\B G}$ agrees with the $G$-fixed points of the classical $G$-space $BUP_G \simeq BU_G \times R(G)$ representing equivariant K-theory, and equivariant Bott periodicity implies $ku^{\B G} \to KU^{\B G}$ induces an equivalence on infinite loop spaces. Via limits, we can extend $KU^{-}$ to a functor $\Spc_{\Orb}^{\mathrm{op}} \to \CAlg$ and one can show that $\pi_0$ of the resulting $KU^{[X/G]}$ agrees with the classical $G$-equivariant K-theory of $X$ for every finite $G$-CW-complex $X$. 

This ends our recollections on equivariant K-theory. Next, we need to recall from \cite[Section 1.6.3]{LurEllII} the definition of the strict multiplicative group: We define an abelian group object $\Gm$ in affine spectral schemes by 
\[\Gm(R) = \Map_{\CAlg}(\Sigma^{\infty}_+\Z, R)\simeq \Map_{\mathrm{CMon}(\Spc)}(\Z, \Omega^{\infty}R).\] More precisely, the group structure is given by considering $\Gm(R)$ as a functor from $\TT_{\Ab} \to \Spc$, sending $L$ to $\Map_{\CAlg}(\Sigma^{\infty}_+L, R)$. (Equivalently, $\Gm(R)$ arises by applying the right adjoint of the forgetful functor $\Ab(\Spc)\to \mathrm{CMon}(\Spc)$ to $\Omega^{\infty}R$ with the multiplicative $E_{\infty}$-structure.) By definition, $\Gm$ is corepresented by $\Sigma^{\infty}_+\Z$. We define $\G_{m,S}$ for an $E_{\infty}$-ring $S$ as the (nonconnective) spectral affine $S$-scheme $\Spec S \tensor \Sigma^{\infty}_+\Z$. 

\begin{theorem}\label{thm:ku}
    There is a preorientation on $\G_{m,KU}$ such that the associated functor from $\Orb$ to nonconnective spectral affine schemes constructed in \cref{MainConstruction} is equivalent to $\Spec KU^{-}$. Extending via colimits yields an equivalence of functors on $\Spc_{\Orb}$. 
\end{theorem}

We prove \cref{thm:ku} in two main steps. In the first, we deduce from a result of Adams, Haeberly, Jackowski and May that $KU^{-}$ is Kan extended from $\Orb^{\ab}$ (cf.\ also \cite[Example 6.4.27]{SchGlobal}). In the second, we will identify $KU^{-}$ on $\Orb^{\ab}$ with what we get via \cref{MainConstruction} from $\G_{m,KU}$. 

\begin{lemma}\label{lem:adjointrep}
    Let $\FF$ be a global family of groups (i.e.\ $\FF$ is closed under isomorphisms, subgroups and quotients), and let $\Orb^{\FF}$ be the full subcategory of $\Orb$ on those $\B G$ such that $G\in \FF$. Fix a compact Lie group $G$ and let $\RR$ be the full subcategory of $\Orb^{\FF}_{/\B G}$ on the representable maps $\B H \to \B G$ (i.e.\ those corresponding to injections). Then the inclusion $\RR \subset \Orb^{\FF}_{/\B G}$ is final. 
\end{lemma}
\begin{proof}
Recall that there is a factorization system on $\Orb$: for a homomorphism $\varphi\colon H \to G$, the morphism $\B H \to \B G$ is in the left class iff $\varphi$ is surjective and in the right class iff $\varphi$ is injective. (See \cite[Proposition 6.14]{LinskensNardinPol} for further details.) Concretely, the factorization systems correspond to the factorization of some $\varphi\colon H\to G$ as $H \twoheadrightarrow \im(\varphi) \hookrightarrow G$. This restricts clearly to $\Orb^{\FF}$ for any global family $\FF$ of groups. The conclusion follows from \cref{lem:factorizationfinal}.
\end{proof}

\begin{prop}[Adams, Haeberly, Jackowski, May]\label{prop:InducedfromAbelian}
    The functor $KU^{-}\colon \Orb^{\op} \to \CAlg$ is right Kan extended from $\Orb^{\FF, \op}$ for any global family $\FF$ containing all topologically cyclic groups. This applies in particular to the family of all abelian compact Lie groups.
\end{prop}
\begin{proof}
    By the pointwise formula for Kan extensions, we need to show that the comparison map $KU^{\B G} \to \lim_{\Orb^{\FF}_{/\B G}} KU^{-}$ is an equivalence. Consider the full subcategory $\RR \subset  \Orb^{\FF}_{/\B G}$ of \emph{representable} morphisms $\B H \to \B G$. By \cref{lem:adjointrep}, the inclusion $\RR \subset  \Orb^{\FF}_{/\B G}$ is final, and hence $\RR^{\op} \subset (\Orb^{\FF}_{/\B G})^{\op}$ is initial. Thus it suffices to show that the map $KU^{\B G} \to \lim_{\RR}KU^{-}$ is an equivalence. By \cref{cor:OrbBG}, $\RR \simeq \Orb_G^{\FF}$ for $\Orb_G^{\FF} \subset \Orb_G$ the full subcategory on $G/H$ such that $H\in \FF$. Moreover, $\colim_{G/H\in \Orb_G^{\FF}} G/H$ is a universal space $E\FF$ for the family of subgroups of $G$ that are in $\FF$. Thus, $\pi_*\lim_{\RR}KU^{-} \cong KU_G^{-*}(E\FF)$.
    By Corollary 1.3 from \cite{AdamsHaeberlyJackowskiMay}, $K_G^*(\pt) \to K_G^*(E\FF)$ is an isomorphism and the result follows. 
\end{proof}

The infinite-loop space $\Omega^{\infty}ku$ is the group completion of $\Vect_{\C}^{\simeq} \simeq \coprod BU(n)$. The resulting $E_{\infty}$-map $BU(1) \to \Omega^{\infty}ku$ (with the multiplicative $E_{\infty}$-structure on $\Omega^{\infty}ku$) is adjoint to a map $\CP^{\infty} = BU(1) \to \G_{m,ku}(ku)$ in $\Ab(\Spc)$. This is our choice of preorientation on $\G_{m,ku}$. Postcomposing with $\G_{m,ku} \to \G_{m,KU}$ defines a preorientation on $\G_{m,KU}$. 

\begin{prop}\label{prop:EquivalenceOnAbelian}
    The functor 
    \[\Orb^{\ab}\times \CAlg_{ku} \to \Spc, \qquad (X,R) \mapsto \Map_{\PreAb}(\Map(X, \B\T), \G_{m, ku}(R)) \] 
    is equivalent to 
    \[\Orb^{\ab}\times \CAlg_{ku} \to \Spc, \qquad (X,R) \mapsto \Map_{\CAlg_{ku}}(ku^X, R) \]
\end{prop}
The corresponding result for $KU$ instead of $ku$ follows. Thus, $\Spec KU^{-}$ is naturally equivalent to $\Map_{\PreAb}(\Map(-, \B\T)\tensor \pt, \G_{m,KU})$, where $\pt$ is the final object of affine $KU$-schemes (i.e.\ $\Spec KU$). Thus, \cref{prop:InducedfromAbelian} and \cref{prop:EquivalenceOnAbelian} imply \cref{thm:ku}.

The basic idea of \cref{prop:EquivalenceOnAbelian} is that all irreducible $G$-representations for an abelian group $G$ are $1$-dimensional, which implies $ku^{\B G} \simeq ku\tensor \widehat{G}_+$. To ensure functoriality in $\Orb^{\ab}$, we will formulate this in orbispace terms. 

\begin{lemma}\label{lem:OneDimensional}
     For $X\in \Orb^{\ab}$, there is a natural equivalence
    \[\Phi\colon ku \tensor_{\Sigma^{\infty}_+BU(1)} \Sigma^{\infty}_+\Map_{\Orb}(X, \B U(1)) \xrightarrow{\simeq} ku^{X}\]
    of $E_{\infty}$-$ku$-algebras.
\end{lemma}
\begin{proof}
    Let $X \in \Orb$. By adjunction, maps 
    \[\Phi\colon ku \tensor_{\Sigma^{\infty}_+BU(1)} \Sigma^{\infty}_+\Map_{\Orb}(X, \B U(1)) \to ku^{X}\]
    in $\CAlg_{ku}$ correspond to  $E_{\infty}$-maps $\phi\colon \Map_{\Orb}(X, \B U(1)) \to \Omega^{\infty} ku^{X}$ under $BU(1)$, with the multiplicative structure $E_{\infty}$-structure on the target. 
    
    The $E_{\infty}$-monoid $\Map_{\Orb}(X, \B U(1))$ includes as the $1$-dimensional part into $\Vect(X)^{\simeq}$, whose group completion is $\Omega^{\infty} ku^{X}$. Composing inclusion and group completion yields thus a map $\phi$ and a corresponding map $\Phi$, which is natural in $X$.
    
    We will show that $\Phi$ is an equivalence if $X = \B G$ for $G$ abelian.  Since \[\Map_{\Orb}(\B G, \B U(1)) \simeq \widehat{G} \times BU(1),\] 
     the source of $\Phi$ simplifies to $ku \tensor \widehat{G}_+$. More precisely, $\Phi$ arises as the group completion of 
     \[\varphi\colon \Vect_{\C}^{\simeq}\tensor\, \widehat{G} \to \Rep_G, \qquad (V_L)_{L\in \widehat{G}} \mapsto \bigoplus_{L\in \widehat{G}} V_L\tensor L;\] 
     here we interpret the tensor product $\Vect_{\C}^{\simeq}\tensor\, \widehat{G}$ of $E_{\infty}$-spaces as the finitely supported part in $\prod_{\widehat{G}} \Vect_{\C}^{\simeq}$, using that finite direct sums and finite products agree in $E_{\infty}$-spaces and the tensor product commutes with filtered colimits. By Schur's lemma, $\varphi$ is fully faithful, and it is essentially surjective since all $G$-representations decompose into one-dimensional representations.
\end{proof}

\begin{proof}[Proof of \cref{prop:EquivalenceOnAbelian}: ]
    For every $\B G\in \Orb^{\ab}$ and $R\in \CAlg_{ku}$, \cref{lem:OneDimensional} implies a natural chain of equivalences 
    \begin{align*}
        \!\!\!\Map_{\PreAb(\Spc)}(\Map(X, \B\T), \G_{m, ku}(R)) &\simeq \Map_{\Ab(\Spc)}( \Map_{\Orb}(\B G, \B U(1)), \Gm(R))^{hBU(1)}\\
        &\simeq \Map_{\CAlg}( \Sigma^{\infty}_+\Map_{\Orb}(\B G, \B U(1)), R)^{hBU(1)}\\
        &\simeq \Map_{\CAlg_{ku}}(ku\!\!\!\!\!\!\underset{\Sigma^{\infty}_+BU(1)}{\otimes} \!\!\!\!\!\!\Sigma^{\infty}_+\Map_{\Orb}(\B G, \B U(1)), R)\\
        &\simeq \Map_{\CAlg_{ku}}(ku^{\B G}, R).\qedhere
    \end{align*}
\end{proof}
\section{Elliptic curves, formal completions and orientations}\label{sec:Orientations}
The goal of this section is to review the notion of an oriented elliptic curve over a non-connective spectral Deligne--Mumford stack, following the ideas of Lurie in \cite{Lur07} and \cite{LurEllII}. In the affine case, an orientation of an elliptic curve $\Ef \to \Spec R$ consists of an equivalence of the formal completion $\widehat{\Ef}$ with the formal group $\Spf R^{B\T}$, providing a purely spectral analogue of the algebro-topological definition of an elliptic cohomology theory (see \cite[Definition 1.2]{Lur07} or \cite[Definition 1.2]{AHR01}). While the notion of a preorientation suffices for the definition of equivariant elliptic cohomology, one only expects good properties if one starts with an oriented elliptic curve. Setting up the necessary definitions in the non-connective spectral Deligne--Mumford case will occupy the rest of this section.

We recall from \cite[Definition 1.4.4.2]{SAG} the notion of a non-connective spectral Deligne--Mumford stack and point to the end of the introduction for our conventions about these objects. Our first goal is to define an $\infty$-topos of sheaves on a given non-connective spectral Deligne--Mumford stack. Since simply defining it to be sheaves on the site of all morphisms into our given stack would run into size issues, we impose the following finiteness condition. Here and in the following, we will use for a non-connective spectral Deligne--Mumford stack $\Mf = (\MM, \OO_{\Mf})$ the shorthand $\tau_{\geq 0}\Mf$ for $(\MM, \tau_{\geq 0}\OO_{\Mf})$.
\begin{definition}\label{def:almostfinitepres}
A non-connective spectral Deligne--Mumford stack $\Mf = (\MM, \OO_{\Mf})$ is called \emph{quasi-compact} if every cover $\coprod U_i \to \ast$ in $\MM$ has a finite subcover. A morphism $f\colon \Nf \to \Mf$ is called \emph{quasi-compact} if for every \'etale morphism $\Spec R \to \Mf$ the pullback $\Nf \times_{\Mf} \Spec R$ is quasi-compact (cf.\ \cite[Definition 2.3.2.2]{SAG}). 

A morphism $f\colon \Nf \to \Mf$ of spectral\footnote{We recall that a spectral Deligne--Mumford stack is a non-connective spectral Deligne--Mumford stack whose structure sheaf is connective.} Deligne--Mumford stacks is called \emph{almost of finite presentation} if it is locally almost of finite presentation in the sense of \cite[Definition 4.2.0.1]{SAG} and quasi-compact.

We call a morphism $f\colon \Nf \to \Mf$ of non-connective spectral Deligne--Mumford stacks \emph{almost of finite presentation} if $f$ is the pullback of a morphism $\Nf' \to \tau_{\geq 0}\Mf$ almost of finite presentation along $\Mf \to \con \Mf$, where $\Nf'$ is also spectral Deligne--Mumford. 
\end{definition}

\begin{example}\label{ex:etale}
Every map $\Spec R \to \Mf$ into a non-connective spectral Deligne--Mumford stack is quasi-compact, as affine non-connective spectral schemes are quasi-compact by \cite[Proposition 2.3.1.2]{SAG} and we assumed all non-connective spectral Deligne--Mumford stacks to be quasi-separated. 

Moreover, \'etale morphisms are always locally almost of finite presentation (as follows e.g.\ by \cite[Corollary 4.1.3.5]{SAG}). Thus, every \'etale morphism $\Spec R \to \Mf$ (and more generally every quasi-compact \'etale morphism) is almost of finite presentation.
\end{example}

\begin{definition}\label{def:Shv}
We define the \emph{big \'etale site} of a spectral Deligne--Mumford stack $\Mf$  to be the full sub-$\infty$-category of spectral Deligne--Mumford stacks over $\Mf$ that are almost of finite presentation; coverings are given by jointly surjective \'etale morphisms. We define $\Shv(\Mf)$ to be the $\infty$-category of space-valued sheaves on the big \'etale site of $\Mf$. 

If $\Mf$ is more generally a \emph{non-connective} spectral Deligne--Mumford stack, we define $\Shv(\Mf)$ to be $\Shv(\tau_{\geq 0}\Mf)$.  
\end{definition}
\begin{remark} 
We imposed the finiteness condition in our definition of the big \'etale site to ensure that it is a small $\infty$-category (see \cref{lem:smallbig}). Thus, $\Shv(\Mf)$ becomes an $\infty$-topos.

Our insistence in the conventions that all our spectral Deligne--Mumford stacks are quasi-separated and 
locally noetherian is connected to these finiteness conditions. Milder finiteness conditions in \cref{def:Shv} would result in milder conditions on our stacks. 
\end{remark}

\begin{notation}
We will often use the notation $\pt$ for the terminal object in the $\infty$-topos $\Shv(\Mf)$; this is represented by $\id_{\con\Mf}\colon \con\Mf \to \con\Mf$ in the big \'etale site of $\con\Mf$. 
\end{notation}
 
Next we want to define the notion of an elliptic curve. 
\begin{definition}
Let $\Mf$ be a non-connective spectral Deligne--Mumford stack. An \emph{elliptic curve}\footnote{If the base is affine, our definition coincides with what Lurie calls \emph{strict} elliptic curves. As we do not consider non-strict elliptic curves in this article, we drop the adjective.} $\Ef$ over $\Mf$ is an abelian group object in the $\infty$-category of non-connective spectral Deligne--Mumford stacks over $\Mf$ such that
\begin{enumerate}
\item $\Ef \to \Mf$ is flat in the sense of \cite[Definition 2.8.2.1]{SAG},
\item $\tau_{\geq 0}\Ef \to \tau_{\geq 0}\Mf$ is almost of finite presentation and proper in the sense of \cite[Definition 5.1.2.1]{SAG},
\item For every morphism $i\colon \Spec k \to \tau_{\geq 0}\Mf$ with $k$ an algebraically closed (classical) field the pullback $i^*\Ef \to \Spec k$ is a classical elliptic curve. 
\end{enumerate}
\end{definition}

\begin{remark}
One can show that given an elliptic curve $\Ef$ over $\Mf$, the connective cover $\tau_{\geq 0}\Ef$ is an elliptic curve over $\tau_{\geq 0}\Mf$ and that this procedure provides an inverse of the base change from $\tau_{\geq 0}\Mf$ to $\Mf$. Thus, we obtain an equivalence between the $\infty$-categories of elliptic curves over $\Mf$ and over $\tau_{\geq 0}\Mf$ (cf.\ \cite[Remark 1.5.3]{LurEllI}).

We will view an elliptic curve $\Ef$ over $\Mf$ as an abelian group object in $\Shv(\Mf)$ by using the functor of points of $\con \Ef$. 
\end{remark}

We fix from now on a non-connective spectral Deligne--Mumford stack $\Mf$ and an elliptic curve  $p\colon \Ef \to \Mf$ over $\Mf$.

\begin{definition}
A \emph{preorientation} of $\Ef$ is a preorientation of $\Ef$ as an abelian group object in $\Shv(\Mf)$, i.e.\ a morphism of abelian group objects $B\T \tensor \pt \to \Ef$ or, equivalently, $B\T \to \Omega^{\infty}\Ef(\Mf)$.
\end{definition}

If $\Nf\to\Mf$ is a map from a non-connective spectral scheme, then $\Ef\times_\Mf\Nf\to\Nf$ is a morphism of nonconnective spectral schemes equipped with a unit section $i\colon\Nf\to\Ef\times_\Mf\Nf$ which is a closed immersion; in particular, it determines a closed subspace of the underlying topological space of $\Ef\times_\Mf \Nf$.
The {\em complement} of $i$ in $\Ef\times_\Mf\Nf$ is the open subscheme corresponding to the open subscheme of the underling ordinary scheme of $\Ef\times_\Mf\Nf$ (under the equivalence between the small Zariski sites of $\Ef\times_\Mf\Nf$ and that of its underlying ordinary scheme).
Letting $\Nf\to\Mf$ vary over the small \'etale site of $\Mf$, these local open complements glue together to form the open complement of the unit section $\Mf\to\Ef$.

\begin{definition}
We define the \emph{formal completion} $\widehat{\Ef}\in \Ab(\Shv(\Mf))$ of $\Ef$ as follows: let $U\to\tau_{\geq 0}\Ef$ be the open (relative) spectral subscheme which is the complement of the closed unit section in $\con \Ef$. For every $\Yf \to \con \Mf$ we let $\widehat{\Ef}(\Yf) \subset (\con \Ef)(\Yf)$ be the full sub-$\infty$-groupoid on those maps $\Yf \to \con \Ef$ such that the fiber product $\Yf \times_{\con \Ef} U$ is empty.
\end{definition}

\begin{warning}
We use the same notation for completion and for Pontryagin (or Picard) duality. The latter applies to a compact abelian topological group or an object in $\Orb^{\ab}$, while the former will typically be applied to a spectral elliptic curve. We trust that it should be clear in every case which of the two is meant. 
\end{warning}

\begin{remark}
The map $B\T \tensor \pt \to \Ef$ factors through the \emph{completion} $\widehat{\Ef}$ of $\Ef$.
In fact, $\widehat{\Ef}$ is the affinization of $B\T\tensor\pt$.
\end{remark}

The formal completion of an elliptic curve is an example of a \emph{formal hyperplane}. To recall this notion from \cite{LurEllII}, we have first to discuss the cospectrum of a commutative coalgebra (cf.\ \cite[Construction 1.5.4]{LurEllII}). 

\begin{definition}
For an $E_{\infty}$-ring $R$, define the $\infty$-category $\cCAlg_R$ of commutative coalgebras over $R$ as $\CAlg(\Mod_R^{\op})^{\op}$. In the case that $R$ is connective, we define the \emph{cospectrum} $\cSpec(C) \in \Fun(\CAlg_R^{\cn}, \Spc)$ of a coalgebra $C\in \cCAlg_R$ by \[\cSpec(C)(A) = \Map_{\cCAlg_A}(A, C \tensor_R A).\]

More generally for a spectral Deligne--Mumford stack and a coalgebra \[\CC \in \cCAlg_{\Mf} = \CAlg(\QCoh(\Mf)^{\op})^{\op},\] 
we define the relative cospectrum $\cSpec_{\Mf}(\CC) \in \Shv(\Mf)$ by 
\[\cSpec_{\Mf}(f\colon \Nf \to \Mf) = \Map_{\cCAlg_{\Nf}}(\OO_{\Nf}, f^*\CC).\]
\end{definition}

Given a discrete ring $R$, an important example of an $R$-coalgebra is the continuous dual of $R\llbracket t_1, \dots, t_n\rrbracket$, which we denote by $\Gamma_R(n)$ as it coincides as an $R$-module with the divided power algebra on $n$ generators. Note that its dual is again $R\llbracket t_1, \dots, t_n\rrbracket$. The cospectrum $\cSpec \Gamma_R(n)$ coincides with $\Spf R\llbracket t_1, \dots, t_n\rrbracket$ by \cite[Proposition 1.5.8]{LurEllII}.

\begin{definition}
Let $\Mf$ be a spectral Deligne--Mumford stack. An object $F \in \Shv(\Mf)$ is called a \emph{formal hyperplane} if it is of the form $\cSpec_{\Mf}(\CC)$ for some coalgebra $\CC \in \cCAlg_{\Mf}$ that is \emph{smooth}, i.e.\
\begin{itemize}
    \item locally, $\pi_0\CC$ is of the form $\Gamma_R(n)$ for some discrete ring $R$ and some $n\geq 0$, and
    \item the canonical map $\pi_0\CC \tensor_{\pi_0\OO_{\Mf}} \pi_k\OO_{\Mf} \to \pi_k\CC$ is an isomorphism for all $k\in\Z$.
\end{itemize}
\end{definition}

\begin{lemma}\label{lem:cSpecEquivalence}
   For every spectral Deligne--Mumford stack, the cospectrum functor defines an equivalence between the $\infty$-category of smooth coalgebras on $\Mf$ and formal hyperplanes on $\Mf$.
\end{lemma}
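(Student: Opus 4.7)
The plan is to reduce to the affine case by \'etale descent and then invoke Lurie's corresponding affine result in \cite{LurEllII}. Since essential surjectivity is immediate from the definition of a formal hyperplane (every formal hyperplane is of the form $\cSpec_\Mf(\CC)$ for some smooth $\CC$), the entire content is the full faithfulness of $\cSpec_\Mf$.

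For the descent step, I would verify that both sides assemble into sheaves of $\infty$-categories on the small \'etale site of $\Mf$. On the coalgebra side, $\Nf \mapsto \cCAlg_\Nf = \CAlg(\QCoh(\Nf)^\op)^\op$ is a sheaf by \'etale descent for quasi-coherent sheaves, since $\CAlg$ and $(-)^\op$ preserve limits of $\infty$-categories. The smoothness condition is stable under and detected by flat base change, because its defining clauses---that $\pi_0\CC$ be locally of the form $\Gamma_R(n)$ and that the comparison map $\pi_0\CC\otimes_{\pi_0\OO_\Nf}\pi_k\OO_\Nf \to \pi_k\CC$ be an isomorphism---are local in character. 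On the geometric side, $\Nf \mapsto \Shv(\Nf)$ is a sheaf of $\infty$-topoi by construction (see \cref{rem:geometricmorphisms}), and being a formal hyperplane is itself \'etale-local, since $\cSpec_{(-)}$ commutes with pullback by construction. Thus the comparison map $\cSpec_\Mf$ is a morphism of sheaves of $\infty$-categories, and it suffices to check that it is an equivalence on each affine $\Spec R \to \Mf$.

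Reduced to the case $\Mf = \Spec R$, the statement identifies smooth $R$-coalgebras with formal hyperplanes over $\Spec R$, which is the content of \cite{LurEllII}. Conceptually, this generalizes to the spectral setting the classical equivalence between divided-power coalgebras (corepresented by the continuous duals of $R\llbracket t_1,\dots,t_n\rrbracket$) and smooth formal schemes; full faithfulness then follows by a Yoneda computation using that $\cSpec \DD(\Nf) = \Map_{\cCAlg_\Nf}(\OO_\Nf, f^*\DD)$, with the universal ``point'' $\OO_{\cSpec\CC} \to \CC^{\mathrm{op}}$ recovering the inverse functor.

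The main obstacle is the affine case itself, which is genuinely substantive and for which we rely on \cite{LurEllII}; by contrast, the descent reduction is essentially formal once one confirms the local nature of smoothness for coalgebras and compatibility of $\cSpec$ with pullback.
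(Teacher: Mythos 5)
Your reduction to the affine case is fine and matches the paper's (which simply says ``one easily reduces''), and your remarks on descent for smooth coalgebras and compatibility of $\cSpec$ with pullback are reasonable. The gap is in the affine case itself, which you dispose of by citation but which is where essentially all of the paper's work lies. Lurie's result \cite[Proposition 1.5.9]{LurEllII} establishes full faithfulness of $\cSpec$ on smooth $A$-coalgebras as a functor into $\Fun(\CAlg^{\cn},\Spc)$, i.e.\ presheaves on \emph{all} connective algebras. In this paper, however, $\Shv(\Spec A)$ is built from the big \'etale site of \cref{def:Shv}, whose objects are required to be \emph{almost of finite presentation} over $A$, so the functor whose full faithfulness is asserted in the lemma lands in $\Fun(\CAlg^{\cn,\afp}_A,\Spc)$. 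Restriction along $\CAlg^{\cn,\afp}_A\subset\CAlg^{\cn}_A$ is not fully faithful on presheaf categories in general, so one cannot simply transport Lurie's statement; one must show that the relevant mapping spaces $\Map(\Spf R,\Spf S)$ are unchanged when computed over the smaller test category.

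The paper does exactly this: using \cite[Lemma 8.1.2.2]{SAG} it writes $\Spf R\simeq\colim_i\Spec R_i$ for a tower of connective $R$-algebras $R_i$ almost perfect over $R$, and then shows---using the standing locally noetherian hypotheses and the explicit description $\pi_0R\cong(\pi_0A)\llbracket t_1,\dots,t_n\rrbracket$, $\pi_kR\cong\pi_kA\otimes_{\pi_0A}\pi_0R$ of duals of smooth coalgebras---that each $R_i$ is almost of finite presentation over $A$ (via \cite[Propositions 7.2.4.31 and 7.2.4.17]{HA} and the fact that a power of $(t_1,\dots,t_n)$ vanishes in $R_i$). Only then does the Yoneda computation $\Map_{\CAlg^{\cn,\ad}_A}(S,R)\simeq\lim_i(\Spf S)(R_i)\simeq\Map_{\Fun(\CAlg^{\cn,\afp}_A,\Spc)}(\Spf R,\Spf S)$ go through over the restricted site. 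Without some version of this finiteness argument your appeal to \cite{LurEllII} does not apply to the functor appearing in the statement, so the proposal as written is incomplete.
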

\begin{proof}
     The claim is equivalent to $\cSpec_{\Mf}$ being fully faithful on smooth coalgebras. One easily reduces to the case $\Mf = \Spec A$ for a connective $E_{\infty}$-ring $A$. As $\Mf$ is locally noetherian by assumption, we can further assume that $\pi_0A$ is noetherian and $\pi_iA$ is finitely generated over $\pi_0A$. 
     
     \cite[Proposition 1.5.9]{LurEllII} proves that $\cSpec$ is fully faithful on smooth $A$-coalgebras as a functor into $\Fun(\CAlg^{\cn}, \Spc)$. In contrast, we need to show fully faithfulness as a functor into $\Fun(\CAlg^{\cn,\afp}_A, \Spc)$, where the $\afp$ stands for almost of finite presentation. Tracing through the proof in \cite{LurEllII}, we need to show that $\Spf$ is fully faithful as a functor into $\Fun(\CAlg^{\cn,\afp}_A, \Spc)$ on adic $E_{\infty}$-rings that arise as duals of smooth coalgebras.
     
     Let $R$ and $S$ be adic $E_{\infty}$-rings. By \cite[Lemma 8.1.2.2]{SAG} we can find a tower 
     \[\cdots \to R_3 \to R_2 \to R_1\]
     in $\CAlg_R^{\cn}$ such that $\colim_i \Spec R_i \to \Spec R$  factors over an equivalence $\colim_i \Spec R_i \simeq \Spf R$ in $\Fun(\CAlg^{\cn}, \Spc)$ and every $R_i$ is almost perfect as a $R$-module. We claim that $R_i$ is almost of finite presentation over $A$ if $R$ is the dual of a smooth coalgebra $C$ over $A$. By \cite[Proposition 7.2.4.31]{HA}, this is equivalent to $\pi_0R_i$ being a finitely generated $\pi_0A$-algebra and $\pi_kR_i$ being a finitely generated $\pi_0R_i$-module for all $k$. By the definition of smooth coalgebras, $\pi_0R$ is of the form $(\pi_0A)\llbracket t_1, \dots, t_n\rrbracket$ for some $n$ and $\pi_kR \cong \pi_kA \tensor_{\pi_0A}\pi_0R$. By \cite[Proposition 7.2.4.17]{HA}, $\pi_kR_i$ is finitely generated over $\pi_0R$. The claim follows as $\Spec R_i \to \Spec R$ factors over $\Spf R$ by construction and thus a power of the ideal $(t_1, \dots, t_n)$ is zero in $R_i$. 
     
     Now let $R$ and $S$ be duals of smooth coalgebras. In particular, we see that $R$ is complete so that $R = \lim_i R_i$ in adic $E_{\infty}$-rings (cf.\ \cite[Lemma 8.1.2.3]{SAG}). 
     \begin{align*}
         \Map_{\CAlg^{\cn, \ad}_A}(S, R) &\simeq \lim_i \Map_{\CAlg^{\cn, \ad}_A}(S, R_i) \\
         &\simeq \lim_i (\Spf S)(R_i) \\
         &\simeq \lim_i \Map_{\Fun(\CAlg^{\cn, \afp}_A, \Spc)}(\Spec R_i, \Spf S) \\
         &\simeq \Map_{\Fun(\CAlg^{\cn, \afp}_A, \Spc)}(\Spf R, \Spf S)
     \end{align*}
     In the third step we have used the Yoneda lemma. 
\end{proof}

\begin{example}
 For every elliptic curve $\Ef$ over a non-connective spectral Deligne--Mumford stack $\Mf$, the formal completion $\widehat{\Ef}$ is a formal hyperplane over $\con \Mf$ \cite[Proposition 7.1.2]{LurEllII}.
\end{example}

We recall the following definition from \cite{LurEllII}:
\begin{definition}
We call an $E_{\infty}$-ring $R$ \emph{complex periodic} if it is complex orientable and Zariski locally there exists a unit in $\pi_2R$. 
For a complex periodic $E_{\infty}$-ring $R$, the \emph{Quillen formal group} $\GQ_R$ is defined as $\cSpec(\con (R \tensor B\T))$. More generally, for a locally complex periodic non-connective spectral Deligne--Mumford stack $\Mf$, the Quillen formal group $\GQ_{\Mf}$ is defined as $\cSpec_{\Mf}(\con (\OO_{\Mf} \tensor B\T))$. 
\end{definition}
By \cite[Theorem 4.1.11]{LurEllII} the Quillen formal group is a formal hyperplane. Moreover the functor 
\[\TT_{\Ab}^{\op} \to \text{Formal Hyperplanes}, \qquad M \mapsto \cSpec_{\Mf}(\con (\OO_{\Mf} \tensor B\Dual{M})) \]
equips it with the structure of an abelian group object (cf.\ \cite[Construction 4.1.13]{LurEllII}). 

Recall that a preorientation of an elliptic curve gives us a morphism $B\T \tensor \pt \to \widehat{\Ef}$, i.e.\ a $\T$-equivariant morphism $\pt \to \widehat{\Ef}$. As both $\pt$ and $\widehat{\Ef}$ are formal hyperplanes on $\con \Mf$, this is by \cref{lem:cSpecEquivalence} obtained from a $\T$-equivariant morphism $\OO_{\Mf} \to \CC$ of commutative coalgebras on $\con \Mf$, where $\widehat{\Ef} \simeq \cSpec \CC$. As $\OO_{\Mf} \tensor B\T \simeq (\OO_{\Mf})_{h\T}$, we obtain a morphism $\OO_{\Mf} \tensor B\T \to \CC$ and hence a morphism $\GQ_{\Mf} \to \widehat{\Ef}$
in the case that $\Mf$ is locally complex periodic. 
\begin{definition}
Let $\Mf$ be locally complex periodic and $p\colon\Ef\to\Mf$ a preoriented elliptic curve over $\Mf$.
We say that $\Ef$  {\em oriented} if the morphism $\GQ_{\Mf} \to \widehat{\Ef}$ is an equivalence.  
\end{definition}
Having such an orientation will force a version of the Atiyah--Segal completion theorem to hold in equivariant elliptic cohomology.

\section{Equivariant elliptic cohomology}\label{sec:EquivariantElliptic}
We are now ready to give a definition of the equivariant elliptic cohomology theory associated to a preoriented elliptic curve $p\colon \Ef\to \Mf$, which we fix throughout the section. Here, $\Mf$ denotes a non-connective spectral Deligne--Mumford stack.
\begin{construction}
To the preoriented elliptic curve $p\colon \Ef \to \Mf$ we can associate the $\infty$-topos $\XX=\Shv(\Mf)$ of space-valued sheaves on the big \'etale site of $\Mf$. As $\Ef$ defines a preoriented abelian group object in $\XX$, \cref{MainConstruction} yields a colimit-preserving functor
\[\Ell\colon \Spc_{\Orb} \to \XX,\]
where we leave the dependency on $\Ef$ implicit. 
\end{construction}
\begin{example}
We have $\Ell(\B\T) \simeq \Ef$ and $\Ell(\B C_n) \simeq \Ef[n]$, the $n$-torsion in the elliptic curve. More generally, we have $\Ell(\B G)\simeq \Hom(\widehat{G}, \Ef)$ for any compact abelian Lie group $G$. Indeed: By construction, $\Ell(\B G)$ is the internal mapping object in $\PreAb(\Shv(\Mf))$ between $\Mf \tensor \widehat{\B G}$ and $\Ef$. Here, $\widehat{\B G}$ is the shifted Pontryagin dual from \cref{con:PicardDuality}, which is equivalent to $\widehat{G} \times B\T$, i.e.\ the left adjoint of $\Ab(\Spc) \to \PreAb(\Spc)$ applied to $\widehat{G}$. It follows that $\Ell(\B G)$ is equivalent to $\Hom(\widehat{G}, \Ef)$, i.e.\ the internal mapping object in $\Ab(\Sh(\Mf))$ between $\Mf \tensor \widehat{G}$ and $\Ef$. See \cref{prop:computation} and \cref{rem:PresentableCase}.
\end{example}
We would like to specialize to a theory for $G$-spaces for a fixed $G$ by a pushforward construction. The key will be the following algebro-geometric observation. 
\begin{prop}\label{prop:affine}
    Let $H\subset G$ be an inclusion of compact abelian Lie groups. Then the induced morphism $f\colon \Ell(\B H) \to \Ell(\B G)$ is affine. 
\end{prop}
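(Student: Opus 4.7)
The plan is to reduce the assertion to showing that the zero section $\Mf \to \Ell(\B K)$ is affine for any compact abelian Lie group $K$, and then to establish this by an explicit decomposition. Set $K := G/H$, which is again a compact abelian Lie group. Pontryagin duality converts the short exact sequence $1 \to H \to G \to K \to 1$ into a short exact sequence
\[
0 \to \widecheck{K} \to \widecheck{G} \to \widecheck{H} \to 0
\]
of finitely generated discrete abelian groups, which in particular is a cofiber sequence in $\Ab(\Spc) \simeq \Mod_{\Z}^{\mathrm{cn}}$. Combining this with the identification $\Ell(\B G') \simeq \uMap_{\Ab(\Shv(\Mf))}(\pt \otimes \widecheck{G'}, \Ef)$ from \cref{MainConstruction}, and using that this cotensor (being a right adjoint in its first variable) turns cofiber sequences into fiber sequences, we obtain a fiber sequence
\[
\Ell(\B H) \to \Ell(\B G) \to \Ell(\B K)
\]
in $\Ab(\Shv(\Mf))$. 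Its basepoint is the zero section $\Mf \simeq \Ell(\pt) \to \Ell(\B K)$, yielding a pullback square
\[
\xymatrix{
\Ell(\B H) \ar[r] \ar[d]_{f} & \Mf \ar[d] \\
\Ell(\B G) \ar[r] & \Ell(\B K).
}
\]
Since affine morphisms are stable under base change, it suffices to show that the zero section $\Mf \to \Ell(\B K)$ is affine.

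To prove this, decompose $K \simeq \T^m \times A$ with $A$ a finite abelian group, so that $\widecheck{K} \simeq \Z^m \oplus A^*$. Writing $A^* \simeq \bigoplus_i C_{n_i}$, the cotensor splits as
\[
\Ell(\B K) \simeq \Ef^m \times \prod_i \Ef[n_i].
\]
The elliptic curve $\Ef$ is proper over $\Mf$ by assumption, hence separated, so the zero section $\Mf \to \Ef$ is a closed immersion. Each torsion subgroup $\Ef[n_i]$ is finite flat over $\Mf$ (since $[n_i]\colon \Ef \to \Ef$ is a finite flat isogeny for $n_i > 0$), in particular affine and separated, so the zero section $\Mf \to \Ef[n_i]$ is likewise a closed immersion. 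Products of closed immersions over $\Mf$ are closed immersions, whence $\Mf \to \Ell(\B K)$ is a closed immersion and in particular affine.

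The main technical point is the fiber sequence $\Ell(\B H) \to \Ell(\B G) \to \Ell(\B K)$: one must absorb the $B\T$ factor of the Picard dual into the preorientation of $\Ef$, identify $\Ell(\pt)$ with the terminal object $\Mf$ of $\Shv(\Mf)$, and verify that the connecting map $\Mf \to \Ell(\B K)$ is indeed the zero section of the abelian group object. Once this is in place, the geometric conclusion follows immediately from properness of the elliptic curve and finiteness of its torsion subgroups.
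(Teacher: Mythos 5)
Your proof is correct and follows essentially the same route as the paper's: both exhibit $f$ as a base change of the zero section of $\Ell(\B (G/H))$ via the exact sequence of Pontryagin duals, and both conclude by observing that this section is a closed immersion because it is a section of a separated morphism. The only executional differences are that the paper first proves the closed-immersion statement for the underlying classical Hom-schemes (inducting on cyclic summands of the kernel rather than using your product decomposition) and then transfers affineness to the spectral level via \cite[Corollary 1.1.6.3]{SAG}, whereas you obtain the pullback square directly at the derived level from the universal property of the cotensor in \cref{MainConstruction} -- a step which, as you note, requires absorbing the $B\T$ factor of the Picard dual but is otherwise unproblematic.
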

\begin{proof}
    We claim first that for a surjection $h\colon A \to B$ of finitely generated abelian groups and a classical elliptic curve $E$ over a base scheme $S$, the map $\Hom(B, E) \to \Hom(A, E)$ is a closed immersion and hence affine. As the kernel of $h$ is a sum of cyclic groups and a composition of closed immersions is a closed immersion, we can assume that $\ker(h)$ is cyclic. The pushout square
    \[
    \xymatrix{B & \ar[l] A \\
    0 \ar[u] & \ker(h) \ar[u]\ar[l] }
    \]
    induces a pullback square
    \[
    \xymatrix{\Hom(B, E) \ar[d]\ar[r] & \Hom(A, E)\ar[d] \\
    S \ar[r] & \Hom(\ker(h), E).
    }
    \]
    Furthermore, $S \to \Hom(\ker(h), E)$ is a closed immersion as it is a right inverse of the structure morphism $\Hom(\ker(h), E) \to S$ and the latter is a separated morphism. (See e.g.\ \cite[Lemma 2.4]{M-OMell} for this criterion for closed immersions.) Thus, its pullback $\Hom(B,E) \to \Hom(A,E)$ is also a closed immersion and thus affine. 

    Recall that $\Ell(\B G)\simeq\Hom(\widehat{G}, E)$, with $\widehat{G}$ denoting the Pontryagin dual, and that an inclusion $H\subset G$ induces a surjection $\widehat{G} \to \widehat{H}$. Thus, we know from the preceding paragraph that the underlying map of $\Ell(\B H) \to \Ell(\B G)$ is affine. We can moreover assume that the base of the elliptic curve $\Ef$ is an affine derived scheme $\Spec R$. By \cite[Remark 1.5.3]{LurEllI}, the elliptic curve $\Ef$ is based changed from $\Spec \tau_{\geq 0}R$. Thus we can assume additionally that $R$ is connective and hence that $\Ell(\B G)$ and $\Ell(\B H)$ are spectral schemes (and not more general non-connective ones). 
	
	Let $\Spec A \to \Ell(\B G)$ be a map from an affine and let $(\XX, \OO_{\XX})$ denote the pullback $\Spec A \times_{\Ell(\B G)} \Ell(\B H)$. As a pullback of spectral schemes, $(\XX, \OO_{\XX})$ is a spectral scheme again, and the underlying scheme of $(\XX, \OO_{\XX})$ is the pullback of the underlying scheme $(\XX,\pi_0\OO_{\XX})$, hence affine. Using \cite[Corollary 1.1.6.3]{SAG}, we conclude that $(\XX,\OO_{\XX})$ is affine.
\end{proof}

\begin{construction}Let $G$ be a compact abelian Lie group and $\Orb_G$ its orbit $\i$-category, which we identify using \cref{cor:OrbBG} with the full subcategory of $\Orb_{/\B G}$ on morphisms $\B H \to  \B G$ inducing an injection $H\to G$. We consider the functor
\[\Orb_G \to \QCoh(\Ell(\B G))^{\op}, \qquad G/H \mapsto f_*\OO_{\Ell(\B H)}, \]
where $f\colon \Ell(\B H) \to \Ell(\B G)$ is the map induced by the inclusion $H\subset G$. As $f$ is affine by the preceding proposition, $f_*$ sends quasi-coherent sheaves to quasi-coherent sheaves by \cite[Proposition 2.5.11]{SAG}. The functor from $\Orb_G$ extends to a colimit-preserving functor
\[\mathcal{E}ll_G\colon \SS^G \to \QCoh(\Ell(\B G))^{\op},\]
where $\SS^G \simeq \Fun(\Orb_G^{\op}, \Spc)$ denotes the $\infty$-category of $G$-spaces.  
As the target is pointed, this functor factors canonically to define a reduced version
\[\widetilde{\mathcal{E}ll}_G\colon \SS^G_* \to \QCoh(\Ell(\B G))^{\op}. \]
\end{construction}
For us, the most important case is $G = \T$. As $\Ell(\B\T) = \Ef$, we obtain in this case a functor from $\T$-spaces to quasi-coherent sheaves on $\Ef$. If desired, we can take homotopy groups to obtain an equivariant cohomology theory with values in quasi-coherent sheaves on the underlying classical elliptic curve of $\Ef$. This is the kind of target we are used to from the classical constructions of equivariant elliptic cohomology, for example by Grojnowski \cite{Groj}. 

Next we want to compare our two constructions of equivariant elliptic cohomology functors. To that purpose we want to recall two notions. First, given any (non-connective) spectral Deligne--Mumford stack $\Nf$ and any $\Zf \in \Shv(\Nf)$, we consider the restriction $\OO_{\Zf}$ of $\OO_{\Nf}$ to $\Shv(\Nf)_{/\Zf}$  and define $\Mod_{\OO_{\Zf}}$ as the $\infty$-category of modules over it. Here, on any $\Yf \to \tau_{\geq 0}\Nf$ in the big \'etale site of $\Nf$, with underlying morphism $g\colon \YY \to \NN$ of $\infty$-topoi, $\OO_{\Nf}$ is defined  as $(\OO_{\Yf}\tensor_{g^{-1}\OO_{\tau_{\geq 0}\Nf}}g^{-1}\OO_{\Nf})(\Yf)$; since on the small \'etale site of $\Nf$, the sheaf $\OO_{\Nf}$ just defined agrees with the structure sheaf of $\Nf$, we use the same symbol. Second, for any $\FF \in \Mod_{\OO_{\Zf}}$, we can consider the $\OO_{\Nf}$-module $f_*\FF$, associated with $f\colon \Zf \to \Nf$ and defined by 
\[f_*\FF(U) = \FF(U\times_{\Nf} \Zf), \]
where $\Nf = \pt$ is the final object in $\Shv(\Nf)$. 

\begin{lemma}\label{lem:pushforward}
    Assume that $\Mf$ is locally $2$-periodic and that there is an $n$ such that $\Mf(\Spec R)$ is $n$-truncated for every classical ring $R$. 
    
    For $G$ a compact abelian Lie group and $X \in \Spc^G$, there is a natural equivalence \[\mathcal{E}ll_G(X) \simeq \QQ(f_*\OO_{\Ell([X/G])}),\] where $f\colon \Ell([X/G]) \to \Ell(\B G)$ denotes the map induced by $X \to \pt$ and $\QQ$ is the right adjoint to the inclusion 
    $\QCoh(\Ell(\B G)) \subset \Mod_{\OO_{\Ell(\B G)}}$.\footnote{This right adjoint exists because $\QCoh(\Ell(\B G))$ is presentable and the inclusion preserves all colimits by \cite[Proposition 2.2.4.1]{SAG}. Strictly speaking, Lurie uses here a different definition of $\Mod_{\OO}$, namely just modules in sheaves of spectra on the small \'etale topos. But as explained at the end of \cref{app:SAG}, pullback defines a functor from this to our version of $\Mod_{\OO}$, which preserves all colimits.}
    If $X$ is finite, $f_*\OO_{\Ell([X/G])}$ is already quasi-coherent so that $\QQ(f_*\OO_{\Ell([X/G])})$ agrees with $f_*\OO_{\Ell([X/G])}$. 
\end{lemma}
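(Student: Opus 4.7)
The plan is to show that both sides of the claimed equivalence extend uniquely from the orbit case, where the identification is tautological. More precisely, both sides define colimit-preserving functors $\SS^G \to \QCoh(\Ell(\B G))^{\op}$, and since $\SS^G \simeq \Fun(\Orb_G^{\op}, \Spc)$ is freely generated under colimits by the orbits $G/H$, it suffices to check that the two functors agree on $\Orb_G$.

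Colimit preservation of the left-hand side $\mathcal{E}ll_G$ holds by construction, as it was defined via left Kan extension from $\Orb_G$. For the right-hand side, I would argue in stages. The functor $X \mapsto [X/G]$ preserves colimits into $\Spc_{\Orb}$ by \cref{prop:embeddingG}, and $\Ell\colon \Spc_{\Orb} \to \Shv(\Mf)$ preserves colimits by construction, so $\YY := \Ell([X/G])$ turns colimits of $G$-spaces into colimits of sheaves. Next, since colimits in the $\i$-topos $\Shv(\Mf)$ are universal, for every $U \to Z := \Ell(\B G)$ the base change $U \times_Z (-)$ preserves colimits, and the structure sheaf $\OO_{\Mf}$ converts these colimits of sheaves into limits of spectra. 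Consequently $\YY \mapsto f_*\OO_{\YY}$ sends colimits in $\Shv(\Mf)_{/Z}$ to limits in $\Mod_{\OO_Z}$. Finally, $\QQ$ preserves limits as a right adjoint.

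On an orbit $G/H \in \Orb_G$ we have $[G/H\,/\,G] \simeq \B H$, so the map $f$ appearing on the right-hand side becomes the affine map $\Ell(\B H) \to \Ell(\B G)$ used in defining $\mathcal{E}ll_G$. By \cref{prop:affine} this map is affine, so its pushforward of structure sheaves is already quasi-coherent and $\QQ$ acts as the identity on $f_*\OO_{\Ell(\B H)}$. Hence the two functors agree on all of $\Orb_G$, and the universal property forces the equivalence on $\SS^G$.

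For the second assertion, every finite $G$-space $X$ is a finite colimit in $\SS^G$ of orbits $G/H_i$ (built from iterated pushouts of cells), so $\Ell([X/G])$ is a finite colimit in $\Shv(\Mf)$ of the spectral stacks $\Ell(\B H_i)$, and by the colimit/limit interchange above, $f_*\OO_{\Ell([X/G])}$ is a finite limit of the quasi-coherent sheaves $f_{i,*}\OO_{\Ell(\B H_i)}$ inside $\Mod_{\OO_Z}$. Quasi-coherence can be checked \'etale locally on $\Ell(\B G)$, where the limit is computed inside $\Mod_A \simeq \QCoh(\Spec A)$ for some $E_\i$-ring $A$ and therefore stays quasi-coherent; hence $\QQ$ is the identity on it. The step I expect to require the most care is the interchange of colimits in $\Shv(\Mf)$ with limits of $\OO_Z$-modules: here I would be careful to check that the identification of $f_*\OO_\YY$ as $\lim_i f_{i,*}\OO_{\YY_i}$ really takes place in $\Mod_{\OO_Z}$ and not merely in an ambient category of sheaves of spectra, since only the former allows the comparison with $\QCoh(\Ell(\B G))$ via $\QQ$.
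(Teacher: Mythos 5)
Your overall strategy is the same as the paper's: both sides are colimit-preserving functors out of $\Spc^G\simeq\PP(\Orb_G)$ that visibly agree on orbits (where affineness via \cref{prop:affine} makes $f_*\OO_{\Ell(\B H)}$ quasi-coherent, so $\QQ$ acts as the identity), and the finiteness statement follows because $\QCoh$ is closed under finite limits. Where you diverge is in the one step that carries all the content, namely that $\Yf\mapsto f_*\OO_{\Yf}$ converts colimits in $\Shv(\Mf)_{/\Zf}$ into limits in $\Mod_{\OO_{\Zf}}$. You argue this purely formally: universality of colimits in the topos plus the assertion that the structure sheaf, evaluated on arbitrary objects of the big \'etale topos, converts colimits into limits of spectra. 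The paper instead proves this by corepresenting the functor at the space level --- identifying $\Omega^{\infty}h_*\OO_{\Yf}$ with $(U\to\Zf)\mapsto\Map_U(\Yf\times_{\Zf}U,\, U\times\A^1)$, which manifestly converts colimits to limits --- and then upgrading from $\Shv(\Zf)$ to $\Mod_{\OO_{\Zf}}$ using that $\Omega^{\infty}\colon\Mod_{\OO_{\Zf}}\to\Shv(\Zf)$ preserves limits and is conservative. It is exactly this conservativity that consumes the hypothesis that $\Mf$ is locally $2$-periodic.

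This discrepancy is the point you should interrogate: your argument never uses $2$-periodicity, whereas the authors state it as a hypothesis and use it. Your formal claim is defensible if one takes $\Gamma(\OO_{\Yf})$ for non-representable $\Yf$ to mean the value at $\Yf$ of the (unique) limit-preserving extension of the structure sheaf from the big \'etale site to the topos $\Shv(\Mf)$ --- then ``converts colimits to limits'' holds by definition, and since limits in $\Mod_{\OO_{\Zf}}$ are created in the underlying sheaves of spectra and computed section-wise, the comparison in $\Mod_{\OO_{\Zf}}$ that you rightly flag as delicate does go through. But this identification of $\Gamma(\OO_{\Yf})$ (defined in the paper as global sections of the restriction $\OO_{\Yf}$ of $\OO_{\Mf}$ to the slice topos) with the limit-preserving extension is precisely the kind of unwinding the paper's $\A^1$-argument is designed to avoid; you should either supply it explicitly or adopt the paper's detour. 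As written, your proof is a genuinely different and more formal route to the same key step, trading the concrete representability-plus-conservativity argument (and its $2$-periodicity input) for a careful appeal to the definition of spectrum-valued sheaves on an $\infty$-topos.
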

\begin{proof}
    Throughout this proof we will abbreviate $\Ell(\B G)$ to $\Zf$. The key is to check that the functor 
    \[\PP\colon \Spc^G \to \Mod_{\OO_{\Zf}}^{\op},\qquad X \mapsto f_*\OO_{\Ell([X/G])}\]
    preserves colimits. This suffices as $\QQ$ preserves limits and hence the composite
    \[\QQ\PP\colon \Spc^G  \to \QCoh(\Zf)^{\op},\qquad X \mapsto \QQ(f_*\OO_{\Ell([X/G])})\]
    preserves colimits as well. As this functor agrees on orbits with $\mathcal{E}ll_G$, it agrees thus on all of $\Spc^G$. Moreover, $\QCoh(\Zf)$ is closed under finite limits, implying the last statement.

    To show that $\PP$ preserves colimits we reinterpret the pushforward using  $\A^1$, i.e.\ the spectrum of the free $E_{\infty}$-ring on one generator. For any map $h\colon \Yf \to \Zf$ with $\Yf \in \Shv(\Mf)$, there is a natural equivalence of $\Omega^{\infty}h_*\OO_{\Yf}$ with the sheaf 
    \[(U\to \Zf) \quad\mapsto\quad \Omega^{\infty}\Gamma(\OO_{\Yf\times_{\Zf}U}) \simeq \Map_U(\Yf\times_{\Zf} U, U\times \A^1).\]
    This implies that the functor from $\Shv(\Mf)_{/\Zf}$ to $\Shv(\Zf)^{\op}$, sending $h\colon \Yf \to \Zf$ to $\Omega^{\infty}h_*\OO_{\Yf}$, preserves all colimits. 
    
    We claim that $\Omega^{\infty}\colon \Mod_{\OO_{\Zf}} \to \Shv(\Zf)$ is conservative. By assumption $\Mf$ is locally of the form $\Spec R$ for a $2$-periodic $E_{\infty}$-ring $R$. As $\Zf = \Ell(\B G)$ maps to $\Mf$, the same is true for $\Zf$. Since the conservativity of $\Omega^{\infty}$ can be checked locally, we just have to show that $\Omega^{\infty}\colon \Mod_R \to \Spc$ is conservative if $R$ is $2$-periodic and this is obvious. 
    
    As $\Omega^{\infty}\colon \Mod_{\OO_{\Zf}} \to \Shv(\Zf)$ also preserves limits, we see that the functor 
    \[\Shv(\Mf)_{/\Zf} \to  \Mod_{\OO_{\Zf}}^{\op}, \qquad (h\colon \Yf \to \Zf)\mapsto h_*\OO_{\Yf}\]
    preserves colimits. Moreover, since $\Ell\colon \Spc_{\Orb} \to \Shv(\Mf)$ preserves colimits, the same is true for $(\Spc_{\Orb})_{/\B G} \to \Shv(\Mf)_{/\Ell(\B G)}$. Applying \cref{prop:embeddingG} finishes the proof. 
\end{proof}

Last we want to speak about the functoriality of equivariant elliptic cohomology in the elliptic curve. Thus denote for the moment the elliptic cohomology functors based on $\Ef$ by $\Ell^{\Ef}$ and $\mathcal{E}ll_G^{\Ef}$. 

\begin{proposition}\label{prop:functoriality}
Let $f\colon \Nf \to \Mf$ be a morphism of non-connective spectral Deligne--Mumford stacks that is almost of finite presentation, $\Ef$ be a preoriented elliptic curve over $\Mf$, and $f^*\Ef$ the pullback of $\Ef$ to $\Nf$. Then there are natural equivalences
\begin{align*}
f^*\Ell^{\Ef} \simeq \Ell^{f^*\Ef} &\text{ in } \Fun(\Spc_{\Orb}, \Shv(\Nf))\text{ and }\\
f^*\mathcal{E}ll_G^{\Ef} \simeq \mathcal{E}ll_G^{f^*\Ef} &\text { in }\Fun((\Spc^G)^{\fin, \op}, \QCoh(f^*\Ell^{\Ef}(\B G)))
\end{align*}
for all compact abelian Lie groups $G$. 
\end{proposition}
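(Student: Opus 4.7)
The plan is to reduce both equivalences to naturality properties of the constructions used to define $\Ell$ and $\mathcal{E}ll_G$.

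For the first equivalence, the key point is that $f\colon \Nf \to \Mf$ induces a geometric morphism $f^*\colon \Shv(\Mf) \to \Shv(\Nf)$. Since every object of the big \'etale site of $\con \Nf$ is almost of finite presentation over $\con \Mf$ by composition of almost-of-finite-presentation morphisms, restriction along $f$ gives a functor on sites, hence by sheafification a colimit- and finite-limit-preserving functor on sheaf $\infty$-topoi. This geometric morphism sends the preoriented abelian group object $\Ef \in \PreAb(\Shv(\Mf))$ to $f^*\Ef \in \PreAb(\Shv(\Nf))$, with the preorientation pulled back (here we use that $f^*$ preserves the terminal object and the tensoring by spaces, as well as finite products used to define abelian group structure). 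The naturality in $\XX$ noted at the end of \cref{MainConstruction} then yields the desired equivalence $f^*\Ell^{\Ef} \simeq \Ell^{f^*\Ef}$ in $\Fun(\Spc_{\Orb}, \Shv(\Nf))$.

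For the second equivalence, we invoke \cref{lem:pushforward}: for a finite $G$-space $X$, we have $\mathcal{E}ll_G^{\Ef}(X) \simeq g^{\Ef}_*\OO_{\Ell^{\Ef}([X/G])}$ where $g^{\Ef}\colon \Ell^{\Ef}([X/G]) \to \Ell^{\Ef}(\B G)$. First, the morphism $g^{\Ef}$ is affine: the finite $G$-space $X$ can be built as a finite colimit of orbits $G/H$ for closed subgroups $H \subset G$, and since $\Ell^{\Ef}$ preserves colimits, $\Ell^{\Ef}([X/G])$ is built as a finite colimit of the $\Ell^{\Ef}(\B H)$, each of which maps to $\Ell^{\Ef}(\B G)$ via an affine map by \cref{prop:affine}; affine morphisms are stable under pullback and composition, and a colimit of affine morphisms over a fixed base is again affine (the opposite category of affines is cocomplete via tensor products of the corresponding algebras). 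The first part of the proposition identifies the base change of $g^{\Ef}$ along $\Ell^{f^*\Ef}(\B G) \to \Ell^{\Ef}(\B G)$ with $g^{f^*\Ef}$. Since $g^{\Ef}$ is affine, affine base change gives a natural equivalence
\[
(f^{\Ef}_G)^* g^{\Ef}_* \OO_{\Ell^{\Ef}([X/G])} \simeq g^{f^*\Ef}_* \OO_{\Ell^{f^*\Ef}([X/G])}
\]
in $\QCoh(\Ell^{f^*\Ef}(\B G))$, where $f^{\Ef}_G\colon \Ell^{f^*\Ef}(\B G) \to \Ell^{\Ef}(\B G)$ is the natural map. This is exactly the claimed equivalence on finite $G$-spaces.

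The main obstacle is technical bookkeeping: verifying that the geometric morphism $f^*$ is compatible with every ingredient of \cref{MainConstruction} (Picard duality, the cotensoring structure, the limit in the preoriented Yoneda formula) and, on the second part, that affine base change holds in the generality of non-connective spectral Deligne--Mumford stacks and that the affineness claim extends from orbits to all finite $G$-CW complexes. Both verifications are essentially formal consequences of properties already assembled in the preceding sections, so once the naturality framework is set up carefully the equivalences fall out without further computation.
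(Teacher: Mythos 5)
Your first equivalence is argued exactly as in the paper: the hypothesis that $f$ is almost of finite presentation makes composition with $f$ a functor of big \'etale sites, giving the geometric morphism $f^*\colon \Shv(\Mf)\to\Shv(\Nf)$, which carries the preoriented abelian group object $\Ef$ to $f^*\Ef$, and the naturality of \cref{MainConstruction} does the rest. That part is fine.

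For the second equivalence there is a genuine gap. You claim that $g^{\Ef}\colon \Ell^{\Ef}([X/G])\to\Ell^{\Ef}(\B G)$ is affine for every finite $G$-space $X$ because it is a finite colimit of the affine maps $\Ell^{\Ef}(\B H)\to\Ell^{\Ef}(\B G)$. But that colimit is computed in $\Shv(\Mf)$ (since $\Ell^{\Ef}$ is the colimit-preserving extension into the sheaf topos), not in the category of affines over $\Ell^{\Ef}(\B G)$, and a colimit of affines in the ambient topos is generally not affine: already for $X=S^1$ with trivial $G$-action one gets $\Ell^{\Ef}([X/G])\simeq \Ell^{\Ef}(\B G)\otimes S^1$, which is not affine over $\Ell^{\Ef}(\B G)$ (its pushforward is the cotensor $\OO^{S^1}$ --- still quasi-coherent, which is all that \cref{lem:pushforward} asserts, but the morphism itself is not affine). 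So "affine base change" cannot be applied to $g^{\Ef}$ directly. Relatedly, your route passes through \cref{lem:pushforward}, which carries a local $2$-periodicity hypothesis on $\Mf$ that the proposition does not assume. The paper sidesteps both issues: since $\mathcal{E}ll_G$ sends finite colimits of $G$-spaces to finite limits and $f^*$ on quasi-coherent sheaves is exact, both $f^*\mathcal{E}ll_G^{\Ef}$ and $\mathcal{E}ll_G^{f^*\Ef}$ are determined by their restrictions to orbits, and on orbits one applies base change (\cite[Proposition 2.5.4.5]{SAG}) only to the maps $\Ell^{\Ef}(\B H)\to\Ell^{\Ef}(\B G)$, which are affine by \cref{prop:affine}, using the first equivalence to identify the relevant square as a pullback. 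Your argument becomes correct once you insert this reduction to orbits in place of the affineness claim for general finite $X$.
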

\begin{proof}
Write $f_*\colon \Shv(\Nf) \to \Shv(\Mf)$ for the induced functor 
on big \'etale topoi, which admits a left adjoint $f^*\colon\Shv(\Mf)\to\Shv(\Nf)$ preserving finite limits.
The preoriented abelian group object $\Ef$ induces a left adjoint functor $\Ell^{\Ef}\colon\Spc_{\Orb}\to\Shv(\Mf)$, which we can postcompose with $f^*$ to obtain a functor $f^*\Ell^{\Ef}\colon\Spc_{\Orb}\to\Shv(\Mf)\to\Shv(\Nf)$.

    By \cref{MainConstruction}, $f^*\Ell^{\Ef}$ is the functor $\Spc_{\Orb} \to \Shv(\Nf)$ associated with $f^*\Ef$. 
    
    The functor $f^*\mathcal{E}ll_G^{\Ef}\colon (\Spc^G)^{\fin, \op} \to \QCoh(f^*\Ell^{\Ef}(\B G))$ preserves all finite limits. Thus, we only have to provide natural equivalences $f^*\mathcal{E}ll_G^{\Ef}(G/H) \simeq \mathcal{E}ll^{f^*\Ef}_G(G/H)$ for all closed subgroups $H\subset G$. These are provided by applying the following commutative square to the structure sheaf of $\Ell^{\Ef}(\B H)$:
    \[
    \xymatrix{
    \QCoh(\Ell^{\Ef}(\B H)) \ar[rr]^{\text{pullback}} \ar[d]^{\text{pushforward}} &&  \QCoh(
    \Ell^{f^*\Ef}(\B H)) \ar[d]^{\text{pushforward}} \\
      \QCoh(\Ell^{\Ef}(\B G)) \ar[rr]^{\text{pullback}} && \QCoh(\Ell^{f^*\Ef}(\B G)) 
    }
    \]
    This commutative square in turn is associated by \cite[Proposition 2.5.4.5]{SAG} to the pullback diagram
    \[
    \xymatrix{
    \Ell^{f^*\Ef}(\B H) \ar[r] \ar[d] & \Ell^{\Ef}(\B H) \ar[d] \\
    \Ell^{f^*\Ef}(\B G) \ar[r] & \Ell^{\Ef}(\B G)
    }
    \]
    that we obtain from the first claim together with the fact that the sheaf in $\Shv(\Nf)$ represented by the pullback $\Nf\times_{\Mf}\Ell^{\Ef}(\B H)$ agrees with the pullback sheaf $f^*\Ell^{\Ef}(\B H)$ and similarly for $G$. 
\end{proof}

\begin{remark}
We do not claim that the functor $\Ell$ defines the ``correct'' version of elliptic cohomology for arbitrary orbispaces. For a general $X\in \Spc_{\Orb}$ there is for example no reason to believe that $\Ell(X)$ is a nonconnective spectral Deligne--Mumford stack, not even a formal one. It should be thus seen more as a starting point to obtain a reasonable geometric object. For example, given an abelian compact Lie group $G$ and a $G$-space $Y$, we have seen how to recover $\mathcal{E}ll_G(Y)$ from $\Ell([Y/G])$. Taking $\Spec$ (if $Y$ is finite) or a suitable version of $\Spf$ (if $Y$ is infinite) of $\mathcal{E}ll_G(Y)$ seems to be a reasonable guess for the ``correct'' geometric replacement of $\Ell([Y/G])$ in these cases. We will return to this point in a sequel to this paper \cite{GepnerMeierII}.
\end{remark}

\section{Eilenberg--Moore type statements}\label{app:EM}
The goal of this section is to recall and extend results of Eilenberg--Moore, Dwyer and Bousfield about the homology of fiber squares and to rephrase parts of them in terms of cospectra. Applied to classifying spaces of abelian compact Lie groups, this will be a key step to proving symmetric monoidality properties of equivariant elliptic cohomology. 

Throughout this section, we will consider a (homotopy) pullback diagram 
\[
\xymatrix{
M \ar[r]\ar[d] & X \ar[d] \\
Y \ar[r] & B} \]
of spaces. We assume that for every $b\in B$, the fundamental group $\pi_1(B,b)$ acts nilpotently on the integral homology of the homotopy fiber $Y_b$.

We denote by $\CC(X,B,Y)$ the cobar construction, i.e.\ the associated cosimplicial object whose $n$-th level is $X \times B^{\times n} \times Y$. This cosimplicial object is augmented by $M$. 

\begin{theorem}[Bousfield, Dwyer, Eilenberg--Moore]
	 Under the conditions above, the augmentation of the cobar construction induces a pro-isomorphism between the constant tower $H_*(M;\Z)$ and the tower $(H_*(\Tot_m \CC(X, B, Y));\Z)_{m\geq 0}$.
\end{theorem}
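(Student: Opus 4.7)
The plan is to deduce the statement from the strong convergence of the Eilenberg--Moore spectral sequence, a theorem originally due to Eilenberg--Moore for simply connected $B$ and extended by Dwyer and Bousfield to cover precisely the nilpotent action hypothesis given here. The pro-isomorphism formulation is the cleanest way to package this strong convergence, and there is essentially nothing to do beyond invoking the appropriate theorems from the literature.

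First, I would reduce to the case in which $B$ is path-connected. The cobar construction $\CC(X,B,Y)$ and each $\Tot_m\CC(X,B,Y)$ split according to which path components of $B$ receive maps from $X$ and $Y$, so both the pullback $M$ and the target tower split as a disjoint union indexed by $\pi_0(B)$, and the same is true for their integral homology. Hence it suffices to prove the pro-isomorphism separately over each component of $B$, and we may choose a basepoint $b\in B$ for which the nilpotency hypothesis on $\pi_1(B,b)$ acting on $H_*(Y_b;\Z)$ is stated.

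Next I would set up the relevant spectral sequence. The partial totalizations form a tower of fibrations
\[
\cdots \longrightarrow \Tot_{m+1}\CC(X,B,Y) \longrightarrow \Tot_m\CC(X,B,Y) \longrightarrow \cdots,
\]
and applying $H_*(-;\Z)$ produces an exact couple and hence a homology spectral sequence, the Eilenberg--Moore spectral sequence; when $B$ is simply connected its $E_2$-page is the classical $\Tor^{H_*(B;\Z)}_{*,*}(H_*(X;\Z),H_*(Y;\Z))$. The augmentation $M \to \Tot_m\CC(X,B,Y)$ assembles, as $m$ varies, into a map from the constant tower $\{H_*(M;\Z)\}$ into the tower of interest.

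The main step is to invoke Dwyer's strong convergence theorem: under the hypothesis that $\pi_1(B,b)$ acts nilpotently on $H_*(Y_b;\Z)$, the Eilenberg--Moore spectral sequence converges strongly to $H_*(M;\Z)$. Bousfield's subsequent general analysis of homology spectral sequences of cosimplicial spaces shows that strong convergence in this setting is precisely equivalent to the pro-isomorphism between the constant tower $\{H_*(M;\Z)\}$ and the tower $\{H_*(\Tot_m\CC(X,B,Y);\Z)\}_m$, induced by the augmentation. The only real bookkeeping is that Dwyer's original statement is often phrased asymmetrically (replacing one of $X\to B$, $Y\to B$ by a fibration and pulling back the other), whereas the cobar construction is manifestly symmetric in $X$ and $Y$; since such a replacement changes neither the homotopy pullback $M$ nor the homotopy type of any $\Tot_m\CC(X,B,Y)$, the pro-isomorphism transports to the symmetric model used here. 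I expect this compatibility check to be the only nontrivial point in the exposition.
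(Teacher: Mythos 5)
Your proposal is correct and follows essentially the same route as the paper, whose entire proof is a citation to Lemmas 2.2 and 2.3 and Section 4.1 of Bousfield's paper on the homology spectral sequence of a cosimplicial space, applied to the cobar construction under the stated nilpotency hypothesis. The only caveat is that your phrase ``strong convergence \ldots is precisely equivalent to the pro-isomorphism'' slightly overstates matters: the pro-isomorphism of towers is the stronger statement and is what Bousfield proves directly, rather than something formally deduced from Dwyer's strong convergence theorem, but since you ultimately point to Bousfield's cosimplicial analysis this does not affect the argument.
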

\begin{proof}
	This follows from Lemmas 2.2 and 2.3 and Section 4.1 in \cite{BouCosimplicial}. 
\end{proof} 
The following corollary is essentially also already contained in \cite{BouCosimplicial}.
\begin{cor}\label{BDEM2}
	 The augmentation of the cobar construction induces a pro-isomorphism between the constant tower $E_*(M)$ and the tower $(E_*(\Tot_m \CC(X, B, Y)))_{m\geq 0}$ for any bounded below spectrum $E$. 
	 In particular, $\lim^1_m E_*(\Tot_m \CC(X, B, Y)) = 0$, and the natural map 
	 \[E_*(M) \to \lim_m E_*(\Tot_m \CC(X, B, Y)) \]
	 is an isomorphism. 
\end{cor}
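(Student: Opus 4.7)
The strategy is to bootstrap from the integer homology statement of the preceding theorem to arbitrary bounded below $E$ via the Postnikov tower. Write $\{Z_m\}$ for the tower $\{\Tot_m \CC(X,B,Y)\}_{m\ge 0}$; the theorem yields a pro-isomorphism between $\{H_\ast(Z_m;\Z)\}_m$ and the constant tower at $H_\ast(M;\Z)$, from which the $\lim^1$-vanishing and the identification of $\lim_m$ in the ``in particular'' clause are automatic, because any tower that is pro-isomorphic to a constant one is Mittag--Leffler and has the constant value as its limit. The first move is to promote the pro-isomorphism to arbitrary coefficients: the natural universal coefficient sequence
\[0\to H_\ast(-;\Z)\otimes A\to H_\ast(-;A)\to \Tor(H_{\ast-1}(-;\Z),A)\to 0\]
together with closure of the class of ``pro-isomorphisms to a constant tower'' under $-\otimes A$, $\Tor(-,A)$ and extensions (via the 5-lemma in pro-abelian groups) gives the analogous pro-isomorphism for $\{H_\ast(Z_m;A)\}$ for every abelian group $A$.

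Assuming $E$ is $(c-1)$-connective, consider its Postnikov tower, whose stages $E(n)$ satisfy $E(n)=0$ for $n<c$ and fit into fiber sequences $\Sigma^n H\pi_n E\to E(n)\to E(n-1)$. Smashing with $\Sigma^\infty(-)_+$ and taking homotopy groups yields, for every space $W$, a natural long exact sequence relating $(E(n))_\ast(W)$, $(E(n-1))_\ast(W)$, and $H_{\ast-n}(W;\pi_n E)$. Applying this degreewise along the tower $\{Z_m\}$ and inducting on $n$, the 5-lemma in pro-abelian groups upgrades the inductive hypothesis (the pro-isomorphism for $(E(n-1))_\ast$) together with the coefficient case $H_\ast(-;\pi_n E)$ already established to the pro-isomorphism for $(E(n))_\ast$.

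To finish, I would exploit boundedness below: the fiber of $E\to E(n)$ is $(n+1)$-connective, so its smash product with the $0$-connective $\Sigma^\infty W_+$ remains $(n+1)$-connective, whence $E_k(W)\to (E(n))_k(W)$ is an isomorphism for all $k\le n$ and all spaces $W$. Fixing a degree $k$ and choosing $n\ge\max(k,c-1)$ reduces the pro-isomorphism statement in degree $k$ to the already-established case of $(E(n))_\ast$. I expect the main obstacle to be bookkeeping: verifying that the ``pro-isomorphic to a constant tower'' property is stable under the universal coefficient sequence and at every stage of the Postnikov induction. Once this closure under $\otimes$, $\Tor$, and extensions in the pro-category of abelian groups is set up correctly, the rest of the argument is formal.
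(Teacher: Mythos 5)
Your proof is correct and follows essentially the same route as the paper: establish the statement for Eilenberg--MacLane spectra with arbitrary coefficients, run a Postnikov induction via the five lemma in pro-abelian groups to handle truncated spectra, and then use that $E_k(W)\to(\tau_{\leq n}E)_k(W)$ is an isomorphism for $n\geq k$ to treat general bounded below $E$. The only (immaterial) difference is that you pass to arbitrary coefficients via the universal coefficient sequence, whereas the paper first observes the theorem holds with $\Z$ replaced by arbitrary direct sums of $\Z$.
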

\begin{proof}
In the last theorem, we can replace $\Z$ by arbitrary direct sums of $\Z$ and get our claim for $E$ any shift of the associated Eilenberg--MacLane spectrum. By the five lemma in pro-abelian groups, we obtain the statement of this corollary for all truncated spectra $E$. The group $E_k(Z)$ coincides with $(\tau_{\leq k}E)_k(Z)$ for every space $Z$. This implies the result.
\end{proof}

We would like to pass from an isomorphism of pro-groups to an equivalence of spectra. In the following, let $R$ an arbitrary $E_{\infty}$-ring spectrum. Note that $\Tot_m$ coincides with the finite limit $\lim_{\Delta_{\leq m}}$ and it thus commutes with smash products. We deduce that the $R$-homology of the $\Tot_m$ of the cobar construction coincides with the homotopy groups of the spectrum
\[
\Tot_m R \tensor (X \times B^{\times \bullet} \times Y)  \simeq \Tot_m ((R\tensor X) \tensor_R (R\tensor B)^{\tensor_{R} \bullet} \tensor_{R} (R \tensor Y)).
\]
Note that as before we view all spaces here as unpointed so that $R \tensor X$ has the same meaning as $R \tensor \Sigma^{\infty}_+X$.

We recall the cotensor product: Given morphisms $C' \to C$ and $C''\to C$ in  $\cCAlg_R$, the cotensor product $C'\square_C C''$ is the fiber product in $\cCAlg_R$. As $\cCAlg_R = \CAlg(\Mod_R^{\op})^{\op}$, the usual formula for the pushout of commutative algebras, i.e.\ the relative tensor product, translates into the formula
\[C'\square_C C'' \simeq \lim_{\Delta} C' \tensor_R C^{\tensor_R \bullet} \tensor_R C'',\]
which we can also use to define the cotensor products for arbitrary comodules. 
Thus, $\lim_{\Delta}\CC(X,B,Y) \simeq (R\tensor X) \square_{R\tensor B} (R\tensor Y)$. In this language, \cref{BDEM2} implies:
\begin{cor}\label{cor:cobar}
	If $R$ is bounded below, the augmentation map 
	\[R\tensor M \to (R\tensor X) \square_{R \tensor B} (R \tensor Y)\]
	is an equivalence.
\end{cor}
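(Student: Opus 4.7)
The plan is to bootstrap the pro-isomorphism of \cref{BDEM2} into an equivalence of spectra via Milnor's $\lim^1$ sequence. The key starting point is the standard rewriting of the cotensor product as a totalization: in $\cCAlg_R$ we have
\[
(R\tensor X)\square_{R\tensor B}(R\tensor Y)\simeq \lim_{\Delta}(R\tensor\CC(X,B,Y))\simeq \lim_m \Tot_m(R\tensor\CC(X,B,Y)),
\]
where $R\tensor\CC(X,B,Y)$ is the cosimplicial spectrum with $n$-th level $R\tensor(X\times B^{\times n}\times Y)$ and the second equivalence is the usual description of a cosimplicial limit as the limit of its tower of partial totalizations.

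The discussion preceding the corollary then identifies the $R$-homology of the space $\Tot_m\CC(X,B,Y)$ with $\pi_{\ast}\Tot_m(R\tensor\CC(X,B,Y))$, using that $\Tot_m$ is a finite limit and each level of the cobar is a cartesian product of spaces, so that smashing with $R$ turns these into iterated relative tensor products over $R$. Applying \cref{BDEM2} to the bounded below spectrum $R$, the augmentation therefore induces a pro-isomorphism between the constant tower with value $\pi_{\ast}(R\tensor M)$ and the tower $\{\pi_{\ast}\Tot_m(R\tensor\CC(X,B,Y))\}_m$, and in particular $\lim^1_m \pi_{\ast}\Tot_m(R\tensor\CC)=0$. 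Feeding this into Milnor's short exact sequence for the tower of spectra $\{\Tot_m(R\tensor\CC)\}_m$ yields
\[
\pi_{\ast}\bigl((R\tensor X)\square_{R\tensor B}(R\tensor Y)\bigr)\cong \lim_m \pi_{\ast}\Tot_m(R\tensor\CC)\cong \pi_{\ast}(R\tensor M),
\]
with the composite isomorphism induced by the augmentation. Since the map of spectra induces an isomorphism on all homotopy groups, it is an equivalence.

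The only genuinely delicate step is the identification of $R_{\ast}(\Tot_m\CC(X,B,Y))$ with $\pi_{\ast}\Tot_m(R\tensor\CC(X,B,Y))$, which is precisely what the paragraph preceding the statement is designed to establish. Granting that compatibility, the proof of \cref{cor:cobar} reduces to a formal application of Milnor's $\lim^1$ exact sequence together with \cref{BDEM2}.
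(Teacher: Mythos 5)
Your proof is correct and is essentially the argument the paper intends: the paper gives no separate proof beyond the surrounding identifications ($\Tot_m$ commuting with $R\tensor-$ as a finite limit, and $\square_{R\tensor B}$ as $\lim_\Delta$ of the cobar construction), after which \cref{BDEM2} plus the Milnor $\lim^1$ sequence yields the equivalence exactly as you spell out. You have simply made explicit the step the paper leaves implicit in the phrase ``In this language, \cref{BDEM2} implies.''
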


\begin{cor}
    	If $R$ is connective and $R \tensor X$ is in the thick subcategory of $R\tensor B$ in $R\tensor B$-comodules, then the map 
    \[\cSpec (R\tensor M) \to \cSpec(R\tensor X) \times_{\cSpec(R\tensor B)} \cSpec(R\tensor Y)\]
    is an equivalence.
\end{cor}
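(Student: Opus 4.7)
The plan is to combine \cref{cor:cobar} with a base-change property for cotensor products, the latter following from the thick subcategory hypothesis by an exact-functor induction. Since $R$ is connective and hence bounded below, \cref{cor:cobar} yields an equivalence $R\tensor M \simeq (R\tensor X)\square_{R\tensor B}(R\tensor Y)$ in $\cCAlg_R$, so it suffices to show that $\cSpec$ sends this fiber product in $\cCAlg_R$ to the fiber product of cospectra. Unwinding the definition $\cSpec(C)(A) = \Map_{\cCAlg_A}(A, C\tensor_R A)$ and using that $\Map_{\cCAlg_A}(A,-)$ preserves fiber products, the claim reduces to showing that for every $A\in\CAlg_R^{\cn}$ the canonical comparison map
\[((R\tensor X)\square_{R\tensor B}(R\tensor Y))\tensor_R A \longrightarrow (R\tensor X \tensor_R A) \square_{R\tensor B \tensor_R A} (R\tensor Y \tensor_R A)\]
is an equivalence in $\cCAlg_A$.

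To prove this base-change claim, I would set $C := R\tensor B$ and $L := R\tensor Y$ and consider the two functors $\Phi, \Psi$ from $C$-comodules to $A$-modules given by $\Phi(N) = (N\square_C L)\tensor_R A$ and $\Psi(N) = (N\tensor_R A) \square_{C\tensor_R A}(L\tensor_R A)$, together with the natural comparison transformation $\Phi \to \Psi$. Both are exact functors in $N$: cotensor is the totalization of the cobar resolution whose terms $N \tensor_R C^{\tensor_R k}\tensor_R L$ are exact in $N$, and $-\tensor_R A$ is exact. At $N = C$ the extra degeneracies on the cobar give $C\square_C L \simeq L$ (and similarly over $A$), so $\Phi(C) \to \Psi(C)$ is the identity on $L\tensor_R A$. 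Hence the full subcategory of $C$-comodules on which $\Phi \to \Psi$ is an equivalence is thick and contains $C$; the hypothesis that $R\tensor X$ lies in the thick subcategory generated by $C$ then yields the claim at $N = R\tensor X$.

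The main obstacle will be that $-\square_C L$ does not commute with base change along $R \to A$ for arbitrary $N$, because totalizations do not generally commute with tensor products; the thick subcategory hypothesis is precisely what allows us to bypass this, by reducing to the trivial case $N = C$ (where $C\square_C L \simeq L$ commutes with everything) and then propagating the equivalence through the finite (co)limits, retracts, and shifts generating the thick subcategory.
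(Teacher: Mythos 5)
Your proof is correct and takes essentially the same route as the paper: reduce via \cref{cor:cobar} to the statement that base change along $R\to A$ commutes with the cotensor product, and deduce that from the thick subcategory hypothesis by comparing two exact functors that agree at $N=R\otimes B$. The paper's proof is simply a terser version of this argument (it leaves the two-exact-functors induction implicit, noting only that $A\otimes_R-$ is exact on comodules and invoking the thick subcategory hypothesis).
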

\begin{proof}
    Given $A\in \CAlg_R^{\cn}$, the functor 
    \[\mathrm{coMod}(R\tensor B) \to \mathrm{coMod}(A\tensor B), \qquad N \mapsto A\tensor_R N \]
    preserves colimits and is in particular exact. Thus, 
    \[(R \tensor X)\square_{R\tensor B}(R\tensor Y)\tensor_R A \simeq (A \tensor X) \square_{A\tensor B}(A\tensor Y),\]
    using that $R\tensor X$ is in the thick subcategory of $R\tensor B$.
    It follows that
    \begin{align*}\cSpec((R \tensor X)\square_{R\tensor B}(R\tensor Y))(A) &\simeq \Map_{\cCAlg_A}(A, (A \tensor X) \square_{A\tensor B}(A\tensor Y)) \\
     &\simeq (\cSpec(R\tensor X)\times_{\cSpec(R\tensor B)}\cSpec(R\tensor Y))(A). \qedhere
    \end{align*}
\end{proof}

\begin{example}\label{ex:EM}
Let $R$ be the connective cover of a complex oriented and $2$-periodic $E_{\infty}$-ring, with chosen isomorphism $\pi_0R^{B\T} \cong (\pi_0R)\llbracket t\rrbracket$. The short exact sequence
\[0 \to (\pi_0R)\llbracket t\rrbracket \xrightarrow{t} (\pi_0R)\llbracket t\rrbracket \to \pi_0R \to 0\]
of topological $\pi_0R$-modules dualizes to a short exact sequence of comodules
\[0 \to \pi_0R \to \Gamma_{\pi_0R}(1) \to  \Gamma_{\pi_0R}(1) \to 0.\]
As for any $R$-free $(R \tensor B\T)$-comodule $C$ the map 
\[\pi_0\Map_{\mathrm{coMod}(R\tensor B\T)}(C, R\tensor B\T) \to \Map_{\Gamma_{\pi_0R}(1)}(\pi_0C, \Gamma_{\pi_0R}(1))
\]
is an isomorphism, we obtain a cofiber sequence
\[R \to R \tensor B\T \to R \tensor B\T \to \Sigma R \]
of $(R\tensor B\T)$-comodules. This implies that $R$ is in the thick subcategory of $R\tensor B\T$ in $(R\tensor B\T)\comodules$. Using the last corollaries, we obtain equivalences
\begin{align}\label{cSpecBCn}\nonumber R\tensor BC_n &\xrightarrow{\simeq} R\tensor B\T \,\square_{R\tensor B\T}\, R \\
\cSpec(R\tensor BC_n) &\xrightarrow{\simeq} \cSpec(R \tensor B\T) \times_{\cSpec(R\tensor B\T)} \Spec R,\end{align}
where the map $R\tensor B\T \to R\tensor B\T$ is multiplication by $n$ on $B\T$. The first equivalence shows in particular that $R\tensor BC_n$ is also in the thick subcategory of $R\tensor B\T$. Thus we obtain from the last corollary an equivalence
\[\cSpec(R \tensor (BC_n\times_{B\T} BC_m)) \xrightarrow{\simeq} \cSpec(R\tensor BC_n) \times_{\cSpec R\tensor B\T} \cSpec(R\tensor BC_m).\]
Note that \eqref{cSpecBCn} also implies that $\cSpec(R\tensor BC_n) \simeq \Hom(C_n, \cSpec(R\tensor B\T))$.  
\end{example}

\section{Torus-equivariant elliptic cohomology is symmetric monoidal}\label{sec:EllCohSymMon}
Given an oriented elliptic curve $\Ef$ over a (locally complex periodic) non-connective spectral Deligne--Mumford stack $\Mf$, our aim is to prove the following theorem. 
\begin{theorem}\label{thm:monodality}
If $X$ and $Y$ are finite $\T$-spaces, the natural map 
	\[\mathcal{E}ll_{\T}(X) \tensor_{\OO_{\Ef}} \mathcal{E}ll_{\T}(Y) \to \mathcal{E}ll_{\T}(X\times Y) \]
	is an equivalence. 
\end{theorem}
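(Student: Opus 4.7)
The plan is to translate the statement into spectral algebraic geometry via the pushforward description of $\mathcal{E}ll_\T$, reduce to the case of orbits by colimit and base-change arguments, and then appeal to the Eilenberg--Moore-type cospectrum identity of the preceding section together with the orientation of $\Ef$.

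\textbf{Paragraph 1: Stacky reformulation.} By \cref{lem:pushforward}, for any finite $\T$-space $Z$ we have $\mathcal{E}ll_\T(Z) \simeq p_{Z,*}\OO_{\Ell([Z/\T])}$ in $\QCoh(\Ef)$, where $p_Z\colon\Ell([Z/\T])\to\Ell(\B\T)=\Ef$ is induced by $Z\to\pt$. A direct functor-of-points argument identifies $[X\times Y/\T]$ with the pullback $[X/\T]\times_{\B\T}[Y/\T]$ in orbispaces, since both represent $\T$-torsors equipped with a compatible pair of equivariant maps. Granted that $\Ell$ preserves this pullback (which is the main content to be verified), the theorem will then follow from affine base change and the projection formula applied to the resulting pullback square in $\Shv(\Mf)$, using that the relevant maps $\Ell(\B H)\to\Ef$ are affine by \cref{prop:affine} (and inherit affineness globally through the colimit presentation of $[X/\T]$).

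\textbf{Paragraph 2: Reduction to orbits.} Write $X\simeq\colim_\alpha\T/H_\alpha$ and $Y\simeq\colim_\beta\T/K_\beta$ as finite colimits of orbits in $\Spc^\T$. Since $[-/\T]\colon\Spc^\T\to\Spc_\Orb$ preserves colimits (\cref{prop:embeddingG}), $\Ell$ preserves colimits by construction, and pullbacks in the $\infty$-topos $\Shv(\Mf)$ commute with colimits, the pullback preservation reduces to showing, for each pair of closed subgroups $H,K\leq\T$, that
\[
\Ell(\B H\times_{\B\T}\B K)\simeq\Ell(\B H)\times_\Ef\Ell(\B K).
\]
The case where one of $H$ or $K$ is $\T$ is trivial; the substantive case is $H=C_n$, $K=C_m$, where the target is $\Ef[n]\times_\Ef\Ef[m]$.

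\textbf{Paragraph 3: Eilenberg--Moore and the orientation.} To handle the orbit case, one may work affine-locally and thus assume $\Mf=\Spec R$ with $R$ locally complex periodic and $\Ef$ oriented, so that $\cSpec(R\tensor B\T)\simeq\widehat\Ef$. Unfolding the formula $\Ell(A)(\B G)=\uMap_{\PreAb}(\pt\tensor\check{\B G},A)$ from \cref{MainConstruction}, together with the Picard duality computation $\check{\B C_n}\simeq C_n\times B\T$, identifies $\Ell$ of the orbispace pullback with a cospectrum. The decisive input is then the Eilenberg--Moore cospectrum identity
\[
\cSpec(R\tensor(BC_n\times_{B\T}BC_m))\simeq\cSpec(R\tensor BC_n)\times_{\cSpec(R\tensor B\T)}\cSpec(R\tensor BC_m),
\]
which follows from \cref{cor:cobar} once one verifies, exactly as in \cref{ex:EM}, that $R\tensor BC_n$ lies in the thick subcategory of $R\tensor B\T$ among $(R\tensor B\T)$-comodules.

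\textbf{Main obstacle.} The main obstacle is Paragraph~3: bridging between the cospectrum identity (which a priori controls only the formal-group side via $\widehat\Ef$) and the honest statement about $\Ell(\B C_n)=\Ef[n]$. Since $\Ef[n]\to\Ef$ is finite flat and $\widehat\Ef\hookrightarrow\Ef$ is the formal completion along the zero section, I expect a descent argument combining the formal completion with its étale complement to promote the cospectrum pullback identity to the honest pullback inside $\Ef$; this is the point where the orientation hypothesis on $\Ef$ is essential.
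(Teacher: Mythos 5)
Your overall architecture matches the paper's: reduce to orbits, translate into a statement about the affine morphisms $\Ell(\B C_n)\to\Ef$, and identify the fiber product $\Ef[m]\times_{\Ef}\Ef[n]$ using the orientation together with the Eilenberg--Moore cospectrum identity of \cref{ex:EM}. The reductions in your first two paragraphs are essentially those in the paper (the paper phrases the affinization via \cref{lem:relativespec} rather than \cref{lem:pushforward}, and also uses \cref{prop:functoriality} to reduce to an affine, $2$-periodic base, after which one must pass to the connective cover $\con R$ since \cref{cor:cobar} needs a bounded-below ring).

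However, the step you flag as the ``main obstacle'' is a genuine gap, and your proposed fix would not work as stated. The cospectrum identity lives on the formal completion $\widehat{\Ef}\simeq\cSpec(\con R\tensor B\T)$, i.e.\ on an infinitesimal neighborhood of the unit section, whereas $\Ef[m]\times_{\Ef}\Ef[n]$ contains $\Ef[d]$ for $d=\gcd(m,n)$ and is therefore not supported near the identity when $d>1$; moreover $\Ef[n]\to\Ef$ is finite flat but not \'etale in residue characteristics dividing $n$, so there is no ``\'etale complement'' to descend along, and the orientation gives no control away from the unit section. The paper's resolution is different and in two steps. First, when $m$ and $n$ are \emph{coprime}, the underlying map of $\Ef[m]\times_{\Ef}\Ef[n]\to\Ef$ does land in the unit section, so this fiber product is unchanged by base change along $\widehat{\Ef}\to\con\Ef$ (using that $\widehat{\Ef}\to\con\Ef$ is a monomorphism); only then do the orientation and \cref{ex:EM} apply, identifying both sides with $\Spec(\con R)^{\T}$ (note $\B C_m\times_{\B\T}\B C_n\simeq\T$ in this case). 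Second, for general $m=kd$, $n=ld$ with $k,l$ coprime, one reduces to the coprime case by a diagram of cartesian squares built from the multiplication map $[d]\colon\Ef\to\Ef$ and the corresponding self-map of $\B\T$, together with the observation that $\Ell_{\T}(\T\times\B C_d)\simeq\Spec\Ell_{\T}(\B C_d)^{\T}$. Without this coprime-plus-$[d]$ factorization your argument only proves the statement on formal completions, which is strictly weaker than the theorem.
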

We first note that it suffices to prove the claim in the case where $X$ and $Y$ are $\T$-orbits, as $\EllT$ sends finite colimits to finite limits, and these commute with the tensor product. Moreover, by the base change property \cref{prop:functoriality} (in conjunction with \cref{ex:etale}) we can always assume that $\Mf \simeq \Spec R$, where $R$ is a complex-orientable and $2$-periodic $E_{\infty}$-ring.

As a first step, we will reformulate our claim into a statement about affine morphisms using the following lemma. 
 \begin{lemma}\label{lem:relativespec}
	     Let $S$ be a non-connective spectral Deligne--Mumford stack and $\Aff_S$ the $\infty$-category of affine morphisms $U \to S$. Then the functor
	     \[\Aff_S^{\op} \to \CAlg(\QCoh(X)), \qquad (U\xrightarrow{f}S) \mapsto f_*\OO_U \]
	     is an equivalence. We denote the inverse by $\Spec_{S}$.
	 \end{lemma}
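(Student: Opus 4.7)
The plan is to reduce to the case where $S$ is affine via \'etale descent on $S$. When $S = \Spec R$ is affine, the functor in question restricts to the standard equivalence between non-connective $E_\i$-$R$-algebras $B$ and affine morphisms $\Spec B \to \Spec R$, under the identification $\QCoh(\Spec R) \simeq \Mod_R$ sending $f_*\OO_{\Spec B}$ to $B$ as a commutative $R$-algebra. This is essentially built into the definition of relative $\Spec$ in (non-connective) spectral algebraic geometry and is treated in \cite{SAG}.

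For general $S$, I would choose an \'etale cover by affines $\coprod_i \Spec R_i \to S$ and form the associated \v{C}ech nerve $S_\bullet$. Both sides of the claimed equivalence satisfy descent along $S_\bullet$. On the right, $\QCoh(S) \simeq \lim_n \QCoh(S_n)$ holds essentially by the construction of quasi-coherent sheaves, and the forgetful functor $\CAlg(\QCoh(S)) \to \QCoh(S)$ is conservative and limit-preserving, so $\CAlg(\QCoh(S)) \simeq \lim_n \CAlg(\QCoh(S_n))$. On the left, affineness of a morphism $f\colon U \to S$ is \'etale local on $S$, which gives $\Aff_S \simeq \lim_n \Aff_{S_n}$. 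Since pushforward along an affine map commutes with flat (in particular \'etale) base change on the target, the affine-case equivalences assemble into a levelwise equivalence of cosimplicial $\i$-categories, and passing to limits produces the desired equivalence over $S$. The inverse $\Spec_S$ is then obtained by gluing the affine relative $\Spec$ functor across $S_\bullet$: a commutative algebra in $\QCoh(S)$ descends to compatible algebras on each $S_n$, whose affine spectra glue to an affine morphism over $S$.

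The main obstacle is verifying \'etale descent for affine morphisms and compatibility of pushforward with \'etale base change rigorously in the non-connective setting. Both ingredients are essentially already in \cite{SAG} (flat base change along affine maps appears as \cite[Proposition 2.5.4.5]{SAG}, which is cited in the excerpt; descent for affineness along \'etale covers is also standard), so the proof amounts to an assembly of existing tools rather than the introduction of genuinely new input. A small subtlety is to keep track that the underlying $\i$-topos governing \'etale descent is controlled by $\tau_{\geq 0}S$ while the algebra structures in play are generally non-connective; here the fact that the big \'etale site and sheaves thereof are insensitive to passing to connective covers ensures the descent arguments go through unchanged.
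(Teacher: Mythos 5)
Your proposal is correct and matches the paper's approach: the paper's entire proof is the one-line remark that the argument is analogous to \cite[Proposition 2.5.1.2]{SAG}, and the descent-to-the-affine-case argument you outline (\'etale descent for $\QCoh$ and for affineness, plus compatibility of affine pushforward with base change) is precisely the content of that analogue. Your closing observation that the big \'etale site is insensitive to passing to connective covers is exactly the point needed to transport the connective statement to the non-connective setting.
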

 \begin{proof}
 	The proof is analogous to \cite[Proposition 2.5.1.2]{SAG}.
 \end{proof}
 Given a representable morphism $X \to \B\T$ (meaning that $X\times_{\B\T}\pt$ is a space), we define $\Ell_{\T}(X)$ as $\Spec_{\EE}\EllT(X \times_{\B\T} \pt)$. If $X$ is the image of a finite space along the embedding $\Spc \to \Spc_{\Orb}$, we observe that we have an equivalence
 \[
 \Ell_{\T}(X) \simeq \Spec R^X
 \]
 because the unit section $\Spec R \to \Ef$ is affine.
 Noting that $\Ell_{\T}(\B C_m) = \Ef[m]$ and $\Ell_{\T}(\B\T) = \Ef$, our claim reduces to the following lemma.
 \begin{lemma}
     The canonical map 
      \begin{equation}\label{SymToShow}\Ell_{\T}(\B C_m\times_{\B\T} \B C_n) \to \Ef[m]\times_{\Ef} \Ef[n]. \end{equation}
      is an equivalence for all $m,n \geq 1$.
 \end{lemma}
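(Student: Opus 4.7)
The plan is to reduce the statement to a comparison of $\OO_\Ef$-algebras over $\Ef$, identify these algebras with the pushforward of structure sheaves from certain spectral schemes, and then invoke the Eilenberg--Moore calculation of \cref{ex:EM}, transported to the elliptic curve via the orientation.

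\medskip

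\emph{Reduction to a comparison of $\OO_\Ef$-algebras.} By \cref{prop:functoriality} we may assume $\Mf = \Spec R$ with $R$ complex-orientable and $2$-periodic. The morphisms $\Ef[m] \to \Ef$ and $\Ef[n] \to \Ef$ are both affine by \cref{prop:affine}, hence so is their fiber product. The left hand side $\Ell_\T(\B C_m \times_{\B\T} \B C_n) \to \Ef$ is affine by construction. Thus, by \cref{lem:relativespec}, it suffices to show that the induced comparison map of $\OO_\Ef$-algebras
\[
\EllT(\T/C_m) \otimes_{\OO_\Ef} \EllT(\T/C_n) \to \mathcal{E}ll_\T(\T/C_m \times \T/C_n)
\]
is an equivalence. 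Here the left hand side has been identified with $h_*\OO_{\Ef[m] \times_\Ef \Ef[n]}$ via base change for affine morphisms, and \cref{lem:pushforward} identifies the right hand side with $\QQ(p_*\OO_{\Ell(\B C_m \times_{\B\T} \B C_n)})$.

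\medskip

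\emph{Verification via cospectra and the orientation.} To check the comparison, we test by mapping in from affine schemes $\Spec A \to \Ef$. The orientation gives a canonical equivalence $\widehat{\Ef} \simeq \Spf R^{B\T}$, under which the formal torsion $\widehat{\Ef}[k]$ corresponds to $\cSpec(R \tensor BC_k)$ (\cref{ex:EM}). After restriction to the formal neighborhood of the unit section, the desired equivalence therefore becomes precisely the assertion
\[
\cSpec(R \tensor (BC_m \times_{B\T} BC_n)) \xrightarrow{\simeq} \cSpec(R \tensor BC_m) \times_{\cSpec(R \tensor B\T)} \cSpec(R \tensor BC_n),
\]
which is supplied by \cref{ex:EM}, itself a consequence of the convergence of the Eilenberg--Moore spectral sequence (\cref{cor:cobar}).

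\medskip

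\emph{Globalization.} The main obstacle is to promote this formal-local identification to an equivalence over all of $\Ef$. I would handle this by exploiting the translation structure: both sides of \eqref{SymToShow} are built from subgroup schemes of $\Ef$ (the torsion subgroups), whose formation is translation-equivariant with respect to the abelian group structure of $\Ef$. Using that $\Ef[mn]$ is finite over $\Ef$ and meets every point of $\Ef$ after sufficient \'etale extension, one can cover $\Ef$ by affine opens each of which can be translated into the formal neighborhood of the unit section, where the formal identification applies. Combined with affineness over $\Ef$ and étale descent of quasi-coherent algebras, this yields the global equivalence.
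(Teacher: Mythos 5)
Your local ingredients agree with the paper's: the reduction to $\Mf=\Spec R$ via \cref{prop:functoriality}, the use of affineness and \cref{lem:relativespec} to turn the statement into a comparison of $\OO_{\Ef}$-algebras, and the identification of the formal-neighborhood picture with the cospectrum computation of \cref{ex:EM} via the orientation. The gap is in your globalization step, and that is where the real content of the lemma sits. The formal completion $\widehat{\Ef}$ is not an open subscheme of $\Ef$: topologically it is supported on the image of the unit section, so no nonempty affine open of $\Ef$ can be ``translated into the formal neighborhood of the unit section.'' The Eilenberg--Moore identification therefore only controls the comparison map over the infinitesimal neighborhood of $e$, which suffices exactly when $\Ef[m]\times_{\Ef}\Ef[n]$ is already supported there, i.e.\ when $\gcd(m,n)=1$ (its underlying scheme is then the unit section, so the map into $\Ef$ factors through $\widehat{\Ef}$ because $\widehat{\Ef}\to\tau_{\geq 0}\Ef$ is a monomorphism). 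For general $m,n$ the fiber product is supported on all of $\Ef[d]$, $d=\gcd(m,n)$, and your translation argument would additionally require knowing that the comparison map \eqref{SymToShow} itself is equivariant for translation by (\'etale-locally chosen) torsion points --- the left-hand side is defined purely homotopy-theoretically, so this equivariance is an unproved extra input, not a formal consequence of the group structure on $\Ef$.

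The paper avoids translation entirely. For coprime $m,n$ it also observes that $\B C_m\times_{\B\T}\B C_n\simeq \T/C_{mn}\cong\T$, so the source of \eqref{SymToShow} is $\Spec R^{\T}$, and then runs exactly your orientation-plus-Eilenberg--Moore argument, which is legitimate there because of the support observation above. For general $m,n$ it writes $m=kd$, $n=ld$ with $k,l$ coprime and base-changes along the multiplication map $[d]\colon\Ef\to\Ef$: the square relating $\Ef[m]\times_{\Ef}\Ef[n]$ to $\Ef[k]\times_{\Ef}\Ef[l]$ over $[d]$ is cartesian, and a corresponding cartesian square on the $\Ell_{\T}$ side is verified by decomposing it through $\Ell_{\T}(\B C_d)$ and using $\Ell_{\T}(\T\times\B C_d)\simeq\Spec\Ell_{\T}(\B C_d)^{\T}$. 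Comparing the two cartesian squares reduces the general case to the coprime one. If you want to salvage your outline, replace the translation/descent paragraph with this reduction along $[d]$.
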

 \begin{proof}
     We first assume that $m$ and $n$ are relatively prime. One computes 
     \[\B C_m\times_{\B\T} \B C_n \simeq \T/C_{mn} \cong \T.\]
     Thus the source in \eqref{SymToShow} is equivalent to $\Spec R^{\T}$. We will construct next an equivalence
     \begin{equation}\label{ConnectiveEquivalence}\Spec (\con R)^{\T} \to (\con \Ef)[m] \times_{\con\Ef} (\con \Ef)[n]; \end{equation}
     base changing along $\Spec R \to \Spec (\con R)$ shows that \eqref{SymToShow} is an equivalence as well.
     
     Given a morphism $f\colon X \to \con \Ef$ from a spectral scheme, the map $X \to X\times_{\con \Ef} \widehat{\Ef}$ is an equivalence if the image of $f$ is contained in the image of the unit section; this follows from $\widehat{\Ef}\to \con \Ef$ being a monomorphism. As we can observe on underlying schemes, the image of  $(\con \Ef)[m]\times_{\con \Ef} (\con\Ef)[n] \to \con \Ef$ is contained in the unit section. Thus, 
     \[(\con \Ef)[m]\times_{\con \Ef} (\con \Ef)[n] \simeq (\con \Ef)[m]\times_{\con \Ef} (\con \Ef)[n]\times_{\con\Ef} \widehat{\Ef} \simeq \widehat{\Ef}[m] \times_{\widehat{\Ef}} \widehat{\Ef}[n]. \]
     As the orientation provides an equivalence $\widehat{\Ef} \simeq \cSpec(\con R \tensor B\T)$, we can further identify this fiber product with 
     \begin{align*}\cSpec(\con R \tensor BC_m) \times_{\cSpec (\con R \tensor B\T)} \cSpec(\con R \tensor BC_n) &\simeq \cSpec(\con R \tensor \B C_m\times_{\B\T} \B C_n) \\
     &\simeq \cSpec (\con R \tensor \T)\end{align*}
     using \cref{ex:EM}. The computation 
     \begin{align*}
         \cSpec(\con R \tensor \T)(A) &= \cCAlg_A(A, A \tensor \T) \\
         &\simeq \CAlg_A(A^{\T}, A) \\
         &\simeq \CAlg_{\con R}((\con R)^{\T}, A),
     \end{align*}
     for $A \in \CAlg^{\cn}_{\con R}$ shows that $\cSpec(\con R \tensor \T) \simeq \Spec (\con R)^{\T}$. This provides the equivalence \eqref{ConnectiveEquivalence}.
     
     For $m$ and $n$ general, let $d$ be their greatest common divisor. Choose relatively prime $k$ and $l$ such that $m = kd$ and $n = ld$ and consider the diagram
      \[
      \begin{tikzcd}
      & \Ell_{\T}(\B C_m \times_{\B \T} \B C_n)\arrow[rr]\arrow[dl]\arrow[dd] & & \Ell_{\T}(\B C_k \times_{\B\T} \B C_l)\arrow[dd]\arrow[dl]  \\
      \Ef[n]\times_{\Ef} \Ef[m] \arrow[dd]\arrow[rr, crossing over] & & \Ef[k] \times_{\Ef} \Ef[l]\\
      &\Ell_{\T}(\B \T)\arrow[dl] \arrow[rr, "{[} d {]}", near start] && \Ell_{\T}(\B\T)\arrow[dl] \\
     \Ef \arrow[rr, "{[} d {]}"] && \Ef
     \ar[from=2-3, to=4-3, crossing over]
     \end{tikzcd}
      \]
     The front square is easily seen to be a fiber square. Concerning the back square, we have $\B C_k \times_{\B \T} \B C_l \simeq \T/C_{kl} \cong \T$. Moreover, the resulting map $\T \to \B \T$ has to factor through the point as there are no non-trivial $\T$-principal bundles on $\T$. Thus, the back square decomposes into a rectangle:
     \[
     \xymatrix{
     \Ell_{\T}(\T \times \B C_d) \ar[r]\ar[d]& \Ell_{\T}(\T) \ar[d] \\
     \Ell_{\T}(\B C_d)\ar[d] \ar[r] & \Ell_{\T}(\pt)\ar[d]\\
     \Ell_{\T}(\B \T) \ar[r]^{[d]} & \Ell_{\T}(\B \T)
     }
     \]
     The lower square is cartesian by definition and for the cartesianity of the upper square one just has to observe that $\Ell_{\T}(\T \times \B C_d) \simeq \Spec \Ell_{\T}(\B C_d)^{\T}$ and $\Ell_{\T}(\T) \simeq \Spec R^{\T}$; thus the back square is cartesian. Now it remains to observe that three of the diagonal arrows are equivalences, either by definition or the above, and hence the arrow
     \[\Ell_{\T}(\B C_m \times_{\B \T} \B C_n) \to \Ef[n] \times_{\Ef} \Ef[m] \]
     is an equivalence as well.
 \end{proof}

\section{Representablity by genuine equivariant spectra}\label{sec:ellipticspectra}
The goal of this section is to connect our treatment of equivariant elliptic cohomology to stable equivariant homotopy theory. 
We let $\Ef$ denote an oriented spectral elliptic curve over a non-connective spectral Deligne--Mumford stack $\Mf$. For the next lemma, we fix the following notation: We denote by $\rho$ the tautological complex representation of $\T = U(1)$ and by $e_n\colon \Ef[n] \hookrightarrow \Ef$ the inclusion of the $n$-torsion. 
\begin{lemma}\label{lem:invertible}
	Applying $\wEllT$ to the cofiber sequence $(\T/\T[n])_+ \to S^0 \to S^{\rho^{\tensor n}}$ results in a cofiber sequence
	\begin{equation}\label{eq:divisor}(e_n)_*\OO_{\Ef[n]} \leftarrow \OO_{\Ef} \leftarrow \OO_{\Ef}(-e_n).\end{equation}
	The $\OO_{\Ef}$-module $\OO_{\Ef}(-e_n)$ is invertible. 
\end{lemma}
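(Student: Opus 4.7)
The plan is to identify each term in the claimed cofiber sequence using the orbit-wise description of equivariant elliptic cohomology from \cref{sec:EquivariantElliptic}, and then deduce the invertibility of $\OO_{\Ef}(-e_n)$ by showing that $e_n\colon \Ef[n]\hookrightarrow \Ef$ is an effective Cartier divisor.

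First I would recognise $(\T/\T[n])_+ \to S^0 \to S^{\rho^{\tensor n}}$ as the standard Thom--Pontryagin cofiber sequence of pointed $\T$-spaces attached to the one-dimensional complex representation $\rho^{\tensor n}$: the unit sphere $S(\rho^{\tensor n})$ is $\T$-equivariantly $\T/C_n$ because $C_n = \ker(\rho^{\tensor n}\colon\T\to\T)$, while $D(\rho^{\tensor n})_+\simeq S^0$ since $D(\rho^{\tensor n})$ is $\T$-contractible. Because $\wEllT$ sends cofiber sequences of finite pointed $\T$-spaces to fiber sequences in $\QCoh(\Ef)$---equivalently, to cofiber sequences, by stability---applying it yields the displayed sequence once one identifies $\wEllT((\T/C_n)_+)\simeq (e_n)_*\OO_{\Ef[n]}$ via $\Ell(\B C_n)\simeq \Ef[n]$ and $\wEllT(S^0)\simeq \OO_{\Ef}$. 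The third term is then defined as $\OO_{\Ef}(-e_n)$, the fiber of the surjection $\OO_{\Ef}\to (e_n)_*\OO_{\Ef[n]}$; this is manifestly the ideal sheaf of the closed immersion $e_n$.

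For the invertibility statement I would argue that $\Ef[n]$ sits in $\Ef$ as an effective Cartier divisor. The two geometric ingredients are the following: the unit section $\sigma\colon\Mf\hookrightarrow\Ef$ is an effective Cartier divisor, because $\Ef\to\Mf$ is smooth of relative dimension one and therefore admits a local regular parameter along $\sigma$; and the multiplication-by-$n$ morphism $[n]\colon\Ef\to\Ef$ is flat, being an isogeny of a smooth commutative group scheme. Since $\Ef[n]\simeq\Ef\times_{[n],\Ef,\sigma}\Mf$, the inclusion $e_n$ is the pullback of $\sigma$ along the flat map $[n]$, and the pullback of an effective Cartier divisor along a flat morphism is again an effective Cartier divisor. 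Hence $\OO_{\Ef}(-e_n)\simeq [n]^*\OO_{\Ef}(-\sigma)$ is invertible.

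The main obstacle will be carrying out these classical-sounding facts in the spectral setting. By \cite[Remark 1.5.3]{LurEllI} the relevant curves are base changed from their connective covers, and the smoothness and isogeny properties descend to $\pi_0$; flatness of $[n]$ and the effective Cartier divisor structure of $\sigma$ (and its compatibility with flat pullback) should then follow from their classical counterparts via the standard spectral algebraic geometry of \cite{SAG}, at which point the invertibility of $\OO_{\Ef}(-e_n)$ becomes formal.
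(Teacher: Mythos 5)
Your identification of the cofiber sequence and of its first two terms under $\wEllT$ matches the paper's proof. The divergence, and the problem, lies in the invertibility argument. Reducing $e_n$ to the unit section $\sigma$ by flat pullback along $[n]$ is a legitimate and even attractive move ($[n]$ is indeed flat in the spectral sense, since its underlying map is flat and $\Ef$ is flat over $\Mf$, so flat base change identifies $[n]^*\mathrm{fib}(\OO_{\Ef}\to\sigma_*\OO_{\Mf})$ with $\OO_{\Ef}(-e_n)$). But the base case --- that $\mathrm{fib}(\OO_{\Ef}\to\sigma_*\OO_{\Mf})$ is invertible as a sheaf of $\OO_{\Ef}$-\emph{module spectra} --- is precisely the $n=1$ instance of the lemma, so you have reduced the statement to itself plus flatness of $[n]$ and then declared the remaining case to follow from ``standard spectral algebraic geometry.'' It does not follow formally from the classical Cartier-divisor statement: invertibility of the fiber requires knowing that $\pi_k\OO_{\Ef}\to\sigma_*\pi_k\OO_{\Mf}$ is surjective in \emph{every} degree (so that $\pi_*$ of the fiber is the degreewise kernel) and that the resulting graded module is locally free of rank one over $\pi_*\OO_{\Ef}$; only the $\pi_0$ part of this is the classical fact.

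This is exactly where the paper does its work: the map is surjective on $\pi_0$ because the underlying map of $e_n$ is a closed immersion, and the base-change property (\cref{prop:functoriality}) reduces to the universal oriented elliptic curve, where source and target are even-periodic; surjectivity on $\pi_0$ then propagates to all homotopy groups, the fiber is even-periodic, and its $\pi_0$ is the classical invertible ideal sheaf, which yields invertibility of the spectral fiber. Your route can be completed --- for instance by checking that a local regular parameter $t\in\pi_0\OO_{\Ef}$ along $\sigma$ remains a nonzerodivisor on every $\pi_k\OO_{\Ef}$ (using flatness of $\Ef$ over $\Mf$ and flatness of $\sigma_*\pi_0\OO_{\Mf}$ over $\pi_0\OO_{\Mf}$), so that $\sigma_*\OO_{\Mf}\simeq\OO_{\Ef}/t$ and the fiber is locally equivalent to $\OO_{\Ef}$ --- but some such homotopy-group bookkeeping is unavoidable and is the actual content of the invertibility claim, so it cannot be left as ``should follow.''
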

\begin{proof}
   As $e_n$ is affine, we can compute $\pi_*(e_n)_*\OO_{\Ef[n]}$ as $(e_n)_*\pi_*\OO_{\Ef[n]}$.\footnote{A reference is \cite[Lemma 2.8]{MeierLevel}, at least if the underlying Deligne--Mumford stack of $\Ef$ is separated. The separatedness assumption is only used at the top of p.1314 of loc.\ cit.\ and can be circumvented by choosing a hypercover of $\Ef$ by disjoint unions of affines. Note that all sheaves of abelian groups satisfy hyperdescent.} Here, we abuse notation to denote by $e_n$ also the map of underlying classical stacks. As the underlying map of the multiplication map $[n]\colon \Ef \to \Ef$ is flat by \cite[Theorem 2.3.1]{K-M85}, the underlying stack of $\Ef[n]$ is precisely the $n$-torsion in the underlying elliptic curve of $\Ef$ (cf.\ \cite[Lemma B.3]{MeierLevel}). 
   
    By definition, the map $\wEllT(S^0) \to \wEllT(\T/\T[n]_+)$ agrees with the canonical map $\OO_{\Ef} \to (e_n)_*\OO_{\Ef[n]}$. By the above, the map $\pi_0\OO_{\Ef} \to (e_n)_*\pi_0\OO_{\Ef[n]}$ is surjective as the underlying map of $\Ef[n] \to \Ef$ is a closed immersion. Moreover, by \cref{prop:functoriality} we can reduce to the universal case of the universal oriented elliptic curve over $\Mf = \Mf_{\Ell}^{\mathrm{or}}$ (see \cite{LurEllII}), where both source and target of $\OO_{\Ef} \to (e_n)_*\OO_{\Ef[n]}$ are even-periodic and thus the map $\OO_{\Ef} \to (e_n)_*\OO_{\Ef}$ is surjective on homotopy groups in all degrees. In particular, its fiber $\wEllT(S^{\rho^{\tensor n}})$ is also even-periodic and its $\pi_0$ agrees with the kernel of $\pi_0\OO_{\Ef} \to (e_n)_*\pi_0\OO_{\Ef[n]}$, i.e.\ the ideal sheaf associated with the underlying map of $e_n$. As this sheaf is invertible, so is $\wEllT(S^{\rho^{\tensor n}})$.
    \end{proof}

Using the universal property of equivariant stabilization, we can deduce that $\EllT$ factors over finite $\T$-spectra. More precisely, we denote by $\Spc^{\T, \fin}$ the $\i$-category of finite $\T$-spaces (i.e.\ the closure of the orbits under finite colimits) and by $\Sp^{\T,\omega}$ the compact objects in $\T$-spectra and obtain the following statement:

\begin{proposition}
We have an essentially unique factorization
\[
\xymatrix{
	\Spc^{\T, \fin}\ar[d]^{()_+} \ar[dr]^{\EllT} \\
	\Spc_*^{\T, \fin}\ar[d]^{\Sigma^\infty} \ar[r]^-{\wEllT} & \QCoh(\Ef)^{\op} \\
	\Sp^{\T, \omega} \ar@{-->}[ur]^{\EllT}.
}
\]
\end{proposition}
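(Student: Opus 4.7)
The plan is to appeal to the universal property of $\Sp^{\T,\omega}$ recorded in \cref{app:GSpectra}: for any stable symmetric monoidal $\i$-category $\DD$ admitting finite colimits, a symmetric monoidal functor $\Spc^{\T,\fin}_*\to\DD$ preserving finite colimits factors essentially uniquely through $\Sigma^\infty\colon\Spc^{\T,\fin}_*\to\Sp^{\T,\omega}$ whenever every representation sphere $S^V$ is sent to an invertible object of $\DD$. Taking $\DD=\QCoh(\Ef)^{\op}$, which is stable and symmetric monoidal because $\QCoh(\Ef)$ is, it therefore suffices to verify that $\wEllT$ is symmetric monoidal, preserves finite colimits, and sends each representation sphere to an invertible object.

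Preservation of finite colimits is immediate from the colimit-preservation of $\Ell$ together with the construction of $\wEllT$ as the reduction of $\EllT$. Symmetric monoidality with respect to the smash product on the source follows from \cref{thm:monodality} in the unpointed setting, combined with the standard observation that $(-)_+\colon \Spc^{\T,\fin}\to\Spc^{\T,\fin}_*$ is the free symmetric monoidal functor from the cartesian structure to the smash structure, so that the pointed reduction $\wEllT$ inherits symmetric monoidality from $\EllT$. For invertibility of $\wEllT(S^V)$ it is enough to treat the irreducible real $\T$-representations. These are the trivial one-dimensional representation and the underlying two-dimensional real representations of $\rho^{\otimes n}$ for $n\geq 1$. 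The sphere $S^1$ is automatically invertible in the stable $\i$-category $\QCoh(\Ef)^{\op}$, while \cref{lem:invertible} identifies $\wEllT(S^{\rho^{\otimes n}})$ with the invertible sheaf $\OO_{\Ef}(-e_n)$. Any representation sphere $S^V$ is a smash product of these pieces, hence sent by the symmetric monoidal functor $\wEllT$ to an invertible object of $\QCoh(\Ef)^{\op}$.

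The main obstacle I anticipate is formulating and applying the universal property of $\Sp^{\T,\omega}$ in the compact (non-presentable) setting used here: Robalo's theorem as usually stated concerns presentable symmetric monoidal $\i$-categories, and one must either Ind-complete and then restrict to compact objects, or invoke a direct finite-object version established in \cref{app:GSpectra}. A secondary, largely formal subtlety is verifying carefully that the unpointed symmetric monoidality of $\EllT$ supplied by \cref{thm:monodality} genuinely transfers to pointed symmetric monoidality of $\wEllT$ along the reduction step; the nontrivial geometric content, namely the invertibility of $\wEllT(S^{\rho^{\otimes n}})$, has already been isolated in \cref{lem:invertible} and crucially uses the orientation of $\Ef$.
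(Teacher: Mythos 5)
Your overall strategy is the one the paper uses: combine the symmetric monoidality of $\EllT$ from \cref{thm:monodality} with the invertibility of $\wEllT(S^{\rho^{\otimes n}})$ from \cref{lem:invertible} and feed this into the universal property of $\Sp^{\T,\omega}$ from \cref{app:GSpectra}. However, there is a genuine gap exactly at the point you flag as "the main obstacle": the finite-object universal property that the paper actually establishes, \cref{prop:universalprop}, has an additional hypothesis that you neither state nor verify, namely that $F$ sends \emph{every} object of $\Spc_*^{\T,\fin}$ (not just the representation spheres) to a \emph{dualizable} object of the target. This hypothesis is not a technicality one can wave away. The target $\QCoh(\Ef)^{\op}$ is the opposite of a presentable $\i$-category and hence not presentable, so Robalo's theorem and its consequence \cref{prop:universalpropPres} do not apply to it directly. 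The paper's mechanism for getting around this is to restrict to dualizable objects, use the equivalence $\DD^{\dual,\op}\simeq\DD^{\dual}$ given by taking duals to land covariantly in the presentable category $\QCoh(\Ef)$, Ind-complete, apply the presentable universal property, restrict back to compact objects (which in $\Sp^{\T,\omega}$ coincide with the dualizable ones), and dualize again. Without knowing that $\wEllT$ takes values in dualizable objects, this reduction to the presentable case does not get off the ground, and your proposed alternative of "Ind-complete and then restrict to compact objects" runs into the same problem: one cannot Ind-complete a contravariant functor into $\QCoh(\Ef)$ into a colimit-preserving one without first making it covariant via duality.

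The missing verification is short but uses real input: since $\wEllT$ sends finite colimits to finite limits, it suffices to check dualizability of $\wEllT(\T/\T[n]_+)$ for all $n$, and this follows from \cref{lem:invertible}, which exhibits $\wEllT(\T/\T[n]_+)\simeq (e_n)_*\OO_{\Ef[n]}$ as the cofiber of a map between the two invertible sheaves $\OO_{\Ef}(-e_n)$ and $\OO_{\Ef}$; dualizable objects are closed under cofibers in a stable symmetric monoidal category with exact tensor product. Note that this is not automatic from symmetric monoidality plus finite-colimit preservation, because finite pointed $\T$-spaces are not themselves smash-dualizable before stabilization. Everything else in your write-up (the transfer of symmetric monoidality from the unpointed to the pointed setting, the reduction of invertibility to the irreducible representations, and the role of the orientation via \cref{lem:invertible}) matches the paper's argument.
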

\begin{proof}
    By \cref{thm:monodality} the functor $\EllT$ and hence also the functor $\wEllT$ is symmetric monoidal. Thus the universal property from \cref{prop:universalprop} applies once we have checked that $\wEllT$ sends every representation sphere to an invertible object and every finite $\T$-space to a dualizable object. The first follows from \cref{lem:invertible} as every $\T$-representation is a sum of tensor powers of $\rho$. For the second it suffices to show that $\wEllT(\T/\T[n]_+)$ is dualizable for every $n$. This follows again from \cref{lem:invertible} as it provides a cofiber sequence with $\wEllT(\T/\T[n]_+)$ and two invertible quasi-coherent sheaves. 
\end{proof}

In the following construction, we will explain how to obtain a genuine $\T$-spectrum from $\T$-equivariant elliptic cohomology and also sketch the analogous process for other compact abelian Lie groups. 

\begin{construction}\label{constr:GSpectra}
Let $\Sp_G^{\omega} \to \Sp^{\op}$ be a finite colimit preserving functor. As $\Sp_G \simeq \Ind(\Sp_G^{\omega})$, this factors over a colimit preserving functor $F\colon \Sp_G \to \Sp^{\op}$. This we can also view as a right adjoint $F\colon \Sp_G^{\op} \to \Sp$ with left adjoint $L$. This functor is representable by $R = L(\mathbb{S})$. Indeed, the functor
\[\Omega^{\infty}F(-) \simeq \Map_{\Sp}(\mathbb{S}, F(-)) \simeq \Map_{\Sp_G}(L\mathbb{S}, -)\]
is equivalent to $\Omega^{\infty}$ of the mapping spectrum $\uMap_{\Sp_G}(L\mathbb{S}, -)$ and 
  \[\Fun^{\mathrm{R}}(\Sp_G^{\op}, \Sp) \xrightarrow{\Omega^{\infty}}\Fun^{\mathrm{R}}(\Sp_G^{\op}, \Spc)\]
  is an equivalence by the proof of \cite[Corollary 1.4.4.5]{HA}.

By definition, $F(\Sigma^{\infty}G/H_+)$ agrees with the mapping spectrum $\uMap_{\Sp_G}(\Sigma^{\infty}G/H_+,R)$, i.e.\ with the fixed point spectrum $R^{\B G}$. Note here that we use the notation $R^{\B H}$ for what traditionally would usually be denoted $R^H$, the reason for which is two-fold: First, it fits well with our philosophy that the fixed points should really be associated with the stack $\B H$ rather than the group $H$ (and could be viewed as the mapping spectrum from $\B H$ to $R$; cf.\ \cite[Theorem 4.4.3]{SchGlobal}). Second, it avoids possible confusion between the $H$-fixed points of $R$ and the cotensor $R^H$, where $H$ is viewed as a topological space. 

In our case, $G$ will be $\T$ and $F$ the composition of $\EllT\colon \Sp_{\T}^{\omega} \to \QCoh(\Ef)^{\op}$ with the global sections functor $\Gamma\colon \QCoh(\Ef) \to \Sp$ and we obtain a representing $\T$-spectrum $R$. 
By construction $R^{\B H}$ agrees with $\Gamma\EllT(\T/H)$ and in particular the underlying spectrum of $R$ agrees with the global sections $\Gamma(\OO_{\Mf})$. Thus, $R$ is a $\T$-equivariant refinement of $\Gamma(\OO_{\Mf})$; in particular, if $\Mf$ is the moduli stack of elliptic curve, we obtain a $\T$-equivariant refinement of the spectrum $\mathrm{TMF}$ of topological modular forms. Moreover, we see that more generally for every finite $\T$-space $X$, the spectrum $\uMap_{\Sp_{\T}}(\Sigma^{\infty}X, R)$ of $\T$-equivariant maps is equivalent to $\Gamma\EllT(X)$. 

We will show in a sequel to this paper how to extend the argument above to $\T^n$ for $n>1$, the key point being extensions of \cref{thm:monodality} and \cref{lem:invertible} \cite{GepnerMeierII}. Denoting the resulting $\T^n$-spectrum also by $R$, it is true by definition that the fixed points $R^{\B\T^n}$ are equivalent to the global sections of $\OO_{\Ef^{\times_{\Mf}n}}$. As every compact abelian Lie group embeds into a torus, we will get by restriction more generally an equivariant elliptic cohomology spectrum for any compact abelian Lie group. 
\end{construction} 

\section{The circle-equivariant elliptic cohomology of a point}\label{sec:computation}
Before we continue with elliptic cohomology, we need to recall the degree-shifting transfer in equivariant homotopy theory. Recall from \cref{cor:dual} the equivalence of $\Sigma^{\infty}_+G \tensor S^{-L}$ with the Spanier--Whitehead dual $D\Sigma^{\infty}_+G$ for the tangent representation $L$ of an arbitrary compact Lie group $G$. Taking the dual of the map $G_+ \to S^0$ induces thus a map \[\mathbb{S} \to D\Sigma^{\infty}G_+ \simeq S^{-L} \tensor \Sigma^{\infty}G_+.\] 
Mapping into a $G$-spectrum $X$ and taking $G$-fixed points results in a further map 
\[\res_{e}^G(S^L \tensor X) \simeq \uMap^G(S^{-L} \tensor \Sigma^{\infty}G_+, X) \to X^{\B G},\]
called the \emph{degree-shifting transfer}. 

As in the last section, we will fix an oriented elliptic curve $\Ef$ over a non-connective spectral Deligne--Mumford stack $\Mf$. In the last section, we constructed a $\T$-spectrum $R$ with fixed points $R^{\B\T} = \Gamma(\OO_{\Ef})$ and whose underlying spectrum is $\Gamma(\OO_{\Mf})$. Leaving out $\res_e^{\B\T}$ to simplify notation, the degree-shifting transfer thus takes the form of a map $\Sigma R \to R^{\B\T}$. Furthermore, the projection $p\colon \Ef \to \Mf$ induces a map $R \to R^{\B\T}$, which may be seen as restriction along $\T \to \{e\}$. We are now ready for our main calculation.  
\begin{thm}\label{thm:main}
Restriction along $\T \to \{e\}$ and degree shifting shifting transfer define an equivalence
\[R\oplus \Sigma R \xrightarrow{\simeq} R^{\B\T} \]
\end{thm}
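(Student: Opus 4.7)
My plan is to apply the contravariant exact functor $\Gamma \EllT(-) = \uMap^{\T}(-, R)$ to the standard cofiber sequence of $\T$-spectra
\[\Sigma^{\infty}_+\T \to \mathbb{S} \to \Sigma^{\infty} S^{\rho}\]
coming from the defining complex representation $\rho$ of $\T$. By \cref{lem:invertible}, $\wEllT(S^\rho) \simeq \OO_\Ef(-e)$ where $e \colon \Mf \to \Ef$ is the unit section, and $\wEllT(\Sigma^{\infty}_+\T) \simeq e_*\OO_\Mf$; this produces the fiber sequence of spectra
\[\Gamma(\OO_\Ef(-e)) \to R^{\B\T} \xrightarrow{\res} R,\]
where $\res$ is restriction to the underlying spectrum. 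The $R$-algebra structure map $R = \Gamma(\OO_\Mf) \to \Gamma(p_*\OO_\Ef) = R^{\B\T}$, induced by the unit $\OO_\Mf \to p_*\OO_\Ef$, provides a canonical section of $\res$ because $p \circ e = \id_\Mf$. Hence $R^{\B\T} \simeq R \oplus \Gamma(\OO_\Ef(-e))$, and the theorem reduces to identifying the second summand with $\Sigma R$ in a way compatible with the degree-shifting transfer.

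Next I would verify that the composite $\Sigma R \xrightarrow{\tr} R^{\B\T} \xrightarrow{\res} R$ vanishes, so that $\tr$ lifts to a map $\widetilde{\tr}\colon \Sigma R \to \Gamma(\OO_\Ef(-e))$. Indeed, this composite corresponds under $\uMap^{\T}(-, R)$ to the composite of the Wirthm\"uller map $t\colon \mathbb{S} \to \Sigma^{-1}\Sigma^{\infty}_+\T$ (dual to the projection $\pi \colon \Sigma^{\infty}_+\T \to \mathbb{S}$) with $\Sigma^{-1}\pi$, which represents the Euler class of $\rho$ and is null since $\chi(\T) = 0$. The proof then reduces to showing $\widetilde{\tr}$ is an equivalence.

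To see this, I would exploit the Wirthm\"uller isomorphism $D\Sigma^{\infty}_+\T \simeq \Sigma^{-1}\Sigma^{\infty}_+\T$ from \cref{cor:dual} (valid since the tangent representation of $\T$ is the trivial $1$-dimensional real representation) together with the symmetric monoidality of $\wEllT$ established in \cref{sec:EllCohSymMon}. Since $\EllT$ is contravariant-exact, hence satisfies $\EllT(\Sigma X) \simeq \Sigma^{-1}\EllT(X)$, these combine to yield $D(e_*\OO_\Mf) \simeq \Sigma e_*\OO_\Mf$ in $\QCoh(\Ef)$. Dualizing the cofiber sequence $\OO_\Ef(-e) \to \OO_\Ef \to e_*\OO_\Mf$ now produces a new cofiber sequence $\Sigma e_*\OO_\Mf \to \OO_\Ef \to \OO_\Ef(e)$, and applying $\Gamma$ gives a parallel fiber sequence
\[\Sigma R \xrightarrow{\tr} R^{\B\T} \to \Gamma(\OO_\Ef(e)),\]
in which the first map is tautologically the transfer. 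Identifying $\Gamma(\OO_\Ef(e)) \simeq R$ by Riemann-Roch for the degree-one divisor $e$ on the elliptic curve (reducing to the universal oriented elliptic curve via \cref{prop:functoriality}, and using the orientation to trivialize the Hodge bundle $\omega$ up to the local $2$-periodicity of $\Mf$), the algebra map sections this second fiber sequence as well; comparing the two resulting splittings of $R^{\B\T}$ gives $\Gamma(\OO_\Ef(-e)) \simeq \Sigma R$ compatibly with the transfer.

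The principal obstacle is the spectral Riemann-Roch computation $p_*\OO_\Ef(e) \simeq \OO_\Mf$ and the careful matching of the two splittings. Both steps depend crucially on the orientation hypothesis, which controls the behavior of the Hodge bundle and ensures compatibility across base change to the universal oriented elliptic curve.
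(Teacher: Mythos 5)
Your proposal is correct and, in its second half, is exactly the paper's argument: dualize the cofiber sequence $\Sigma^\infty\T_+ \to \mathbb{S} \to \Sigma^\infty S^{\rho}$ via the Wirthm\"uller isomorphism, use symmetric monoidality of $\wEllT$ to obtain the fiber sequence $\Sigma R \xrightarrow{\tr} R^{\B\T} \to \Gamma(\OO_{\Ef}(e))$, and then show via the classical computation $p_*\OO_{\Ef_0}(e_0)\cong\OO_{\Mf_0}$, $R^1p_*\OO_{\Ef_0}(e_0)=0$ (plus the descent spectral sequence and reduction to the even-periodic universal case) that the unit $R \to R^{\B\T} \to \Gamma(\OO_{\Ef}(e))$ is an equivalence, which already splits the sequence. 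The first half of your argument (the splitting off of $\Gamma(\OO_{\Ef}(-e))$ and the vanishing of $\res\circ\tr$) is therefore not needed, and the final ``comparison of the two splittings'' can be omitted.
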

\begin{proof}
By the Wirthm{\"u}ller isomorphism \cref{cor:dual}, we can identify the dual of the cofiber sequence
\[\Sigma^\infty \T_+ \to \mathbb{S} \to \Sigma^\infty S^{\rho} \]
with 
\[\Sigma^{-1}\Sigma^\infty \T_+ \leftarrow \mathbb{S} \leftarrow \Sigma^\infty S^{-\rho}.\]
As $\EllT$ is symmetric monoidal on finite $\T$-spectra by \cref{thm:monodality}, it preserves duals. Thus, applying $\EllT$ and taking global sections produces a cofiber sequence
\begin{equation}\label{eq:keycofibersequence}\Sigma R \to R^{\B\T} = \Gamma(\OO_{\Ef}) \to \Gamma(\OO_{\Ef}(e)),\end{equation}
where $e\colon \Mf \to \Ef$ is the unit section and we use \cref{lem:invertible} for the identification of the last term. Essentially by definition, the first map is the degree shifting transfer. 

We will compute $\Gamma(\OO_{\Ef}(e))$ by identifying $p_*\OO_{\Ef}(e)$. Denoting by $e_0\colon \Mf_0 \to \Ef_0$ the underlying classical morphism of $e$, we have already argued in the proof of \cref{lem:invertible} that we can reduce to the case where $\pi_*\OO_{\Mf}$ is even and thus $\pi_*\OO_{\Ef}(e)$ is concentrated in even degrees and the restriction of $\pi_0\OO_{\Ef}(e)$ to the classical locus is $\OO_{\Ef_0}(e_0)$. 

By \cite[\parag 1]{Del75} $R^1p_*\OO_{\Ef_0}(e_0) = 0$ and the morphism $\OO_{\Mf_0} \to p_*\OO_{\Ef_0}(e_0)$ is an isomorphism. By the flatness of $\Ef \to \Mf$ and the projection formula, 
\[R^1p_*(\pi_{2k}\OO_{\Ef}(e)) \cong R^1p_*(\OO_{\Ef_0}(e_0) \tensor_{\OO_{\Ef_0}}p^*\pi_{2k}\OO_{\Mf}) \cong R^1p_*\OO_{\Ef_0}(e_0) \tensor_{\OO_{\Mf_0}} \pi_{2k}\OO_{\Mf}\]
vanishes as well.
Moreover, the higher derived images vanish as $\Ef_0$ is smooth of relative dimension $1$. Thus the relative descent spectral sequence
\[R^s\pi_t\OO_{\Ef}(e) \Rightarrow \pi_{t-s}p_*\OO_{\Ef}(e)\]
is concentrated in the zero-line.\footnote{One can obtain the relative descent spectral sequence e.g.\ by applying $\pi_*p_*$ to the Postnikov tower of $\OO_{\Ef}(e)$ and observing that for a sheaf $\FF$ concentrated in degree $t$, we have $\pi_{t-s}p_*\FF \cong R^s\pi_t\FF$.} We see that the map $\OO_{\Mf} \to p_*\OO_{\Ef} \to p_*\OO_{\Ef}(e)$ is an equivalence and thus taking global sections shows that $R \to R^{\B\T} \to \Gamma(\OO_{\Ef}(e))$ is an equivalence as well. Hence \eqref{eq:keycofibersequence} takes the form of a split cofiber sequence
\[\Sigma R \to R^{\B\T} \to R. \qedhere\]
\end{proof}

\begin{remark}
Intuitively the equivalence $\Gamma(\OO_{\Ef}) \simeq R \oplus \Sigma R$ corresponds to the fact that an elliptic curve (say, over a field) has only non-trivial $H^0$ and $H^1$, both of rank $1$. The appearance of a non-trivial $H^1$-term yields a peculiar behaviour of the completion map $R^{\B\T} \simeq R \oplus \Sigma R \to R^{B\T}$. If $R$ is concentrated in even degrees, we see that the map factors over the standard map $R \to R^{B\T}$ (thus is not injective in any sense, in contrast to the situation for equivariant $K$-theory). At least implicitly, this factorization is a key ingredient for the equivariant proofs of the rigidity of the elliptic genus as in \cite{Rosu}.  
\end{remark}

\begin{cor}\label{cor:torus}
There is an equivalence 
\[\bigoplus_{k=0}^n\bigoplus_{\PP_k(n)} \Sigma^k R \to \Gamma(\OO_{\Ef^{\times n}}) = R^{\B\T^n},\]
where $\PP_k(n)$ runs over all $k$-element subsets of $\{1,\dots, n\}$ and $\Ef^{\times n}$ stands for the $n$-fold fiber product over $\Mf$. 
\end{cor}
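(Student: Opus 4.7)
The proof goes by induction on $n$. The base case $n=1$ is \cref{thm:main}, so we focus on the inductive step, whose plan is to reduce the $\T^n$-statement to the $\T^{n-1}$-statement by applying \cref{thm:main} once more, but to a suitably base-changed elliptic curve.

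For the inductive step, set $\Nf := \Ef^{\times (n-1)}$ with structure morphism $f\colon \Nf \to \Mf$. Being an iterated fiber product of the proper flat elliptic curve $\Ef$, $\Nf$ inherits from $\Mf$ the running hypotheses of being a locally noetherian, quasi-separated, locally complex periodic non-connective spectral Deligne--Mumford stack. The pullback $f^*\Ef \to \Nf$ is again an oriented elliptic curve, since both the formal completion of $\Ef$ and the Quillen formal group commute with base change. Moreover, $f^*\Ef = \Nf \times_\Mf \Ef = \Ef^{\times n}$. Applying \cref{thm:main} to the oriented elliptic curve $f^*\Ef/\Nf$, with associated $\T$-spectrum $R_\Nf$, yields an equivalence
\[
R_\Nf \oplus \Sigma R_\Nf \xrightarrow{\simeq} R_\Nf^{\B\T}.
\]
By construction of the equivariant elliptic cohomology spectrum (see the construction preceding \cref{sec:computation}), the underlying spectrum $R_\Nf$ equals $\Gamma(\OO_\Nf) = R^{\B\T^{n-1}}$, while its $\T$-fixed point spectrum equals $\Gamma(\OO_{f^*\Ef}) = \Gamma(\OO_{\Ef^{\times n}}) = R^{\B\T^n}$. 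Thus we obtain
\[
R^{\B\T^n} \simeq R^{\B\T^{n-1}} \oplus \Sigma R^{\B\T^{n-1}}.
\]

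Plugging in the induction hypothesis, the first summand contributes $\bigoplus_{S \subset \{1,\dots,n-1\}}\Sigma^{|S|}R$, which we reinterpret as a sum over subsets of $\{1,\dots,n\}$ not containing $n$; the second summand contributes $\bigoplus_{S \subset \{1,\dots,n-1\}}\Sigma^{|S|+1}R$, which via $S \mapsto S \cup \{n\}$ we reinterpret as a sum over subsets containing $n$, the extra $\Sigma$ accounting for the additional element. Together these exhaust all subsets of $\{1,\dots,n\}$, giving the claimed decomposition.

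The only nontrivial point to check is that base change of an oriented elliptic curve along $f$ remains oriented and fits within the running set-up, and that the $\T$-spectrum $R_\Nf$ built from $f^*\Ef/\Nf$ really does have underlying spectrum $R^{\B\T^{n-1}}$ and $\T$-fixed points $R^{\B\T^n}$. Both are straightforward consequences of the naturality of $\EllT$ in the elliptic curve encoded in \cref{prop:functoriality}, together with the constructions in \cref{sec:Orientations} and \cref{sec:ellipticspectra}; no further computation is required beyond \cref{thm:main} itself.
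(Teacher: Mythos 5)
Your proof is correct and follows exactly the paper's argument: the paper likewise considers the oriented elliptic curve $\Ef^{\times n}\to\Ef^{\times(n-1)}$ obtained by pulling back $\Ef\to\Mf$ along $\Ef^{\times(n-1)}\to\Mf$, applies \cref{thm:main} to get $\Gamma(\OO_{\Ef^{\times n}})\simeq\Gamma(\OO_{\Ef^{\times(n-1)}})\oplus\Sigma\Gamma(\OO_{\Ef^{\times(n-1)}})$, and concludes by induction. Your additional checks (that the base-changed curve stays oriented and within the running hypotheses, and the subset bookkeeping) are correct elaborations of steps the paper leaves implicit.
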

\begin{proof}
    Consider the oriented elliptic curve $\Ef^n \to \Ef^{n-1}$ over $\Ef^{n-1}$ given by projection to the first $(n-1)$ coordinates, i.e.\ the pullback of $\Ef \to \Mf$ along $\Ef^{n-1} \to \Mf$. Applying \cref{thm:main}, we see that 
    \[\Gamma(\OO_{\Ef^{\times n}}) \simeq \Gamma(\OO_{\Ef^{\times (n-1)}}) \oplus \Sigma \Gamma(\OO_{\Ef^{\times (n-1)}}) \]
    Induction yields the result.
\end{proof}

\begin{example}
Let $\Mf = \Mf_{\Ell}^{or}$ be the  moduli stack of oriented elliptic curves and $\Ef$ be the universal oriented elliptic curve over it (see \cite[Proposition 7.2.10]{LurEllII} and \cite[Section 4.1]{MeierLevel} for definitions). The stack $\Mf_{\Ell}^{or}$ can be thought of as the classical moduli stack of elliptic curves equipped with the Goerss--Hopkins--Miller sheaf $\OO^{top}$ of $E_{\infty}$-ring spectra. In this case, $R = \Gamma(\OO^{top})$ is by definition the spectrum $\mathrm{TMF}$ of topological modular forms. Our theory specializes to define $\T^n$-spectra with underlying spectrum $\mathrm{TMF}$ and the previous corollary computes its $\T^n$-fixed points. Let us comment on the fixed points for other compact abelian Lie groups.

In \cite{MeierLevel} the second-named author computed the $G$-fixed points of $\mathrm{TMF}$ for $G$ a finite abelian group after completing at a prime $l$ not dividing the group order, namely as a sum of shifts of $\widehat{\mathrm{TMF}_1(3)}_2$ if $p=2$, of $\widehat{\mathrm{TMF}_1(2)}_3$ if $l=3$ and of $\widehat{\mathrm{TMF}}_p$ if $l>3$. In the latter case, the result can be strengthened as follows: Denote by $p_0\colon \Ef_0 \to \Mf_{\Ell}$ the underlying map of classical Deligne--Mumford stacks of $p\colon \Ef \to \Mf_{\Ell}^{or}$. By \cite[Lemma 4.7]{MeierLevel}, the morphism $\Hom(\Dual{G}, \Ef_0) \to \Mf_{\Ell}$ is finite and flat. Thus $(p_0)_*\OO_{\Hom(\Dual{G}, \Ef_0)}$ is a vector bundle on $\Mf_{\Ell}$. After localizing at a prime $l>3$, \cite[Theorem A]{Mei13} implies that this vector bundle splits into a sum of $\omega^{\tensor i} = \pi_{2i}\OO^{top}$. A descent spectral sequence argument shows that $\mathrm{TMF}^{\B G}_{(l)} = \Gamma(\OO_{\Hom(\Dual{G}, \Ef)})_{(l)}$ splits into shifts of $\mathrm{TMF}_{(l)}$, without the assumption that $l$ does not divide $|G|$. In contrast, if $l=2$ or $3$ and it divides the order of the group, the computation of $\mathrm{TMF}_{(l)}^{\B G}$ is still wide open in general, though Chua has computed recently $\mathrm{TMF}_{(2)}^{\B C_2}$ \cite{Chua}.

More generally, we can consider $\mathrm{TMF}^{\B(G \times \T^n)}$ for a finite abelian group $G$. This coincides with the global sections of the structure sheaf of $\Hom(\Dual{G}, \Ef) \times_{\Mf} \Ef^n$. Denoting the pullback of $\Ef$ to $\Hom(\Dual{G}, \Ef)$ by $\Ef'$ this agrees with the $n$-fold fiber product of $\Ef'$ over $\Hom(\Dual{G}, \Ef)$. Thus, \cref{cor:torus} implies
\[ \mathrm{TMF}^{\B(G\times \T^n)} \simeq \bigoplus_{k=0}^n\bigoplus_{\PP_k(n)} \Sigma^k \mathrm{TMF}^{\B G}. \]
\end{example}

\begin{cor}\label{cor:dualizable}
The $\mathrm{TMF}$-module $\mathrm{TMF}^{\B H} = \Gamma(\OO_{\Ell(\B H)})$ of $H$-fixed points is dualizable for every abelian compact Lie group $H$. 
\end{cor}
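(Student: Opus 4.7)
The plan is to reduce to the case where $H$ is finite abelian via \cref{thm:main}, and then to handle the finite case by observing that the relevant pushforward is a vector bundle on $\Mf_{\Ell}^{or}$ and descending dualizability from $\QCoh$ to $\mathrm{TMF}$-modules.

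For the reduction step, decompose $H \simeq F \times \T^{n}$ with $F$ a finite abelian group. Pontryagin duality gives $\widecheck{H} \cong \widecheck{F} \times \Z^{n}$ and hence
\[
\Ell(\B H) \;\simeq\; \Ell(\B F)\times_{\Mf_{\Ell}^{or}} \Ef^{\times n}.
\]
Pulling the universal oriented elliptic curve $\Ef$ back along $\Ell(\B F) \to \Mf_{\Ell}^{or}$ yields, by \cref{prop:functoriality}, an oriented elliptic curve $\Ef'$ over $\Ell(\B F)$, and $\Ell(\B H)\simeq (\Ef')^{\times_{\Ell(\B F)} n}$. Applying \cref{thm:main} to $\Ef' \to \Ell(\B F)$ and iterating $n$ times exactly as in the proof of \cref{cor:torus} produces an equivalence
\[
\mathrm{TMF}^{\B H} \;\simeq\; \bigoplus_{k=0}^{n}\binom{n}{k}\,\Sigma^{k}\mathrm{TMF}^{\B F}
\]
of $\mathrm{TMF}$-modules. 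Since dualizability is preserved under finite direct sums and suspensions, it suffices to treat the case $H=F$ finite abelian.

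For such $F$, the morphism $p\colon \Ell(\B F) = \Hom(\widecheck{F},\Ef) \to \Mf_{\Ell}^{or}$ is finite flat (this is the input to the example following \cref{cor:torus}, via Lemma 4.7 of \cite{MeierLevel} on underlying stacks combined with flatness over $\Mf_{\Ell}^{or}$). Hence $p$ is affine and $p_{*}\OO_{\Ell(\B F)}$ is a vector bundle of finite rank on $\Mf_{\Ell}^{or}$, i.e.\ a dualizable object of $\QCoh(\Mf_{\Ell}^{or})$. It remains to upgrade this to dualizability of $\mathrm{TMF}^{\B F}=\Gamma(p_{*}\OO_{\Ell(\B F)})$ as a $\mathrm{TMF}$-module. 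I would do this by faithfully flat descent: choose a faithful cover of $\Mf_{\Ell}^{or}$ by an affine spectral Deligne--Mumford stack $\Spec S$ (e.g.\ assembled from $\mathrm{TMF}_{1}(3)$ at the prime $2$, $\mathrm{TMF}_{1}(2)$ at the prime $3$, and rational $\mathrm{TMF}$ via the arithmetic fracture square). Because $p$ is affine, the base change $\Spec S \times_{\Mf_{\Ell}^{or}} \Ell(\B F)$ is affine, of the form $\Spec T$ with $S \to T$ finite flat and thus $T$ finite projective over $S$. Affine base change for global sections identifies $S\otimes_{\mathrm{TMF}}\mathrm{TMF}^{\B F}\simeq T$ as $S$-modules, and faithfully flat descent of dualizability then concludes.

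The main obstacle will be this last descent step: unlike dualizability in $\QCoh(\Mf_{\Ell}^{or})$, which is \'etale-local, dualizability as a $\mathrm{TMF}$-module is a global finiteness condition, and its preservation under global sections requires a genuinely faithful affine cover of $\Mf_{\Ell}^{or}$ together with base-change compatibility. Ensuring such a cover exists integrally (rather than merely after inverting primes) is the principal technical point and requires the Galois-theoretic descent machinery for $\mathrm{TMF}$ at the primes $2$ and $3$.
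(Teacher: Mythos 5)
Your reduction to the finite case is exactly the paper's: split $H\cong F\times\T^n$, use \cref{thm:main} iterated as in \cref{cor:torus} to write $\mathrm{TMF}^{\B H}$ as a finite sum of suspensions of $\mathrm{TMF}^{\B F}$, and observe that dualizability passes through finite sums and shifts. Where you diverge is the finite case: the paper simply invokes \cite[Proposition 2.13]{MeierLevel} together with the finite flatness of $\Hom(\widecheck{F},\Ef_0)\to\Mf_{\Ell}$, whereas you unfold what that citation contains into an explicit argument (the pushforward is a vector bundle, hence dualizable in $\QCoh(\Mf_{\Ell}^{or})$, and one then transports dualizability through $\Gamma$). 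That unfolding is legitimate, but be careful with the phrase ``faithfully flat descent'': for modules over $E_\infty$-rings, faithful flatness of $\mathrm{TMF}\to S$ alone does not give descent of module categories or of dualizability, nor does it give the base-change equivalence $S\otimes_{\mathrm{TMF}}\mathrm{TMF}^{\B F}\simeq T$ for free. What one actually needs is that $\mathrm{TMF}\to S$ is \emph{descendable} in Mathew's sense (equivalently, that $\Mf_{\Ell}^{or}$ is $0$-affine, i.e.\ $\Gamma\colon\QCoh(\Mf_{\Ell}^{or})\to\Mod_{\mathrm{TMF}}$ is a symmetric monoidal equivalence); this is the Mathew--Meier affineness theorem, and it is precisely the nontrivial input packaged into the cited Proposition 2.13. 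You correctly flag this as the technical crux rather than treating it as formal, so your argument is sound once that theorem is invoked; the paper's route just delegates it to the reference. Your proposed cover ($\mathrm{TMF}_1(3)$ at $2$, $\mathrm{TMF}_1(2)$ at $3$, glued by arithmetic fracture) is workable, though an integral \'etale affine cover such as $\Mf_{\Ell}(3)\sqcup\Mf_{\Ell}(4)$ is cleaner and avoids the fracture-square bookkeeping.
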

\begin{proof}
Let $\Mf = \Mf_{\Ell}^{or}$ and $\Ef$ as in the example above. We can split $H \cong G \times T$, where $G$ is finite and $T$ a torus of dimension $n$. As seen in the previous example, $\mathrm{TMF}^{\B A}$ splits as a sum of suspensions of $\mathrm{TMF}^{\B G}$ and so it suffices to show that $\mathrm{TMF}^{\B G}$ is dualizable. Arguing as in \cite[Proposition 2.13]{MeierLevel} this follows from $\Hom(\widehat{G}, \Ef_0) \to \Mf_{\Ell}$ being finite and flat, with notation as in the previous example.
\end{proof}

\begin{remark}We conjecture that $\mathrm{TMF}^{\B G}$ is dualizable for every compact Lie group. (Strictly speaking, we have not defined $\mathrm{TMF}^{\B G}$ if $G$ is not abelian, but this we see as the lesser problem.) Crucial evidence is given by Lurie's results on the finite group case, which we will summarize. 

Every oriented spectral elliptic curve $\Ef$ over an $E_{\infty}$-ring $A$ gives rise to a $\mathbf{P}$-divisible group $\mathbf{G}$ over $A$ in the sense of \cite[Definition 2.6.1]{LurEllIII} by \cite[Section 2.9]{LurEllIII}. By \cite[Notation 4.0.1]{LurEllIII} this in turn gives rise to a functor $A_{\mathbf{G}}\colon \TT^{\op} \to \CAlg_A$, where $\TT$ denotes the full subcategory of $\Spc$ on the spaces $BH$ for $H$ a finite abelian group, and \cite[Construction 3.2.16]{LurEllIII} allows to extend this to a functor on $H$-spaces for all finite groups $H$. We conjecture that the spectrum Lurie associates to $H \curvearrowright \pt$ is equivalent to our spectrum $\Gamma(\OO_{\Ell(\B H)})$ and moreover that these are the $H$-fixed points of a global spectrum $A$ (e.g.\ in the sense of \cite{SchGlobal}) -- in any case, we  denote the spectrum constructed by Lurie by $A^{\B H}$.

Lurie's  Tempered Ambidexterity Theorem implies that this spectrum is a dualizable (and even self-dual) $A$-module as follows: Lurie defines for every finite group $H$ an $\infty$-category $\LocSys_{\mathbf{G}}(\B H)$. For $H = e$, we have $\LocSys_{\mathbf{G}}(\pt) \simeq \Mod(A)$ and the pushforward $f_*\underline{A}_{\B H}$ of the ``constant local system'' along $f\colon \B H \to \pt$ corresponds to $A^{\B H}$ under this equivalence. Moreover, the Tempered Ambidexterity Theorem \cite[Theorem 1.1.21]{LurEllIII} identifies this with $f_!\underline{A}_{\B H}$, where $f_!$ denotes the left adjoint to the restriction functor. We compute
\[\uMap_A(A^{\B H}, A) \simeq \uMap_A(f_!\underline{A}_{\B H}, A) \simeq \uMap_{\LocSys_{\mathbf{G}}(\B H)}(\underline{A}_{\B H}, f_*A) \simeq f_* \underline{A}_{\B H} \simeq A^{\B H}.\]
\end{remark}

\appendix

\section{Quotient $\i$-categories}\label{app:q}
In this appendix we collect some basic results concerning quotient $\i$-categories.
Recall that a quotient of an ordinary category $\DD$ is a category $\CC$ obtained by identifying morphisms in $\CC$ by means of an equivalence relation that is suitably compatible with composition.
A quotient functor $q\colon\DD\to\CC$ is necessarily essentially surjective, as $\CC$ is typically taken to have the same objects as $\DD$, though nonisomorphic objects of $\DD$ may become isomorphic in $\CC$.
A standard example (and one which is important for our purposes) is the homotopy category of spaces, which is a quotient of the ordinary category of Kan complexes by the homotopy relation.

For some of the arguments in this appendix, it will be convenient to regard 
\[
\Cat_\i\subset\Fun(\Delta^{\op},\SS)
\]
as a reflective subcategory of the $\i$-category of simplicial spaces.
From this point of view, many different simplicial spaces $\CC_\bullet$ give rise to the same $\i$-category $\CC$, even if we assume that the simplicial space satisfies the Segal condition.
The standard simplicial model of $\CC$ is the simplicial space $\CC_\bullet$ with $\CC_n=\Map(\Delta^n,\CC)$; up to equivalence, this is the only {\em complete} Segal space model of $\CC$.
However, we will have occasion to consider other (necessarily not complete) models as well, especially those models which arise from restricting the space of objects along a given map.
Note that to compute the geometric realization of a simplicial object $\Delta^{\op}\to\Cat_\i$, we may first choose any lift to $\Fun(\Delta^{\op},\SS)$, where the realization is computed levelwise, and then localize the result (which amounts to enforcing the Segal and completeness conditions).

\begin{construction}
Let $\CC_\bullet\colon\Delta^{\op}\to\SS$ be a simplicial space and let $f\colon\DD_0\to\CC_0$ be a morphism in $\SS$.
Let $i\colon\Delta_{\leq 0}\to\Delta$ denote the inclusion of the terminal object $[0]$, and consider the unit map $\CC_\bullet\to i_*i^*\CC_\bullet\simeq i_*\CC_0$.
Define a simplicial space $\DD_\bullet\colon\Delta^{\op}\to\SS$ as the pullback
\[
\xymatrix{\DD_\bullet\ar[r]\ar[d] & \CC_\bullet\ar[d]\\
i_*\DD_0\ar[r] & i_*\CC_0 .}
\]
We also refer to this simplicial space as $f^*\CC_\bullet$.
Observe that, more or less by construction, if $\CC_\bullet$ is additionally a Segal space, then $\DD_\bullet$ is also a Segal space, and moreover that if $S$ and $T$ are any pair of points in $\DD_0$, then
\[
\Map_{\DD_\bullet}(S,T)\simeq\Map_{\CC_\bullet}(f(S),f(T)).
\]
In particular, $\DD_\bullet\to\CC_\bullet$ is fully faithful, and it is essentially surjective if $f$ is $\pi_0$-surjective (and conversely if $\CC_\bullet$ is additionally complete).
\end{construction}

Thus, if $f\colon S\to\CC_0$ is any $\pi_0$-surjective map of spaces, then the simplicial space $f^*\CC_\bullet$ obtained by restriction is a simplicial space model of $\CC$ which satisfies the Segal condition, though it will not be complete unless $f$ is an equivalence (cf.\ \cite[Theorem 7.7]{Rez01}). In particular, we can take $S$ to be discrete and see that every Segal space is equivalent to one with a discrete space of objects. Moreover, given an essentially surjective functor $q\colon \DD \to \CC$, we may choose a $\pi_0$-surjection $f\colon S \to \DD_0$ and replace $q$ by the equivalent map $f^*\DD \to (qf)^*\CC$, which is the identity on the (discrete) spaces of objects. If $q$ is not essentially surjective, we can factor it into an essentially surjective functor and an inclusion, thus obtaining a Segal space model that is an injection on discrete spaces of objects. 

\begin{definition}
A morphism of $\i$-categories $q\colon\DD\to\CC$ is a {\em quotient functor} if it is essentially surjective and $\pi_0\Map_\DD(s,t)\to\pi_0\Map_\CC(s,t)$ is surjective for all pairs of objects $s$ and $t$ of $\DD$.
\end{definition}

In other words, $q\colon\DD\to\CC$ is a quotient functor if all objects and arrows of $\CC$ lift to objects and arrows of $\DD$, respectively.

\begin{proposition}\label{prop:qcolim}
A functor $q\colon\DD\to\CC$ is a quotient if and only if the augmented simplicial diagram
\[
\cdots\rrrrarrow\DD\times_\CC\DD\times_\CC\DD\rrrarrow\DD\times_\CC\DD\rrarrow\DD\to\CC
\]
is a colimiting cone.
\end{proposition}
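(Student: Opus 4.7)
The plan is to use Segal space models with a discrete object set (as in the construction of $f^*\CC_\bullet$ above) to reduce the problem to a statement about effective epimorphisms in $\Spc$. I would first choose a $\pi_0$-surjection $f\colon S \to \DD_0$ from a discrete $S$ and form the restricted Segal space models $\DD'_\bullet := f^*\DD_\bullet$ and $\CC'_\bullet := (qf)^*\CC_\bullet$. The induced map $q'\colon \DD'_\bullet \to \CC'_\bullet$ is then the identity on the object set $S$, with $\DD'_\bullet$ equivalent to $\DD$ in $\Cat_\i$. The associated functor $\CC' \to \CC$ in $\Cat_\i$ is fully faithful with essential image the image of $q$, so $\CC'\simeq\CC$ in $\Cat_\i$ precisely when $q$ is essentially surjective.

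Next I would identify the $\Cat_\i$-colimit of the simplicial diagram of iterated fiber products of $q'$ with an explicit Segal space. Because $S$ is discrete, iterated fiber products in $\Cat_\i$ of Segal spaces with fixed object set $S$ are computed level-wise in simplicial spaces, yielding at bidegree $(p,n)$ the space $(\DD'_n)^{\times_{\CC'_n}\,(p+1)}$. I would then compute the $\Cat_\i$-colimit by first taking the level-wise colimit in simplicial spaces and then localizing to $\Cat_\i$. The crucial observation is that colimits over $\Delta^{\op}$ commute with fiber products over a discrete base $S$ (since $\Spc_{/S}\simeq\prod_S\Spc$ and geometric realizations commute with finite products in $\Spc$), so the Segal condition survives the level-wise colimit. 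Its mapping space from $s$ to $t$ is therefore the realization $|\DD(s,t)^{\times_{\CC(q(s),q(t))}\,(\bullet+1)}|$, i.e., the realization of the \v{C}ech nerve of $\DD(s,t)\to\CC(q(s),q(t))$ in $\Spc$, and Rezk completion preserves mapping spaces.

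Both directions of the proposition now follow from the standard fact that a map in $\Spc$ is an effective epimorphism exactly when it is $\pi_0$-surjective. If $q$ is a quotient functor, essential surjectivity gives $\CC'\simeq\CC$ and the $\pi_0$-surjectivity on mapping spaces makes each realization equal to $\CC(q(s),q(t))$, identifying the colimit with $\CC$. Conversely, if the augmented simplicial diagram is a colimit, I would extract essential surjectivity by applying the left adjoint $\Cat_\i \to \Set$ that sends an $\i$-category to its set of isomorphism classes of objects: since left adjoints preserve colimits, this turns the statement into a coequalizer in $\Set$, forcing $\pi_0\DD \to \pi_0\CC$ to be surjective on isomorphism classes. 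Working then in the discrete-object model and matching the mapping spaces of the colimit with those of $\CC$ forces $\pi_0$-surjectivity of each $\DD(s,t) \to \CC(q(s),q(t))$. The main technical point that will require care is the preservation of the Segal condition under the level-wise colimit, which is precisely where the discreteness of $S$ is used.
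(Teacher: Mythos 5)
Your overall strategy --- passing to Segal space models with a common discrete object set $S$, computing the realization of the \v{C}ech nerve levelwise, observing that the Segal condition survives because fiber products over the discrete base $S$ commute with geometric realization, identifying the mapping spaces of the colimit with realizations of \v{C}ech nerves in $\Spc$, and invoking that the effective epimorphisms of spaces are exactly the $\pi_0$-surjections --- is essentially the paper's own proof; your ``crucial observation'' is precisely the content of \cref{lem:segalcat}, and your identification of $\CC'$ with the essential image of $q$ matches the paper's reduction to a model that is injective (resp.\ bijective) on discrete object sets.

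There is, however, one step in your converse direction that fails as stated: the functor $\Cat_\i\to\Set$ sending an $\i$-category to its set of isomorphism classes of objects is \emph{not} a left adjoint and does not preserve colimits. That functor is $\pi_0\circ\mathrm{core}$, and $\mathrm{core}$ is a \emph{right} adjoint; the actual left adjoint to the inclusion $\Set\hookrightarrow\Cat_\i$ sends $\CC$ to $\pi_0$ of its classifying space, i.e.\ it identifies any two objects connected by a zig-zag of not-necessarily-invertible morphisms. Using the genuine left adjoint would only give surjectivity of $\pi_0|\DD|\to\pi_0|\CC|$, which is strictly weaker than essential surjectivity of $q$ (consider $\Delta^0\to\Delta^1$). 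The repair is already contained in your own setup and is what the paper does: the levelwise colimit of the \v{C}ech nerve is a Segal space with object set $S$, so its isomorphism classes --- hence those of $\CC$, once the augmentation is assumed to be an equivalence --- form a quotient of a set surjecting onto the isomorphism classes of $\DD$, which is essential surjectivity of $q$. (One further point you assert in passing, that the iterated fiber products $\DD^{\times_\CC\bullet}$ are computed levelwise on these non-complete models, is the same identification the paper makes and indeed deserves the ``care'' you promise, but it is not a divergence from the paper's argument.)
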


\begin{proof}
First suppose that the canonical map $|\DD^{\times_\CC\bullet}|\to\CC$ is an equivalence.
In particular, it is essentially surjective, so that  $\pi_0|\DD^{\times_\CC\bullet}|_0\to\pi_0\CC_0$ is surjective.
Note that $\pi_0\DD_0$ surjects onto $\pi_0|\DD^{\times_\CC\bullet}|_0$. This can be seen for example by using a Segal space model where $\DD \to \CC$ is injective on discrete spaces of objects and thus \cref{lem:segalcat} implies that $|\DD^{\times_\CC\bullet}|$ can be viewed as a Segal space with the same set of objects as $\DD$. Thus, the composite $\pi_0\DD_0\to\pi_0\CC_0$ is surjective as well and it follows that $\DD\to\CC$ is essentially surjective.
We now show that $\DD\to\CC$ is also surjective on mapping spaces.
 By the essential surjectivity, the map $q\colon\DD\to\CC$ admits a Segal space model which is the identity on (discrete) spaces of objects $S$, in which case, by Lemma \ref{lem:segalcat} below, the realization of $\DD^{\times_\CC\bullet}$ can be computed levelwise. But then 
$\Map_\CC(A,B)$ is the realization of $\Map_{\DD^{\times_\CC\bullet}}(A,B)$, and consequently $\pi_0\Map_{\DD^{\times_\CC\bullet}}(A,B)$ surjects onto $\pi_0\Map_\CC(A,B)$.

Conversely, suppose that $q\colon\DD\to\CC$ is a quotient functor, and consider the comparison map $|\DD^{\times_\CC\bullet}|\to\CC$.
Since $\pi_0\DD_0\to\pi_0\CC_0$ is surjective, we may again suppose without loss of generality that $\CC$ and $\DD$ admit Segal space models with same discrete space of objects $S$, in which case it follows that $\pi_0|\DD^{\times_\CC\bullet}|_0\cong\pi_0|\DD^{\times_\CC\bullet}_0|\cong\pi_0\CC$.
In particular, $|\DD^{\times_\CC\bullet}|\to\CC$ is essentially surjective.
To see that it is fully faithful, we use that for any $\pi_0$-surjective map $X \to Y$ of spaces, the induced map $|X^{\times_Y \bullet}| \to Y$ is an equivalence (see e.g.\ \cite[Corollary 7.2.1.15]{HTT}). We thus compute the mapping spaces as follows: \begin{align*}
    \Map_{|\DD^{\times_\CC\bullet}|}(A,B)&\simeq|\Map_{\DD^{\times_\CC\bullet}}(A,B)|\\
    &\simeq |\Map_{\DD}(A,B)^{\times_{\Map_{\CC}(A,B)}\bullet}| \\
    &\simeq \Map_{\CC}(A,B) \qedhere
\end{align*}
\end{proof}

\begin{lemma}\label{lem:segalcat}
    Suppose given a simplicial Segal space
    $\DD_\bullet\colon\Delta^{\op}\to\Fun^\mathrm{Seg}(\Delta^{\op},\SS)$
    with a constant and discrete space of objects; that is, the functor $\Delta^{\op}\to\Fun^\mathrm{Seg}(\Delta^{\op},\SS)\to\SS$ obtained by composing $\DD_\bullet$ with evaluation at $[0]$ is constant with value some discrete space $S\in\tau_{\leq 0}\SS$.
    Then $|\DD_\bullet|\in\Fun(\Delta^{\op},\SS)$ is again a Segal space with object space $S$. Moreover, for any pair of objects $s_0, s_1 \in S$, there is an equivalence $\Map_{|\DD_{\bullet}|}(s_0, s_1) \simeq |\Map_{\DD_{\bullet}}(s_0, s_1)|$.
\end{lemma}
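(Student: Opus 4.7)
The plan is to combine two standard facts: (i) fiber products over a discrete space distribute over colimits in $\SS$, so decompositions of Segal spaces indexed by pairs of objects pull out of colimits; and (ii) $\Delta^{\op}$ is sifted, so geometric realization in $\SS$ commutes with finite products. Once both are in place, everything reduces to comparing explicit coproduct decompositions levelwise.

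First I would observe that, since colimits in presheaf categories are computed pointwise, one has $|\DD_\bullet|_k\simeq\colim_{[m]\in\Delta^{\op}}\DD_{m,k}$ in $\SS$ for every $k\geq 0$. Taking $k=0$ and using that $\Delta^{\op}$ is weakly contractible, the colimit of the constant diagram with value $S$ is $S$ itself, so $|\DD_\bullet|_0\simeq S$ and the object space claim is settled. Next, since each $\DD_m$ is a Segal space with discrete object space $S$, the source-target map $\DD_{m,k}\to S^{k+1}$ yields a canonical decomposition
\[
\DD_{m,k}\;\simeq\;\coprod_{(s_0,\dots,s_k)\in S^{k+1}}\Map_{\DD_m}(s_0,s_1)\times\cdots\times\Map_{\DD_m}(s_{k-1},s_k),
\]
coming from the fact that a map into a discrete space decomposes as a coproduct over its fibers, together with the Segal condition in $\DD_m$.

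Now I would take $\colim_{[m]\in\Delta^{\op}}$ of both sides. Colimits in $\SS$ commute with coproducts, and by siftedness of $\Delta^{\op}$ they commute with the finite products appearing in each summand. This yields
\[
|\DD_\bullet|_k\;\simeq\;\coprod_{(s_0,\dots,s_k)\in S^{k+1}}|\Map_{\DD_\bullet}(s_0,s_1)|\times\cdots\times|\Map_{\DD_\bullet}(s_{k-1},s_k)|.
\]
Specializing to $k=1$, this identifies $|\Map_{\DD_\bullet}(s_0,s_1)|$ with the fiber of $|\DD_\bullet|_1\to S\times S$ over $(s_0,s_1)$, which is by definition $\Map_{|\DD_\bullet|}(s_0,s_1)$; this is the mapping space formula. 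Comparing the formulas for $k\geq 2$ and $k=1$ (using again that fiber products over the discrete space $S$ decompose as coproducts of products over the fibers), one reads off the Segal condition
\[
|\DD_\bullet|_k\;\simeq\;|\DD_\bullet|_1\times_S\cdots\times_S|\DD_\bullet|_1.
\]

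The only genuinely non-formal ingredient is the interchange of geometric realization with finite products, which rests on the siftedness of $\Delta^{\op}$; the rest of the argument is bookkeeping with the coproduct decomposition forced by the discreteness of $S$. I therefore expect no serious obstacle.
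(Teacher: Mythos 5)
Your proof is correct and follows essentially the same route as the paper's: both arguments rest on the facts that geometric realization commutes with colimits and with finite products (via siftedness of $\Delta^{\op}$), and both exploit the coproduct decomposition of each $\DD_{m,k}$ over tuples in the constant discrete object set $S$ to push the realization inside and read off the Segal condition and the mapping-space formula. The only cosmetic difference is that you state the Segal condition as an iterated fiber product over $S$ while the paper phrases it as restriction along the spine inclusion $N^n\hookrightarrow\Delta^n$.
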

\begin{proof}
    The space of objects of $|\DD_\bullet|$ is again $S$, which is discrete.
    Using the fact that geometric realization commutes with colimits and finite products, we see that for $0<k<n$,
    \[
    \Map(\Delta^n,|\DD_\bullet|)\simeq\coprod_{(s_0,\ldots,s_n)\in S^{\times[n]}}\Map_{|\DD_\bullet|}(s_0,s_1)\times\cdots\times\Map_{|\DD_\bullet|}(s_{n-1},s_n)\simeq\Map(N^n,|\DD_\bullet|),
    \]
    where $N^n\simeq\Delta^1\coprod_{\Delta^0}\coprod\cdots\coprod_{\Delta^0}\Delta^1$ denotes the $n$-spine.
    It follows that $|\DD_\bullet|$ satisfies the Segal condition. The last statement is the case $n=1$, which is similar, but strictly easier, as the term involving the 1-spine $N^1=\Delta^1$ is irrelevant.
\end{proof}
\begin{prop}
Let $\DD_\bullet\colon\Delta^{\op}\to\Cat_\infty$ be a functor with colimit $\CC\simeq |\DD_\bullet|$.
Then the projection $\DD_0\to\CC$ is a quotient functor.
\end{prop}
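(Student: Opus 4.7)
The plan is to verify directly the two conditions of the definition of a quotient functor: essential surjectivity, and $\pi_0$-surjectivity on all mapping spaces. Both should follow from an analysis via complete Segal space models of the $\DD_n$, combined with the observation that, for each $n$, the colimit cone component $\DD_n\to\CC$ factors, up to coherent homotopy, as $\DD_n\to\DD_0\to\CC$ via any face map $[0]\hookrightarrow[n]$.

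I would first choose a lift $D_{\bullet,\bullet}\colon\Delta^{\op}\times\Delta^{\op}\to\Spc$ of $\DD_\bullet$ in which each row $D_{n,\bullet}$ is a complete Segal space presenting $\DD_n$. Then $\CC$ is presented by applying the reflection onto complete Segal spaces to the levelwise colimit $E_\bullet:=\colim_{n\in\Delta^{\op}}D_{n,\bullet}$ in $\Fun(\Delta^{\op},\Spc)$. For essential surjectivity, I would check that the composite $(\DD_0)_0=D_{0,0}\to E_0=|D_{\bullet,0}|\to\CC_0$ is $\pi_0$-surjective: the first arrow because $X_0\to|X_\bullet|$ is $\pi_0$-surjective for any simplicial space $X_\bullet$, and the second because the reflection only identifies equivalent objects, never introducing new connected components.

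For $\pi_0$-surjectivity on mapping spaces, I would fix $s,t\in(\DD_0)_0$ and a class $[f]\in\pi_0\Map_\CC(s,t)$ and aim to produce a representative in $\Map_{\DD_0}(s,t)$. The key input is that $[f]$ lifts to a morphism in some $\DD_n$, which via any face map descends to a morphism in $\DD_0$ with the same image in $\CC$, by the simplicial identities and the colimit cone. The main obstacle will be that such a candidate lift in $\DD_0$ may have source and target $s',t'$ that agree with $s,t$ only up to equivalence in $\CC_0$, not within $(\DD_0)_0$ itself. To correct this, I would unfold the Segal space data further: any equivalence between $s$ and $s'$ in $\CC_0$ should pull back through a zigzag of $1$-simplices in $D_{\bullet,0}$, each of which corresponds to a morphism in some $\DD_n$ that can itself be pushed into $\DD_0$ via a face map. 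Precomposing and postcomposing these corrective arrows with the initial lift should then produce the desired element of $\pi_0\Map_{\DD_0}(s,t)$ mapping to $[f]$.
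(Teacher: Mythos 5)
Your essential–surjectivity argument is fine in outline (though the justification for $E_0\to\CC_0$ should be that the unit of the reflection onto complete Segal spaces is essentially surjective as a functor of $\infty$-categories, i.e.\ every object of the reflection is \emph{equivalent} to one coming from $E_0$ --- the reflection certainly can change the homotopy type of the space of objects). The gap is in the mapping–space step. Your ``key input'', that a class $[f]\in\pi_0\Map_{\CC}(s,t)$ lifts to a single morphism of some $\DD_n$, is false: the levelwise colimit $E_\bullet$ is not a Segal space, and the reflection onto (complete) Segal spaces freely adjoins composites, so a morphism of $\CC$ is in general only a formal composite of morphisms coming from various $\DD_{n_i}$ whose sources and targets match only after passing to $\CC$. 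The corrective–zigzag step cannot repair this, and is moreover based on a confusion: a $1$-simplex of the simplicial space of objects $D_{\bullet,0}$ is an \emph{object} of some $\DD_n$ whose two vertex images in $\DD_0$ are $s$ and $s'$; it is not a morphism of any $\DD_n$ and cannot be ``pushed into $\DD_0$'' as a corrective arrow. This is not a presentational problem --- without further hypotheses the statement itself fails. Take $\DD_\bullet$ to be the simplicial set $\Delta^1$, regarded as a simplicial diagram of discrete categories. Since the inclusion $\Spc\subset\Cat_\infty$ preserves colimits, $\CC\simeq|\Delta^1|\simeq\pt$, yet $\Map_{\DD_0}(0,1)=\emptyset$ while $\Map_{\CC}(q(0),q(1))\simeq\pt$, so $\DD_0\to\CC$ is not a quotient functor. (The equivalence $0\simeq 1$ in $\CC$ is created by the object $\id_{[1]}\in\DD_1$ with $d_0$- and $d_1$-images $0$ and $1$ --- exactly the kind of datum your zigzag step would need to convert into an arrow of $\DD_0$, and cannot.)

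The paper's proof is a reference to the first part of the proof of \cref{prop:qcolim}, and that argument is really an argument for diagrams admitting a model with a \emph{constant discrete} space of objects --- which holds in the only place the proposition is used, \cref{con:cosimplicialquotient}, where every $\DD_n=\DD[\Delta^n_{\DD}]$ has the same objects. Under that hypothesis \cref{lem:segalcat} shows the levelwise realization is already a Segal space with $\Map_{\CC}(s,t)\simeq|\Map_{\DD_\bullet}(s,t)|$, so no Segalification occurs and no endpoint correction is needed: $\pi_0$ of a realization of a simplicial space is a quotient of $\pi_0$ of its zeroth level, and surjectivity of $\pi_0\Map_{\DD_0}(s,t)\to\pi_0\Map_{\CC}(s,t)$ is immediate. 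With that hypothesis added your argument collapses to this much shorter one; without it, no argument can succeed.
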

\begin{proof}
    Similar to the first part of the proof of Proposition \ref{prop:qcolim}.
\end{proof}
Let $\CC$ be an $\i$-category with finite limits and let $X\in\CC$ be an object.
We can define a new $\i$-category $\CC[X]$ in which the mapping spaces
\[
\Map_{\CC[X]}(S,T)\simeq\Map_\CC(X\times S,T)
\]
are $X$-indexed families of maps.
More precisely, we consider a functor $\mu_X\colon\CC\to\CC$ given by multiplication with $X$.
Since $X\simeq\mu_X(\pt)$, this factors through the slice $\CC_{/X}$,
and we further factor this functor as a composite
\[
\CC\to\CC[X]\subset\CC_{/X}.
\]
Here $\CC[X]\subset\CC_{/X}$ denotes the full subcategory consisting of those objects in the image of $\mu_X$, and (for ease of notation) we will simply denote these objects by their name in $\CC$.
This approach has the advantage that composition is well-defined ($\CC[X]$ is a full subcategory of $\CC_{/X}$), and we have an equivalence
\[
\Map_{\CC[X]}(S,T)=\Map_{\CC_{/X}}(X\times S,X\times T)\simeq\Map_{\CC}(X\times S,T).
\]
To see that this construction is natural in $X$, we use an auxiliary construction.
Consider the target projection $p\colon\Fun(\Delta^1,\CC)\to\CC$.
It is a cartesian fibration via pullback, since $\CC$ admits finite limits, and the straightening of this fibration is a functor $\CC^{\op}\to\Cat_\i$, which sends $X$ to $\CC_{/X}$.
In particular, given a simplicial set $I$ and a functor $f\colon I\to\CC$, we obtain the desired functor $I^{\op}\to\Cat_\i$ by precomposing $\CC^{\op}\to\Cat_\i$ with $f^{\op}$.

\begin{construction}\label{con:cosimplicialquotient}
Let $\DD$ be an $\i$-category with finite products equipped with a cosimplicial object $\Delta_\DD\colon\Delta\to\DD$.
Let $\DD_n:=\DD[\Delta^n_\DD]\subset\DD_{/\Delta^n_\DD}$, so that we obtain a simplicial $\i$-category $\DD_\bullet$, and set $\CC=|\DD_\bullet|$.
Then the projection $q\colon\DD\simeq\DD_0\to\CC$ is a quotient functor.
\end{construction}

\section{Comparison of models for the global orbit category}\label{app:GlobalOrbitModels}
The goal of this appendix is to compare our definition of the global orbit category $\Orb$ with the construction in \cite{GH}. This is especially important because the latter one has been compared to other models of unstable global homotopy theory in \cite{KorOrbi} and \cite{SchwedeOrbi}. As this appendix will involve point set considerations, we will use $\hocolim$ to denote a homotopy colimit in a relative category or, equivalently, a colimit in the associated $\infty$-category. 

The model of \cite{GH} of the global orbit category is based on the notion of the fat geometric realization of a simplicial space, which we denote by $||-||$. In contrast to the usual geometric realization, $||X_{\bullet}||$ is always a model for the homotopy colimit $\hocolim_{\Delta^{\op}}X_{\bullet}$. Segal has shown in \cite[Appendix A]{SegalCategories} that the fat realization is always homotopy invariant and equivalent to the usual realization on good spaces and in particular on Reedy cofibrant ones; on these it is well-known that the geometric realization is a model of the homotopy colimit.

 One usually defines the fat realizations like the geometric realizations, but one only takes identifications along face and not along degeneracy maps. We will use the equivalent definition as the geometric realization of the simplicial space $\overline{X}_{\bullet}$ that arises as the left Kan extension of $X_{\bullet}$ along $\Delta_{\mathrm{inj}} \to \Delta$, where $\Delta_{\mathrm{inj}}$ stands for the subcategory of $\Delta$ of injections. Concretely, we have $\overline{X}_n =\coprod_{f\colon [n]\twoheadrightarrow [k]} X_k$; given $\alpha\colon [m] \to [n]$, we factor the composite to $[k]$ as $h\circ g\colon [m] \twoheadrightarrow [l] \hookrightarrow [k]$ and get $\alpha^*(f,x) = (g, h^*x)$. 
 
 Homotopically, fat realization is strong monoidal: Segal showed in \cite[Appendix A]{SegalCategories} that the canonical map $||X \times Y||\to ||X||\times ||Y||$ is a weak equivalence and $||\pt|| = \Delta^{\infty}$ is contractible. We claim moreover that fat realization is lax monoidal on the nose: The map $||X \times Y||\to ||X||\times ||Y||$  admits a retract $r$, which is induced by a retract $\overline{r}\colon \overline{X} \times \overline{Y} \to \overline{X \times Y}$ of the canonical map: Given a pair of $(f\colon [n] \twoheadrightarrow [k], x\in X_k)$ and $(g\colon [m] \twoheadrightarrow [l], y\in Y_k)$, there is a unique surjection $h\colon [n] \twoheadrightarrow [p]$ such that $f$ and $g$ factor over $h$ and $p$ is minimal with this property. We define $\overline{r}((f,x), (g, y))$ as the pair of $h$ and the pair of images of $x$ and $y$ in $(X \times Y)_p$. 
 
 For a simplicial diagram $\CC_{\bullet}$ in $\TopCat$ whose functor of objects is constant, applying the fat realization to the mapping spaces thus defines a topological category $||\CC_{\bullet}||$. The analogous construction also works for simplicial diagrams in simplicial categories. 
 
 \begin{definition}
 Consider the category of topological groupoids as enriched over itself. Define $\Orb^{\Gpd}$ as the full enriched subcategory on the groupoids $\{G\rrarrow \pt\}$ for $G$ compact Lie. We define the Gepner--Henriques global orbit category $\Orb'$ as the topological category $||\Orb^{\Gpd}||$ (cf.\ \cite[Remark 4.3]{GH}). Concretely, we have
 \[
 \Map_{\Orb'}(H,G)=||\underline{\Map}_{\TopGpd}(H\rrarrow \pt,G\rrarrow \pt)|| 
 \]
if we identify its objects with the corresponding compact Lie groups. 
 \end{definition}
 
 To compare $\Orb'$ with our definition of $\Orb$, we have to investigate the precise homotopical meaning of taking homwise fat realization of a simplicial diagram of topological categories.
 
 For us, a functor between topological or simplicial categories is a \emph{(Dwyer--Kan) equivalence} if it is a weak equivalence on all mapping spaces and an equivalence on homotopy categories. Homwise geometric realization and singular complex define inverse equivalences between the relative categories $\TopCat$ and $\sCat$. As the latter can be equipped with the Bergner model structure, we see that both have all homotopy colimits. We moreover recall the Rezk model structure on $\Fun(\Delta^{\op}, \sSet)$ from \cite{Rez01}.
 
 \begin{prop}
     Let $\CC_{\bullet}$ be a simplicial diagram in $\TopCat$ whose functor of objects is constant. Then  $||\CC_{\bullet}||$ is a model for the homotopy colimit  $\hocolim_{\Delta^{\op}}\CC_{\bullet}$ in topological categories. Furthermore, the analogous statement is true for simplicial categories.
 \end{prop}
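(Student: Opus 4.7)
The plan is to reduce the claim to a homwise statement on mapping spaces. The first step is to pass from $\TopCat$ to $\sCat$ using the homwise $\Sing$ and geometric realization equivalence of relative categories, which preserves the constant-object condition and commutes with $||-||$ up to the natural weak equivalence $|\Sing X|\simeq X$. It therefore suffices to prove the simplicial version of the statement.

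Because the object functor of $\CC_{\bullet}$ is constant with some value $S$, the diagram factors through the non-full subcategory $\sCat_S \subset \sCat$ of simplicial categories with object set $S$ and identity-on-objects functors. Ordinary colimits taken in $\sCat_S$ agree with the same colimits in $\sCat$ (as one sees by inspecting object sets and mapping spaces separately), so it is enough to identify $||\CC_{\bullet}||$ with $\hocolim_{\Delta^{\op}} \CC_{\bullet}$ computed inside $\sCat_S$. The category $\sCat_S$ admits a Dwyer--Kan model structure transferred from the projective model structure on $(\sSet)^{S \times S}$, in which both weak equivalences and fibrations are detected homwise; consequently homotopy colimits of simplicial diagrams in $\sCat_S$ are computed homwise on mapping spaces.

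It therefore remains to check that $||\CC_{\bullet}(s,t)||$ models $\hocolim_{\Delta^{\op}} \CC_{\bullet}(s,t)$ in $\sSet$ for each pair $(s,t)$, and that these homwise fat realizations assemble correctly into the simplicial category $||\CC_{\bullet}||$. The first point is the classical fact, recalled in the excerpt following Segal, that fat realization is homotopy invariant on arbitrary simplicial objects in $\sSet$ and agrees with the ordinary realization on any Reedy cofibrant replacement. The second point is precisely the purpose of the lax monoidality of $||-||$ with the natural retract $\overline{r}$ established earlier: composition in $||\CC_{\bullet}||$ is by construction induced from $\overline{r}$ applied to the composition in each $\CC_n$, so the hom spaces of $||\CC_{\bullet}||$ are by definition the homwise fat realizations of the hom spaces of $\CC_{\bullet}$.

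The main obstacle is justifying that homotopy colimits in $\sCat_S$ are really computed homwise on mapping spaces, i.e.\ that the transferred model structure is well-behaved and that the forgetful functor to $(\sSet)^{S \times S}$ preserves the relevant homotopy colimits. This can be done directly following Dwyer--Kan, or bypassed by working in the associated $\infty$-category: the universal fact that fat realization of a simplicial object in any presentable $\infty$-category computes its colimit applies, and the constant-object hypothesis then ensures that the colimit in $\Cat_{\infty}$ is detected levelwise by the constant space of objects together with the homwise colimits of mapping spaces.
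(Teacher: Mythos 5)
Your overall strategy coincides with the paper's at the outer level (reduce from $\TopCat$ to $\sCat$ via homwise $\Sing$/realization, then reduce to the classical fact that fat realization computes homotopy colimits of simplicial spaces), but the middle step contains a genuine gap. The crux of the proposition is precisely the claim that, for a simplicial diagram with constant object set $S$, the homotopy colimit in the homotopy theory of \emph{all} simplicial categories (Bergner/Dwyer--Kan equivalences, i.e.\ the colimit in $\Cat_\i$) is detected homwise on mapping spaces. Your argument establishes at best that the homotopy colimit computed \emph{inside} $\sCat_S$ is homwise, and then identifies the two homotopy colimits by appeal to "ordinary colimits in $\sCat_S$ agree with those in $\sCat$." That is a statement about strict colimits; it does not transfer to homotopy colimits, because the inclusion $\sCat_S\hookrightarrow\sCat$ is not left Quillen, the two homotopy theories have different weak equivalences and different cofibrant replacements, and a (Reedy/projective) cofibrant replacement in $\Fun(\Delta^{\op},\sCat_S)$ need not compute the Bergner homotopy colimit. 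Your proposed "bypass" -- that the constant-object hypothesis ensures the colimit in $\Cat_\i$ is detected by the homwise colimits -- is exactly the assertion that needs proof, not a known universal fact: levelwise colimits of Segal-type objects are in general only computed after re-imposing the Segal and completeness conditions.

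The paper fills this gap by changing models: it transports the question along the nerve $\mathfrak{N}\colon\sCat\to\Fun(\Delta^{\op},\sSet)$, using Bergner's factorization through Segal precategories to see that $\mathfrak{N}$ preserves and reflects homotopy colimits, and then observes (i) that $\mathfrak{N}$ intertwines homwise fat realization with \emph{levelwise} fat realization because $\lVert-\rVert$ commutes with disjoint unions and with finite products up to weak equivalence, and (ii) that levelwise fat realization is a homotopy colimit for Rezk equivalences since the Rezk model structure is a left Bousfield localization of the Reedy one. Point (i) is where the constant-object hypothesis and the finite-product compatibility of $\lVert-\rVert$ actually enter (compare \cref{lem:segalcat}); some such argument is unavoidable, and your proposal currently replaces it with an assertion. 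If you prefer the $\sCat_S$ route, you would still need to prove that the derived inclusion $\sCat_S\to\sCat$ preserves $\Delta^{\op}$-indexed homotopy colimits, which is essentially equivalent to the statement being proved.
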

 \begin{proof}
 Observe first that homwise geometric realization and singular complex define an equivalence between the relative categories $\TopCat$ and $\sCat$. As the geometric realization functor $\sSet \to \Top$ commutes with fat realization of simplicial objects, it suffices to show the statement for simplicial categories.
     
     We will use the nerve functor $\mathfrak{N}\colon \sCat \to \Fun(\Delta^{\op},\sSet)$ defined as follows: given a simplicial category $\CC$, the zeroth space of $\mathfrak{N}\CC$ is the set of objects, the first space is the disjoint union over all mapping spaces and the higher ones are disjoint unions of products of composable mapping spaces. To analyze this functor, we introduce the intermediate category $\SeCat$ of Segal precategories, i.e.\ objects of $\Fun(\Delta^{\op},\sSet)$ whose zeroth space is discrete. In \cite{BergnerThree}, Bergner writes $\mathfrak{N}$ as a composite
     \[\sCat \xrightarrow{R} \SeCat \xrightarrow{I} \Fun(\Delta^{\op}, \sSet). \]
     The functor $R$ is a right Quillen equivalence for a certain model structure on $\SeCat$. The functor $I$ is a left Quillen equivalence from a different model structure with the same weak equivalences and all objects cofibrant to the Rezk model structure on $\Fun(\Delta^{\op}, \sSet)$. Thus we see that $\mathfrak{N}$ induces an equivalence of relative categories between fibrant simplicial categories and $\Fun(\Delta^{\op},\sSet)$ with the Rezk equivalences. In particular, $\mathfrak{N}$ preserves and reflects homotopy colimits on fibrant simplicial categories. 
     
     As fat realization commutes with disjoint unions and is homotopically strong symmetric monoidal, we see that $\mathfrak{N}$ intertwines the homwise fat realization in $\sCat$ with the levelwise fat realization in $\Fun(\Delta^{\op}, \sSet)$ up to levelwise equivalence of simplicial spaces. Moreover, levelwise fat realization is a homotopy colimit for levelwise equivalences of simplicial spaces and hence also for Rezk equivalences (as the Rezk model structure is a left Bousfield localization of the Reedy model structure). We deduce that homwise fat realization is a homotopy colimit in simplicial categories.
 \end{proof}
 
 \begin{cor}\label{cor:hocolim}
 Let $\CC_{\bullet}$ be a simplicial diagram $\CC_{\bullet}$ in $\TopCat$ whose functor of objects is constant. Then $N^{\coh}\Sing ||\CC_{\bullet}||$ is equivalent to $\hocolim_{\Delta^{\op}}N^{\coh}\Sing \CC_{\bullet}$, where $\Sing$ and $||-||$ are formed on the level of mapping (simplicial) spaces.
 \end{cor}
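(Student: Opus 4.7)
The plan is to deduce this directly from the preceding proposition by transferring the identification of $||\CC_\bullet||$ as a homotopy colimit across the equivalence of $\infty$-categories presented by $N^{\coh}\Sing$. The preceding proposition states that $||\CC_\bullet||$ models $\hocolim_{\Delta^{\op}}\CC_\bullet$ in $\TopCat$ viewed as a relative category with Dwyer--Kan equivalences, so the task reduces to checking that $N^{\coh}\Sing$ transports homotopy colimits in $\TopCat$ to homotopy colimits in $\Cat_\infty$.

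First I would observe that $\Sing$, applied homwise to mapping spaces, defines a functor $\TopCat \to \sCat$ that preserves and reflects Dwyer--Kan equivalences, and that the unit and counit of the homwise adjunction $(|-|,\Sing)$ are levelwise weak equivalences of spaces. Consequently $\Sing$ induces an equivalence of the associated localized $\infty$-categories. Composing with the coherent nerve $N^{\coh}\colon \sCat \to \sSet_{\mathrm{Joyal}}$, which is a right Quillen equivalence by a theorem of Bergner, yields an equivalence
\[
N^{\coh}\Sing \colon \TopCat[W^{-1}] \xrightarrow{\ \simeq\ } \sCat[W^{-1}] \xrightarrow{\ \simeq\ } \Cat_\infty.
\]

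Since equivalences of $\infty$-categories preserve all colimits, and since homotopy colimits in the relative category $\TopCat$ are by definition the $\infty$-categorical colimits in the localization $\TopCat[W^{-1}]$, applying the functor $N^{\coh}\Sing$ to the equivalence $||\CC_\bullet|| \simeq \hocolim_{\Delta^{\op}}\CC_\bullet$ furnished by the preceding proposition yields the desired equivalence
\[
N^{\coh}\Sing||\CC_\bullet|| \;\simeq\; \hocolim_{\Delta^{\op}} N^{\coh}\Sing \CC_\bullet
\]
in $\Cat_\infty$.

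The only potential subtlety worth flagging is that $\Sing$ does not commute with fat realization on the nose, so one cannot naively identify $\Sing\,||\CC_\bullet||$ with $||\Sing\CC_\bullet||$ as simplicial categories; the argument sidesteps this entirely by appealing to the $\infty$-categorical universal property of colimits, where the strict point-set comparison is immaterial.
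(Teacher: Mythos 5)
Your proposal is correct and follows essentially the same route as the paper: invoke the preceding proposition to identify $||\CC_\bullet||$ as $\hocolim_{\Delta^{\op}}\CC_\bullet$ in $\TopCat$, then transport this across $N^{\coh}\Sing$, using that $\Sing$ preserves homotopy colimits and that $N^{\coh}$ does so on fibrant (Kan-enriched) simplicial categories because its derived functor is an equivalence onto $\Cat_\infty$. Your packaging of these two facts into the single statement that $N^{\coh}\Sing$ induces an equivalence of localizations is just a mild rephrasing of the paper's argument.
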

 \begin{proof}
     Applying the singular complex to the mapping spaces of a topological category results in a fibrant simplicial category. The coherent nerve $N^{\coh}$ preserves homotopy colimits of fibrant simplicial categories (as the right derived functor of $N^{\coh}$ defines an equivalence of $\infty$-categories between simplicial categories and quasi-categories). As $\Sing$ also commutes with homotopy colimits, the result follows from the preceding proposition. 
 \end{proof}
 
 Our main interest in simplicial diagrams of topological categories lies in their association with topological groupoids (and thus to $\Orb'$): 
 We can associate to a topological groupoid $X_{\bullet}$ a simplicial topological space and following \cite{GH} we denote by $||X_{\bullet}||$ the fat realization of  this space. On the level of categories we obtain from a category enriched in topological groupoids a simplicial diagram in topological categories with a constant set of objects. We obtain the following further corollary. 
 
 \begin{cor}\label{cor:TopGpdInfinity}
 Let $\CC$ be a category enriched in topological groupoids. Let $||\CC||$ be the topological category obtained by applying $||-||$ homwise. Then the associated $\infty$-category $N^{\coh}\Sing ||\CC||$ can alternatively be computed as follows: Apply $\Sing$ homwise to $\CC$ to obtain a simplicial diagram in groupoid-enriched categories, take the associated $\infty$-categories and take their homotopy colimit. 
 \end{cor}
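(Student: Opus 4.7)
The plan is to deduce the claim from \cref{cor:hocolim} by recognizing the two simplicial diagrams involved as transposes of the same bisimplicial structure. For each pair $(x,y)$ of objects of $\CC$, write $M(x,y)_\bullet$ for the simplicial topological space underlying the mapping topological groupoid $\Map_\CC(x,y)$ and define a bisimplicial set
\[
U(x,y)_{p,m}:=\Map_{\Top}(\Delta^p, M(x,y)_m).
\]
Viewing $\CC$ as a simplicial diagram $\CC_\bullet$ in $\TopCat$ with constant functor of objects (so $\CC_m$ has mapping spaces $M(x,y)_m$), one has $\|\CC_\bullet\|=\|\CC\|$ after homwise fat realization, so \cref{cor:hocolim} gives
\[
N^{\coh}\Sing\|\CC\|\simeq\hocolim_m N^{\coh}\Sing\CC_m,
\]
where $\Sing\CC_m$ has mapping simplicial set $U(x,y)_{\bullet,m}$.

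On the other hand, applying $\Sing$ homwise to $\CC$ and viewing each resulting simplicial groupoid as a simplicial object in groupoids yields a simplicial object $\CC^{\Gpd}_\bullet$ in groupoid-enriched categories whose level-$k$ mapping groupoid at $(x,y)$ has object set $U(x,y)_{k,0}$ and morphism set $U(x,y)_{k,1}$. Its Duskin nerve $D\CC^{\Gpd}_k$ is equivalent to the coherent nerve $N^{\coh}(N\CC^{\Gpd}_k)$ of the simplicial category $N\CC^{\Gpd}_k$ obtained by taking the nerve of each mapping groupoid; explicitly, this simplicial category has mapping simplicial set $U(x,y)_{k,\bullet}$.

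Following the strategy of the proof of \cref{cor:hocolim}, both homotopy colimits $\hocolim_m \Sing\CC_m$ and $\hocolim_k N\CC^{\Gpd}_k$ in $\sCat$ can be computed homwise in simplicial sets via Bergner's chain of Quillen equivalences and the Rezk model structure. Each such homwise colimit amounts to realizing the bisimplicial set $U(x,y)_{\bullet,\bullet}$ in one of its two directions, and since both iterated realizations are canonically weakly equivalent to the diagonal, the two colimits agree as simplicial categories. Applying $N^{\coh}$, which preserves homotopy colimits as part of a Quillen equivalence, gives
\[
\hocolim_k D\CC^{\Gpd}_k\simeq \hocolim_m N^{\coh}\Sing\CC_m\simeq N^{\coh}\Sing\|\CC\|,
\]
which is the desired equivalence.

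The main obstacle is the Fubini-style identification of the homwise homotopy colimit in $\sCat$ with the diagonal of the bisimplicial enrichment. This reduces, via Bergner's nerve $\mathfrak{N}\colon\sCat\to\Fun(\Delta^{\op},\sSet)$ used in the proof of \cref{cor:hocolim}, to the standard fact that the geometric realization of a bisimplicial set agrees with either of its iterated realizations up to weak equivalence.
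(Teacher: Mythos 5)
Your argument is correct and follows the route the paper intends: the corollary is stated without proof as a consequence of \cref{cor:hocolim} applied to the simplicial diagram in $\TopCat$ underlying $\CC$, and your proof supplies exactly the one nontrivial missing ingredient, namely the Fubini-type identification of the two homwise one-directional realizations of the bisimplicial mapping sets $U(x,y)_{\bullet,\bullet}$ through the diagonal, carried out via Bergner's nerve just as in the proof of \cref{cor:hocolim}. The only point worth recording explicitly is that $N^{\coh}$ preserves the relevant homotopy colimits because the simplicial categories $\Sing\CC_m$ and $N\CC^{\Gpd}_k$ are fibrant (their mapping objects are singular complexes and nerves of groupoids, hence Kan), which your proof uses implicitly.
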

 
 Recall that $\Orb^{\Gpd}$ is the full subcategory of the $\TopGpd$-enriched version of $\TopGpd$ on the objects of the form $\{G \rrarrow \pt\}$ for $G$ compact Lie. Applying $\Sing$ to the mapping groupoids results in a simplicial object $\Orb^{\Gpd}_{\bullet}$ in groupoid-enriched categories whose mapping groupoids in level $n$ are $\Map_{\TopGpd}(G\times\Delta^n \rrarrow \Delta^n , H \rrarrow \pt)$. (Recall here that $\Map_{\TopGpd}$ denotes the groupoid and not the topological groupoid of maps.) There is a corresponding simplicial object $\Orb^{\Stk}_{\bullet}$ whose $n$-th groupoid enriched category has mapping groupoids $\Map_{\Stk}(\Delta^n \times \B G, \B H)$. Stackification provides a map $\Orb^{\Gpd}_{\bullet}\to \Orb^{\Stk}_{\bullet}$ of simplicial groupoid-enriched categories. 
 
 \begin{lemma}\label{lem:GpdStk}
     The map $\Orb^{\Gpd}_{\bullet}\to \Orb^{\Stk}_{\bullet}$ defines in each level on each mapping groupoid an equivalence.
 \end{lemma}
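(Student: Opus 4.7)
The plan is to verify, for each $n$ and each pair of compact Lie groups $G, H$, that the natural comparison
\[
\Phi_n \colon \Map_{\TopGpd}(\Delta^n \times G \rrarrow \Delta^n, H \rrarrow \pt) \to \Map_{\Stk}(\Delta^n \times \B G, \B H)
\]
is an equivalence of groupoids. First I would unwind both sides concretely. On the left, a functor of topological groupoids is determined by a continuous map $\varphi\colon \Delta^n \times G \to H$ which is a group homomorphism in the $G$-variable, while a natural transformation $\alpha\colon \varphi \to \varphi'$ is a continuous map $\alpha\colon \Delta^n \to H$ satisfying $\varphi'(x,g) = \alpha(x)\varphi(x,g)\alpha(x)^{-1}$; this is a direct unwinding of the definition of the mapping groupoid into $\{H \rrarrow \pt\}$.

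For the right-hand side I would proceed by descent. Since $\B H$ is already a stack, the stackification-inclusion adjunction gives $\Map_{\Stk}(\Delta^n \times \B G, \B H) \simeq \Map_{\PreStk}(\Delta^n \times \{G \rrarrow \pt\}, \B H)$. Writing the represented prestack on the left as the colimit in $\PreStk$ of its simplicial nerve $[k] \mapsto \Delta^n \times G^k$, this mapping groupoid becomes $\holim_{[k] \in \Delta} \B H(\Delta^n \times G^k)$, which by descent is the groupoid of principal $H$-bundles on the stack quotient $[\Delta^n / G]$ with trivial $G$-action, i.e.\ the groupoid of $G$-equivariant principal $H$-bundles on $\Delta^n$. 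Tracking through the stackification construction, $\Phi_n$ sends a topological groupoid morphism $\varphi$ to the trivial principal $H$-bundle $\Delta^n \times H \to \Delta^n$ endowed with the $G$-action $g \cdot (x, h) = (x, \varphi(x, g) h)$, and sends $\alpha$ to the gauge automorphism $(x, h) \mapsto (x, \alpha(x) h)$.

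The geometric heart of the proof is then the classical fact that every principal $H$-bundle on $\Delta^n$ is trivializable, since $\Delta^n$ is contractible and paracompact. Choosing a trivialization of a given $G$-equivariant bundle $P \to \Delta^n$ identifies its $G$-equivariant structure with exactly such a homomorphism-valued continuous map $\Delta^n \times G \to H$, yielding essential surjectivity of $\Phi_n$. For full faithfulness, isomorphisms between two trivialized $G$-equivariant principal $H$-bundles are precisely the gauge transformations $\alpha\colon \Delta^n \to H$ intertwining the $G$-actions, matching the natural transformations on the left side on the nose. The hard part will be setting up the descent identification on the right-hand side rigorously within the paper's framework; once that is in place, the triviality of principal bundles on $\Delta^n$ supplies the crucial geometric input and the remainder is a matter of unwinding definitions.
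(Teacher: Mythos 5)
Your proposal is correct and follows essentially the same route as the paper: use the stackification adjunction to replace $\Map_{\Stk}(\Delta^n\times\B G,\B H)$ by maps of prestacks out of the groupoid $\{G\times\Delta^n\rrarrow\Delta^n\}$, identify this via descent with the groupoid of principal $H$-bundles on $\Delta^n$ equipped with $G$-equivariance (descent) data, and then invoke the triviality of principal $H$-bundles on $\Delta^n$ to reduce to the explicitly described mapping groupoid of topological groupoids. The paper's proof is exactly this argument, stated slightly more tersely.
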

 \begin{proof}
     Let $H$ and $G$ be compact Lie groups. We have to show that 
     \[ \Map_{\TopGpd}(G\times\Delta^n \rrarrow \Delta^n, H \rrarrow \pt) \to \Map_{\Stk}(\Delta^n\times \B G, \B H)
     \]
     is an equivalence of groupoids. As $\B H$ is a stack and stackification is left adjoint, the target is equivalent to 
     $\Map_{\Pre\Stk}(G\times\Delta^n \rrarrow \Delta^n , \B H)$. 
     This groupoid is the groupoid of $H$-principal bundles on $\{G\times\Delta^n \rrarrow \Delta^n\}$: an object is a principal $H$-bundle on $\Delta^n$ with an
     isomorphism between the two pullbacks to $G\times\Delta^n$ satisfying a cocycle condition. As every principal $H$-bundle on $\Delta^n$ is trivial, we may up to equivalence replace this groupoid by the subgroupoid where we require the $H$-principal bundle on $\Delta^n$ to be equal to $H\times\Delta^n$. This subgroupoid is precisely $\Map_{\TopGpd}(G\times\Delta^n \rrarrow \Delta^n, H \rrarrow \pt)$. The set of objects is $\Map(\Delta^n, \Hom(G,H))$ and the set of morphisms is $\Map(\Delta^n, \Hom(G,H) \times H)$.
 \end{proof}
 
 Recall that we can view every groupoid-enriched category as a quasi-category via the Duskin nerve $N^{\mathrm{Dusk}}$; this is isomorphic to taking the homwise nerve and applying the coherent nerve. Note further by construction there is for each $n$ a fully faithful embedding $N^{\mathrm{Dusk}}\Orb_n^{\Stk} \to \Stk[\Delta^n]$, with $[\Delta^n]$ as in the preceding appendix. These define a morphism of simplicial diagrams which are constant on objects, and thus taking $\hocolim_{\Delta^{\op}}$ results in a fully faithful embedding
 \[\hocolim_{\Delta^{\op}}N^{\mathrm{Dusk}}\Orb_{\bullet}^{\Stk} \to \Stk_{\infty}\]
 with image $\Orb$, where $\Stk_{\infty}$ is as in \cref{def:Stki} (cf.\ \cref{lem:segalcat}). By composing this with the equivalence of the preceding lemma, we obtain an equivalence
 \[\hocolim_{\Delta^{\op}}N^{\mathrm{Dusk}}\Orb_{\bullet}^{\Gpd} \to \Orb.\]
 By \cref{cor:TopGpdInfinity} the source can be identified with $N^{\coh}\Sing \Orb'$, i.e.\ with the $\infty$-category associated with $\Orb'$. Thus we obtain the goal of this appendix:
 
 \begin{prop}\label{prop:OrbComparison}
 Stackification induces an equivalence between $\Orb$ and the $\infty$-category associated with $\Orb'$.
 \end{prop}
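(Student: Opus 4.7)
My plan is to assemble the equivalence from pieces already developed in the appendix, essentially following the paragraph immediately preceding the statement. The overall strategy is to express both sides as homotopy colimits of the same simplicial diagram (up to a pointwise equivalence of groupoid-enriched categories), so that the comparison map is forced to be an equivalence.

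First, I would upgrade the fully faithful embeddings $N^{\mathrm{Dusk}}\Orb_n^{\Stk} \hookrightarrow \Stk[\Delta^n]$ to a morphism of simplicial objects in $\Cat_\infty$. This uses the description of $\Stk_\infty$ from \cref{con:cosimplicialquotient} applied to $\DD = \Stk$ with the cosimplicial object $\Delta^\bullet_{\Top}$, which realizes $\Stk_\infty$ as $|\Stk[\Delta^\bullet]|$. Because the inclusion is fully faithful in each level and $\Orb \subset \Stk_\infty$ is by definition the full subcategory on the $\BB G$'s (each of which lies in the image of every level), I would then conclude that the induced map
\[
\hocolim_{\Delta^{\op}}N^{\mathrm{Dusk}}\Orb^{\Stk}_{\bullet} \longrightarrow \Stk_\infty
\]
is fully faithful with essential image $\Orb$, giving an equivalence onto $\Orb$.

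Next, I would use \cref{lem:GpdStk}, which says that stackification gives a levelwise equivalence $\Orb^{\Gpd}_\bullet \to \Orb^{\Stk}_\bullet$ of simplicial groupoid-enriched categories. Since the Duskin nerve sends equivalences of groupoid-enriched categories to equivalences of $\infty$-categories and homotopy colimits in $\Cat_\infty$ preserve levelwise equivalences, this induces an equivalence
\[
\hocolim_{\Delta^{\op}}N^{\mathrm{Dusk}}\Orb^{\Gpd}_{\bullet} \xrightarrow{\simeq} \hocolim_{\Delta^{\op}}N^{\mathrm{Dusk}}\Orb^{\Stk}_{\bullet} \simeq \Orb.
\]

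Finally, I would identify the left-hand side with the $\infty$-category underlying $\Orb'$. By definition $\Orb' = \|\Orb^{\Gpd}\|$ is obtained by applying fat realization to each mapping groupoid of $\Orb^{\Gpd}$, viewed as a simplicial object in groupoid-enriched categories with constant object set. By \cref{cor:TopGpdInfinity}, the $\infty$-category $N^{\coh}\Sing \Orb'$ is computed as $\hocolim_{\Delta^{\op}}N^{\coh}\Sing \Orb^{\Gpd}_\bullet$, and since the coherent nerve of a simplicial groupoid agrees with its Duskin nerve, this is exactly $\hocolim_{\Delta^{\op}}N^{\mathrm{Dusk}}\Orb^{\Gpd}_{\bullet}$. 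Chaining the equivalences yields the desired equivalence between $\Orb$ and the $\infty$-category associated to $\Orb'$. The only real subtlety, and the step I would be most careful with, is checking that the map on simplicial diagrams $N^{\mathrm{Dusk}}\Orb^{\Stk}_\bullet \to \Stk[\Delta^\bullet]$ can be made coherent enough to induce a map of homotopy colimits; this is a naturality check that ultimately reduces to the fact that $\Stk[\Delta^n] \subset \Stk_{/\Delta^n}$ is defined via pullback and hence functorial in $[n]$.
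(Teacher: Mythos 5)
Your proposal follows the paper's own argument essentially verbatim: realize $\Stk_\infty$ as $|\Stk[\Delta^\bullet]|$ so that the levelwise fully faithful embeddings $N^{\mathrm{Dusk}}\Orb^{\Stk}_n \hookrightarrow \Stk[\Delta^n]$ assemble to a fully faithful functor $\hocolim_{\Delta^{\op}}N^{\mathrm{Dusk}}\Orb^{\Stk}_{\bullet} \to \Stk_\infty$ with image $\Orb$, then invoke \cref{lem:GpdStk} for the levelwise equivalence with $\Orb^{\Gpd}_\bullet$ and \cref{cor:TopGpdInfinity} to identify the resulting colimit with $N^{\coh}\Sing\Orb'$. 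This is correct and is the same proof, including the same (lightly treated) coherence point about the map of simplicial diagrams.
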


\section{The $\infty$-category of $G$-spectra}\label{app:GSpectra}
The aim of this appendix is to introduce the $\infty$-category of equivariant spectra, compare it to the theory of orthogonal spectra and deduce from Robalo's thesis a universal property of this $\infty$-category. Other $\infty$-categorical treatments of $G$-spectra for (pro)finite groups $G$ include \cite{BarwickSpec} and \cite{NardinOrb}. 

\subsection{$G$-spaces and $G$-spectra}We first review a construction of $\i$-categories from $1$-categorical data:
A relative category is a category with a chosen class of morphisms, called \emph{weak equivalences}, containing all identities and closed under composition. To every relative category $\CC$, we can consider an $\infty$-category $L\CC$, which universally inverts all weak equivalences. We recall the following functorial construction: Let $\mathrm{core}\colon \Cat_{\infty} \to \Spc$ be the right adjoint of the inclusion (i.e.\ the maximal $\i$-subgroupoid) and consider the resulting functor 
\[\Cat_{\i} \to \Cat_{\i}^{\Delta[1]},\qquad \CC \,\mapsto\, (\mathrm{core}(\CC) \to \CC).\]
This functor preserves limits and is accessible (as the same is true for $\mathrm{core}$) and thus the adjoint functor theorem implies that it admits a left adjoint $L_{\i}\colon \Cat_{\i}^{\Delta[1]} \to \Cat_{\i}$. The universal property of the adjunction implies that $\CC \simeq L_{\i}(\varnothing \subset \CC) \to L_{\infty}(\WW \subset \CC)$ is a localization at $\WW$ in the sense of \cite[Definition 7.1.2]{Cisinski}. Let $\RelCat$ denote the $(2,1)$-category of relative categories, weak equivalence preserving functors and natural isomorphisms. Viewing $(2,1)$-categories as $\i$-categories as before, we obtain a composite functor \[\RelCat \to \Cat^{\Delta[1]} \to \Cat_{\i}^{\Delta[1]} \xrightarrow{L_{\i}} \Cat_{\i},\] which we define to be $L$.  

From now on let $G$ be a compact Lie group. We denote by $\Top^G$ the relative category of (compactly generated, weak Hausdorff) topological spaces with $G$-action, where weak equivalences are detected on fixed points for all closed subgroups $H\subset G$. We denote by $\Spc^G = L\Top^G$ the associated $\infty$-category. 

Denote by $\Orb_G$ the orbit category. This is the $\infty$-category associated to the full topological subcategory of $\Top_G$ on the $G/H$ where $H$ is a closed subgroup; equivalently, it is the full sub-$\infty$-category of $\Spc^G$ on the same $G/H$ (cf.\ \cite[Theorem 1.3.4.20]{HA}). Elmendorf's theorem implies that 
\[\Spc^G\to \Fun(\Orb^{\op}_G, \Spc), \qquad X \mapsto \Map(-, X)\] is an equivalence. While Elmendorf's original article \cite{Elmendorf} only showed an equivalence of homotopy categories, this was upgraded to an equivalence of homotopy theories in \cite{DwyerKanSingularFunctors}. The precise $\infty$-categorical version we are stating can be found in \cite[Example 8.2]{LinskensNardinPol}. 

There are also pointed versions: Calling a $G$-space \emph{well-pointed} if all its fixed points are well-pointed, we denote by $\Top_*^G$ the relative category of well-pointed topological $G$-spaces and by $\Spc_*^G$ the associated $\infty$-category. By the dual version of \cite[Corollary 7.6.13]{Cisinski} or by \cite[Proposition 2.3]{LinskensNardinPol}, this agrees with pointed objects in $\Spc^G$ and thus $\Spc_*^G  \simeq \Fun(\Orb^{\op}_G, \Spc_*)$. 

Given an orthogonal $G$-representation $V$, we denote by $S^V$ its one-point compactification and by $\Sigma^V$ the functor $- \sm S^V$. This defines a homotopical functor $\Top_*^G \to \Top_*^G$. Indeed: By \cite[Proposition B.1(iii)]{SchGlobal}, we have $(S^V \sm X)^H\cong S^{V^H}\sm X^H$. This reduces to the non-equivariant version, where it is well-known (see also \cite[p.258]{SchGlobal}). We remark that the induced functor $\SS_*^G \to \SS_*^G$ agrees with the tensor $- \tensor S^V$ in the pointwise monoidal structure on $\Fun(\Orb^{\op}_G, \Spc_*)$. 

We fix in the following an orthogonal $G$-representation $\UU$. In the main body of this article, this will always be a complete universe, i.e.\ $\UU$ is countably-dimensional and contains up to isomorphism every countable direct sum of finite-dimensional orthogonal $G$-representations. (This always exists as e.g.\ shown in \cite[p.20]{SchGlobal}.)

Informally speaking, we obtain the $\infty$-category $\Sp^G_{\UU}$ of $G$-spectra by inverting all subrepresentations of $\UU$ on $\Spc_*^G$. More precisely, let $\Sub_{\UU}$ be the poset of finite-dimensional subrepresentations of $\UU$. We define a (pseudo-)functor $T\colon \Sub_{\UU} \to \RelCat$ sending each $V$ to $\Top_*^G$. Given an inclusion $V\subset W$, we denote by $W-V$ the orthogonal complement of $V$ in $W$ and define the relative functor $T(V\subset W)$ as $\Top_*^G \xrightarrow{\Sigma^{W-V}} \Top_*^G$. In particular, this induces a functor $LT\colon \Sub_{\UU} \to \Cat_{\infty}$.
	
	By its characterization as a functor category, $\Spc_*^G$ is presentable, and for each $V\subset W$, the functor $LT(V\subset W)$ is a left adjoint with right adjoint $\Omega^{W-V}$. As by Illman's theorem \cite[Corollary 7.2]{Illman}, $S^{W-V}$ has the structure of a finite $G$-CW-complex, $\Omega^{W-V} \colon \Spc_*^G \to \Spc_*^G$ preserves filtered colimits (as can, for example, be shown by induction over the cells) and hence $\Sigma^{W-V}$ preserves compact objects. Thus, $LT$ can be seen as taking values in the $\infty$-category $\Cat^{\omega}_{\infty}$ of compactly generated $\infty$-categories and compact-object preserving left adjoints. (We refer to \cite[Appendix A]{HeutsGood} for a quick overview of compactly generated $\infty$-categories.)
	
\begin{defi}
The $\infty$-category $\Sp^G_{\UU}$ of $G$-spectra is the colimit over $LT$ in $\Cat^{\omega}_{\infty}$. If $\UU$ is a complete universe, we write $\Sp^G$ for $\Sp^G_{\UU}$. We denote the map $\Spc_*^G \to \Sp^G_{\UU}$ corresponding to the subspace $0\subset \UU$ by $\Sigma^{\infty}$. 
\end{defi}

By \cite[Lemma A.4]{HeutsGood}, we can write the compact objects in $\Sp^G_{\UU}$ as the idempotent completion of $\colim_{\Sub_{\UU}}(\Spc_*^G)^{\omega}$, where $(\Spc_*^G)^{\omega}$ are the compact objects and the colimit is taken in $\infty$-categories. Thus, $\Sp^G_{\UU}$ agrees with the ind-completion of this colimit \cite[Proposition 5.5.7.8]{HTT}. If $\UU$ is countably-dimensional, we can find a cofinal map $\N \to \Sub_{\UU}$ and furthermore the map from the underlying graph of $\N$ to the nerve of $\N$ is cofinal (e.g.\ using \cite[Corollary 4.1.1.9]{HTT}), making this colimit particularly easy to understand.

Another perspective on this colimit is as the limit over $\Sub_{\UU}^{\op}$ of the diagram $T^R$ with $T^R(V\subset W) = (\Spc_*^G \xrightarrow{\Omega^{V^{\perp}}} \Spc_*^G)$ (see \cite[Proposition 5.5.7.6, Remark 5.5.7.7]{HTT}). This way we also see that the colimit defining $\Sp^G_{\UU}$ agrees with the corresponding colimit in the $\infty$-category $\Pr^L$ of presentable $\infty$-categories. 

\subsection{Comparison to orthogonal spectra}
We want to compare our definition of $G$-spectra with the (maybe more traditional) definition via orthogonal spectra. Thus let $\Sp^G_O$ be the relative category of $G$-objects in orthogonal spectra, whose weak equivalences are the stable equivalences with respect to a complete universe $\UU$ (which we will fix from now on). We refer to \cite{HHR}, \cite{SchwedeEquiv}, \cite{MandellMay} and \cite{SchGlobal} for general background on equivariant orthogonal spectra. We will use the stable model structure from \cite[Section III.4]{MandellMay}. 

\begin{lemma}
	The $\infty$-category $L\Sp^G_O$ is compactly generated. 
\end{lemma}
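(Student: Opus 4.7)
The approach is to identify $L\Sp^G_O$ as the underlying $\infty$-category of a combinatorial stable model category, from which presentability and stability follow, and then to exhibit an explicit small set of compact generators. For the first part, the stable model structure on $\Sp^G_O$ with respect to the complete universe $\UU$ is cofibrantly generated, proper, and stable, so by standard results on underlying $\infty$-categories of combinatorial stable model categories (e.g.\ \cite[Section 1.3.4]{HA}), $L\Sp^G_O$ is a presentable stable $\infty$-category. It remains only to produce a small set of compact generators.

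The candidate set is
$$\GG = \{F_V(G/H_+) \,:\, V \subset \UU \text{ finite-dimensional},\ H \subset G \text{ closed}\},$$
the ``free'' orthogonal $G$-spectra on orbits $G/H_+$, where $F_V$ is left adjoint to evaluation at $V$. Joint generation reduces to the equivariant Whitehead theorem: a map in $\Sp^G_O$ is a $\UU$-stable equivalence if and only if it induces isomorphisms on $\pi_V^H$ for all closed $H \subset G$ and all finite-dimensional $V \subset \UU$, and each group $\pi_V^H(X)$ can be identified with $\pi_0 \Map_{L\Sp^G_O}(F_V(G/H_+), X)$ after fibrant replacement, so these objects jointly detect equivalences.

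Compactness of each $F_V(G/H_+)$ will follow from this identification together with the classical fact that equivariant stable homotopy groups of orthogonal $G$-spectra commute with filtered homotopy colimits: the $H$-fixed point functor on well-pointed $G$-spaces preserves filtered colimits indexed by closed inclusions, and the filtered colimit over subrepresentations of $\UU$ that defines $\pi_V^H$ likewise commutes with filtered colimits. The main technical obstacle is translating these classical statements into the $\infty$-categorical setting: one must match filtered colimits in $L\Sp^G_O$ with filtered homotopy colimits in $\Sp^G_O$, and verify that for cofibrant objects in $\GG$ the classical (simplicial) mapping spaces model the $\infty$-categorical mapping spaces. Both are standard consequences of the theory of combinatorial simplicial model categories and their underlying $\infty$-categories. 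Granted these, $\GG$ is a set of compact generators and $L\Sp^G_O$ is compactly generated.
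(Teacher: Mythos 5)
Your proposal is correct in outline but takes a genuinely different route from the paper's proof. The paper does not go through ``presentable $+$ stable $+$ jointly conservative compact objects''; instead it invokes \cite[Proposition 5.4.2.2]{HTT} directly, which requires exhibiting a small set of compact objects such that \emph{every} object of $L\Sp^G_O$ is a filtered (homotopy) colimit of objects from that set. The set used there consists of the spectra $F_V(X)=\Sigma^{-V}\Sigma^{\infty}X$ for $X$ ranging over a set of representatives of finite pointed $G$-CW complexes, and the generation statement is proved explicitly via the zig-zag presentation $E\simeq\hocolim_n \Sigma^{-V_n}\Sigma^{\infty}E(V_n)$ for a cofinal sequence $V_0\subset V_1\subset\cdots$ in $\Sub_{\UU}$ (as in \cite{HHR}), writing each space $E(V_n)$ as a filtered colimit of finite $G$-CW complexes. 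Your argument uses the smaller generating set of orbit cells $F_V(G/H_+)$ together with the criterion that in a presentable stable $\infty$-category a set of compact objects whose mapping-out functors jointly detect equivalences is a set of compact generators; this is cleaner once presentability and stability are known, and your identification of $\pi_0\Map(F_V(G/H_+),-)$ with the homotopy groups defining stable equivalence is correct, as is the compactness argument via homotopy groups commuting with filtered homotopy colimits. The one step you should tighten is the appeal to \emph{combinatorial} stable model categories for presentability: $\Sp^G_O$ with the Mandell--May stable model structure is a cofibrantly generated \emph{topological} model category, and $\Top$ is not locally presentable, so \cite[Section 1.3.4]{HA} does not apply verbatim. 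This is standard and fixable (pass to a Quillen equivalent simplicial or combinatorial model, or observe that the explicit filtered-colimit presentation of every object by compacts yields presentability directly, which is in effect what the paper's more hands-on argument does), but as written your presentability step rests on a citation that does not literally cover this case.
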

\begin{proof}
	The $\infty$-category $L\Sp^G_O$ has all colimits and these can be computed as homotopy colimits in $\Sp^G_O$ \cite[Section 4.2.4]{HTT}. By \cite[Proposition 5.4.2.2]{HTT} it thus suffices to obtain a set of compact objects generating $\Sp^G_O$ under filtered homotopy colimits. 
	
	To that purpose we first choose a set $\CC'$ of pointed $G$-spaces such that every pointed finite $G$-CW complex is isomorphic to an object in $\CC'$. We define $\CC$ as the set of all $\Sigma^{-V} \Sigma^\infty X$ for $X\in \CC'$ and $V$ an orthogonal subrepresentation of the universe $\UU$. Here, $\Sigma^{-V} \Sigma^\infty X = F_V(X)$ with $F_V$ being the left adjoint to the evaluation $\ev_V$ from $\Sp^G_O$ to pointed $G$-spaces. As in \cite[B.4.3]{HHR}, we can pick a cofinal map $\N \to \Sub_{\UU}$ with images $V_n$ and an arbitrary orthogonal $G$-spectrum $E$ to form zig-zags
	\[\Sigma^{-V_n}\Sigma^{\i}E(V_n) 
	\xleftarrow{\simeq} \Sigma^{-V_{n+1}}\Sigma^{\i}\Sigma^{V_{n+1}-V_n}E(V_{n+1}) 
	\to \Sigma^{-V_{n+1}}\Sigma^{\i}E(V_{n+1}), \]
	all mapping to $E$. Pasting the zig-zags and taking the homotopy colimit produces an equivalence $\hocolim \Sigma^{-V_n}\Sigma^{\i}E(V_n) \to E$ (as one sees by taking homotopy groups). Each $E(V_n)$ in turn can be written as a filtered colimit in $\Spc_*^G$ of finite $G$-CW complexes and thus each $\Sigma^{-V_n}\Sigma^{\i}E(V_n)$ as a filtered colimit of objects in $\CC$.  
\end{proof}

 We remark that $\Sigma^{\infty}\colon \Top_*^G \to \Sp_O^G$ is homotopical by \cite[Proposition 3.1.44]{SchGlobal}. Thus $\Sigma^{\infty}$ descends to a functor $\Spc_*^G \to L\Sp_O^G$.

\begin{lemma}
    Given $X,Y\in \Spc_*^{G,\omega}$, the zig-zag
    \[\xymatrix{
    &\colim_{V\in \Sub_{\UU}} \Map_{\Spc_*^G}(\Sigma^V X, \Sigma^V Y) \ar[d]\\
    \Map_{L\Sp^G_O}(\Sigma^{\infty}X, \Sigma^{\infty} Y) \ar[r] & \colim_{V\in \Sub_{\UU}}\Map_{L\Sp^G_O}(\Sigma^{\infty}\Sigma^VX, \Sigma^{\infty} \Sigma^VY)  
    }\]
    consists of equivalences. 
\end{lemma}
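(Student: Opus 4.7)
The plan is to show that each of the two maps in the zig-zag is an equivalence separately. For the horizontal arrow, I would argue that every transition morphism in the target colimit is an equivalence, whence the inclusion of the $V = 0$ term into the colimit is too. Indeed, for $V\subset W$ in $\Sub_\UU$ this transition is induced by $\Sigma^{W-V}\colon L\Sp^G_O \to L\Sp^G_O$; in the stable model structure on $\Sp^G_O$ with respect to the complete universe $\UU$, the adjunction $(\Sigma^{W-V},\Omega^{W-V})$ is a Quillen equivalence (see, e.g., \cite[Theorem III.4.2]{MandellMay}), equivalent to the statement that each representation sphere $S^{W-V}$ is invertible in the symmetric monoidal $\infty$-category $L\Sp^G_O$. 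Every transition map is therefore an equivalence, and so is the horizontal arrow itself.

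For the vertical arrow the key input is the equivariant suspension-spectrum formula: for every pair of compact pointed $G$-CW complexes $A,B$ there is a natural equivalence
\[
\Map_{L\Sp^G_O}(\Sigma^\infty A, \Sigma^\infty B) \;\simeq\; \colim_{W\in \Sub_\UU}\Map_{\Spc_*^G}(\Sigma^W A, \Sigma^W B),
\]
reflecting the standard description of equivariant stable homotopy groups via representation-sphere suspensions; see \cite{MandellMay} or \cite{SchGlobal}. Applying it with $A=\Sigma^V X$ and $B=\Sigma^V Y$ (which remain compact) and then taking $\colim_V$, cofinality of the addition map $\Sub_\UU\times\Sub_\UU\to\Sub_\UU$ collapses the resulting double colimit to $\colim_U\Map_{\Spc_*^G}(\Sigma^U X, \Sigma^U Y)$. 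Chasing through the definitions identifies the vertical arrow in the diagram with this equivalence.

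The main obstacle is the equivariant suspension-spectrum formula itself. It is standard but non-trivial; a proof from first principles requires cofibrant and $\Omega$-spectrum replacements in the stable model structure on $\Sp^G_O$ and careful analysis of the colimit system indexed by $\Sub_\UU$. Once this formula is granted, the two steps above are essentially formal consequences of invertibility of representation spheres together with cofinality, and the zig-zag consists of equivalences as claimed.
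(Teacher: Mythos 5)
Your proof is correct and follows essentially the same route as the paper: the horizontal map is handled by noting all transition maps are equivalences since $\Sigma^{W-V}$ is invertible on $L\Sp_O^G$, and the vertical map reduces to the suspension-spectrum mapping-space formula plus cofinality of addition on $\Sub_{\UU}$. The only difference is that you cite that formula as standard, whereas the paper proves it in-line by exhibiting an explicit $\Omega$-spectrum fibrant replacement $(\Sigma^V Y)'_n = \hocolim_{V'}\Omega^{V'}\Sigma^{V'+n}\Sigma^V Y$ and using the $(\Sigma^{\infty},\ev_0)$ adjunction together with compactness of $\Sigma^V X$ -- exactly the argument you correctly identify as the required input.
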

\begin{proof}
The horizontal map is an equivalence as $\Sigma^{\i}$ commutes with $\Sigma^V$ and the latter defines an equivalence on $L\Sp_O^G$. 

    For the other equivalence recall that the fibrant objects in the stable model structure are the $G$-$\Omega$-spectra, i.e.\ the orthogonal $G$-spectra $Z$ such that the adjoint structure maps $Z(V) \to \Omega^WZ(V\oplus W)$ are $G$-equivalences for all representations $V,W$. One checks that the orthogonal $G$-spectrum $(\Sigma^V Y)'$ given by $(\Sigma^V Y)'_n = \hocolim_{V'\in \Sub_{\UU}} \Omega^{V'}\Sigma^{V'+n}\Sigma^VY$ defines a fibrant replacement for $\Sigma^{\infty}\Sigma^VY$. 
    Moreover, we can assume that $X$ is a $G$-CW complex so that $\Sigma^{\infty}\Sigma^VX$ is cofibrant. 
        Thus, we can compute $\Map_{L\Sp^G_O}(\Sigma^{\infty}\Sigma^VX, \Sigma^{\infty} \Sigma^VY)$ as the mapping space in the topological category of orthogonal $G$-spectra between $\Sigma^{\infty} \Sigma^VX$ and $(\Sigma^V Y)'$. As $\Sigma^{\infty}$ is adjoint to taking the zeroth space, this agrees with 
        \[\Map_{\Spc_*^G}(\Sigma^VX, \hocolim_{V'\in \Sub_{\UU}} \Omega^{V'}\Sigma^{V'}\Sigma^VY) \simeq \colim_{V'\in \Sub_{\UU}} \Map_{\Spc_*^G}(\Sigma^{V'\oplus V}X, \Sigma^{V'\oplus V} Y). \]
        The map 
        \[\colim_{V\in \Sub_{\UU}} \Map_{\Spc_*^G}(\Sigma^V X, \Sigma^V Y) \to \colim_{V,V'\in \Sub_{\UU}} \Map_{\Spc_*^G}(\Sigma^{V'\oplus V} X, \Sigma^{V'\oplus V} Y)\]
        is clearly an equivalence.
\end{proof}

\begin{prop}\label{prop:orthogonalequivalence}
    There is an equivalence $\Sp^G \simeq L\Sp_O^G$.
\end{prop}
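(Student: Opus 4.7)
The strategy is to apply the universal property of $\Sp^G$ as a colimit in $\Cat^{\omega}_{\infty}$ to produce a canonical comparison functor $\Phi\colon \Sp^G \to L\Sp^G_O$, and then to show $\Phi$ is an equivalence by checking it restricts to an equivalence on compact objects.

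First, I would construct a cocone over the diagram $LT\colon \Sub_{\UU}\to \Cat_{\infty}^{\omega}$ with vertex $L\Sp^G_O$ by sending $X$ in the $V$-th copy of $\Spc_*^G$ to $F_V X = \Sigma^{-V}\Sigma^{\infty}X$. The required homotopy coherence data amounts to the well-known fact that $F_W \circ \Sigma^{W-V} \simeq F_V$ in $L\Sp^G_O$, which follows because $\Sigma^{W-V}\colon L\Sp^G_O\to L\Sp^G_O$ is inverse to the shift. Functors preserve compact objects because every $F_V X$ for $X$ a finite pointed $G$-CW complex is compact in $L\Sp^G_O$ (by the first lemma of this subsection). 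This yields a colimit-preserving functor $\Phi\colon \Sp^G \to L\Sp^G_O$ sending $\Sigma^{\infty}_V X$ to $F_V X$.

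Next, I would show $\Phi$ is fully faithful on compact objects. The compact objects of $\Sp^G$ are the idempotent completion of the essential image of $\colim_{\Sub_{\UU}}(\Spc_*^G)^{\omega}$, so it suffices to check fully faithfulness on pairs $(\Sigma^{\infty}_V X,\Sigma^{\infty}_W Y)$ with $X,Y$ compact pointed $G$-spaces; by cofinality we can assume $V=W=0$. On the $\Sp^G$ side, since the defining colimit can equivalently be formed in $\Pr^L$ via the right-adjoint diagram $T^R(V\subset W)=\Omega^{W-V}$, the mapping space is the standard filtered-colimit formula
\[
\Map_{\Sp^G}(\Sigma^{\infty}X,\Sigma^{\infty}Y)\simeq \colim_{V\in \Sub_{\UU}}\Map_{\Spc_*^G}(\Sigma^V X,\Sigma^V Y).
\]
The preceding lemma establishes exactly the same formula for mapping spaces in $L\Sp^G_O$, and the comparison with $\Phi$ is compatible with these formulas stage by stage, since $\Phi$ intertwines the structure maps on both sides by construction. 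Hence $\Phi$ is fully faithful on compact generators, and therefore on all compact objects (using that idempotent completion preserves fully faithfulness).

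Finally, I would verify essential surjectivity on compact objects. The first lemma of this subsection shows $L\Sp^G_O$ is generated under filtered colimits by the objects $F_V X=\Sigma^{-V}\Sigma^{\infty}X$, for $X$ a finite pointed $G$-CW complex; consequently, every compact object of $L\Sp^G_O$ is a retract of a finite colimit of such $F_V X$'s. Each such object lies in the essential image of $\Phi$ on compact objects, which is closed under finite colimits and (by fully faithfulness and idempotent completeness of $(\Sp^G)^{\omega}$) under retracts. Combined with the fact that both $\Sp^G$ and $L\Sp^G_O$ are compactly generated and $\Phi$ preserves filtered colimits, passing to ind-completions gives that $\Phi$ is an equivalence.

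The main obstacle is the mapping-space comparison in the second step: one must identify the ad hoc colimit computation of $\Map_{L\Sp^G_O}(\Sigma^{\infty}X,\Sigma^{\infty}Y)$ supplied by the preceding lemma with the intrinsic filtered-colimit formula for mapping spaces in the $\Cat^{\omega}_{\infty}$-colimit defining $\Sp^G$, and check that these identifications are induced by $\Phi$ itself—this is the bridge linking the point-set description of orthogonal $G$-spectra to the abstract universal construction.
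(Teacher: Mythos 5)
Your proposal is correct and follows essentially the same route as the paper: the same comparison functor $X\mapsto \Sigma^{-V}\Sigma^{\infty}X$, full faithfulness via the filtered-colimit formula for mapping spaces in the colimit defining $\Sp^G$ matched against the preceding lemma's computation in $L\Sp_O^G$, and essential surjectivity from the compact-generation lemma. The only place the paper is more careful is in producing the cocone: rather than asserting the coherence data for $F_W\circ\Sigma^{W-V}\simeq F_V$ directly in $\Cat_\infty^{\omega}$, it builds a strict natural transformation $T\Rightarrow T'$ of diagrams of relative categories (using that $\Sigma^{W-V}\Sigma^{\infty}$ is canonically isomorphic to $\Sigma^{\infty}\Sigma^{W-V}$) and localizes, which supplies all higher coherences for free.
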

\begin{proof}
    Analogously to the functor $T\colon \Sub_{\UU} \to \RelCat$ considered before, we can consider a (pseudo-)functor $T' \colon \Sub_{\UU} \to \RelCat$ that sends each $V$ to $\Sp_O^G$ and each inclusion $V \subset W$ to $\Sigma^{W-V}$, with $W-V$ the orthogonal complement of $V$ in $W$. (The functors $\Sigma^{W-V}\colon \Sp_O^G \to \Sp_O^G$ are indeed homotopical by \cite[Proposition 3.2.19]{SchGlobal}.) As $\Sigma^{W-V}\colon \Sp_O^G \to \Sp_O^G$ defines an equivalence of associated $\infty$-categories, we can identify the colimit of $LT'\colon \Sub_{\UU} \to \Cat_{\infty}^{\omega}$ with $L\Sp_O^G$. 
    
    As $\Sigma^{W-V}\Sigma^{\infty}$ is canonically isomorphic to $\Sigma^{\infty}\Sigma^{W-V}$, we see that $\Sigma^{\infty}$ defines a natural transformation $T \Rightarrow T'$. Taking colimits of the associated natural transformation $LT \Rightarrow LT'$ (with values in $\Cat^{\omega}_{\infty}$), we obtain a functor $F\colon \Sp^G \to L\Sp_O^G$ in $\Cat_{\infty}^{\omega}$. 
    
    By definition, precomposing $F$ with the map $\iota_V\colon \Spc_*^G \to \Sp^G$ associated with a subrepresentation $V\subset \UU$, sends some $X\in \Spc_*^G$ up to equivalence to $\Sigma^{-V}\Sigma^{\infty}X$ (as all choices of $V$-fold desuspensions are equivalent). As such orthogonal spectra generate $L\Sp_O^G$ via colimits, $\Sp^G$ has all colimits and $F$ preserves them, we see that $F$ is essentially surjective. 
    
    As $\Sp^G$ is compactly generated, every object in $\Sp^G$ is a filtered colimit of objects of the form $\iota_V(X)$ with $X \in \Spc_*^G$ compact. As a mapping space in a filtered colimit of $\infty$-categories is just the filtered colimit of mapping spaces,\footnote{One easy way of seeing this is by identifying the mapping space between $X$ and $Y$ in some $\infty$-category $\CC$ with the fiber product $\pt \times_{\CC^{\partial \Delta^1}}\CC^{\Delta^1}$ in $\Cat_{\infty}$, where the map $\pt \to \CC^{\partial \Delta^1}$ classifies $(X,Y)$. As both $\partial \Delta^1$ and $\Delta^1$ are compact in $\Cat_{\infty}$, the result follows.} we can identify the map 
    \[\Map_{\Sp^G}(\iota_V(X), \iota_V(Y)) \to \Map_{L\Sp_O^G}(F\iota_V(X), F\iota_V(Y))\] 
    with the natural map from 
    $\colim_{V\subset W\in \Sub_{\UU}} \Map_{\Spc_*^G}(\Sigma^{W-V}X, \Sigma^{W-V}Y)$ to 
    \[\colim_{V \subset W\in \Sub_{\UU}} \Map_{L\Sp_O^G}(\Sigma^{\i}\Sigma^{W-V}X, \Sigma^{\i}\Sigma^{W-V}Y) \simeq \Map_{L\Sp_O^G}(\Sigma^{-V}\Sigma^{\i}X, \Sigma^{-V}\Sigma^{\i}X),\]
   which is an equivalence by the previous  lemma.
    Thus, $F$ is fully faithful. 
\end{proof}

\subsection{Symmetric monoidal structures and universal properties}
Next, we want to deduce a symmetric monoidal universal property for $\Sp^G$. Recall to that purpose that an object $X$ of a symmetric monoidal $\infty$-category is called \emph{symmetric} if the cyclic permutation map acting on $X \tensor X \tensor X$ is homotopic to the identity. 

\begin{lemma}
	Let $G$ be a compact Lie group and $V$ an orthogonal $G$-representation. Then $S^V$ is symmetric in $G$-spaces. 
\end{lemma}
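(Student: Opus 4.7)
The plan is to exhibit a $G$-equivariant path in $O(V\oplus V\oplus V)$ from the cyclic permutation $\sigma$ to the identity, and then pass to one-point compactifications.

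First, I would identify $S^V\wedge S^V\wedge S^V$ with $S^{V\oplus V\oplus V}$ (equivariantly, using that one-point compactification sends direct sums of representations to smash products of representation spheres, compare \cite[Proposition B.1]{SchGlobal}). Under this identification the cyclic symmetry acts as the linear $G$-equivariant isometry $\sigma\colon V^{\oplus 3}\to V^{\oplus 3}$, $(v_1,v_2,v_3)\mapsto(v_3,v_1,v_2)$.

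Next, decompose $V^{\oplus 3}=V_{\mathrm{diag}}\oplus V_\perp$ as an orthogonal direct sum of $G$-subrepresentations, where $V_{\mathrm{diag}}=\{(v,v,v)\}$ and $V_\perp=\{(v_1,v_2,v_3): v_1+v_2+v_3=0\}$. Both summands are $\sigma$-invariant: $\sigma$ acts as the identity on $V_{\mathrm{diag}}$, and on $V_\perp$ one has a natural $G$-equivariant isometric isomorphism $V_\perp\cong V\otimes_{\mathbb{R}} W$, where $W$ is a $2$-dimensional real inner product space with trivial $G$-action carrying the standard rotation action of $\mathbb{Z}/3$, and under this isomorphism $\sigma$ acts as $\id_V\otimes r_{2\pi/3}$ with $r_\theta\in SO(W)$ rotation by angle $\theta$. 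In particular $\sigma$ commutes with the (diagonal) $G$-action, which is already clear, but now we see it as a rotation in a $G$-equivariant way.

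Now set $H_t:=\id_{V_{\mathrm{diag}}}\oplus(\id_V\otimes r_{t\cdot 2\pi/3})$ for $t\in[0,1]$. Since the $G$-action on $W$ is trivial, each $H_t$ is $G$-equivariant; since each $H_t$ is an isometry, it extends to a based $G$-equivariant self-map of $S^{V^{\oplus 3}}$, giving a (continuous in $t$, hence $G$-equivariant) homotopy from $H_0=\id$ to $H_1=\sigma$ on $S^V\wedge S^V\wedge S^V$. The main (and only mildly interesting) obstacle is the decomposition step: one must be sure that the $\mathbb{Z}/3$-action on $V^{\oplus 3}$ admits, $G$-equivariantly, a description in which the nontrivial part is a \emph{rotation} rather than an arbitrary order-$3$ isometry, which is exactly what the identification $V_\perp\cong V\otimes W$ provides. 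This gives the required null-homotopy of the cyclic permutation and shows that $S^V$ is symmetric.
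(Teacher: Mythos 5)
Your proof is correct, but it takes a genuinely different route from the paper's. The paper argues non-constructively: writing $V=\bigoplus_i W_i^{\oplus n_i}$ with $W_i$ irreducible, Schur's lemma identifies $\Aut_G(V^{\oplus 3})$ with a product of groups $GL_{3n_i}(k_i)$ for $k_i=\R,\C,\mathbb{H}$, so $\pi_0\Aut_G(V^{\oplus 3})$ is an elementary abelian $2$-group; since the cyclic permutation $\sigma$ has order $3$, it must lie in the identity path component, and any path from $\sigma$ to $\id$ compactifies to the required homotopy. You instead produce the path explicitly, via the $G$-equivariant decomposition $V^{\oplus 3}\cong V\otimes_{\R}\R^3\cong V_{\mathrm{diag}}\oplus(V\otimes_{\R}W)$ with $W$ the reduced regular representation of $\Z/3$, on which $\sigma$ acts as a rotation by $2\pi/3$; rotating back to the identity through $\id_V\otimes r_{t\cdot 2\pi/3}$ gives the null-homotopy. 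Your argument avoids Schur's lemma and the classification of real division algebras entirely, and it makes transparent why equivariance persists along the path (the permutation lives purely in the tensor factor carrying the trivial $G$-action); it also generalizes immediately to show that any even permutation of $V^{\oplus n}$ acts $G$-equivariantly homotopically trivially, since even permutation matrices lie in the connected group $SO(n)$. The paper's argument is shorter and applies verbatim to any $G$-equivariant automorphism of odd order, not just permutations. One small point worth making explicit in your write-up: the homotopy of based maps on one-point compactifications obtained from a continuous family of linear isometries is continuous also at the point at infinity (preimages of complements of compacta behave uniformly because the $H_t$ are isometries), so the extension to $S^{V^{\oplus 3}}$ really is a based $G$-homotopy.
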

\begin{proof}
	We can write $V$ as $\bigoplus_i W_i^{\oplus n_i}$, where the $W_i$ are irreducible. By Schur's lemma the group of $G$-equivariant automorphisms $\Aut_G(V)$ of $V$ is isomorphic to $\prod_i GL_{n_i}k_i$, where $k_i = \R, \C$ or $\mathbb{H}$. Thus, $\pi_0\Aut_G V$ is a finite product of $\Z/2$. As the cyclic permutation $\sigma\colon V^{\oplus 3} \to V^{\oplus 3}$ is of order $3$, we see that $\sigma$ is in the path-component of the identity in $\Aut_G(V)$. Taking one-point compactifications, we see that the cyclic permutation of $S^V \sm S^V \sm S^V$ is homotopic to the identity. 
\end{proof}

Recall that a symmetric monoidal $\infty$-category is \emph{presentably symmetric monoidal} if it is presentable and the tensor products commutes in both variables with colimits. (Equivalently, it is a commuative algebra in the $\infty$-category $\Pr^L$ of presentable $\infty$-categories.) Robalo shows in \cite[Proposition 2.9 and Corollary 2.20]{RobaloBridge} the following:
\begin{theorem}\label{thm:Robalo}
    Let $\CC$ be a presentably symmetric monoidal category and $X \in \CC$ a symmetric object. Let $\Stab_X\CC$ be the colimit of 
    \[ \CC \xrightarrow{\tensor X} \CC \xrightarrow{\tensor X} \cdots\]
    in $\Pr^L$. Then $\CC \to \Stab_X\CC$ refines to a symmetric monoidal functor and for any other presentably symmetric monoidal $\infty$-category $\DD$ and a symmetric monoidal left adjoint $F\colon \CC \to \DD$, sending $X$ to an invertible object, there is an essentially unique factorization over a symmetric monoidal left adjoint $\Stab_X \CC \to \DD$. Moreover, the resulting square
    \[
    \xymatrix{
    \CC \ar[r]\ar[d]^{F} & \Stab_X \CC\ar@{-->}[dl]\ar[d] \\
    \DD \ar[r] & \Stab_{F(X)} \DD
    }
    \]
    consists of two commutative triangles.
\end{theorem}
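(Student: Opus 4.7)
The plan is to equip $\Stab_X\CC$ with a presentably symmetric monoidal structure refining the sequential colimit map $\CC \to \Stab_X\CC$, and then verify the universal property against presentably symmetric monoidal left adjoints sending $X$ to an invertible object. The key formal tool is that $\Pr^L$ is itself a presentably symmetric monoidal $\infty$-category under Lurie's $\otimes$, and $\CAlg(\Pr^L)$ is the $\infty$-category of presentably symmetric monoidal $\infty$-categories with symmetric monoidal left adjoints. Since the colimit defining $\Stab_X\CC$ exists in $\Pr^L$ (filtered colimits exist there and agree with the corresponding limit of right adjoints), what must be shown is that this colimit admits a refinement to a colimit in $\CAlg(\Pr^L)$ under $\CC$.

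To construct the symmetric monoidal structure, I would first observe that $- \otimes X$ is a morphism in $\Mod_\CC(\Pr^L)$, so the colimit $\Stab_X\CC$ is naturally a $\CC$-module. The candidate tensor product identifies $(Y \otimes X^{\otimes -a}) \otimes (Z \otimes X^{\otimes -b})$ with $(Y \otimes Z) \otimes X^{\otimes -(a+b)}$, and one must check that the symmetric monoidal coherence data of $\CC$ descends. The only genuine obstructions come from the $\Sigma_n$-actions on $X^{\otimes n}$ that are induced by the symmetry of $\otimes$ in $\CC$: for the tensor product on $\Stab_X\CC$ to be well-defined up to coherent isomorphism, these actions must become homotopically trivial after inverting $X$.

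The main obstacle, and the heart of Robalo's argument, is precisely this coherence problem. A key computation (using that $\Sigma_n$ is generated by a transposition and an $n$-cycle, together with the Eckmann--Hilton-style interaction of transpositions on $X^{\otimes 2}$ with cyclic permutations on $X^{\otimes 3}$) reduces the collective triviality of all $\Sigma_n$-actions on powers of $X$ in $\Stab_X\CC$ to the single condition that the cyclic permutation on $X^{\otimes 3}$ is homotopic to the identity. That is exactly the symmetry hypothesis on $X$, and it is what makes the na\"ive sequential colimit in $\Pr^L$ coincide with the universal inversion in $\CAlg(\Pr^L)$. Without this hypothesis, one would only obtain an $E_1$-monoidal structure on $\Stab_X\CC$.

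For the universal property, let $F\colon \CC \to \DD$ be a symmetric monoidal left adjoint with $F(X)$ invertible. Then the diagram $\CC \xrightarrow{\otimes X} \CC \xrightarrow{\otimes X} \cdots$ postcomposed with $F$ becomes equivalent, via the invertibility of $F(X)$, to the constant diagram at $\DD$ in $\CAlg(\Pr^L)$. The canonical cocone at $\DD$ then provides the desired factorization $\Stab_X\CC \to \DD$ by the universal property of the colimit in $\CAlg(\Pr^L)$; essential uniqueness is immediate from the same universal property. The commutativity of both triangles in the final square is then a direct consequence: the left triangle is the factorization itself, and the right triangle follows from functoriality of $\Stab_{(-)}$ applied to $F$, since $F$ sends the colimit diagram for $\Stab_X\CC$ to the colimit diagram for $\Stab_{F(X)}\DD$.
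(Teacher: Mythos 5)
Your outline follows the same broad strategy as Robalo's proof (which the paper simply cites for everything except the final square, handling that via the triangle identities of an adjunction), and you correctly identify the central point: the symmetry hypothesis, i.e.\ triviality of the cyclic permutation on $X^{\otimes 3}$, is exactly what makes the naive sequential colimit compute the universal symmetric monoidal inversion. But as written there are two genuine gaps. First, the heart of the matter --- that the symmetric monoidal coherences descend to the colimit, equivalently that the sequential colimit in $\Pr^L$ agrees with the formal inversion in $\CAlg(\Pr^L)$ --- is asserted (``a key computation \dots reduces \dots'') rather than carried out. This is the entire technical content of Robalo's Proposition 2.19/Corollary 2.22; it requires an analysis of mapping spaces in the colimit (each $\Sigma_n$-action must be shown to become trivial after tensoring with enough further copies of $X$), not merely the observation that $\Sigma_n$ is generated by a transposition and an $n$-cycle.

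Second, and more concretely wrong: your universal-property argument treats $\CC \xrightarrow{\otimes X} \CC \xrightarrow{\otimes X} \cdots$ as a diagram in $\CAlg(\Pr^L)$ whose colimit universal property can be invoked. It is not such a diagram: the transition functor $-\otimes X$ is a morphism of $\CC$-modules but is not symmetric monoidal (it does not preserve the unit unless $X\simeq \mathbf{1}$), so the diagram lives only in $\Pr^L$ (or $\Mod_{\CC}(\Pr^L)$), and neither the factorization nor its essential uniqueness can come from ``the universal property of the colimit in $\CAlg(\Pr^L)$.'' The correct route is the one the paper's proof points to: the full subcategory of $\CAlg(\Pr^L)_{\CC/}$ on algebras on which $X$ acts invertibly is reflective, the universal property is the unit of this localization adjunction, and the sequential colimit enters only afterwards, to identify the underlying $\infty$-category of the localization when $X$ is symmetric. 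Relatedly, your argument for the right-hand triangle of the final square is given only at the level of underlying functors and cocones; the paper obtains the commutativity of both triangles as symmetric monoidal functors directly from the triangle identity of the localization adjunction.
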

\begin{proof}
    Everything except for the last statement is directly contained in the cited statements of \cite{RobaloBridge}. For the last statement, we have to open up slightly the box of proofs. In \cite[Proposition 2.1]{RobaloBridge} Robalo shows the existence of a left adjoint $\LL^{\tensor}_{\CC^{\tensor}, X}$ to the inclusion of presentably stable symmetric monoidal $\infty$-categories under $\CC$ where $X$ acts invertibly to all presentably stable symmetric monoidal $\infty$-categories under $\CC$. The symmetric monoidal functor $F$ induces a commutative diagram
    \[
    \xymatrix{\CC\ar[d]^F \ar[r] & \LL^{\tensor}_{\CC^{\tensor}, X}(\CC) \ar[r]^-{\simeq}\ar[d] \ar@{-->}[dl] & \LL^{\tensor}_{\CC^{\tensor}, X}(\Stab_X\CC) \ar[r]^-{\simeq}\ar[d] & \Stab_X \CC \ar[d]\\
    \DD \ar[r]^-{\simeq} &  \LL^{\tensor}_{\CC^{\tensor}, X}(\DD) \ar[r]^-{\simeq} &  \LL^{\tensor}_{\CC^{\tensor}, X}(\Stab_{F(X)}\DD) \ar[r]^-{\simeq} & \Stab_{F(X)}\DD
    }
    \]
    The functor $\Stab_X\CC \to \DD$ is the composite of the inverses of the two upper horizontal arrows and the diagonal arrow. The statement follows from the commutativity of the two triangles in the diagram, which follows in turn from the triangle identity of adjoints. 
\end{proof}
This theorem does not directly apply to our situation, as we do not invert just a single object, but $S^V$ for all (irreducible) finite-dimensional representations $V$. We nevertheless obtain the following: 
\begin{cor}\label{prop:universalpropPres}
The functor $\Sigma^{\infty}\colon\Spc_*^G \to \Sp^G$ refines to a symmetric monoidal functor. Moreover, for any symmetric monoidal left adjoint $F\colon \Spc_*^G \to \DD$ into a presentably symmetric monoidal $\infty$-category $\DD$ such that $F(S^V)$ is invertible for every irreducible $G$-representation $V$, there is an essentially unique symmetric monoidal left adjoint $\overline{F}\colon \Sp^G \to \DD$ with an equivalence $\overline{F}\Sigma^{\infty}\simeq F$. 
 \end{cor}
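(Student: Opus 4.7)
The plan is to iterate Robalo's Theorem \ref{thm:Robalo}, inverting representation spheres one at a time. First, note that $\Spc_*^G \simeq \Fun(\Orb_G,\Spc_*)$ is presentably symmetric monoidal under the pointwise smash product, and by the preceding lemma $S^V$ is a symmetric object of $\Spc_*^G$ for every finite-dimensional orthogonal $G$-representation $V$. Since $\UU$ is countably-dimensional, I choose a cofinal chain $0 = V_0 \subsetneq V_1 \subsetneq V_2 \subsetneq \cdots$ in $\Sub_{\UU}$ and write $W_n = V_n - V_{n-1}$ for the orthogonal complements.

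I would then inductively construct a tower $\Spc_*^G = \CC_0 \to \CC_1 \to \CC_2 \to \cdots$ in $\CAlg(\mathrm{Pr}^{\mathrm{L}})$ by $\CC_n := \Stab_{S^{W_n}}(\CC_{n-1})$, applying \cref{thm:Robalo} at each step to obtain a symmetric monoidal left adjoint $\CC_{n-1}\to\CC_n$ into a presentably symmetric monoidal target, and set $\CC_\infty := \colim_n \CC_n$ in $\CAlg(\mathrm{Pr}^{\mathrm{L}})$. Since the forgetful functor to $\mathrm{Pr}^{\mathrm{L}}$ preserves sifted colimits, and since each $\Stab$ is by construction a sequential $\mathrm{Pr}^{\mathrm{L}}$-colimit, unfolding the iterated stabilization yields that the underlying $\mathrm{Pr}^{\mathrm{L}}$-category of $\CC_\infty$ is a filtered colimit indexed by $\N^{\oplus\N}$ of copies of $\Spc_*^G$, with transitions $\otimes S^{W_n}$ in each coordinate direction. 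Fixing compatible embeddings of the $W_n$ into $\UU$ produces a functor $\phi\colon \N^{\oplus\N}\to\Sub_{\UU}$ sending $(m_i)$ to an embedding of $\bigoplus m_i W_i$; completeness of $\UU$ makes $\phi$ cofinal, identifying $\CC_\infty$ with the defining colimit of $\Sp^G$ over $\Sub_{\UU}$. This provides the asserted presentably symmetric monoidal structure on $\Sp^G$ for which $\Sigma^{\infty}$ is refined symmetric monoidally.

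For the universal property, given $F\colon \Spc_*^G \to \DD$ as in the statement, every finite-dimensional $W\subset\UU$ decomposes as a sum of irreducibles, so $S^W$ is a smash product of irreducible representation spheres and $F(S^W)$ is hence invertible. In particular each $F(S^{W_n})$ is invertible, so \cref{thm:Robalo} furnishes an essentially unique symmetric monoidal left adjoint extension $F_1\colon \CC_1\to\DD$; the commutative square in that theorem ensures that $F_1(S^{W_2})$ is again invertible, so the construction iterates to produce a compatible tower $F_n\colon \CC_n \to \DD$. Passing to the colimit in $\CAlg(\mathrm{Pr}^{\mathrm{L}})$ yields the essentially unique $\overline{F}\colon \Sp^G \simeq \CC_\infty \to \DD$ with $\overline{F}\Sigma^{\infty}\simeq F$.

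The main obstacle is making the identification $\CC_\infty \simeq \Sp^G$ as $\mathrm{Pr}^{\mathrm{L}}$-objects precise: because the iterated stabilization is naturally indexed by $\N^{\oplus\N}$ while $\Sp^G$ is defined over $\Sub_{\UU}$, one must carry out the cofinality argument above with care and check compatibility of transition functors under $\phi$. A potentially cleaner alternative is to avoid the explicit indexing by exhibiting $\CC_\infty$ directly as the $\mathrm{Pr}^{\mathrm{L}}$-initial category under $\Spc_*^G$ on which every $\otimes S^V$ becomes an equivalence (which it is, since $S^V$ is invertible in $\CC_\infty$), and matching this with the $\mathrm{Pr}^{\mathrm{L}}$-universal property of the colimit defining $\Sp^G$.
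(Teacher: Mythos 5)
Your proposal is correct and follows essentially the same route as the paper: iterate Robalo's theorem one representation sphere at a time, identify the resulting sequential colimit of stabilizations with the defining colimit of $\Sp^G$ over $\Sub_{\UU}$ by a cofinality argument, and use the commutative square in \cref{thm:Robalo} to propagate invertibility through the tower. The only (cosmetic) difference is that the paper stabilizes at one irreducible representation from each isomorphism class rather than at the successive complements $W_n$ of a cofinal chain; the cofinality identification you rightly flag as the delicate point is needed in either version and is in fact passed over more quickly in the paper than in your write-up.
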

 \begin{proof}
     Choose irreducible orthogonal $G$-representations $V_1, V_2, \dots$ such that every irreducible $G$-representation is isomorphic to exactly one of these. Then $\Sp^G$ can be identified with the directed colimit
     \[\Spc_*^G \to \Stab_{S^{V_1}}\Spc_*^G \to \Stab_{S^{V_2}}\Stab_{S^{V_1}}\Spc_*^G \to \cdots  \]
     in $\Pr^L$. 
     
     The forgetful functor from presentably symmetric monoidal $\infty$-categories under $\Spc_*^G$ to $\Pr^L$ preserves filtered colimits. Inductively, we see by the previous theorem that all the mapping spaces from $\Stab_{S^{V_n}}\cdots \Stab_{S^{V_1}}\Spc_*^G$ to $\DD$, computed in presentably symmetric monoidal $\infty$-categories under $\Spc_*^G$, are contractible, and thus the same is true if we go the colimit. 
    \end{proof}
    
 The result we actually use will rather be a version for small categories. For this we denote by $\Spc_*^{G, \fin}$ the subcategory of $\Spc_*^G$ of \emph{finite $G$-spaces}, i.e.\ the subcategory generated by finite colimits from the orbits $G/H_+$ for all closed subgroups $H\subset G$. Moreover, $\Sp^{G, \omega}$ denotes the $\infty$-category of (retracts of) finite $G$-spectra, i.e.\ the compact objects in $\Sp^G$. It is easy to see that the functor $\Sigma^{\infty}\colon \Spc_*^G \to \Sp^G$ restricts to a functor $\Sigma^{\infty}\colon \Spc_*^{G,\fin} \to \Sp^{G, \omega}$. Analogously, we can define an $\infty$-category $\Spc^{G, \fin}$ of unpointed finite $G$-CW-complexes and obtain a functor $\Spc^{G,\fin} \to \Spc_*^{G, \fin}$ by adjoining a disjoint base point. The composition of these functors will be denoted by $\Sigma^{\infty}_+$.
  \begin{cor}\label{prop:universalprop}
 Let $\DD$ be a presentably symmetric monoidal $\infty$-category and $F\colon \Spc_*^{G,\fin} \to \DD^{\op}$ be a symmetric monoidal functor that preserves finite colimits, sends every object to a dualizable object and every representation sphere to an invertible object. Then $F$ factors over $\Sigma^{\infty}\colon \Spc_*^{G, \fin} \to \Sp^{G,\omega}$ to produce a finite colimit preserving functor $\Sp^{G,\omega} \to \DD^{\op}$.
  \end{cor}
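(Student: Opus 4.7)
The plan is to derive this small-category universal property from its presentable counterpart \cref{prop:universalpropPres}. The key obstacle is that the target of $F$ is an opposite category $\DD^{\op}$, which is typically neither presentable nor has a tensor product commuting with colimits, so \cref{prop:universalpropPres} does not apply directly. I will use the dualizability hypothesis to replace $F$ by a covariant functor landing in $\DD$ itself.

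Let $\DD^{\mathrm{dbl}}\subseteq \DD$ denote the symmetric monoidal full subcategory of dualizable objects. Contravariant dualization is a symmetric monoidal equivalence $D\colon \DD^{\mathrm{dbl},\op}\xrightarrow{\simeq} \DD^{\mathrm{dbl}}$. Since by hypothesis $F$ factors through $\DD^{\mathrm{dbl},\op}\subset \DD^{\op}$, I set $F^{\vee}:=D\circ F\colon \Spc_*^{G,\fin}\to \DD^{\mathrm{dbl}}\hookrightarrow \DD$. Because $D$ is a symmetric monoidal equivalence, $F^{\vee}$ is again symmetric monoidal and finite-colimit preserving, and still sends representation spheres to invertible objects.

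I then Ind-extend. The smash product on $\Spc_*^{G,\fin}$ preserves finite colimits separately in each variable and $\DD$ is presentably symmetric monoidal, so the universal property of $\Spc_*^G\simeq \Ind(\Spc_*^{G,\fin})$ as a presentably symmetric monoidal category produces an essentially unique symmetric monoidal left adjoint $\widetilde{F^{\vee}}\colon \Spc_*^G\to \DD$ extending $F^{\vee}$. Applying \cref{prop:universalpropPres} to $\widetilde{F^{\vee}}$ yields an essentially unique symmetric monoidal left adjoint $\overline{F^{\vee}}\colon \Sp^G\to \DD$ with $\overline{F^{\vee}}\circ \Sigma^{\infty}\simeq \widetilde{F^{\vee}}$.

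Finally I restrict to compact objects and re-dualize. Since symmetric monoidal functors preserve dualizable objects and every compact $G$-spectrum is dualizable (cf.\ \cref{cor:dual}), the restriction of $\overline{F^{\vee}}$ to $\Sp^{G,\omega}$ factors through $\DD^{\mathrm{dbl}}$. Postcomposing with the inverse equivalence $D^{-1}\colon \DD^{\mathrm{dbl}}\xrightarrow{\simeq} \DD^{\mathrm{dbl},\op}\hookrightarrow \DD^{\op}$ yields the desired functor $\overline F\colon \Sp^{G,\omega}\to \DD^{\op}$, and $\overline F\circ \Sigma^{\infty}\simeq F$ since $D^{-1}\circ D\simeq \id$ and double-duality is trivial on dualizables. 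It remains to check that $\overline F$ preserves finite colimits, which is the main technical point: it reduces to the observation that, in a presentably symmetric monoidal category, dualization carries a finite colimit of dualizable objects (with dualizable colimit) to the corresponding finite limit, as one sees from the adjunction $\Map(Y,X^{\vee})\simeq \Map(Y\otimes X,\mathbf{1})$. Hence the composite $D^{-1}\circ \overline{F^{\vee}}|_{\Sp^{G,\omega}}$ sends finite colimits to finite limits in $\DD$, that is, to finite colimits in $\DD^{\op}$.
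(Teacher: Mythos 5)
Your proposal is correct and follows essentially the same route as the paper: dualize to replace $F$ by a covariant symmetric monoidal functor into the dualizable objects of $\DD$, Ind-extend to $\Spc_*^G$, apply the presentable universal property to factor through $\Sp^G$, restrict to compact (equivalently dualizable) objects, and dualize back. The only difference is that you spell out the final finite-colimit check explicitly, which the paper leaves implicit.
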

  \begin{proof}
  Denote by $\DD^{\dual}$ the dualizable objects in $\DD$. Dualizing yields a symmetric monoidal functor 
  \[F'\colon \Spc_*^{G,\fin} \to \DD^{\dual, \op} \to \DD^{\dual}\subset \DD.\]
      The functor $F'$ factors over $\Spc_*^{G, \fin} \to \Ind(\Spc_*^{G, \fin}) \simeq \Spc_*^G$, yielding a colimit-preserving symmetric monoidal functor $\Spc_*^G \to \DD$. Applying the previous corollary, this factors over a functor $\Sp^G \to \DD$, which we can restrict again to $\Sp^{G, \omega}$. The compact objects agree with the dualizable objects and thus dualizing once more yields the result. 
  \end{proof}

We want to compare the symmetric monoidal structure defined on $\Sp^G$ above with the symmetric monoidal structure coming from the category $\Sp_O^G$ of orthogonal spectra. For that purpose we want to recall how to pass from a symmetric monoidal model category to a symmetric monoidal $\infty$-category. This was already discussed in \cite[Section 4.1.7]{HA} and \cite[Appendix A]{NikolausScholze}, but we sketch a more elementary treatment suggested to us by Daniel Sch{\"a}ppi. The starting point is a symmetric monoidal relative category $(\CC, \WW, \tensor)$, i.e.\ a symmetric monoidal category with a subcategory of weak of equivalences $\WW$ such that $c \tensor d \to c' \tensor d$ is a weak equivalence if $c \to c'$ is. We will assume that $\WW$ contains all objects and satisfies $2$-out-of-$6$; this implies in particular that $\WW$ contains all isomorphisms. An important class of examples is the category of cofibrant objects in a symmetric monoidal model category. 

As sketched in \cite[Section 2]{SegalCategories}, every symmetric monoidal category defines a functor $\Gamma^{\op} \to \Cat$, sending $0$ to the terminal category and satisfying the Segal condition. In case of a symmetric monoidal relative category, this lifts to a functor $\Gamma^{\op} \to \RelCat$ into the category of relative categories and weak-equivalence preserving functors. This satisfies the Segal conditions both on underlying categories and categories of weak equivalences. The localization functor $L\colon \RelCat \to \Cat_{\i}$ preserves products \cite[Proposition 7.1.13]{Cisinski}. Thus, composing our functor $\Gamma^{\op} \to \RelCat$ with $L$ we obtain a functor from $\Gamma^{\op}$ into $\Cat_{\i}$ satisfying the Segal conditions and thus defining a symmetric monoidal $\infty$-category (see \cite[Remark 2.4.2.2, Proposition 2.4.2.4]{HA}). 

Going back to equivariant homotopy theory, we consider again the stable model structure of \cite[Section III.4]{MandellMay} on $\Sp_O^G$. The stable model structure satisfies the pushout-product axiom with respect to the smash product \cite[Proposition III.7.5]{MandellMay} and its unit $\mathbb{S}$ is cofibrant -- thus its category $\Sp_O^{G,\cof}$ of cofibrant objects defines a symmetric monoidal relative category and $L\Sp_O^{G,\cof}$ obtains the structure of a symmetric monoidal $\infty$-category.

\begin{prop}
    The equivalence in \cref{prop:orthogonalequivalence} refines to a symmetric monoidal equivalence between $\Sp^G$ with the symmetric monoidal structure from \cref{prop:universalpropPres} and $L\Sp_O^{G,\cof}\simeq L\Sp_O^G$ with the symmetric monoidal structure induced by the smash product. 
\end{prop}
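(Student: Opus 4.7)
The strategy is to verify that $\Sigma^{\infty}\colon \Spc_*^G\to L\Sp_O^{G,\cof}$ is a symmetric monoidal left adjoint sending representation spheres to invertible objects, so that the universal property of \cref{prop:universalpropPres} produces a symmetric monoidal factorization $\Sp^G\to L\Sp_O^G$. Then I would argue that this factorization coincides with the equivalence $F$ from \cref{prop:orthogonalequivalence} via Robalo's uniqueness (cf.\ the last statement of \cref{thm:Robalo}), which by the two-out-of-three property for symmetric monoidal equivalences makes $F$ itself a symmetric monoidal equivalence.

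Concretely, the first step is to check that $(\Sp_O^{G,\cof},\WW,\wedge)$ really is a symmetric monoidal relative category: the pushout-product axiom for the stable model structure of \cite{MandellMay} combined with the cofibrancy of $\mathbb{S}$ shows that smashing preserves stable equivalences between cofibrant objects, and standard cofibrant replacement exhibits $\wedge$ on $L\Sp_O^G$ as derived from the point-set smash. From this, one deduces that $L\Sp_O^G$ is presentably symmetric monoidal: presentability follows from cofibrant generation (used already in the proof of \cref{prop:orthogonalequivalence}), and the fact that $\wedge$ preserves colimits in each variable passes from the model category to its localization since filtered homotopy colimits and cobase changes of cofibrations along cofibrations compute the colimits in $L\Sp_O^G$.

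Next, one observes that the point-set functor $\Sigma^{\infty}\colon \Top_*^G\to\Sp_O^G$ is strong symmetric monoidal (for the smash product on both sides) and sends well-pointed $G$-spaces to cofibrant orthogonal $G$-spectra; it is moreover a left Quillen functor, and thus descends to a symmetric monoidal left adjoint on $\infty$-categories $\Sigma^{\infty}\colon \Spc_*^G\to L\Sp_O^G$. (Formally, one applies the construction $\Gamma^{\op}\to\RelCat\to\Cat_\infty$ recalled just before the statement to the symmetric monoidal relative functor $\Top_*^{G,\we}\to\Sp_O^{G,\cof}$ and uses that $L$ preserves finite products.) Since each representation sphere $S^V$ is $\wedge$-invertible in $L\Sp_O^G$ (its inverse being the cofibrant replacement of the desuspension $\mathbb{S}^{-V}$), \cref{prop:universalpropPres} yields an essentially unique symmetric monoidal left adjoint $\widetilde{F}\colon \Sp^G\to L\Sp_O^G$ whose composition with $\Sigma^{\infty}\colon \Spc_*^G\to\Sp^G$ recovers $\Sigma^{\infty}\colon \Spc_*^G\to L\Sp_O^G$.

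Finally, I would identify $\widetilde{F}$ with the functor $F$ built in \cref{prop:orthogonalequivalence}. Unwinding the construction in \cref{prop:orthogonalequivalence}, $F$ was defined as the colimit in $\Pr^L$ (equivalently in $\Cat_\infty^\omega$) of the natural transformation $LT\Rightarrow LT'$ induced by the symmetric monoidal $\Sigma^{\infty}$; this is precisely the construction appearing in the proof of \cref{thm:Robalo}, so the last sentence of that theorem, applied inductively over the sequence $\Stab_{S^{V_n}}\cdots\Stab_{S^{V_1}}\Spc_*^G$ used in \cref{prop:universalpropPres}, identifies $\widetilde{F}$ with $F$ as symmetric monoidal left adjoints. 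Since $F$ is already an equivalence of underlying $\infty$-categories, $\widetilde{F}$ is a symmetric monoidal equivalence, and its inverse inherits a canonical symmetric monoidal structure. The main obstacle is the bookkeeping in the previous paragraph: verifying that $L\Sp_O^G$ is presentably symmetric monoidal and that $\Sigma^{\infty}$ genuinely lifts to a symmetric monoidal left adjoint of $\infty$-categories, rather than merely a symmetric monoidal functor; everything else is formal consequence of the universal property already proved.
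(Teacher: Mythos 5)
Your proposal is correct and follows essentially the same route as the paper: show that $\Sigma^{\infty}$ descends to a symmetric monoidal colimit-preserving functor $\Spc_*^G\to L\Sp_O^G$ inverting representation spheres, invoke the universal property of \cref{prop:universalpropPres}, and use the last part of \cref{thm:Robalo} to identify the resulting symmetric monoidal functor with the underlying equivalence of \cref{prop:orthogonalequivalence}. The extra bookkeeping you flag (presentable symmetric monoidality of $L\Sp_O^G$ and the lift of $\Sigma^{\infty}$ via the $\Gamma^{\op}$-construction) is exactly what the paper's proof records.
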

\begin{proof}
    By definition of the generating cofibrations in \cite[Section III.2]{MandellMay} one observes that the suspension spectrum functor $\Sigma^{\infty}\colon\Top^G_* \to \Sp_O^G$ preserves cofibrant objects, where we consider on $\Top^G_*$ the model structure where fibrations and weak equivalences are defined to be those maps that are fibrations and weak equivalences, respectively, on fixed points for all subgroups. As $\Sigma^{\infty}$ is strong symmetric monoidal, we see that the resulting functor $\Spc_*^G \to L\Sp_O^G$ is strong symmetric monoidal again (since it defines a natural transformation of functors from $\Gamma^{\op}$).

As the smash product on $\Sp_O^G$ commutes in each variable with colimits and colimits in $L\Sp_O^G$ can be computed as homotopy colimits in $\Sp_O^G$, the induces symmetric monoidal structure on $L\Sp_O^G$ commutes with colimits in both variables. As moreover $\Sigma^{\infty}$ sends all representation spheres to invertible objects, the universal property \cref{prop:universalpropPres} yields a symmetric monoidal functor $\Sp^G \to L\Sp_O^{G,\cof}$. The last part of \cref{thm:Robalo} implies that the underlying functor of $\infty$-categories agrees up to equivalence with the equivalence we have constructed in \cref{prop:orthogonalequivalence} (once restricted to cofibrant objects).
\end{proof}

\subsection{The Wirthm{\"u}ller isomorphism}
As a last point we want to state the Wirthm{\"u}ller isomorphism. Note to that purpose that for every compact Lie group $G$ and every closed subgroup $H\subset G$ the restriction functor $\Sp^G \to \Sp^H$ has two adjoints, which we denote by $G_+ \tensor_H -$ and $\uMap_H(G_+, -)$. We need these adjoints only on the level of homotopy categories, where they are well-known, e.g.\ by comparing to orthogonal spectra. 
\begin{thm}[Wirthm{\"u}ller]
Let $L = T_{eH}G/H$ be the tangent $G$-representation. Then there is for every $X \in \Sp^H$ an equivalence $G_+ \tensor_H X \to \uMap_H(G_+, S^L \tensor X)$.
\end{thm}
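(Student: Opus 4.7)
The plan is to reduce the statement to the classical Wirthmüller isomorphism for orthogonal $G$-spectra, transported to our setting via the symmetric monoidal equivalence $\Sp^G \simeq L\Sp_O^G$ of \cref{prop:orthogonalequivalence}. For a closed inclusion $H \subset G$ of compact Lie groups, the classical result is proved in \cite[Chapter III]{MandellMay}: one constructs a natural transformation from the derived induction to a twisted derived coinduction by means of a Pontryagin--Thom collapse around the $H$-fixed coset $eH \in G/H$, whose normal $H$-representation is $L$, and shows it is a stable $G$-equivalence via Atiyah duality for the compact $G$-manifold $G/H$.

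To transport this statement, I would first identify the adjoints on both sides of the equivalence $\Sp^G \simeq L\Sp_O^G$. Restriction along $H \subset G$ is both left and right Quillen for the stable model structures on orthogonal $G$- and $H$-spectra and, upon passing to $L\Sp_O^G$ and $L\Sp_O^H$, agrees with the $\infty$-categorical restriction functor $\res_H^G$. The left Quillen adjoint (classical induction $G_+ \wedge_H -$) and the right Quillen adjoint (classical coinduction) for orthogonal spectra therefore descend to left and right adjoints of $\res_H^G$ at the $\infty$-categorical level, which by uniqueness of adjoints must agree with $G_+ \tensor_H -$ and $\uMap_H(G_+, -)$, respectively.

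With this dictionary in hand, the classical Wirthmüller equivalence yields the desired natural equivalence $G_+ \tensor_H X \xrightarrow{\simeq} \uMap_H(G_+, S^L \tensor X)$ in the homotopy category of $\Sp^G$, which is all that is needed for the applications in this paper. The main technical obstacle lies in the bookkeeping surrounding the identification of adjoints: one must verify that under the equivalence $\Sp^G \simeq L\Sp_O^G$, the Wirthmüller map of \cite{MandellMay} corresponds to the natural comparison map produced by the $\infty$-categorical adjunctions. This in turn reduces to the functoriality of adjoints under the symmetric monoidal localization of relative categories, which is routine once the correspondence of functors is established.
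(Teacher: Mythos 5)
Your proposal is correct and takes essentially the same route as the paper: the theorem is recalled from the literature and transported to $\Sp^G$ via the comparison with orthogonal spectra, with the adjoints of restriction identified at the level of homotopy categories exactly as you describe; the paper's only additional point is that since \cite[Section 3.2]{SchGlobal} constructs the Wirthm\"uller map only on homotopy groups, one upgrades it to a map of spectra by observing it is a transformation of homology theories and invoking Brown representability. Note only that the spectrum-level statement is not in \cite[Chapter III]{MandellMay}; the appropriate classical sources are \cite{MayWirth} and \cite{LMS}.
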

References include \cite{MayWirth} and \cite[Section 3.2]{SchGlobal}. (While the latter constructs in (3.2.6) only the Wirthm{\"u}ller map on the level of homotopy groups, it is clear the same construction defines it as a transformation of homology theories. As the $G$-stable homotopy category is a Brown category \cite[Example 1.2.3b, Section 4.1]{HoveyPalmieriStrickland}, this can be lifted to a map in $\Sp^G$.) The special case we need is the following:
\begin{cor}\label{cor:dual}
Let $L$ be the tangent representation of $G$. Then there is an equivalence between $\Sigma^{\infty}G_+ \tensor S^{-L}$ and the Spanier--Whitehead dual $D\Sigma^{\infty}G_+$.
\end{cor}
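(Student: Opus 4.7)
\medskip

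The plan is to deduce this from the Wirthm\"uller isomorphism by taking $H$ to be the trivial subgroup and exploiting the invertibility of $S^L$ in $\Sp^G$. First I would specialize the Wirthm\"uller theorem to $H = \{e\}$, so that $L = T_eG$ is just a (non-equivariant) finite-dimensional vector space viewed as a trivial $e$-representation. Taking $X = \res_e^G S^{-L}$ in the Wirthm\"uller formula yields
\[
G_+ \tensor_e \res_e^G S^{-L} \;\simeq\; \uMap_e\!\bigl(G_+,\, S^L \tensor \res_e^G S^{-L}\bigr).
\]
Since restriction is strong symmetric monoidal, $\res_e^G S^{-L}$ is simply $S^{-L}$ as a non-equivariant spectrum, so $S^L \tensor \res_e^G S^{-L} \simeq \mathbb{S}$; hence the right-hand side simplifies to $\uMap_e(G_+, \mathbb{S})$.

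Next, I would rewrite the left-hand side using Frobenius reciprocity (the projection formula) for the induction/restriction adjunction: for any $Y \in \Sp^G$, one has a natural equivalence $G_+ \tensor_e \res_e^G Y \simeq \Sigma^\infty G_+ \tensor Y$. Applied to $Y = S^{-L}$, which makes sense since $S^{-L}$ is an invertible object of $\Sp^G$ (in particular belongs to $\Sp^G$, not merely $\Sp$), this gives $G_+ \tensor_e \res_e^G S^{-L} \simeq \Sigma^\infty G_+ \tensor S^{-L}$. Combining the two steps yields
\[
\Sigma^\infty G_+ \tensor S^{-L} \;\simeq\; \uMap_e(G_+, \mathbb{S}).
\]

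Finally, it remains to identify $\uMap_e(G_+, \mathbb{S})$ with the Spanier--Whitehead dual $D\Sigma^\infty G_+ = F^G(\Sigma^\infty G_+, \mathbb{S}_G)$. This is a formal consequence of the fact that $\uMap_e(G_+, -)$ is the right adjoint of $\res_e^G$, together with the internal-hom form of this adjunction: for $Y \in \Sp^G$,
\[
F^G\!\bigl(\Sigma^\infty G_+,\, Y\bigr) \;\simeq\; \uMap_e(G_+, \res_e^G Y),
\]
which follows by checking the universal property against an arbitrary $Z \in \Sp^G$ via the projection formula $Z \tensor \Sigma^\infty G_+ \simeq G_+ \tensor_e \res_e^G Z$ and the induction--restriction adjunction. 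Setting $Y = \mathbb{S}_G$ produces the desired identification.

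I do not anticipate a serious obstacle here: the theorem does all of the real work, and the corollary is essentially a bookkeeping exercise combining the Wirthm\"uller shift with Frobenius reciprocity and the two adjunctions between induction, restriction, and coinduction. The only mildly subtle point is verifying the projection formula at the level of $\infty$-categories, but since we work at the level of homotopy categories (where the adjunctions $G_+\tensor_e -\dashv \res \dashv \uMap_e(G_+,-)$ and Frobenius reciprocity are standard, e.g.\ via orthogonal $G$-spectra as invoked before Theorem~C.7), this is available to us.
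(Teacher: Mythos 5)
Your proposal is correct and follows the same route the paper intends: the paper offers no details beyond declaring the corollary "the special case" of the Wirthm\"uller isomorphism (or of equivariant Atiyah duality), and your argument is exactly the standard unwinding of that claim, specializing to $H=\{e\}$ with $X=\res_e^G S^{-L}$ and supplying the projection-formula and adjunction bookkeeping needed to identify $\uMap_e(G_+,\mathbb{S})$ with $D\Sigma^\infty G_+$.
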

This special case can also be seen as special cases of equivariant Atiyah--Duality \cite[Theorem III.5.1]{LMS}.

\section{Spectral algebraic geometry of the big \'etale site}\label{app:SAG}
The purpose of this appendix is to establish a few facts that are necessary to work with the big \'etale site. Before we recall its definition, we need the following definition. 

\begin{defi}\label{def:nqsep}
Call a morphism $f\colon \Yf \to \Zf$ of spectral Deligne--Mumford stacks \emph{$0$-quasi-separated} if it is quasi-compact and \emph{$n$-quasi-separated} if the diagonal $\Delta_f\colon \Yf \to \Yf\times_{\Zf}\Yf$ is $(n-1)$-quasi-separated. If $\Yf \to \Spec \mathbb{S}$ is $n$-quasi-separated, we say that $\Yf$ is $n$-quasi-separated.   
\end{defi}
Note that being $1$-quasi-separated agrees per definition with being quasi-separated. As will follow from the next lemma, the notion of being $n$-quasi-separated is moreover closely related to being $n$-quasi-compact in the sense of \cite{SAG}: a spectral Deligne--Mumford stack $\Yf$ is $k$-quasi-separated for all $0\leq k\leq n$ if and only if it is $n$-quasi-compact (cf.\ \cite[Proposition 2.3.6.2]{SAG}).

\begin{lemma}Let $f\colon \Yf \to \Zf$ be a morphism of spectral Deligne--Mumford stacks and let $n\geq 0$. 
    \begin{enumerate}
        \item Being $n$-quasi-separated is local in the \'etale topology on the target and closed under base change. Moreover, it is closed under composition and under products.
        \item Every spectral affine scheme is $n$-quasi-separated for all $n$. 
        \item If $\Zf$ is affine, $f$ is $n$-quasi-separated if and only if $\Yf$ is $n$-quasi-separated. 
        \item The morphism $f$ is $n$-quasi-separated if and only if $\Spec A\times_{\Zf}\Yf$ is $n$-quasi-separated for all $\Spec A \to \Yf$. In fact, it suffices to check against an \'etale cover. 
        \item The stack $\Yf$ is $(n+1)$-quasi-separated if and only if for all $n$-quasi-separated $U$ and $V$, the fiber product $U\times_{\Yf} V$ is $n$-quasi-separated. 
    \end{enumerate}
\end{lemma}
\begin{proof}
    That being $n$-quasi-separated is local in the \'etale topology, closed under base change and products follows inductively from the corresponding statement for quasi-compact morphisms \cite[Proposition 2.3.3.1, Corollary 2.3.5.3, Example 6.3.3.6]{SAG}. For composition, note that for a morphism $g\colon \Xf\to \Yf$, the morphism $\Xf\times_{\Yf}\Xf \to \Xf \times_{\Zf}\Xf$ is a base change of $\Delta_f\colon \Yf \to \Yf \times_{\Zf} \Yf$. Thus, if  $\Delta_f$ and $\Delta_g$ are $(n-1)$-quasi-separated, so is the composite $\Xf \xrightarrow{\Delta_g} \Xf\times_{\Yf}\Xf \to \Xf \times_{\Zf}\Xf$, using induction (see \cite[Corollary 2.3.5.2]{SAG} for the case $n=0$). This yields (1).

    By induction, one shows that morphisms between affine schemes are $n$-quasi-separated for all $n$, giving (2). Assume that Point (3) holds for a given $n$ and that $\Zf$ is affine. Then $\Yf \to \Zf$ is $(n+1)$-quasi-separated if and only if $\Spec A \times_{\Yf}\Spec A \to \Spec A \times_{\Zf}\Spec A$  is $n$-quasi-separated for all $\Spec A \to \Yf$ that are part of an \'etale cover. By induction, this is true if and only if $\Spec A \times_{\Yf}\Spec A$ is $n$-quasi-separated. This happens if and only if $\Yf$ itself is $(n+1)$-quasi-separated. This yields (3). 

    Point (4) follows from (1) and (3). The pullback square
    \[
\xymatrix{
U\times_{\Yf} V \ar[r] \ar[d]& U\times V \ar[d]\\
\Yf \ar[r] & \Yf\times \Yf}
\]
shows that (5) follows from (1) and (4).
\end{proof} 

Recall from the conventions that we assumed all our spectral Deligne--Mumford stacks to be locally noetherian and $n$-quasi separated for all $n\geq 1$. For clarity, we will be explicit about these conventions in this appendix and not assume them implicitly. It will be established in the proof of \cref{lem:smallet} that every quasi-separated spectral scheme and every quasi-separated spectral Deligne--Mumford $1$-stack with a quasi-separated diagonal is $n$-quasi-separated for all $n\geq 1$. Thus, this condition is rather mild and forms a natural higher analogue of the usual quasi-separatedness condition. In particular, it is fulfilled by the (connective cover) of the moduli stack of oriented spectral elliptic curves, which underlies $TMF$.  

Let $\Mf$ be a (locally noetherian) spectral Deligne--Mumford stack that is $n$-quasi-separated for all $n$. Recall from \cref{def:Shv} that its big \'etale site is defined to consist of all spectral Deligne--Mumford stacks that are almost of finite presentation over $\Mf$ and $n$-quasi-separated for all $n\geq 1$. (Local noetherianity of objects in the big \'etale site is automatic if $\Mf$ is locally noetherian by \cite[Remark 4.2.0.4]{SAG}.) This $\infty$-category has all pullbacks. Indeed: As observed above, being $n$-quasi separated for all $n\geq 1$ is preserved by pullbacks. For being almost of finite presentation, use \cite[Corollary 7.4.3.19]{HA}. With the \'etale topology, the big \'etale site thus becomes indeed a site and we define $\Shv(\Mf)$ as the $\infty$-category of space-valued sheaves on the big \'etale site. To establish that $\Shv(\Mf)$ is an $\infty$-topos, we need to establish that the big \'etale site is a small $\infty$-category (which is the whole reason for imposing a finiteness condition). For this in turn, we first need the following result of general interest. 

\begin{lemma}\label{lem:local}
     Let $\PP$ be a property of morphisms of spectral Deligne--Mumford stacks that is \'etale-local on the target, and denote for any spectral Deligne--Mumford stack $\Mf$ by $\textrm{SpDM}^{\PP}_{/\Mf}$ the $\infty$-category of morphisms $\Nf \to \Mf$ in $\PP$. Then $\Mf \mapsto \textrm{SpDM}^{\PP}_{/\Mf}$ satisfies \'etale hyperdescent. This applies in particular to the functor sending $\Mf$ to its big \'etale site. 
 \end{lemma}
 \begin{proof}
     Let $\widehat{\Spc}$ 
     be the very large $\infty$-category of large spaces and denote by 
     $\XX = \widehat{\Shv}_{\acute{e}t}(\mathrm{Aff}, \widehat{\Spc})$
     the very large $\infty$-topos of large \'etale hypersheaves on spectral affine schemes. Let $U_{\bullet} \to \Mf$ be a hypercover of a spectral Deligne--Mumford stack. Since $\XX$ is hypercomplete, this defines a colimiting diagram in $\XX$. By descent in $\infty$-topoi, this implies that $\XX_{/\Mf} \to \lim_{\bullet\in \Delta}\XX_{/U_{\bullet}}$ is an equivalence. Since the functor of points functor from the $\infty$-category of spectral Deligne--Mumford stacks $\mathrm{SpDM}$ to $\XX$ is fully faithful \cite[Proposition 1.6.4.2]{SAG}, we see that $\mathrm{SpDM}^{\PP}_{/\Mf} \to \lim_{\bullet\in \Delta} \mathrm{SpDM}^{\PP}_{/U_{\bullet}}$ is fully faithful. For essential surjectivity, we need to show first that for a morphism $\Nf \to \Mf$ in $\XX$, the functor $\Nf$ is spectral Deligne--Mumford if $U_0 \times_{\Mf}\Nf$ is. This follows since $U_0 \times_{\Mf}\Nf \to \Nf$ is an \'etale cover, which we can precompose with an \'etale cover from a disjoint union of affines. Essential surjectivity follows now since $\PP$ is \'etale-local on the target. 

     Since being almost of finite presentation is a property that is \'etale on the target by \cite[Example 6.3.3.6(11,13)]{SAG}, the last point follows as well.
 \end{proof}

\begin{lemma}\label{lem:smallbig}
    Given a locally noetherian non-connective spectral Deligne--Mumford stack $\Mf$ that is $n$-quasi-separated for all $n\geq 1$, its big \'etale site is an essentially small $\infty$-category.
\end{lemma}

\begin{proof}
    Recall from \cite[Proposition 7.2.4.27(4)]{HA} that for a connective $E_{\infty}$-ring $A$, a connective $E_{\infty}$-$A$-algebra $B$ is almost of finite presentation if each of its truncations $\tau_{\leq n}B$ is a compact object of $\tau_{\leq n}\CAlg_A$. As the $\infty$-category of compact objects in $\CAlg_A$ is essentially small and $B \simeq \lim_n \tau_{\leq n}B$, we see that the full subcategory of $\CAlg_A$ consisting of those connective algebras almost of finite presentation is essentially small as well. 

        For the general case, we may assume that $\Mf$ is a spectral Deligne--Mumford stack. By \cref{lem:local}, formation of the big \'etale site satisfies hyperdescent. Choosing a hypercover by disjoint unions of affines reduces to the case $\Mf = \Spec A$. By definition we see that for every \'etale map $\Spec B \to \Zf$ for $\Zf$ almost of finite presentation over $\Spec A$, the $A$-algebra $B$ must be almost of finite presentation over $A$. Thus, there is up to equivalence only a set of possible hypercovers of some $\Zf$ almost of finite presentation over $\Spec A$ such that each stage is a finite union of affines. As we can recover $\Zf$ as the geometric realization of the hypercover (e.g.\ in \'etale hypersheaves, as in the proof of \cref{lem:local}) and $\Zf$ is quasi-compact by \cite[Propositions 2.3.1.2 and 2.3.5.1]{SAG}, this proves the lemma.     

\end{proof}
Let $\Mf = (\MM, \OO_{\Mf})$ be a spectral Deligne--Mumford stack. Our final goal for this appendix is to compare $\MM$ and $\Shv(\Mf)$. 
By the definition of a spectral Deligne--Mumford stack, for every $U\in \MM$, the pair consisting of the slice topos $\MM_{/U}$ and the restriction of $\OO_{\Mf}$ to it is a spectral Deligne--Mumford stack. By the definition of \'etale morphisms (\cite[Section 1.4.10]{SAG}), these are (up to equivalence) precisely the spectral Deligne--Mumford stacks that are \'etale over $\Mf$. If $\Mf$ is $n$-quasi-separated for all $n\geq 1$, then $\Yf\in \MM$ lies in the big \'etale site of $\Mf$  if $\Yf$ is $n$-quasi-separated for all $n\geq 0$. Indeed, by \cite[Proposition 2.3.1.2]{SAG} \'etale morphisms are automatically locally almost of finite presentation. If $\Mf$ is an $n$-stack, denote by $\Mf_{\et}$ the full subcategory of $\MM$ on these objects which are additionally $n$-truncated. Recall that $\Mf$ is an \emph{$n$-stack} if $\Mf(\Spec R)$ is $n$-truncated for all classical rings $R$. The restriction of the canonical topology on $\MM$ defines a topology on $\Mf_{\et}$.

\begin{lemma}\label{lem:smallet}
    Let $\Mf$ be a spectral Deligne--Mumford $n$-stack. Then restriction defines an equivalence $\MM \simeq \Shv(\MM) \simeq \Shv(\Mf_{\et})$. 
\end{lemma}
\begin{proof}
We claim that every $n$-stack $\Yf$ is $(n+2)$-quasi-separated. Indeed: Since $U\times_{\Yf} V$ is an $(n-1)$-stack for affines $U$ and $V$ mapping to $\Yf$, we  can assume that $\Yf$ is a $0$-stack, i.e.\ a spectral algebraic space. Thus, $\Delta\colon \Yf \to \Yf \times \Yf$ induces on each classical affine $\Spec R$ an injection of sets; hence $\Delta_{\Delta}\colon \Yf \to \Yf \times_{\Yf \times \Yf} \Yf$ induces an isomorphism on every classical $\Spec R$ and is thus an isomorphism on $0$-truncations. In particular, it is quasi-compact and hence $\Yf$ is $2$-quasi-separated. 

Denote by $\MM_{\leq n} \subset \MM$ the full subcategory on $n$-truncated objects and by $\MM_{\leq n}^k\subset \MM_{\leq n}$ the further full subcategory of $m$-quasi-separated objects for every $m\geq k$. By \cite[Lemma 1.4.7.7]{SAG}, every $n$-truncated object in $\MM$ defines an $n$-stack and thus $\MM_{\leq n} = \MM_{\leq n}^{n+2}$. By the comparison lemma for Grothendieck topologies \cite[Lemma C.3]{HoyoisQuadratic}, $\MM_{\leq n}^k \subset \MM_{\leq n}^{k+1}$ defines an equivalence on $\infty$-categories of sheaves, for every $k$: indeed, the crucial property is that every $X\in \MM_{\leq n}^{k+1}$ has a cover by  $U_i\in \MM_{\leq n}^k$ (one can take $U_i$ to be affine by \cite[Proposition 1.4.7.9]{SAG}) and that $U_i\times_X U_j \in \MM_{\leq n}^k$ (as follows from $X$ being $(k+1)$-quasi-separated). Thus, we obtain a chain of equivalences \[\Shv(\MM_{\leq n}^0) \simeq \cdots \simeq \Shv(\MM_{\leq n}^{n+2}) = \Shv(\MM_{\leq n}) \simeq \MM.\]
The last equivalence follows from \cite[Proposition 1.6.8.5]{SAG}.
\end{proof}

Thus, restriction along the inclusion of $\Mf_{\et}$ into the big \'etale site defines for any spectral Deligne--Mumford $n$-stack $\Mf = (\MM, \OO_{\Mf})$ (that is locally noetherian and $k$-quasi-separated for all $k\geq 1$) a geometric morphism $\Shv(\Mf) \to \MM$.

\bibliographystyle{alpha}
\bibliography{Chromatic}
\end{document}